\definecolor{darkblue}{rgb}{0,0,0.5}
\newcommand{\nAlph}[1]{\@Alph{#1}}
\newcommand{\nalph}[1]{\@alph{#1}}
\newcommand{\ngreekname}[1]{\ifcase#1\or alpha\or beta\or gamma\or delta\or varepsilon\or zeta\or eta\or vartheta\or iota\or kappa\or lambda\or mu\or nu\or xi\or omicron\or pi\or rho\or sigma\or tau\or upsilon\or phi\or chi\or psi\or omega\else \@ctrerr \fi}
\newcommand{\nGreekname}[1]{\ifcase#1\or Alpha\or Beta\or Gamma\or Delta\or Theta\or Lambda\or Pi\or Sigma\or Upsilon\or Phi\or Psi\or Omega\else \@ctrerr \fi}
\newcommand{\ngreek}[1]{\expandafter\csname\ngreekname{#1}\endcsname}
\newcommand{\nGreek}[1]{\expandafter\csname\nGreekname{#1}\endcsname}
\newcommand{\safeEdef}[2]{
	\ifcsname#1\endcsname
	\errmessage{Command \csname#1\endcsname already defined}
	\else
	\expandafter\edef\csname#1\endcsname{#2}
	\fi
}
\let\hbar\undefined
\newcommand{\eps}{\varepsilon}
\newcommand{\pr}{\@ifstar\pr@star\pr@nostar}
\NewDocumentCommand{\pr@star}{ O{\bP} m }{#1\mathopen{}\left[#2\right]\mathclose{}}
\NewDocumentCommand{\pr@nostar}{ O{} O{\bP} m }{#2\mathopen#1[#3\mathclose#1]}
\newcommand{\cpr}{\@ifstar\cpr@star\cpr@nostar}
\newcommand{\cpr@star}[2]{\bP\mathopen{}\left[#1\mathrel{}\middle|\mathrel{}#2\right]\mathclose{}}
\newcommand{\cpr@nostar}[3][]{\bP\mathopen#1[#2\mathrel#1|#3\mathclose#1]}
\newcommand{\ex}{\@ifstar\ex@star\ex@nostar}
\NewDocumentCommand{\ex@star}{ O{\bE} m }{#1\mathopen{}\left[#2\right]\mathclose{}}
\NewDocumentCommand{\ex@nostar}{ O{} O{\bE} m }{#2\mathopen#1[#3\mathclose#1]}
\newcommand{\cex}{\@ifstar\cex@star\cex@nostar}
\newcommand{\cex@star}[2]{\bE\mathopen{}\left[#1\mathrel{}\middle|\mathrel{}#2\right]\mathclose{}}
\newcommand{\cex@nostar}[3][]{\bE\mathopen#1[#2\mathrel#1|#3\mathclose#1]}
\newcommand{\set}{\@ifstar\set@star\set@nostar}
\newcommand{\set@star}[1]{\mathopen{}\left\{#1\right\}\mathclose{}}
\newcommand{\set@nostar}[2][]{\mathopen#1\{#2\mathclose#1\}}
\newcommand{\cset}{\@ifstar\cset@star\cset@nostar}
\newcommand{\cset@star}[2]{\mathopen{}\left\{#1:#2\right\}\mathclose{}}
\newcommand{\cset@nostar}[3][]{\mathopen#1\{#2:#3\mathclose#1\}}
\newcommand{\defn}[1]{\emph{#1}}
\DeclarePairedDelimiter{\paren}{\lparen}{\rparen}
\DeclarePairedDelimiter{\abs}{\lvert}{\rvert}
\DeclarePairedDelimiter{\floor}{\lfloor}{\rfloor}
\newcommand{\rightharpoonupline}{\mathchoice%
	{\clipbox{{0.0\width} {0.3\height} {0.7\width} {-0.425\height}}{$\scriptstyle\rightharpoonup$}}
	{\clipbox{{0.0\width} {0.3\height} {0.7\width} {-0.425\height}}{$\scriptstyle\rightharpoonup$}}
	{\clipbox{{0.0\width} {0.4\height} {0.7\width} {-0.425\height}}{$\scriptscriptstyle\rightharpoonup$}}
	{\clipbox{{0.0\width} {0.4\height} {0.7\width} {-0.425\height}}{$\scriptscriptstyle\rightharpoonup$}}
}
\newcommand{\rightharpoonupend}{\mathchoice%
	{\clipbox{{.675\width} {0.3\height} 0pt {-0.425\height}}{$\scriptstyle\rightharpoonup$}}
	{\clipbox{{.675\width} {0.3\height} 0pt {-0.425\height}}{$\scriptstyle\rightharpoonup$}}
	{\clipbox{{.675\width} {0.4\height} 0pt {-0.425\height}}{$\scriptscriptstyle\rightharpoonup$}}
	{\clipbox{{.675\width} {0.4\height} 0pt {-0.425\height}}{$\scriptscriptstyle\rightharpoonup$}}
}
\newcommand{\overrightharpoonup}[1]{\mathchoice%
	{\vbox{\m@th\ialign{##\cr$\displaystyle\hbox{$\displaystyle\rightharpoonupline$}\mkern-1mu\cleaders\hbox{$\displaystyle\mkern-2mu\rightharpoonupline$}\hfill\mkern-2mu\rightharpoonupend$\cr\noalign{\nointerlineskip\vspace{-0pt}}$\displaystyle #1$\cr}}}
	{\vbox{\m@th\ialign{##\cr$\textstyle\hbox{$\textstyle\rightharpoonupline$}\mkern-1mu\cleaders\hbox{$\textstyle\mkern-2mu\rightharpoonupline$}\hfill\mkern-2mu\rightharpoonupend$\cr\noalign{\nointerlineskip\vspace{-0pt}}$\textstyle #1$\cr}}}
	{\vbox{\m@th\ialign{##\cr$\scriptstyle\hbox{$\scriptstyle\rightharpoonupline$}\mkern-1mu\cleaders\hbox{$\scriptstyle\mkern-2mu\rightharpoonupline$}\hfill\mkern-2mu\rightharpoonupend$\cr\noalign{\nointerlineskip\vspace{-0pt}}$\scriptstyle #1$\cr}}}
	{\vbox{\m@th\ialign{##\cr$\scriptscriptstyle\hbox{$\scriptscriptstyle\rightharpoonupline$}\mkern-1mu\cleaders\hbox{$\scriptscriptstyle\mkern-2mu\rightharpoonupline$}\hfill\mkern-2mu\rightharpoonupend$\cr\noalign{\nointerlineskip\vspace{-0pt}}$\scriptscriptstyle #1$\cr}}}
}
\newcommand{\ordsubs}[2]{{#1}^{\underline{#2}}}
\newcommand{\unordsubs}[2]{\binom{#1}{#2}}
\DeclareMathOperator{\dom}{dom}
\DeclareMathOperator{\im}{Im}
\newcommand{\ind}{\mathbbm{1}}
\newcommand{\comp}[1]{#1^{\mathsf{c}}}
\newtheorem{theorem}[algocf]{Theorem}
\newtheorem{lemma}[algocf]{Lemma}
\theoremstyle{definition}
\newtheorem{definition}[algocf]{Definition}
\newtheorem{remark}[algocf]{Remark}
\numberwithin{equation}{section}
\newcommand{\COMMENT}[1]{\footnote{#1}}
\newcommand{\TASK}[1]{{\hypersetup{linkcolor=red}\footnote{\color{red}#1}}}
\newcommand{\REMARK}[1]{\footnote{\color{green!50!black}#1}}
\newcommand{\OLD}[1]{{\color{red}#1}}
\renewcommand{\COMMENT}[1]{}
\renewcommand{\TASK}[1]{}
\renewcommand{\REMARK}[1]{}
\newcommand{\restr}{\@ifstar\restr@star\restr@nostar}
\newcommand{\restr@star}[2]{\left.#1\right|_{#2}}
\newcommand{\restr@nostar}[3][]{#2#1|_{#3}}
\newcommand{\usub}[2]{#1^{(#2)}}
\newcommand{\diff}{\mathop{}\!\mathrm{d}}
\newcommand{\odv}[3][]{\if\relax\detokenize{#1}\relax\frac{\diff #2}{\diff #3}\else\frac{\diff^{#1} #2}{\diff #3^{#1}}\fi}
\newcommand{\tand}{\text{ and }}
\newcommand{\tor}{\text{ or }}
\newcommand{\tforall}[1]{\text{ for all #1}}
\newcommand{\tforsome}[1]{\text{ for some #1}}
\newcommand{\f}{\mathrm f}
\newcommand{\eventeq}[1]{=_{#1}}
\newcommand{\eventleq}[1]{\leq_{#1}}
\newcommand{\eventgeq}[1]{\geq_{#1}}
\newcommand{\Xeq}{\eventeq{\cX}}
\newcommand{\Xleq}{\eventleq{\cX}}
\newcommand{\Xgeq}{\eventgeq{\cX}}
\newcommand{\pridx}{\@ifstar\pridx@star\pridx@nostar}
\newcommand{\pridx@star}[1]{\pr*[\bP_{#1}]}
\newcommand{\pridx@nostar}[2][]{\pr[#1][\bP_{#2}]}
\newcommand{\exidx}{\@ifstar\exidx@star\exidx@nostar}
\newcommand{\exidx@star}[1]{\ex*[\bE_{#1}]}
\newcommand{\exidx@nostar}[2][]{\ex[#1][\bE_{#2}]}
\newcommand{\pri}{\@ifstar\pri@star\pri@nostar}
\newcommand{\pri@star}{\pr*[\bP_{i}]}
\newcommand{\pri@nostar}[1][]{\pr[#1][\bP_{i}]}
\newcommand{\exi}{\@ifstar\exi@star\exi@nostar}
\newcommand{\exi@star}{\ex*[\bE_{i}]}
\newcommand{\exi@nostar}[1][]{\ex[#1][\bE_{i}]}
\newcommand{\evicts}{\mathrel{\nearrow}}
\newcommand{\nevicts}{\mathrel{\ooalign{$\evicts$\cr\hfil$|$\hfil\cr}}}
\NewDocumentCommand{\defnidx}{ O{#2} m }{\defn{#2}\index{#1}}
\newcommand{\rmtar}{\mathrm{tar}}
\newcommand{\rmdef}{\mathrm{def}}
\newcommand{\eul}{\mathrm{e}}
\newif\ifnonessential
\title[Conflict-free hypergraph matchings]{Conflict-free hypergraph matchings}
\author[S.~Glock]{Stefan Glock}
\author[F.~Joos]{Felix Joos}
\author[J.~Kim]{Jaehoon Kim}
\author[M.~K\"uhn]{Marcus K\"uhn}
\author[L.~Lichev]{Lyuben Lichev}
\address[S.~Glock]{Institute for Theoretical Studies, ETH Z\"urich, Switzerland}
\email{dr.stefan.glock@gmail.com}
\address[F.~Joos, M.~K\"uhn]{Institut f\"ur Informatik, Universit\"at Heidelberg, Germany}
\email{[joos, kuehn]@informatik.uni-heidelberg.de}
\address[J.~Kim]{Department of Mathematical Sciences, KAIST, South Korea}
\email{jaehoon.kim@kaist.ac.kr}
\address[L.~Lichev]{Universit\'e Jean Monnet and Institut Camille Jordan, Saint-Etienne, France}
\email{lyuben.lichev@univ-st-etienne.fr}
\date{\today}
\thanks{The research leading to these results was supported by Dr.~Max R\"ossler, the Walter Haefner Foundation and the ETH Z\"urich Foundation (S. Glock), the Deutsche Forschungsgemeinschaft (DFG, German Research Foundation) -- 428212407 (F. Joos and M. K\"uhn) as well as the POSCO Science Fellowship of POSCO TJ Park Foundation (J. Kim).}
\newif\ifshowchanges
\else\renewcommand{\OLD}[1]{}\fi
\begin{document}
	
	\begin{abstract} 
		A celebrated theorem of Pippenger, and Frankl and R\"odl states that every almost-regular, uniform hypergraph~$\cH$ with small maximum codegree has an almost-perfect matching. We extend this result by obtaining a \emph{conflict-free} matching, where conflicts are encoded via a collection~$\cC$ of subsets~$C\subseteq E(\cH)$. We say that a matching~$\cM\subseteq E(\cH)$ is conflict-free if~$\cM$ does not contain an element of~$\cC$ as a subset. Under natural assumptions on~$\cC$, we prove that~$\cH$ has a conflict-free, almost-perfect matching. This has many applications, one of which yields new asymptotic results for so-called ``high-girth'' Steiner systems.
		Our main tool is a random greedy algorithm which we call the ``conflict-free matching process''.
	\end{abstract}
	
	\maketitle
	
	\section{Introduction}
	
	In 1963, Erd\H{o}s and Hanani~\cite{EH:63} conjectured that for any fixed $s>t\geq 1$, there exists an ``approximate'' $(m,s,t)$-Steiner system. Here, a \defn{partial $(m,s,t)$-Steiner system} is a collection $\cS$ of subsets of $[m]$, each of size $s$, such that every subset of $[m]$ of size $t$ is contained in at most one element of~$\cS$. Note that, trivially, $|\cS|\le \binom{m}{t}/\binom{s}{t}$. Erd\H{o}s and Hanani conjectured that one may find such~$\cS$ with $|\cS|\ge (1-o(1))\binom{m}{t}/\binom{s}{t}$, which we refer to as an approximate~$(m,s,t)$-Steiner system. 
	This conjecture was proved in a breakthrough by R\"odl~\cite{rodl:85} in 1985. The impact of this result and its proof method, the so-called R\"odl nibble, on combinatorics and beyond cannot be overestimated. To mention just two outstanding examples, the result of R\"odl was a key ingredient in the recent resolution of the ``Existence conjecture'' on combinatorial designs~\cite{keevash:14}, and the proof method was used to find the so-far largest gaps between primes~\cite{FGKMT:18}.
	
	A closely related problem, also going back to Erd\H{o}s, is the existence of Steiner triple systems with large girth. A partial Steiner triple system of order $m$ is simply a partial $(m,3,2)$-Steiner system and a Steiner triple system of order~$m$ is a partial Steiner triple system of size~$\binom{m}{2}/3$.
	By an old theorem of Kirkman~\cite{kirkman:47}, a Steiner triple system of order $m$ exists if and only if $m\equiv 1,3\mod{6}$.
	The \defn{girth} of a partial Steiner triple system~$\cS$ is the smallest integer $g\ge 4$ such that some~$g$-element subset of points in $[m]$ contains at least $g-2$ triples of~$\cS$ (if no such $g$ exists, the girth is infinite). In 1973, Erd\H{o}s~\cite{erdos:73,erdos:76} asked if there exist Steiner triple systems of arbitrarily large girth. 
	More precisely, Erd\H{o}s conjectured that for any fixed $g$ and any sufficiently large~$m\equiv 1,3\mod{6}$, there exists a Steiner triple system of order $m$ whose girth is at least~$g$.
	Recently, an approximate version of the conjecture of Erd\H{o}s was proved by Bohman and Warnke~\cite{BW:19} and independently by Glock, K\"uhn, Lo and Osthus~\cite{GKLO:20}. Their results show that for any fixed $g$, there exist partial Steiner triple systems of order $m$ of size at least $(1-o(1))m^2/6$ and girth at least~$g$. This is achieved by analysing a random process which builds a partial Steiner triple system by adding a new triple at random in each step, subject to maintaining the desired property of having a partial Steiner triple system with girth at least~$g$. While this approach seems natural (it has been suggested for instance in~\cite{KKLS:18}, see also \cite{EL:14,LPR:93}), the high-girth condition entails considerable technical difficulties in analysing the process.
	In a recent breakthrough, Kwan, Sah, Sawhney and Simkin \cite{KSSS:22} proved the conjecture of Erd\H{o}s. Their proof builds upon the approximate results of \cite{BW:19,GKLO:20} and the iterative absorption method (see for example~\cite{BGKLMO:20}), combined with many new ideas.
	We refer the reader to their paper for more details on the history of Erd\H{o}s's conjecture and earlier results.

	In this paper, we considerably generalize the result on approximate Steiner triple systems of large girth from~\cite{BW:19,GKLO:20}. Our main contribution is a theorem which guarantees the existence of an almost-perfect matching in certain hypergraphs, with the additional constraint that the matching is ``conflict-free''. Our notion of conflicts is in particular flexible enough to capture phenomena like the high-girth condition from Erd\H{o}s's conjecture.
	
	Before introducing the general hypergraph matching setting, let us first discuss a consequence of our main theorem.
	A natural problem is to consider Erd\H{o}s's conjecture for general Steiner systems, not just Steiner triple systems. Indeed, such generalizations were conjectured in several papers, by F\"uredi and Ruszink\'o~\cite[Conjecture 1.4]{FR:13}, Glock, K\"uhn, Lo and Osthus~\cite[Conjecture~7.2]{GKLO:20} as well as Keevash and Long~\cite[Section~3]{KL:20}.
	The theorem below can be interpreted as ``R\"odl's theorem with large girth'', and settles approximate versions of the conjectures made in~\cite{FR:13,GKLO:20,KL:20}.  We remark that the case when $s=t+1$ and $\ell=3$ was already proved by Sidorenko~\cite{sidorenko:20} using an algebraic construction.
	
	\begin{theorem}\label{thm:Steiner systems simple}
		For all~$\ell,s,t$, there exist~$\eps>0$ and~$m_0$ such that for all~$m\geq m_0$, there exists a partial $(m,s,t)$-Steiner system $\cS$ of size $(1-m^{-\eps})\binom{m}{t}/\binom{s}{t}$ such that any subset of $\cS$ of size $j$, where $2\le j\le \ell$, spans more than $(s-t)j+t$ points.
	\end{theorem}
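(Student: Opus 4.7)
The plan is to recast the problem as an instance of the paper's main conflict-free matching theorem. Define the $\binom{s}{t}$-uniform hypergraph $\cH$ with vertex set $V(\cH) := \binom{[m]}{t}$ and edge set $E(\cH) := \{e_S : S \in \binom{[m]}{s}\}$, where $e_S := \binom{S}{t}$. Matchings in $\cH$ correspond bijectively to partial $(m,s,t)$-Steiner systems, since $e_S \cap e_{S'} = \emptyset$ if and only if $|S \cap S'| < t$. The hypergraph $\cH$ is regular of degree $d := \binom{m-t}{s-t} = \Theta(m^{s-t})$, and its maximum codegree is at most $\binom{m-t-1}{s-t-1} = O(m^{s-t-1}) = o(d)$, so $\cH$ fits the framework of the paper's matching process.

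To encode the girth constraint, let the conflict collection $\cC$ consist of all sets $\{e_{S_1}, \ldots, e_{S_j}\} \subseteq E(\cH)$ with $2 \le j \le \ell$ and $|S_1 \cup \cdots \cup S_j| \le (s-t)j + t$. A matching in $\cH$ is conflict-free with respect to $\cC$ precisely when every $2 \le j \le \ell$ of its edges span strictly more than $(s-t)j + t$ points, which is exactly the desired large-girth condition. Hence, granting the conflict-free matching theorem, obtaining $\cS$ as in the statement reduces to verifying that $\cC$ satisfies the theorem's hypotheses.

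The decisive estimate is the bound on the number of $j$-conflicts through a given edge. Fix $j \in \{2, \ldots, \ell\}$ and $e = e_{S_1}$, and let $\{e_{S_1}, \ldots, e_{S_j}\}$ be a $j$-conflict. Setting $V := S_1 \cup \cdots \cup S_j$, we have $|V \setminus S_1| = |V| - s \le (s-t)(j-1)$, so the number of choices for $V \setminus S_1$ is at most $\binom{m-s}{(s-t)(j-1)} = O(m^{(s-t)(j-1)}) = O(d^{j-1})$, and the number of ways to place $S_2, \ldots, S_j$ inside $V$ is a constant depending only on $s, t, \ell$. This yields $O(d^{j-1})$ conflicts of size $j$ through each edge, matching the weight budget $d^{1-j}$ per conflict that the framework requires. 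The same vertex-counting argument yields analogous bounds on the number of completions of any $k < j$ prescribed edges to a conflict, i.e., the requisite "link" degree conditions.

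The main obstacle will be the verification of the remaining technical hypotheses of the conflict-free matching theorem, typically conditions ensuring that various test statistics (counts of small substructures) concentrate throughout the random process, and that conflicts whose restriction to proper subcollections of edges has atypically small span contribute only a lower-order term. These reduce to a finite case analysis over intersection patterns of tuples of $s$-sets, each handled by the same kind of vertex-counting estimate as above. Once verified, the theorem directly produces a conflict-free matching of size $(1-m^{-\eps})\binom{m}{t}/\binom{s}{t}$, which is the desired partial $(m,s,t)$-Steiner system of Theorem~\ref{thm:Steiner systems simple}.
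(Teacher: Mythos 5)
Your reduction is the same as the paper's: the same hypergraph $\cH$ on $\binom{[m]}{t}$ with edges $\binom{S}{t}$, the same degree and codegree computation for $\cH$, and the same idea of encoding the sparseness condition as a conflict system and invoking the main matching theorem. The degree bound $\Delta_1(\cC^{(j)})=O(d^{j-1})$ via counting the $\le (s-t)(j-1)$ new points is also correct.

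However, there is a genuine gap in the codegree verification, and it stems from how you defined $\cC$. You take as conflicts \emph{all} sets $\{e_{S_1},\dots,e_{S_j}\}$ with $|S_1\cup\dots\cup S_j|\le (s-t)j+t$, and you assert that ``the same vertex-counting argument yields analogous bounds on the number of completions of any $k<j$ prescribed edges to a conflict.'' This is false for your $\cC$: if the $k$ prescribed edges $E$ themselves span only $q\le (s-t)k+t$ points (i.e.\ $E$ is itself a forbidden configuration), then a completion to a $j$-conflict may introduce up to $(s-t)j+t-q\ge (s-t)(j-k)$ new points, so the number of completions is $\Theta(m^{(s-t)(j-k)})=\Theta(d^{j-k})$, which violates the required bound $\Delta_{k}(\cC^{(j)})\le d^{j-k-\eps}$. (A milder instance of the same problem occurs already at $j=2$: your definition admits ``conflicts'' $\{e_{S_1},e_{S_2}\}$ with $|S_1\cap S_2|\ge t$, which makes the condition on $|\{f\in N^{(2)}_{\cC}(e):v\in f\}|$ fail, since for $v\subseteq S_1$ there are $d$ such $S_2\supseteq v$.) The fix — which the paper flags as ``sometimes crucial'' and implements in its proof — is to include as conflicts only \emph{minimal} forbidden configurations, i.e.\ those matchings $C$ for which $\cS(C)$ spans at most $(s-t)|C|+t$ points but no proper subset does. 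This loses nothing, since any matching containing a forbidden configuration contains a minimal one; and it restores the codegree bound, because then every proper subset $E$ of a conflict spans at least $(s-t)|E|+t+1$ points, so the count of completions picks up the extra factor $m^{-1}\le d^{-1/(s-t)}$ needed for $\Delta_{k}(\cC^{(j)})\le d^{j-k-\eps}$. Without this restriction the hypotheses of the matching theorem are simply not met, so the appeal to it does not go through as written.
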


	We now introduce the more general hypergraph matching setting, from which we deduce the above theorem in Section~\ref{section: steiner systems}.
	Shortly after R\"odl's theorem, Frankl and R\"odl~\cite{FR:85} and Pippenger (see~\cite{PS:89}) greatly generalized his result. Their fundamental observation was that R\"odl's result is ``just'' the tip of the iceberg of a much more general phenomenon: every regular hypergraph with small codegrees has an almost-perfect matching. In the following discussion, consider a hypergraph $\cH$ which is $k$-uniform (every edge has cardinality $k$) and $d$-regular (every vertex is contained in $d$ edges), where $k$ is fixed and asymptotics are with respect to $d\to \infty$. If the maximum codegree of $\cH$ is small, meaning that every pair of vertices is contained in $o(d)$ edges, then $\cH$ has a matching which covers all but $o(|V(\cH)|)$ vertices. 
	To see how this generalizes R\"odl's theorem, consider parameters $m,s,t$ and construct a hypergraph $\cH$ as follows: the vertices of~$\cH$ are all the subsets of $[m]$ of size~$t$, and for each subset of $[m]$ of size~$s$, we create an edge of~$\cH$ which comprises all its subsets of size~$t$.
	Note that matchings in $\cH$ correspond exactly to partial~$(m,s,t)$-Steiner systems of the same size, and an almost-perfect matching in $\cH$ yields an approximate~$(m,s,t)$-Steiner system. It is straightforward to check that $\cH$ is $k$-uniform and $d$-regular with~$k=\binom{s}{t}$ and~$d=\binom{m-t}{s-t}$, and all codegrees are $o(d)$. Hence, the above result on matchings in hypergraphs implies R\"odl's theorem (see Section~\ref{section: steiner systems} for more details).

	We now explain how one can capture the high-girth condition (and many other desired features) in the hypergraph matching setting. For simplicity in the discussion, we consider the case of Steiner triple systems. Hence, the vertices of $\cH$ are the pairs in $[m]$, and the edges of $\cH$ correspond to the triples in $[m]$ (more precisely, an edge corresponding to a given triple consists of the three pairs contained in it). Now, suppose that we are given a set $C$ of $\ell$ triples that span at most $\ell+2$ points. This means that if all triples from $C$ were contained in a partial Steiner triple system, then this system would have girth at most $\ell+2$. Hence, for the system to have large girth, say at least $g> \ell+2$, we have to make sure that not all triples from~$C$ are used. Since triples correspond to edges in $\cH$, this gives us a subset of (disjoint) edges of~$\cH$ which ``conflict'' in the sense that we want to find a matching which does not contain the full set. One can form a collection $\cC$ consisting of all ``conflicting'' sets of triples, that is, all those sets that span too few vertices. Then, the aim is to find a matching $\cM$ in $\cH$ such that no element of $\cC$ is a subset of~$\cM$. It will be convenient to think of $\cC$ as a hypergraph with vertex set $E(\cH)$ since conflicts are sets of edges.
	
	\begin{definition}\label{definition: C-free}
		Given a hypergraph $\cC$ with $V(\cC)=E(\cH)$, we say that an edge set $E\subseteq E(\cH)$ is \defn{$\cC$-free} if no edge of $\cC$ is a subset of~$E$.
	\end{definition}
	
	We do not assume $\cC$ to be uniform. This reflects the fact that the girth condition can be violated by sets of edges of different size. We write $\cC^{(j)}$ to denote the subgraph of $\cC$ which consists of all those conflicts in $\cC$ that have size~$j$.
	
	One major goal of this work is to provide general conditions on $\cC$ that allow us to guarantee the existence of a $\cC$-free almost-perfect matching in~$\cH$.
	Note that a priori it is perhaps not even clear that this is possible. For instance, the conflicts arising in the Steiner system application are inherently ``local'' in the sense that they forbid having too many triples on small sets of points. However, when transferring the problem to the hypergraph matching setting, the information about points is ``lost'' in the sense that the point set of the Steiner system has no counterpart in the matching description.
	Moreover, in the case of high-girth Steiner triple systems, the proofs in~\cite{BW:19,GKLO:20} extensively use the structural properties of the ``forbidden configurations''.
	One of the key insights of the present work is that one can indeed formulate general conditions on $\cC$ which guarantee existence of an almost-perfect conflict-free matching.
	Not only are these conditions natural (as evidenced by the fact that they are satisfied in many applications such as the high-girth Steiner systems), but they are also necessary in the sense that the theorem would be false in general if one condition is omitted entirely.
	For more details, see Subsection~\ref{subsection: optimality}.

	We now state our main theorem. We remark that, although it captures the most important features, we later state several variations which might be applicable in situations where the following is not. Recall that $\Delta_i(\cH)$ denotes the \defn{maximum $i$-degree} of $\cH$, that is, the maximum number of edges containing any fixed set of $i$ vertices.
	For an edge~$e\in E(\cH)$, we define~$N^{(2)}_{\cC}(e):=\cset{f\in\cH}{ \set{e,f}\in\cC }$.

	\begin{theorem}\label{thm:matchings simple}
		For all~$k,\ell\geq 2$, there exists~$\eps_0>0$ such that for all~$\eps\in(0,\eps_0)$, there exists~$d_0$ such that the following holds for all~$d\geq d_0$.
		Let $\cH$ be a $k$-uniform hypergraph with $|V(\cH)|\le \exp(d^{\eps^3})$ such that every vertex is contained in $(1\pm d^{-\eps})d$ edges and~$\Delta_2(\cH)\le d^{1-\eps}$.
		
		Let $\cC$ be a hypergraph with $V(\cC)=E(\cH)$ such that every $C\in E(\cC)$ satisfies $2\le |C|\le \ell$, and the following conditions hold.
		\begin{itemize}
			\item $\Delta_1(\cC^{(j)})\le \ell d^{j-1}$ for all~$2\leq j\leq \ell$;
			\item $\Delta_{j'}(\cC^{(j)})\le d^{j-j'-\eps}$ for all $2\le j'<j\le \ell$;
			\item $\abs{\cset{ f\in N_\cC^{(2)}(e) }{ v\in f }}\leq d^{1-\eps}$ for all~$e\in E(\cH)$ and~$v\in V(\cH)$;
			\item $\abs{N_\cC^{(2)}(e)\cap N_\cC^{(2)}(f)}\leq d^{1-\eps}$ for all disjoint~$e,f\in\cH$.
		\end{itemize}
		Then, there exists a $\cC$-free matching $\cM$ in $\cH$ which covers all but $d^{-\eps^3} |V(\cH)|$ vertices of~$\cH$.
	\end{theorem}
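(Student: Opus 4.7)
The plan is to analyse a random greedy algorithm that we shall call the \emph{conflict-free matching process}. Starting from $\cM_0=\emptyset$, at step~$i\ge 1$ let $\cA_i$ consist of those $e\in E(\cH)$ that are disjoint from every edge of~$\cM_{i-1}$ and satisfy $C\not\subseteq\cM_{i-1}\cup\set{e}$ for every $C\in\cC$; choose $e_i$ uniformly at random from $\cA_i$ (stopping if $\cA_i=\emptyset$) and set $\cM_i:=\cM_{i-1}\cup\set{e_i}$. With time parameterised as $t:=i/d$, the informal prediction is that the fraction of available edges through an unmatched vertex decays like $\mathrm e^{-(k-1)t}$, so running the process until $t$ is a sufficiently slowly growing function of $d$ (and, if necessary, appending a short greedy clean-up) should leave at most a $d^{-\eps^3}$-fraction of $V(\cH)$ uncovered.

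To execute this rigorously I would track two families of random variables. First, for each vertex $v$ not yet covered by $\cM_i$, the availability degree $A_i(v):=\abs{\cset{e\in\cA_i}{v\in e}}$. Second, for each $2\le j\le\ell$ and each partial conflict $S\subsetneq C\in\cC^{(j)}$ with $S\subseteq\cA_i$, the number of ways to complete~$S$ to a conflict using edges of $\cA_i$; these quantities control how quickly edges migrate from available to forbidden. A trend calculation shows that each tracked variable is, in expectation over one step, well approximated by an autonomous system of ODEs with closed-form solutions; the codegree bound $\Delta_2(\cH)\le d^{1-\eps}$ and the degree bound $\Delta_1(\cC^{(j)})\le\ell d^{j-1}$ guarantee that the corrections to the first-order trend are negligible.

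For concentration I would apply Freedman's martingale inequality to each tracked variable's deviation from its deterministic trajectory. Because $|V(\cH)|$ can be as large as $\exp(d^{\eps^3})$, the union bound forces deviation probabilities of the form $\exp(-d^{\Omega(\eps)})$, which demands tight control of both the maximum one-step change and the accumulated conditional variance. Here the codegree bounds $\Delta_{j'}(\cC^{(j)})\le d^{j-j'-\eps}$ for $2\le j'<j\le\ell$ provide the required slack: they ensure that the contribution of each single step is at most $d^{-\eps}$ times what a naive bound would give, which is exactly the quantitative improvement needed for Freedman to close.

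The main obstacle, and where the two ``second-order'' hypotheses on $N_\cC^{(2)}$ enter, is in controlling the $j=2$ conflict-extension variables $\abs{N_\cC^{(2)}(e)\cap\cA_i}$: these can change when $e_i$ coincides with an element of $N_\cC^{(2)}(e)$ or when $e_i$ merely eliminates one of its members via vertex overlap, and unlike the vertex-degree variables, they are globally sensitive to coincidences between distant edges. The hypothesis that at most $d^{1-\eps}$ conflict-neighbours of any $e$ pass through a given vertex~$v$ prevents any single step from producing an anomalously large jump in the variable; the hypothesis bounding $\abs{N_\cC^{(2)}(e)\cap N_\cC^{(2)}(f)}$ for disjoint $e,f$ prevents the increments at distinct edges from being too strongly correlated, keeping the conditional variance bounded. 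Extending this bookkeeping uniformly to all partial-conflict types and all $j\le\ell$ simultaneously, and verifying that the net deviation remains within the slack provided by $\eps$ throughout the process, is the technical heart of the argument; each of the four listed conditions on $\cC$ corresponds to one of the four ingredients needed (trend, maximum step-size, conditional variance, and cross-correlation control).
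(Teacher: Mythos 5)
Your overall architecture — the uniformly random greedy process, tracking vertex availability degrees and partially completed conflicts, matching them to deterministic trajectories, and closing with Freedman's inequality — is exactly the paper's strategy. However, there is one genuine gap: the system of variables you propose to track does not close under the stated hypotheses, because the theorem only gives \emph{upper} bounds on the conflict degrees. The expected one-step decrease of $A_i(v)$ is governed, for each available $e\ni v$, by the probability that $e$ becomes forbidden, and the conflict contribution to this probability is (up to normalisation) the number of conflicts $C\ni e$ with $C\setminus\set{e}\subseteq\cM_{i-1}$; its trajectory is $\sum_j d_{\cC^{(j)}}(e)\cdot\zhat_{j-1,1}(i)$, which depends on the \emph{individual} degrees $d_{\cC^{(j)}}(e)$ and not merely on the global bound $\ell d^{j-1}$. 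If these degrees vary between edges — which the hypotheses permit, and which is essential for applications such as Theorem~\ref{thm:packings} — then different edges die at genuinely different rates, the availability degrees of different vertices drift apart at constant relative order, and no autonomous trend equation in your tracked variables exists. The paper's resolution is a preprocessing step (Lemma~\ref{lemma: conflict regularization}): one artificially \emph{adds} random conflicts of each uniformity, with inclusion probabilities proportional to products of degree deficits, so that every edge attains conflict degree $(1\pm d^{-\eps/4})$ times a common target value while the codegree and neighbourhood conditions are preserved; a matching free of the enlarged system is free of the original. Without this regularisation (or an equivalent device) the trend hypothesis for $A_i(v)$ cannot be verified, and your claimed closed-form ODE solution does not exist.

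Two smaller omissions. First, one must also reduce to the case where every conflict is a matching and no conflict contains another (the paper's conditions \ref{item: conflict matchings} and \ref{item: conflict no subset}); this is what makes ``$e$ dies by overlap'' and ``$e$ dies by conflict'' disjoint events and is used repeatedly in the spreadness analysis. Second, the claim that ``corrections to the first-order trend are negligible'' is where most of the work lies: one needs that configurations consisting of two overlapping conflicts (or a conflict overlapping a tracked partial conflict) with all but a few edges already matched remain rare throughout the process, uniformly over the exponentially many variables. The paper proves this via a separate moment-method argument (Lemmas~\ref{lemma: edge selection}--\ref{lemma: test spread event}) rather than folding it into the martingale analysis, and it is precisely here — not in the variance of Freedman — that the fourth hypothesis on $\abs{N^{(2)}_{\cC}(e)\cap N^{(2)}_{\cC}(f)}$ is consumed. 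Finally, your heuristic decay $\mathrm e^{-(k-1)t}$ omits the factor $\exp(-\Gammahat(i))$ recording losses due to conflicts; this factor is of constant order (it is why the degree bound $\ell d^{j-1}$ rather than, say, $d^{j-1}\log^2 d$ is imposed) and must appear in the trajectories.
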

	
	Note that if there are no conflicts of size~$2$ the last two conditions are irrelevant and we are only left with simple degree and codegree conditions for~$\cH$ and~$\cC$.
	In addition, when applying Theorem~\ref{thm:matchings simple} with some given conflict hypergraph~$\cC$ one can without loss of generality assume that each conflict in~$\cC$ is a matching and does not contain another conflict.
	In fact, omitting these redundant conflicts is sometimes crucial to meet the codegree conditions required in Theorem~\ref{thm:matchings simple}.

	In the sequel, we briefly outline our proof strategy, which also gives some intuition for the conditions that we require for $\cH$ and~$\cC$. 
	We then discuss several applications of Theorem~\ref{thm:matchings simple}.

	\subsection{Proof overview}\label{subsec:proof sketch}
	
	Our strategy in proving Theorem~\ref{thm:matchings simple} is to construct the matching $\cM$ with a random greedy algorithm, which we call the ``conflict-free matching process''. The algorithm itself is as simple as it could be. Start with an empty matching. Then in each step, do the following: consider the set of all edges which are ``available'' in the sense that adding them results in a matching which is still conflict-free, choose one of these edges uniformly at random and add it to the matching. Keep iterating until no more edges are available.
	The final matching is conflict-free by construction, so the crucial task is to show that it is as large as desired, which is to say that the process does not abort too early (with high probability).
	The intuition is that, through the fact that we keep adding edges in a random fashion, the current matching essentially behaves like a uniformly random subset of edges. Concretely, we show that certain random variables associated with this process follow with high probability a deterministic trajectory, which allows us to deduce that the process only stops when almost all vertices of $\cH$ are covered. 
	On this high level, our proof of Theorem~\ref{thm:matchings simple} is similar to the proofs in~\cite{BW:19,GKLO:20}.

	The assumptions on $\cH$ we make are (qualitatively) the same as usual: we want $\cH$ to be almost-regular and to have small codegree. This would, as discussed before, imply the existence of an almost-perfect matching in~$\cH$.
	As far as the conditions on~$\cC$ are concerned, observe first that the order of magnitude which bounds the degrees of $\cC$ is natural, as can be seen by an application of the probabilistic deletion method.
	Indeed, it means that in total, there are $O(nd^j)$ conflicts of size $j$, where $n$ is the number of vertices of~$\cH$. If we select every edge of $\cH$ with probability $p=\delta/d$, the expected number of chosen edges is roughly $p nd/k=\delta n/k$. Call an edge ``bad'' if it overlaps with another chosen edge or participates in a conflict where all edges have been chosen. The expected number of overlapping pairs of chosen edges is $O(p^2nd^2)=O(\delta^2 n)$, and the expected number of completely chosen conflicts of size $j$ is $O(p^jnd^j)=O(\delta^j n)$. Hence, the expected total number of bad edges is only $O(\delta^2 n)$. Consequently, choosing $\delta$ small enough, there exists an outcome of this random experiment such that after removing all bad edges, $\delta n/k-O(\delta^2 n)$ edges still remain. In other words, we have found a $\cC$-free matching covering a constant proportion of all vertices.
	One can think of the above procedure as one ``bite'' of the R\"odl nibble.
	
	As observed by Frankl and R\"odl, and Pippenger, the small codegree assumption for $\cH$ is enough to ensure that one can repeatedly take such small ``bites'' until almost all vertices are covered.
	When dealing in addition with a conflict system, the obstructions coming from the exclusion of conflicts additionally influence the behaviour of this procedure. For instance, roughly speaking, if an edge of $\cH$ participates in many conflicts, it is much more likely to become unavailable at some point than an edge which participates in few conflicts.
	Hence, in order to ensure that the uncovered part of $\cH$ remains almost-regular throughout, we also wish to know quite precisely in how many conflicts a given edge participates in each step. Controlling the regularity of the conflict hypergraph is complicated by the fact that conflicts consist of several edges, some of which might already be included in the matching, while others are not, or might even be unavailable due to an overlap with an edge in the matching or another conflict.

	One important point in Theorem~\ref{thm:matchings simple} in terms of its applicability (see Subsection~\ref{sec:mindeg}) is that we only require upper bounds for the conflict hypergraph~$\cC$. While this seems natural (having fewer conflicts should only be advantageous when finding a conflict-free matching), one has to be careful since, as described above, the degrees of the conflict hypergraph significantly influence the evolution of the degrees of~$\cH$.
	In our proof, we actually show that given a conflict system with upper bounds on the degrees, one can artificially add conflicts to ``regularise'' the conflict system, and then we analyse the random process with respect to this enlarged conflict system.

	An additional point is that during the process, we also allow to track certain ``test functions''. This is not necessary to prove our main result, but we provide this additional feature to facilitate future applications. 
	Roughly speaking, the idea is that we also want to be able to claim that the obtained conflict-free matching $\cM$ behaves as one would expect by considering probabilistic heuristics. In the usual setting, without a conflict system, such a tool was provided in~\cite{EGJ:20a} and has already found a number of applications~(see for example~\cite{KKKMO:21,KS:20b}).
	Inevitably, this additional feature adds in technicality and length to our proof, but we believe it could be essential for future applications. For instance, one can utilise this to see that not only can one find a high-girth partial~$(m,s,t)$-Steiner system with $o(m^t)$ uncovered subsets of size~$t$ (as stated in Theorem~\ref{thm:Steiner systems simple}), but even one where every subset of size~$t-1$ is contained in $o(m)$ uncovered subsets of size~$t$.

	\subsection{Applications}\label{subsection: applications}

	We now discuss some applications of Theorem~\ref{thm:matchings simple}. In addition to various new results, we also point out that some results which are already known in the literature, proved ad-hoc and with no obvious connection to, say, Steiner triple systems of large girth, are implied by Theorem~\ref{thm:matchings simple}. This underpins the fact that Theorem~\ref{thm:matchings simple} reveals a very general phenomenon.
	It would be interesting to find further relevant applications.

	\subsubsection{Steiner systems and Latin squares}
	As already discussed above, Theorem~\ref{thm:matchings simple} implies the existence of high-girth approximate Steiner systems as stated in Theorem~\ref{thm:Steiner systems simple}, which generalizes the results from~\cite{BW:19,GKLO:20}. We provide the details of this deduction, together with some additional extensions such as growing girth, in Section~\ref{section: steiner systems}.
	
	We remark that similar results can be obtained in the ``partite'' setting. 
	For instance, while Steiner triple systems are equivalent to triangle decompositions of complete graphs, Latin squares are equivalent to triangle decompositions of complete balanced tripartite graphs.
	In particular, since every Latin square is also a partial Steiner triple system, the definition of girth also applies for Latin squares.
	More concretely, the girth of a partial Latin square $L$ is the smallest $g\geq 4$ such that there exists a set of rows, columns and symbols, of size $g$ in total, such that there are at least $g-2$ cells whose row, column and symbol is contained in the given set (if no such $g$ exists, the girth is infinite). For instance, it is easy to see that $L$ has girth greater than~$6$ if and only if it contains no intercalate (a $2\times 2$ sub-Latin square). 
	Then the same argument used to prove Theorem~\ref{thm:Steiner systems simple} gives the existence of partial $m\times m$ Latin squares which are almost complete (all but $o(m^2)$ cells are filled) and have arbitrarily large girth. This yields an approximate solution to a question of Linial who conjectured that $m\times m$ Latin squares of arbitrarily large girth exist for all sufficiently large~$m$. Linial's conjecture was very recently confirmed in full by Kwan, Sah, Sawhney and Simkin in~\cite{KSSS:22b}, where they adopted the methods they used for Steiner triple systems in~\cite{KSSS:22}.
	Finally, we remark that analogous (approximate) results hold for ``high-dimensional permutations'', which are a generalization of Latin squares (and correspond to Steiner systems with arbitrary parameters).

	\subsubsection{Erd\H{o}s meets Nash-Williams}\label{sec:mindeg}
	
	A famous conjecture of Nash-Williams~\cite{nash-williams:70} says that every graph $G$ with minimum degree at least $3|V(G)|/4$ has a triangle decomposition, subject to the necessary conditions that $|E(G)|$ is divisible by $3$ and all the vertex degrees are even.
	In~\cite{GKO:21}, a combination of the conjectures of Erd\H{o}s and Nash-Williams was proposed: that every sufficiently large graph as above in fact has a triangle decomposition with arbitrarily high girth.
	In this context, it was asked whether minimum degree $0.9|V(G)|$, say, is at least enough to guarantee an approximate triangle decomposition with arbitrarily high girth. 
	We can answer this question partially (in the sense that $0.9$ is replaced by a bigger but explicit constant). This is a consequence of the aforementioned feature that we only require upper bounds on the degrees of the conflict hypergraph. Recall that in the hypergraph matching setting, the vertices of $\cH$ are the edges of $G$ and the edges of $\cH$ correspond to the triangles of~$G$.
	In general, $\cH$ will not be regular. However, if $G$ contains a collection of triangles such that every edge is contained in roughly the same number of these ``special'' triangles, then we can simply define $\cH$ by only keeping these special triangles. The conflict hypergraph might become irregular through this sparsification, but since we only require upper bounds, this does not cause any problem.
	
	\begin{theorem}\label{thm:packings}
		For all~$c_0>0$,~$\ell\geq 2$ and~$s>t\geq 2$, there exists~$\eps_0>0$ such that for all~$\eps\in(0,\eps_0)$, there exists~$m_0$ such that the following holds for all~$m\geq m_0$ and~$c\geq c_0$.
		Let~$G$ be a $t$-uniform hypergraph on $m$ vertices and let $\cK$ be a collection of sets of size~$s$ which induce cliques in $G$ such that any edge is contained in $(1\pm m^{-\eps})c m^{s-t}$ elements of~$\cK$.
		
		Then, there exists a partial $(m,s,t)$-Steiner system $\cS \subseteq \cK$ of size $(1-m^{-\eps^{3}})|E(G)|/\binom{s}{t}$ such that any subset of $\cS$ of size $j$, where $2\le j\le \ell$, spans more than $(s-t)j+t$ points.%
		
	\end{theorem}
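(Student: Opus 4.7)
The plan is to reduce Theorem~\ref{thm:packings} to an application of Theorem~\ref{thm:matchings simple} via a natural auxiliary hypergraph and a conflict system encoding the girth condition. First I would define the $k$-uniform hypergraph $\cH$ with $k = \binom{s}{t}$ by $V(\cH) = E(G)$ and, for each $K \in \cK$, an edge $e_K \in E(\cH)$ consisting of the $\binom{s}{t}$ edges of $G$ inside $K$; matchings in $\cH$ are then in bijection with partial $(m,s,t)$-Steiner systems drawn from $\cK$. The natural degree parameter is $d := c m^{s-t}$. Next I would take $\cC$ (with $V(\cC) = E(\cH)$) to consist of all \emph{minimal} subfamilies $\{K_1, \dots, K_j\} \subseteq \cK$ of size $2 \le j \le \ell$ that form a matching in $\cH$ and span at most $(s-t)j + t$ vertices of $G$; a $\cC$-free matching is then precisely one meeting the desired girth condition for every $2 \le j \le \ell$.

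The hypotheses on $\cH$ are routine to check. Regularity is immediate from the hypothesis on $\cK$, and since two distinct edges of $G$ share at most $t-1$ vertices and are contained in $O(m^{s-t-1})$ cliques of size $s$, we obtain $\Delta_2(\cH) = O(d/m) \le d^{1-\eps}$. The size bound $|V(\cH)| \le m^t \le \exp(d^{\eps^3})$ holds for $m$ large. A useful simplification is that $\cC^{(2)} = \emptyset$: two cliques that are disjoint in $\cH$ share at most $t-1$ vertices of $G$ and hence span at least $2s - t + 1 > 2(s-t) + t$ vertices, so the two codegree conditions involving $N_\cC^{(2)}(\cdot)$ in Theorem~\ref{thm:matchings simple} are vacuous.

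The degree conditions on $\cC^{(j)}$ for $j \ge 3$ are the main content. For $\Delta_1(\cC^{(j)})$ I would fix $K_1$ and observe that the remaining $j-1$ cliques in any conflict introduce at most $(s-t)(j-1)$ new vertices, giving $O(m^{(s-t)(j-1)}) = O(d^{j-1})$ extensions; the resulting constant can exceed $\ell$, but this is harmless, as Theorem~\ref{thm:matchings simple} can be applied with a larger $\ell^* = \ell^*(s,t,\ell)$ (conflicts of size larger than $\ell$ are simply absent). The codegree bound $\Delta_{j'}(\cC^{(j)}) \le d^{j-j'-\eps}$ for $2 \le j' < j$ is the crux of the argument and I expect it to be the main obstacle: the gain of a factor $d^{-\eps}$ relies crucially on minimality. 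Indeed, if $\{K_1, \dots, K_{j'}\}$ extends to a minimal conflict, it cannot itself be a sub-conflict and so must span at least $(s-t)j' + t + 1$ points; the extending cliques then introduce at most $(s-t)(j-j') - 1$ new vertices, yielding at most $O(m^{(s-t)(j-j')-1}) = O(d^{j-j'}/m) \le d^{j-j'-\eps}$ extensions whenever $\eps < 1/(s-t)$ and $m$ is large.

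Once all hypotheses are verified, Theorem~\ref{thm:matchings simple} produces a $\cC$-free matching $\cM$ in $\cH$ covering all but $d^{-\eps^3} |V(\cH)|$ vertices. The corresponding family $\cS := \{K \in \cK : e_K \in \cM\}$ is then a partial $(m,s,t)$-Steiner system of size at least $(1 - d^{-\eps^3}) |E(G)| / \binom{s}{t}$, and its girth property follows directly from $\cC$-freeness. Using $d \ge c_0 m^{s-t}$ one converts the error term to the form $m^{-\eps^3}$ after choosing the $\eps$ fed to Theorem~\ref{thm:matchings simple} slightly smaller than the output $\eps$ in Theorem~\ref{thm:packings}, completing the proof.
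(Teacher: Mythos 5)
Your reduction is exactly the one the paper intends: the same auxiliary $\binom{s}{t}$-graph $\cH$ on $E(G)$ with edges given by the cliques in $\cK$, the same conflict system consisting of the \emph{minimal} forbidden configurations, and the same minimality argument (a $j'$-subset of a minimal conflict is not itself forbidden, hence spans at least $(s-t)j'+t+1$ points, so the extension to a conflict uses at most $(s-t)(j-j')-1$ new points) to gain the factor $d^{-\eps}$ in the codegrees. This is precisely the computation carried out in the proof of Theorem~\ref{theorem: steiner systems}, and the paper's deduction of Theorem~\ref{thm:packings} is exactly your observation that passing to the subcollection $\cK$ only shrinks the conflict system, so all upper bounds persist. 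Your checks of $\Delta_2(\cH)$, of $\cC^{(2)}=\emptyset$, and of the degree/codegree conditions are all correct (note that $c$ is automatically $O(1)$ since no $t$-set lies in more than $\binom{m-t}{s-t}$ members of $\cK$, so $d=cm^{s-t}=\Theta(m^{s-t})$).

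There is, however, one genuine (though repairable) gap in the last step: Theorem~\ref{thm:matchings simple} is the wrong black box for the stated error term, because it couples the regularity tolerance and the leftover fraction through the \emph{same} parameter. If you feed it $\eps_1$, its hypothesis forces $m^{-\eps}\leq d^{-\eps_1}$, i.e.\ $\eps_1\lesssim \eps/(s-t)$, while the desired conclusion $d^{-\eps_1^3}\leq m^{-\eps^3}$ forces $\eps_1\gtrsim \eps/(s-t)^{1/3}$. For $s-t\geq 2$ these two constraints are incompatible (and your suggested fix of taking the fed-in $\eps$ \emph{smaller} goes in the wrong direction for the leftover bound). As written, your argument only yields a leftover fraction of roughly $m^{-\eps^3/(s-t)^2}$, which does not imply the statement since weakening $\eps$ also weakens the hypothesis on $\cK$. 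The paper avoids this by deducing Theorem~\ref{thm:packings} via Theorem~\ref{theorem: no test systems}, where the uncovered fraction $\mu$ is a free parameter constrained only by $1/\mu^{\Gamma\ell}\leq d^{\eps_1^2}$: one takes $\eps_1\approx\eps/(s-t)$ for the regularity and $\mu=m^{-\eps^3}$, and the constraint $m^{\eps^3\Gamma\ell}\leq m^{(s-t)\eps_1^2}$ holds once $\eps_0$ is small in terms of $\ell$, $s$, $t$. With that substitution (and your $\ell^*$ device replaced by the parameter $\Gamma$ of that theorem, which is designed for exactly this purpose), your proof is complete.
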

	In Section~\ref{section: steiner systems}, we outline how to obtain this from our main theorem.
	Note that by specifying~$G$ to be the complete $t$-uniform hypergraph and $\cK$ the collection of all sets of size~$s$, we recover Theorem~\ref{thm:Steiner systems simple}. The above could also be used when~$G$ is a dense random $t$-uniform hypergraph.
	Moreover, if~$G$ admits a fractional decomposition~$w$ into~$s$-cliques where the largest weight~$w_{\max}$ assigned by~$w$ is sufficiently small, then, to find a collection of $s$-cliques $\cK$ that is regular enough to apply the above theorem, one may consider the random collection~$\cK$ where every~$s$-clique~$S$ is included independently at random with probability~$w(S)/w_{\max}$.
	Such a fractional decomposition exists in particular if~$G$ has very large $(t-1)$-degree, say $\delta_{t-1}(G)\ge (1-(4s)^{-2t})m$ (see for example Theorem~1.5 and its proof in~\cite{BKLMO:17} and~\cite[Lemma~6.3]{GKLO:ta}).

	\subsubsection{Excluding grids}
	Another application are Tur\'an-type questions that were already studied by F\"uredi and Ruszink\'o~\cite{FR:13}.
	A hypergraph~$\cH$ is called \defn{linear} if~$\abs{e\cap f}\leq 1$ for all distinct~$e,f\in \cH$.
	An \defn{$s$-grid} is an $s$-uniform hypergraph on $s^2$ vertices with $2s$ edges $e_1, \ldots, e_s, f_1, \ldots, f_s$ such that 
	$\{e_1, \ldots, e_s\}$, $\{f_1, \ldots, f_s\}$ are matchings and $|e_i\cap f_j| = 1$ for all $i,j\in [s]$.
	An $s$-uniform hypergraph is \defn{grid-free} if it does not contain an $s$-grid as a subgraph.

	\begin{theorem}[{\cite[Theorem 1.2]{FR:13}}]\label{theorem: forbidden grids}
		For all $s\ge 4$, there exists $\eps > 0$ such that there are linear grid-free $s$-uniform hypergraphs~$\cH$ on $m$ vertices with~$(1-m^{-\eps})\binom{m}{2}/\binom{s}{2}$ edges.
	\end{theorem}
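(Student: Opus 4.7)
The plan is to apply Theorem~\ref{thm:matchings simple} to an auxiliary hypergraph~$\cH$ with an appropriate conflict system~$\cC$ that encodes ``contains no $s$-grid''. Let $V(\cH):=\binom{[m]}{2}$, and for each $s$-subset $S\subseteq[m]$ include an edge $\binom{S}{2}\in E(\cH)$. Then edges of~$\cH$ are in bijection with $s$-subsets of~$[m]$, and matchings in~$\cH$ correspond precisely to linear $s$-uniform hypergraphs on~$[m]$ of the same size. Observe that~$\cH$ is $\binom{s}{2}$-uniform and $d$-regular with $d=\binom{m-2}{s-2}=\Theta(m^{s-2})$, while $\Delta_2(\cH)=O(m^{s-3})\le d^{1-\eps}$ for small~$\eps>0$, and $|V(\cH)|=\binom{m}{2}\ll\exp(d^{\eps^3})$. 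Set $\ell:=2s$ and let $E(\cC)=\cC^{(2s)}$ consist of all $2s$-subsets of~$E(\cH)$ whose corresponding $s$-sets form an $s$-grid in~$[m]$; then $\cC$-free matchings in~$\cH$ are exactly the linear grid-free $s$-uniform hypergraphs we seek. Since $s\ge 4$, we have $\cC^{(j)}=\emptyset$ for $j<2s$; in particular $N_\cC^{(2)}(e)=\emptyset$ for every~$e$, so the last two hypotheses of Theorem~\ref{thm:matchings simple} are vacuously satisfied.

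The main technical step is verifying the degree conditions on~$\cC$. An $s$-grid has ``rows''~$e_1,\dots,e_s$ and ``columns''~$f_1,\dots,f_s$ spanning $s^2$ points of~$[m]$. A fixed $j'$-subset of its edges, comprising $j'_1$ rows and $j'_2=j'-j'_1$ columns, spans exactly $j's-j'_1 j'_2$ points, and $j'_1 j'_2\le (j')^2/4$. Given any $j'$ edges of~$\cH$, the number of grids containing them is therefore at most $O(m^{s^2-j's+(j')^2/4})$. Comparing with the required bound $d^{2s-j'-\eps}=\Theta(m^{(s-2)(2s-j')-O(\eps)})$, the desired inequality rearranges (after completing the square) to
\begin{equation*}
	(j'-2s)(j'+2s-8)\le -4\eps(s-2),
\end{equation*}
which holds with strict inequality for every $2\le j'\le 2s-1$ provided $s\ge 4$, leaving room to absorb the $-4\eps(s-2)$ term for small enough~$\eps>0$. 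The case $j'=1$, namely $\Delta_1(\cC^{(2s)})=O(m^{s^2-s})\le 2s\cdot d^{2s-1}$, reduces to $s^2-4s+2\ge 0$, again valid for $s\ge 4$.

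With all hypotheses of Theorem~\ref{thm:matchings simple} verified, an application produces a $\cC$-free matching~$\cM$ in~$\cH$ covering all but at most $d^{-\eps^3}|V(\cH)|$ vertices of~$\cH$. Reading~$\cM$ as a family of $s$-subsets of~$[m]$ yields a linear $s$-uniform hypergraph on~$[m]$ which is grid-free (since its edges contain no $2s$-set forming an $s$-grid) and has at least $(1-d^{-\eps^3})\binom{m}{2}/\binom{s}{2}$ edges. Since $d=\Theta(m^{s-2})$, we have $d^{-\eps^3}\le m^{-\eps'}$ for some $\eps'>0$ depending on~$s$ and~$\eps$, giving the claimed bound after renaming. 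The main obstacle is the codegree calculation above; notably, the displayed inequality becomes $0\le -4\eps$ at $s=3$, $j'=2$, failing even without the~$\eps$ margin, which is exactly why the hypothesis~$s\ge 4$ is needed.
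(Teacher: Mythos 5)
Your proposal is correct, but it takes a genuinely different route from the paper. The paper does not apply Theorem~\ref{thm:matchings simple} directly to a grid-only conflict system; instead it deduces Theorem~\ref{theorem: forbidden grids} as a special case of Theorem~\ref{theorem: forbidden subgraphs}, which is itself a rephrasing of the high-girth Steiner system result (Theorem~\ref{thm:Steiner systems simple}): an $s$-grid is a $2$-linear configuration with $2s$ edges spanning $s^2$ points, and $s^2\le (s-2)\cdot 2s+2$ precisely when $s^2-4s+2\ge 0$, i.e.\ $s\ge 4$, so any $2s$-sparse partial $(m,s,2)$-Steiner system is automatically grid-free. Thus the paper excludes \emph{all} dense configurations of up to $2s$ edges and gets grid-freeness for free, with the codegree verification done once for general dense configurations in Section~\ref{section: steiner systems}; your argument instead takes the minimal conflict system consisting only of the grids themselves and verifies the boundedness conditions by the row/column count $j's-j'_1j'_2$ with $j'_1j'_2\le (j')^2/4$, leading to the same threshold $(j'-2s)(j'+2s-8)<0$ and the same obstruction at $s=3$, $j'=2$. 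Your verification is sound: the conflicts are matchings in $\cH$ (rows and columns of a grid meet in one point, hence share no pair), $\cC^{(2)}=\emptyset$ makes the last two hypotheses vacuous, and the degree and codegree exponent computations are correct. What your approach buys is a self-contained, direct proof of the grid theorem that does not route through the sparseness machinery; what the paper's approach buys is that the grid theorem (and any analogous statement for a finite family of dense $t$-linear hypergraphs) falls out of one general theorem with no additional counting.
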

	
	Our results yield the following generalization that allows multiple forbidden subgraphs.
	Here, we call a hypergraph $\cH$ \defn{$t$-linear} if~$\abs{e\cap f}\leq t-1$ for all distinct~$e,f\in\cH$ and for a given collection~$\ccF$ of hypergraphs, we say that~$\cH$ is~$\ccF$-free if no subgraph of~$\cH$ is a copy of an element of~$\ccF$.
	
	\begin{theorem}\label{theorem: forbidden subgraphs}
		Let $s\geq 2$ and $t\in [s-1]$.
		Suppose~$\ccF$ is a finite collection of $t$-linear $s$-uniform hypergraphs~$\cF$ with~$\abs{E(\cF)}\geq 2$ and~$\abs{V(\cF)}\leq (s-t)\abs{E(\cF)} + t$.
		Then, there exist~$\eps>0$ and~$m_0$ such that for all~$m\geq m_0$, there is an~$\ccF$-free~$t$-linear~$s$-uniform hypergraph~$\cH$ on~$m$ vertices with~$ (1-m^{-\eps})\binom{m}{t}/\binom{s}{t}$ edges.
	\end{theorem}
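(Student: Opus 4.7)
My plan is to deduce Theorem~\ref{theorem: forbidden subgraphs} from Theorem~\ref{thm:matchings simple} via the same hypergraph encoding as in the R\"odl discussion: set $V(\cH) := \binom{[m]}{t}$ and, for each $s$-set $S \subseteq [m]$, declare $e_S := \binom{S}{t}$ to be an edge of $\cH$. Then $\cH$ is $\binom{s}{t}$-uniform and $d$-regular with $d := \binom{m-t}{s-t} = \Theta(m^{s-t})$, and $\Delta_2(\cH) = O(m^{s-t-1}) \leq d^{1-\eps}$ for any $\eps < 1/(s-t)$ and $m$ large. Matchings in $\cH$ are in bijection with $t$-linear $s$-uniform hypergraphs on $[m]$, so the task reduces to producing a large matching in $\cH$ that avoids copies of members of $\ccF$.

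The key design choice is the conflict hypergraph $\cC$. Setting $\ell := \max_{\cF \in \ccF}|E(\cF)|$, I would enlarge $\ccF$ to
\[
\ccF^* := \cset{\cF'}{\cF' \text{ is $t$-linear and $s$-uniform, } 2 \leq |E(\cF')| \leq \ell, \text{ and } |V(\cF')| \leq (s-t)|E(\cF')|+t},
\]
a finite collection (up to isomorphism) containing $\ccF$. Let $\ccF^*_{\min}$ be the minimal members of $\ccF^*$ under sub-hypergraph inclusion; every $\cF \in \ccF \subseteq \ccF^*$ then contains some element of $\ccF^*_{\min}$ as a sub-hypergraph, so avoiding copies of $\ccF^*_{\min}$ forces $\ccF$-freeness. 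I would define $\cC$ on $V(\cC) = E(\cH)$ by declaring as conflicts the edge-sets of all labeled copies of elements of $\ccF^*_{\min}$ in $[m]$; by construction, each conflict is a matching in $\cH$ and no conflict properly contains another.

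To verify the hypotheses of Theorem~\ref{thm:matchings simple}, the bound $\Delta_1(\cC^{(j)}) = O(d^{j-1})$ comes from a direct extension count: fixing an edge of $\cH$ and an isomorphism type $\cF^* \in \ccF^*_{\min}$ of size $j$, the number of copies containing the fixed edge is $O(m^{|V(\cF^*)|-s}) = O(m^{(s-t)(j-1)})$. The codegree verification splits into two cases. Fix $2 \leq j' < j \leq \ell$ and a $j'$-set $F \subseteq E(\cH)$; we may assume $F$ is a matching in $\cH$ (else no conflict contains $F$), so the $j'$ underlying $s$-sets are $t$-linear and span some $v'$ vertices in $[m]$. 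If $v' \geq (s-t)j' + t + 1$, then the count of conflicts of size $j$ extending $F$ is $O(m^{(s-t)j+t-v'}) = O(m^{(s-t)(j-j')-1}) \leq d^{j-j'-\eps}$ for $\eps < 1/(s-t)$. Otherwise $v' \leq (s-t)j' + t$; then $F$ itself is a copy of an element of $\ccF^*$, hence contains a copy of some minimal element, i.e.\ a conflict of $\cC$, so minimality forbids any strictly larger conflict from containing $F$ and the count is zero. Finally, $\ccF^*$ has no element of size $2$ (two distinct $s$-sets intersecting in $\leq t-1$ vertices span $\geq 2s-(t-1) > 2s-t$ vertices), so $\cC^{(2)} = \emptyset$ and both conditions involving $N^{(2)}_\cC$ hold vacuously.

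Theorem~\ref{thm:matchings simple} then delivers a $\cC$-free matching in $\cH$ of size $(1-d^{-\eps^3})\binom{m}{t}/\binom{s}{t}$, which is exactly a $t$-linear $s$-uniform $\ccF$-free hypergraph of the desired size after converting $d^{-\eps^3}$ to $m^{-\eps'}$ via $d = \Theta(m^{s-t})$. The crucial and somewhat delicate step is the codegree verification: a naive use of $\ccF$ directly would fail whenever $F$ corresponds to a ``tight'' sub-configuration with $v' = (s-t)j'+t$, and the enlargement of $\ccF$ to $\ccF^*$ together with the ``no conflict contains another'' property built into $\cC$ is precisely what rescues this case.
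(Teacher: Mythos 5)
Your proposal is correct and matches the paper's approach in substance: the paper obtains Theorem~\ref{theorem: forbidden subgraphs} as an immediate rephrasing of Theorem~\ref{thm:Steiner systems simple}, whose proof (Theorem~\ref{theorem: steiner systems} in Section~\ref{section: steiner systems}) uses exactly your construction — the enlargement to all sparse $t$-linear configurations and the restriction to minimal ones is the paper's definition of conflicts as forbidden partial Steiner systems containing no smaller forbidden system, and your codegree dichotomy (either $F$ spans enough points or it already contains a conflict, making the extension count zero) is the same case analysis. The only cosmetic difference is that you invoke Theorem~\ref{thm:matchings simple} directly rather than routing through the Steiner-system statement.
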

	
	Note that in particular, this applies to~$s$-grids with~$s\geq 4$ and ($2$-)linear hypergraphs, so it implies Theorem~\ref{theorem: forbidden grids}.
	Note that for a~$3$-grid~$\cF$, we have~$\abs{V(\cF)}=9>8=\abs{E(\cF)}+2$, so similarly to Theorem~\ref{theorem: forbidden grids} this theorem cannot be applied for grid-free~$3$-uniform hypergraphs.
	However, F\"uredi and Ruszink\'o conjectured that similar asymptotics also hold for grid-free~$3$-uniform hypergraphs.
	For significant progress on this, see~\cite{GS:21}.
	
	To see that Theorem~\ref{theorem: forbidden subgraphs} is true, note that a hypergraph with vertex set~$[m]$ is a~$t$-linear~$s$-uniform hypergraph if and only if its edge set is a partial~$(m,s,t)$-Steiner system and that the ``high girth'' condition in Theorem~\ref{thm:Steiner systems simple} means that any subgraph~$\cF$ of~$\cH:=([m],\cS)$ with~$2\leq \abs{E(\cF)}\leq \ell$ has more than~$(s-t)\abs{E(\cF)}+t$ vertices.
	Hence, for a given finite collection~$\ccF$ as in Theorem~\ref{theorem: forbidden subgraphs}, Theorem~\ref{thm:Steiner systems simple} implies the existence of suitable~$\ccF$-free~$t$-linear~$s$-uniform hypergraphs provided that~$\ell$ is sufficiently large.
	In fact, Theorems~\ref{thm:Steiner systems simple} and~\ref{theorem: forbidden subgraphs} are equivalent, so Theorem~\ref{theorem: forbidden subgraphs} is essentially just a rephrasing of Theorem~\ref{thm:Steiner systems simple} using the terminology of~$t$-linear~$s$-uniform hypergraphs instead of partial Steiner systems.

	\subsubsection{Well-separated packings}
	Another straightforward application is the existence of asymptotically optimal $F$-packings which are ``well-separated''.
	The following theorem was proved by Frankl and F\"uredi~\cite{FF:87} and has turned out to be useful for many applications (see for example~\cite{AS:95,furedi:12,ST:20}).
	
	\begin{theorem}
		Let $\cF$ be a given $t$-uniform hypergraph with~$\abs{E(\cF)}\geq 2$. There exist~$\eps>0$ and~$m_0$ such that for all~$m\geq m_0$, there exists a collection~$\ccF$ of copies of $\cF$ on a vertex set of size $m$ such that $\abs\ccF\ge (1-m^{-\eps})\binom{m}{t}/|E(\cF)|$, and for all distinct $\cF_1,\cF_2\in\ccF$, we have $|V(\cF_1)\cap V(\cF_2)|\le t$, and if $|V(\cF_1)\cap V(\cF_2)|= t$, then this set of size $t$ is neither an edge of $\cF_1$ nor of~$\cF_2$.
	\end{theorem}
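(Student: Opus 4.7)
The plan is to apply Theorem~\ref{thm:matchings simple} (or, if necessary, one of the variations announced in the paper) to a hypergraph~$\cH$ whose matchings encode edge-disjoint $\cF$-packings, together with a conflict system~$\cC$ ruling out non-well-separated pairs. Take $V(\cH):=\binom{[m]}{t}$ and, for each copy~$\cF_1$ of~$\cF$ on~$[m]$, include $E(\cF_1)\subseteq V(\cH)$ as an edge of~$\cH$. Then $\cH$ is $\abs{E(\cF)}$-uniform, essentially $d$-regular with $d=\Theta(m^{\abs{V(\cF)}-t})$, and $\Delta_2(\cH)=O(m^{\abs{V(\cF)}-t-1})=O(d^{1-\eps'})$ for some $\eps'>0$; matchings in~$\cH$ correspond exactly to edge-disjoint $\cF$-packings on~$[m]$. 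Define $\cC$ with $V(\cC)=E(\cH)$ to have the size-$2$ edges $\set{E(\cF_1),E(\cF_2)}$ over all pairs of distinct edge-disjoint copies~$\cF_1,\cF_2$ with either $\abs{V(\cF_1)\cap V(\cF_2)}\geq t+1$, or with $\abs{V(\cF_1)\cap V(\cF_2)}=t$ such that this intersection is an edge of~$\cF_1$ or of~$\cF_2$. Then $\cC$-free matchings in~$\cH$ correspond precisely to well-separated $\cF$-packings.

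Next, I would verify the hypotheses of Theorem~\ref{thm:matchings simple}. The degree and codegree bounds on~$\cH$ follow by direct counting, and $\Delta_1(\cC^{(2)})=O(d)$: each copy is in $O(d/m)$ conflicts coming from $(\geq t+1)$-intersections and in $O(d)$ conflicts coming from $t$-intersections equal to an edge. The conditions on $\cC^{(j)}$ for $j\geq 3$ are vacuous since $\cC$ has no conflicts of size $\geq 3$.

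The main point of care is the second-order ``localized'' conditions on $\cC^{(2)}$. For a copy $\cF_1$ and a $t$-set $v$, the count $\abs{\cset*{f\in N_{\cC}^{(2)}(E(\cF_1))}{v\in f}}$ is $O(d/m)$ in most cases, but when $v$ is one of the (constantly many) non-edge $t$-subsets of $V(\cF_1)$ it is of order~$d$: indeed, any copy~$\cF_2$ with $v\in E(\cF_2)$ and $v\subseteq V(\cF_1)$ necessarily satisfies $\abs{V(\cF_1)\cap V(\cF_2)}\geq t$ and hence conflicts with~$\cF_1$. This local concentration — on $O(1)$ ``exceptional'' vertices per edge — is precisely the kind of situation addressed by the refined variants of the main theorem announced in the paper; applying the appropriate variant yields a $\cC$-free matching covering all but an $m^{-\eps^3}$-fraction of $V(\cH)$, and this matching is the required well-separated $\cF$-packing of size~$(1-m^{-\eps})\binom{m}{t}/\abs{E(\cF)}$.
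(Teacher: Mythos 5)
Your reduction is the natural one, and you have correctly located the obstruction: for each copy $\cF_1$ and each of the $\binom{\abs{V(\cF)}}{t}-\abs{E(\cF)}$ sets $v\in\unordsubs{V(\cF_1)}{t}$ with $v\notin E(\cF_1)$, \emph{every} copy $\cF_2$ with $v\in E(\cF_2)$ satisfies $v\subseteq V(\cF_1)\cap V(\cF_2)$ and hence conflicts with $\cF_1$, so $\abs{\cset{f\in N_{\cC}^{(2)}(E(\cF_1))}{v\in f}}=\Theta(d)$ and the third condition of Theorem~\ref{thm:matchings simple} fails for \emph{every} edge of $\cH$ (unless $\cF$ is complete, in which case the statement is just Theorem~\ref{thm:Steiner systems simple} with $\ell=2$). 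The gap is your final step: no ``refined variant'' in the paper accommodates this. All the variants in Section~\ref{section: variations} require $(d,\ell,\Gamma,\eps)$-boundedness, whose condition \ref{item: conflict j=2} is exactly the failing one; and the only relaxation offered, in Subsection~\ref{subsection: optimality}, trades the third condition for the bound $\abs{N_{\cC}^{(2)}(e)\cap N_{\cC}^{(2)}(f)}\leq d^{1-6\eps}$ for \emph{all distinct} $e,f$, which also fails here (take two copies whose vertex sets share a $t$-set $v$ that is a non-edge of both: all $\Theta(d)$ copies having $v$ as an edge conflict with both), and whose mechanism --- delete the few edges violating the third condition --- is useless when every edge violates it.

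Moreover, the failure is not a technicality that a softer hypothesis could absorb, because the conclusion you want from the conflict-free matching process is false for this instance. Once $E(\cF_1)$ enters the matching, each non-edge $t$-subset $v$ of $V(\cF_1)$ becomes permanently uncoverable: any copy having $v$ as an edge meets $V(\cF_1)$ in at least $t$ vertices and is therefore forbidden. So each step covers $\abs{E(\cF)}$ vertices of $\cH$ and permanently excludes $\binom{\abs{V(\cF)}}{t}-\abs{E(\cF)}$ further ones; under uniformly random choices these excluded $t$-sets rarely coincide, so the process stalls after roughly $\binom{m}{t}/\binom{\abs{V(\cF)}}{t}$ steps rather than reaching $\binom{m}{t}/\abs{E(\cF)}$. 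In a genuine well-separated packing the excluded $t$-sets must be reused massively --- the copies whose vertex sets contain a given excluded $t$-set $v$ pairwise intersect exactly in $v$, i.e.\ they form a sunflower with core $v$ --- and this structure has to be built into the reduction in advance (for instance by pre-selecting a sparse family of admissible ``cores'' and only admitting copies all of whose non-edge $t$-subsets are cores, which removes the offending conflicts altogether), rather than hoped for from the uniform process. As written, the proposal does not prove the theorem.
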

	One can deduce this theorem from our Theorem~\ref{thm:matchings simple}. Since the result is already known, we omit the details.

	\subsubsection{Counting}
	
	We remark that, by analysing our proof, one can also obtain a lower bound on the number of conflict-free almost-perfect matchings. This is a consequence of our tight control over the number of choices which the algorithm has in each step (with high probability). As discussed in Subsection~\ref{subsec:proof sketch}, the degrees of the conflict system $\cC$ significantly influence the trajectories of the process, hence the number of choices in each step (and thus the total number of choices) depends on the given conflict system~$\cC$.
	For a precise counting statement, see Theorem~\ref{theorem: counting}.
	
	\subsubsection{Degenerate Tur\'an densities}
	
	In a forthcoming paper with Oleg Pikhurko, we will use Theorem~\ref{thm:matchings simple} to make some progress towards a problem of Brown, Erd\H{o}s and S\'os~\cite{BES:73b}.
	
	\subsection{Optimality of the conditions}\label{subsection: optimality}
	
	In the previous subsection, we provide several examples showing that the conditions for $\cC$ are general enough to have many interesting applications.
	Here, we also demonstrate that our conditions are necessary in the sense that Theorem~\ref{thm:matchings simple} would be false if any of the four conditions listed for $\cC$ is omitted.
	
	For the first condition, we consider a random construction of~$\cC$, for some fixed uniformity $j\ge 3$.
	The structure of $\cH$ can be quite arbitrary, we only use that it has roughly $nd/k$ edges, which are the vertices of~$\cC$. Let $\cC$ be the binomial random $j$-graph with edge probability $p=(K\log d)/n^{j-1}$. Then, assuming $K$ is a large enough constant (depending only on $j,k$), the following holds with high probability.
	\begin{enumerate}[label=\rm{(\arabic*)}]
		\item $\Delta(\cC) \le  2K d^{j-1 } \log d$;\label{degree optimality}
		\item $\Delta_{j'}(\cC) \le d^{j-j'-\eps}$ for all $2\le j' < j$;
		\item there is no independent set of size larger than $n/2k$.
	\end{enumerate}
	Here, the first two properties follow from standard concentration inequalities and the third from a simple first moment argument.
	
	In particular, the codegree assumption for $\cC$ is satisfied, but the largest $\cC$-free subset of $E(\cH)$ has size at most $n/2k$.
	This shows that the maximum degree condition for $\cC$ cannot be omitted.
	In fact, a stronger version of Theorem~\ref{thm:matchings simple}, namely Theorem~\ref{theorem: no test systems}, allows for maximum degree $\Delta(\cC^{(j)}) \le  \alpha d^{j-1 } \log d$ for some small enough constant $\alpha$ (provided $\mu$ and $\ell$ are both constant), and \ref{degree optimality} shows that this is tight up to the constant factor.

	Let us now check that the codegree condition for $\cC$ is also necessary.
	Again ignoring the structure of $\cH$, we simply choose $\cC$ as a spanning $j$-graph that is the disjoint union of cliques of order $2jd$, for any $j\geq 3$.
	Then the maximum degree condition is satisfied, but a $\cC$-free set can contain at most $j-1$ elements from each clique and hence will have size at most $n/2k$.

	The third condition is also necessary.
	Indeed, consider $\cH=K_{n,n}$ with parts $A,B$.
	Take an arbitrary partition of $A$ into pairs and for each pair $u,v$, add a conflict between all pairs of edges where one edge is incident to $u$ and the other to~$v$. This construction satisfies all but the third condition, but any conflict-free matching can only cover half of the vertices of~$A$.

	To see that the fourth condition is necessary, consider as $\cH$ the graph that is the disjoint union of $2\sqrt{n}$ cliques of order $\sqrt{n}/2$.
	Let $\cC$ be the disjoint union of~$\binom{\sqrt{n}/2}{2}$ cliques of order $2\sqrt{n}$, where each clique of $\cC$ contains precisely one edge from each clique of~$\cH$.
	Hence any $\cC$-free set contains at most $(\sqrt{n}/2)^2/2=n/8$ edges.

	Note that the above example for the necessity of the third condition satisfies the fourth condition for all disjoint $e,f$. However for two non-disjoint $e,f$ containing a vertex in $A$, it fails to satisfy the fourth condition.
	In this spirit, we note that we can indeed omit the third condition if we make the fourth condition stronger so that $|N_{\mathcal{C}}^{(2)}(e) \cap N_{\mathcal{C}}^{(2)}(f)|\leq d^{1-6\eps}$ holds for \emph{all distinct} $e,f\in E(\mathcal{H})$.
	Basically, we can deduce from this stronger fourth condition that the number of edges $e$ that fail to satisfy the third condition for some $v$ is at most $d^{-\eps}|E(\mathcal{H})|$. We can just delete all such edges and add a few dummy vertices and dummy edges containing at least one dummy vertex to make $\mathcal{H}$ almost regular. Then it is easy to see that a $\mathcal{C}$-free almost perfect matching in this hypergraph yields a $\mathcal{C}$-free almost perfect matching in the original hypergraph.

	\section{Notation}
	If~$m$ and~$n$ are integers, we set~\defnidx[$[n]_m$]{$[n]_m:=\{m,\ldots,n\}$} if~$m\leq n$,~\defnidx[$[n]_m$]{$[n]_m:=\emptyset$} if~$m>n$ and~\defnidx[$[n]$]{$[n]:=[n]_1$}.

	For a set~$A$, we say that~$A$ is a~\defnidx[set@$k$-set]{$k$-set} if~$\abs{A}=k$.
	We write~\defnidx{$\unordsubs{A}{k}$} for the set of~$k$-sets that are subsets of~$A$ and~\defnidx{$\ordsubs{A}{k}$} for the set of tuples~$(a_1,\ldots,a_k)\in A^k$ with~$a_i\neq a_j$ for all~$i\neq j$.
	We use~\defnidx{$\ind_A$} to denote the indicator function of~$A$ where a suitable choice for its domain will be obvious from the context.
	We write~$\alpha\pm\eps=\beta\pm \delta$ to mean that~$[\alpha-\eps,\alpha+\eps]\subseteq[\beta-\delta,\beta+\delta]$.
	We extend this notation to similar expressions involving more uses of~\defnidx{$\pm$} in the natural way.
	We ignore rounding issues when they do not affect the argument.

	For a hypergraph~$\cH$, we write~\defnidx[V(H)@$V(\cH)$]{$V(\cH)$} for the vertex set and~\defnidx[E(H)@$E(\cH)$]{$E(\cH)$} for the edge set of~$\cH$.
	We often write~$\cH$ in place of~$E(\cH)$.
	In particular, we write~\defnidx{$\abs{\cH}$} for the size of~$E(\cH)$.
	For an integer~$k\geq 2$, $\cH$ is called a~\defnidx[uniform@$k$-uniform!hypergraph]{$k$-uniform} hypergraph (\defnidx[graph@$k$-graph]{$k$-graph} for short) if all its edges have size~$k$ and we refer to~$k$ as the \defnidx[uniformity!of a hypergraph]{uniformity} of~$\cH$.
	For an integer~$j\geq 2$, we write~\defnidx{$\usub{\cH}{j}$} for the subgraph of~$\cH$ with~$V(\usub{\cH}{j})=V(\cH)$ and~$E(\usub{\cH}{j})=\cset{e\in\cH}{\abs{e}=j}$.
	For~$v\in V(\cH)$, we use~\defnidx[N2H(v)@$N^{(2)}_\cH(v)$]{$N^{(2)}_\cH(v)$} to denote the \defnidx{neighbourhood}~$\cset{u\in V(\cH)}{\set{u,v}\in\cH}$ of~$v$ and we use~\defnidx{$\cH_v$} to denote the \defnidx{link} of~$v$ in~$\cH$, that is, the hypergraph with vertex set~$V(\cH)\setminus\set{v}$ and edge set~$\cset{ e\setminus \set{v} }{ e\in\cH, v\in e }$.
	For an integer~$j\geq 1$ and~$v\in V(\cH)$, we write~\defnidx{$\cH^{(j)}_v$} as a shorthand for~$(\cH_v)^{(j)}=(\cH^{(j+1)})_v$.
	For~$j\in[k]_0$ and~$X=\{x_1,\ldots,x_j\}\in\unordsubs{V(\cH)}{j}$, we write~\defnidx[dH(X)@$d_{\cH}(X)$]{$d_{\cH}(X)$} or~\defnidx[dH(x1...xj)@$d_{\cH}(x_1\ldots x_j)$]{$d_{\cH}(x_1\ldots x_j)$} for the~$j$-degree~$\abs{\{ e\in E(\cH)\colon X\subseteq e \}}$ of~$X$,~\defnidx[deltaj(H)@$\delta_j(\cH)$]{$\delta_j(\cH)$} for the minimum~$j$-degree~$\min\bigl\{d_{\cH}(X)\colon X\in\unordsubs{V(\cH)}{j}\bigr\}$ of~$\cH$ and~\defnidx[Deltaj(H)@$\Delta_j(\cH)$]{$\Delta_j(\cH)$} for the maximum~$j$-degree~$\max\bigl\{d_{\cH}(X)\colon X\in\unordsubs{V(\cH)}{j}\bigr\}$ of~$\cH$.
	We define~\defnidx[delta(H)@$\delta(\cH)$]{$\delta(\cH):= \delta_{1}(\cH)$} and~\defnidx[Delta(H)@$\Delta(\cH)$]{$\Delta(\cH):= \Delta_{1}(\cH)$}.
	For~$U\subseteq V(\cH)$, we use~\defnidx{$\cH[U]$} to denote the induced~$k$-graph with vertex set~$U$ and edge set~$\cset{ e\in\cH }{ e\subseteq U }$.
	For two~$k$-graphs~$\cH_1$ and~$\cH_2$, we write~\defnidx{$\cH_1\subseteq \cH_2$} to indicate that~$\cH_1$ is a subgraph of~$\cH_2$ and we write~\defnidx{$\cH_1\cup\cH_2$} for the~$k$-graph with vertex set~$V(\cH_1)\cup V(\cH_2)$ and edge set~$E(\cH_1)\cup E(\cH_2)$.
	
	For a statement~$\varphi$ that is true or false depending on random choices, we use~\defnidx[$\protect\set{\varphi}$]{$\set{\varphi}$} to denote the event that~$\varphi$ is true.
	
	We remark that an index is provided at the end of the paper for the convenience of the reader.

	\section{Variations and extensions of the main theorem}\label{section: variations}
	In this section, we provide several variations of Theorem~\ref{thm:matchings simple} that we prove in Section~\ref{section: theorem proofs}.
	In particular Theorem~\ref{thm:matchings simple} is an immediate consequence of Theorem~\ref{theorem: no test systems}.

	For a~$k$-graph~$\cH$, we say that a (not necessarily uniform) hypergraph~$\cC$ with vertex set~$\cH$ whose edges have size at least~$2$, which may be used to encode the subsets which we wish to avoid as subsets of a matching as in Theorem~\ref{thm:matchings simple}, is a \defnidx{conflict system} for~$\cH$.
	We call the edges of~$\cC$ \defnidx[conflict]{conflicts} of~$\cC$ and for~$e\in\cH$ and~$C\in\cC_e$, we say that~$C$ is a \defnidx{semiconflict}.
	
	To obtain a~$\cC$-free matching~$\cM\subseteq\cH$, it is crucial that~$\cC$ satisfies suitable boundedness conditions similarly as in Theorem~\ref{thm:matchings simple}, however slightly weaker conditions are sufficient.
	To this end, for integers~$d\geq 1$ and~$\ell\geq 2$ and reals~$\Gamma\geq 0$ and~$\eps\in(0,1)$, we say that~$\cC$ is \defnidx[bounded@$(d,\ell,\Gamma,\eps)$-bounded]{$(d,\ell,\Gamma,\eps)$-bounded} if the following holds.
	\begin{enumerate}[label=\textup{(C\arabic*)}]
		\item\label{item: conflict size} $2\leq\abs{C}\leq \ell$ for all~$C\in \cC$;
		\item\label{item: conflict degree} $\sum_{j\in[\ell]} \frac{\Delta(\usub{\cC}{j})}{d^{j-1}}\leq \Gamma$ and~$\abs{ \cset{j\in[\ell]_2}{\cC^{(j)}\neq\emptyset} }\leq\Gamma$;%
		\item\label{item: conflict codegrees} $\Delta_{j'}(\cC^{(j)})\leq d^{j-j'-\eps}$ for all~$j\in[\ell]_2$ and~$j'\in[j-1]_2$;
		\item\label{item: conflict j=2} $\abs{\cset{ f\in N^{(2)}_\cC(e)}{ v\in f }}\leq d^{1-\eps}$ for all~$e\in\cH$ and~$v\in V(\cH)$;
		\item\label{item: conflict double j=2} $\abs{ N_\cC^{(2)}(e)\cap N_\cC^{(2)}(f)  }\leq d^{1-\eps}$ for all disjoint~$e,f\in\cH$.
	\end{enumerate}
	\begin{theorem}\label{theorem: no test systems}
		For all~$k\geq 2$, there exists~$\eps_0>0$ such that for all~$\eps\in(0,\eps_0)$, there exists~$d_0$ such that the following holds for all~$d\geq d_0$.
		Suppose~$\ell\geq 2$ is an integer and suppose~$\Gamma\geq 1$ and~$\mu\in(0,1/\ell]$ are reals such that~$1/\mu^{\Gamma\ell}\leq d^{\eps^2}$.
		Suppose~$\cH$ is a $k$-graph on~$n\leq \exp(d^{\eps^2/\ell})$ vertices with~$(1-d^{-\eps})d\leq \delta(\cH)\leq\Delta(\cH)\leq d$ and~$\Delta_2(\cH)\leq d^{1-\eps}$ and suppose~$\cC$ is a~$(d,\ell,\Gamma,\eps)$-bounded conflict system for~$\cH$.
		
		Then, there exists a $\cC$-free matching~$\cM\subseteq \cH$ of size~$(1-\mu)n/k$.
	\end{theorem}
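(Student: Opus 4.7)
The plan is to construct $\cM$ by a random greedy algorithm, the \emph{conflict-free matching process}. Set $\cM_0:=\emptyset$; at step $i+1$, let $\cH_i\subseteq\cH$ denote the set of edges $e$ such that $\cM_i\cup\set{e}$ is both a matching and $\cC$-free, choose $e_{i+1}\in\cH_i$ uniformly at random and set $\cM_{i+1}:=\cM_i\cup\set{e_{i+1}}$. The final matching is $\cC$-free by construction, so it suffices to show that with positive probability the process runs for at least $I:=(1-\mu)n/k$ steps. Heuristically, $\cM_i$ behaves like a uniformly random $i$-edge matching, so the number of edges through any fixed vertex that remain available at step $i$ should lie on a deterministic trajectory reflecting the three ways an edge can become unavailable: by intersecting $\cM_i$, by forming a size-$2$ conflict with an edge of $\cM_i$, or by completing a larger conflict through $\cM_i$.

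Concretely, I would introduce a trajectory $\hat d(\tau):=d\exp\!\bigl(-k\tau-\sum_{j=2}^{\ell}\gamma_j\tau^{j-1}\bigr)$ (up to lower-order corrections), where $\gamma_j$ encodes a uniform conflict density obtained by first \emph{regularising}~$\cC$: since \ref{item: conflict size}--\ref{item: conflict double j=2} only give upper bounds, I augment~$\cC$ with dummy conflicts so that every edge lies in exactly $\gamma_j d^{j-1}$ conflicts of each size $j\in[\ell]_2$, which only restricts the class of $\cC$-free matchings. Writing $\hat d_v(i):=\abs{\cset{e\in\cH_i}{v\in e}}$ for the available degree and, for each $e\in\cH$ and $j\in[\ell]_2$, $\xi^{(j)}_e(i)$ for the number of ``live'' $(j-1)$-tuples of pairwise disjoint edges, each disjoint from $\cM_i$, which together with $e$ would complete a conflict of size~$j$, I would prove by induction on $i$ a ``good event'' $\cE_i$ asserting that every $\hat d_v(i)$ and every $\xi^{(j)}_e(i)$ lies within a $(1\pm d^{-\eps'})$ multiplicative factor of its trajectory prediction, for some $\eps'\in(0,\eps)$.

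The inductive step combines a one-step expectation computation with martingale concentration. On $\cE_i$, the conditional probability that a given edge $f\in\cH_i$ leaves $\cH_{i+1}$ splits, up to relative error $d^{-\eps'}$, into the probability that $e_{i+1}$ intersects $f$ plus the probabilities that $e_{i+1}$ completes a conflict through $f$; the crucial inputs here are \ref{item: conflict codegrees}--\ref{item: conflict double j=2}, which ensure that two distinct available edges share at most $d^{1-\eps}$ semiconflicts and so the various elimination mechanisms decorrelate. These yield a one-step expected change matching the derivative of the trajectory to the required accuracy. I would then apply Freedman's inequality to the Doob decomposition of each tracked variable to bound one-step deviations by $\exp(-d^{\Omega(\eps^2)})$, and close the induction via a union bound over the $O(nd^{\ell})$ tracked variables and the $I\leq n$ steps, which is tolerated by the hypothesis $n\leq\exp(d^{\eps^2/\ell})$.

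The principal obstacle is the self-referential nature of the trajectory analysis: the expected change of $\hat d_v(i)$ depends on the $\xi^{(j)}_e(i)$, whose own one-step dynamics depend on extension counts of partially matched conflicts. I would resolve this by tracking, in a single induction, extension counts $\xi^{(j,j')}$ indexed by both the conflict size $j\in[\ell]_2$ and the number $j'\in[j-1]$ of edges still to be chosen, and setting up the corresponding finite family of trajectories as the solution of a coupled system of ODEs; the finiteness of this system is guaranteed by the bound $\abs{\cset{j\in[\ell]_2}{\cC^{(j)}\neq\emptyset}}\leq\Gamma$ from~\ref{item: conflict degree}. Once $\cE_I$ is established, the number of available edges through each remaining vertex is still at least $d^{1-2\eps'}$, so the process has successfully completed $I=(1-\mu)n/k$ steps and $\cM_I$ has the required size.
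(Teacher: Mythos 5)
Your plan is, in outline, the paper's own proof: the same random greedy process, the same idea of regularising $\cC$ by adding dummy conflicts, the same tracked variables (available degrees and the partially matched semiconflict counts $\cC_e^{(j)[s]}(i)$, which are your $\xi^{(j,j')}_e$), the same Poisson-paradigm trajectories, and Freedman's inequality closed by a union bound that uses $n\le\exp(d^{\eps^2/\ell})$. Two points in your sketch, however, need more than you have written. First, the regularisation cannot simply add conflicts until every edge meets the target degree exactly: arbitrary additions can destroy the codegree and neighbourhood conditions \ref{item: conflict codegrees}--\ref{item: conflict double j=2} that your decorrelation step relies on. The paper adds the dummy conflicts randomly with weights chosen via a fractional-degree lemma and verifies by Chernoff/McDiarmid that the augmented system still satisfies slightly weakened versions of these conditions; it also arranges that every conflict is a matching and that no conflict contains another, both of which are used in the one-step computation (an available edge cannot leave both by intersection and by conflict, and without the no-containment property the counting of completed semiconflicts degenerates).

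Second, and more substantively: your good event $\cE_i$ tracks only the first-order quantities, but the claimed $(1\pm d^{-\eps'})$ one-step expectation estimates require, at every step $i$, upper bounds on second-order configuration counts --- for instance the number of available edges whose selection would complete two \emph{distinct} semiconflicts of a fixed edge $e$, or the number of semiconflicts of $e$ passing through a fixed vertex with all but one edge already in the matching. These are random variables; the hypotheses \ref{item: conflict codegrees}--\ref{item: conflict double j=2} bound them only at $i=0$, and folding them into the martingale induction would force you to track their own, still more complicated, one-step dynamics. The paper sidesteps this circularity with a separate moment argument (its ``spreadness'' events): conditioned on many edges having been available at every earlier step, any fixed set of $j$ disjoint edges lies in $\cM(i)$ with probability at most $(d^{\eps/(48\ell)}/d)^{j}$, and a Markov-inequality computation on high moments then shows that all of these local-interaction counts remain a factor $d^{-\eps/3}$ below their first-order counterparts throughout the process. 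Without some such device, the error terms in your one-step expectation step are not controlled, and the trend hypothesis for Freedman's inequality does not close.
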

	
	Note that the somewhat unusual condition $1/\mu^{\Gamma \ell}\leq d^{\eps^2}$ allows for various tradeoffs between the parameters. 
	In particular, when $\ell$ and $\mu$ are constant, we can allow $\Gamma$ to be of order $\log d$. Theorem~\ref{theorem: no test systems} is an immediate consequence of a further extension, namely Theorem~\ref{theorem: test systems}, where we also obtain a matching~$\cM$ that is almost-perfect.
	There,~$\cM$ additionally has properties which random edge sets that include the edges of~$\cH$ independently with probability~$\abs\cM/\abs\cH$ typically exhibit.
	In more detail, we show that~$\cM$ can be chosen such that for a given sufficiently large edge set~$\cZ\subseteq\cH$, we have~$\abs{\cZ\cap \cM}\approx\abs\cZ \cdot \abs\cM/\abs\cH$, which is what would be expected if the edges of~$\cH$ were included in~$\cM$ independently at random with probability~$\abs\cM/\abs\cH$.
	Further, we show that an analogous statement holds for suitable sets~$\cZ\subseteq\unordsubs{\cH}{j}$ and we show that it can be satisfied for multiple~$\cZ$ simultaneously.
	Again, as for the conflict systems, it is convenient to interpret sets~$\cZ\in\unordsubs{\cH}{j}$ where~$j\geq 1$ as edge sets of hypergraphs with vertex set~$\cH$.
	Thinking of these~$\cZ$ as a way to test~$\cM$ for properties that it satisfies, we say that a uniform hypergraph~$\cZ$ with vertex set~$\cH$ whose edges are matchings is a \defnidx{test system} and we call edges of~$\cZ$ \defnidx[test]{tests}.
	Note that in particular, we allow~$1$-uniform test systems~$\cZ$, so edge sets~$E\subseteq\cH$ may be treated as test systems by considering the test system~$\cZ$ with edge set~$\unordsubs{E}{1}$.
	We can only allow test systems that are well behaved in the sense that we can keep track of their properties during the evolution of our random iterative matching construction such that in the end, we can guarantee that the matching behaves as expected with respect to~$\cZ$.
	To this end, for integers~$j,d\geq 1$, a real~$\eps>0$ and a conflict system~$\cC$ for~$\cH$, we say that a~$j$-uniform test system~$\cZ$ for~$\cH$ is~\defnidx[trackable@$(d,\eps,\cC)$-trackable!test system]{$(d,\eps,\cC)$-trackable} if the following holds. 
	\begin{enumerate}[label=\textup{(Z\arabic*)}]
		\item\label{item: trackable size} $\abs{\cZ}\geq d^{j+\eps}$;
		\item\label{item: trackable degrees} $\Delta_{j'}(\cZ)\leq \abs{\cZ}/d^{j'+\eps}$ for all~$j'\in[j-1]$;
		\item\label{item: trackable neighborhood} $\abs{\cC_e^{(j')}\cap \cC_f^{(j')}}\leq d^{j'-\eps}$ for all~$e,f\in \cH$ with~$d_{\cZ}(ef)\geq 1$ and all~$j'\in[\ell-1]$;
		\item\label{item: trackable no conflicts} $Z$ is~$\cC$-free for all~$Z\in\cZ$.
	\end{enumerate}
	Intuitively, again thinking about~$\cM$ as an edge set that behaves as if it was chosen uniformly at random among all subsets of~$\cH$ of the same size,~\ref{item: trackable size} and~\ref{item: trackable degrees} ensure that~$\abs{\cset{Z\in\cZ}{Z\subseteq\cM}}$ is close to its expectation and not dominated by rare events with large effects; observe that in this respect both conditions cannot be relaxed beyond omitting the~$d^{\eps}$ factor.
	We require Condition~\ref{item: trackable neighborhood} to guarantee that for all tests~$Z\in\cZ$, all edges~$e\in Z$ enter~$\cM$ approximately independently.
	Indeed, omitting the~$d^{-\eps}$ factor in this condition would allow us to construct test systems~$\cZ$ where there are tests~$Z\in\cZ$ with edges~$e,f\in Z$ such that whenever~$\cM$ cannot contain~$e$ due to conflicts it cannot contain~$f$ either.
	Condition~\ref{item: trackable no conflicts} is also natural since tests~$Z$ which are not~$\cC$-free are never contained in~$\cM$.
	In fact, a~$(d,\eps,\cC)$-trackable test system has properties similar to those of a link of an edge in~$\cC$ provided that~$\cC$ is a suitable~$(d,\ell,\Gamma,\eps)$-bounded conflict system (see Lemma~\ref{lemma: conflict links are essentially trackable}).

	\begin{theorem}\label{theorem: test systems}
		Assume the setup of Theorem~\ref{theorem: no test systems} and suppose~$\ccZ$ is a set of~$(d,\eps,\cC)$-trackable test systems for~$\cH$ of uniformity at most~$\ell$ with~$\abs{\ccZ}\leq \exp(d^{\eps^2/\ell})$.
		Then, there exists a $\cC$-free matching~$\cM\subseteq \cH$ of size~$(1-\mu)n/k$ with~$\abs{\cset{Z\in \cZ}{ Z\subseteq \cM }}=(1\pm d^{-\eps/900})\paren{\abs\cM/\abs\cH}^j\abs\cZ$
		for all~$j$-uniform~$\cZ\in\ccZ$.
	\end{theorem}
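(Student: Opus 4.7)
The plan is to analyse the \emph{conflict-free matching process}: starting from the empty matching, at each step select uniformly at random an edge from the current set of available edges (those whose addition preserves the $\cC$-free matching property) and add it. Setting the scaled time parameter $\tau := ik/n$, where $i$ counts the edges added, the aim is to show that with positive probability the process survives to $\tau = 1-\mu$ while simultaneously keeping each quantity $\abs{\cset{Z\in\cZ}{Z\subseteq\cM_i}}$ close to its expected trajectory. Since Theorem~\ref{theorem: no test systems} is recovered by ignoring the test system part, I would prove both results in a single argument driven by the differential equation / trajectory method.

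The random variables I would track are: for each vertex $v\in V(\cH)$, the number of available edges through $v$; for each available $e\in \cH_i$, the number of ``still threatening'' semiconflicts of each size $j\in [\ell]_2$ rooted at $e$; and for each $\cZ\in\ccZ$, the quantities $\abs{\cset{Z\in\cZ}{Z\cap\cM_i=\emptyset}}$ and $\abs{\cset{Z\in\cZ}{Z\subseteq\cM_i}}$. Heuristically, if $p(\tau)$ denotes the common ``survival probability'' up to time $\tau$, then available degrees evolve as $d\cdot p(\tau)^{k-1}$, $j$-semiconflict counts evolve as $\Delta(\cC^{(j)})\cdot p(\tau)^{j-1}$ (after an initial regularisation of $\cC$ by artificial conflicts, as hinted at in Subsection~\ref{subsec:proof sketch}, to convert the upper bound hypotheses into approximate regularity), and test counts decay like $(1-\tau)^j\cdot$(initial size). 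Conditions~\ref{item: conflict size}--\ref{item: conflict double j=2} and \ref{item: trackable size}--\ref{item: trackable no conflicts} are calibrated so that at each step the expected one-step change of each tracked variable matches the derivative of its trajectory up to a $d^{-\Omega(\eps)}$ relative error, and the worst-case single-step change is bounded by $d^{1-\Omega(\eps)}$ or $\abs{\cZ}/d^{1+\Omega(\eps)}$. Freedman's inequality applied to the appropriate supermartingale then gives a failure probability of $\exp(-d^{\Omega(\eps)})$ per variable; since the hypotheses bound $n$ and $\abs\ccZ$ by $\exp(d^{\eps^2/\ell})$, a union bound over all vertices, edges, conflicts, and test systems, and over the $O(n)$ steps, completes the concentration argument. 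Running the process up to $i^\ast = (1-\mu)n/k$ then yields a $\cC$-free matching of the required size, and for a $j$-uniform $\cZ\in\ccZ$ the tracked trajectory evaluates to $(\abs\cM/\abs\cH)^j\abs\cZ$ up to the error $d^{-\eps/900}$ quoted in the statement, where the specific exponent $1/900$ arises from compounding the various $d^{-\eps}$ slacks in the hypotheses through the iteration.

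The principal obstacle is consistency: the survival of $e$ depends on whether some conflict containing $e$ becomes ``critical'' (all its other edges having been added or otherwise forced), and that in turn depends on the survival of other edges, so the drift terms form a coupled system. Controlling this requires the auxiliary regularisation of $\cC$ together with the codegree bound \ref{item: conflict codegrees}, which ensures that very few semiconflicts become critical at any given step. The bounds \ref{item: conflict j=2} and \ref{item: conflict double j=2} are what handle the special role of $2$-conflicts, whose effect on available degrees at single vertices and on pairs of disjoint edges would otherwise spoil both the drift estimates and the boundedness of one-step changes. For the test systems, the subtle point is approximate independence of the survival events across the edges of a single test $Z\in\cZ$: condition~\ref{item: trackable neighborhood} is exactly what guarantees that no two edges of $Z$ share enough common conflicts to couple their fates, and together with \ref{item: trackable size}, \ref{item: trackable degrees}, and \ref{item: trackable no conflicts} it makes the expected joint survival of $Z$ factor approximately as $\prod_{e\in Z} p(\tau)$, which is precisely what the test-trajectory claim requires.
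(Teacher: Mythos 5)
Your proposal follows essentially the same route as the paper: run the conflict-free matching process, regularise $\cC$ into an approximately regular conflict system, track the available degrees, the partially matched semiconflicts and the test counts along idealised trajectories via a supermartingale/Freedman argument, and union-bound over the (at most $\exp(d^{\eps^2/\ell})$) vertices, edges and test systems. The only substantive component you leave implicit is the moment-based control of the ``local interaction'' configurations (the spreadness events) needed to justify the drift and boundedness estimates, but the roles you assign to \ref{item: conflict codegrees}, \ref{item: conflict j=2}, \ref{item: conflict double j=2} and \ref{item: trackable neighborhood} are exactly the ones they play in the paper.
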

	
	We also provide a version of Theorem~\ref{theorem: test systems} that allows tracking of test weight functions instead of test systems.
	A \defnidx{test function} for~$\cH$ is a function~$w\colon \unordsubs{\cH}{j}\rightarrow[0,1]$ where~$j\geq 1$ such that~$w(E)=0$ whenever~$E\in\unordsubs{\cH}{j}$ is not a matching.
	We refer to~$j$ as the \defnidx[uniformity!of a test function]{uniformity} of~$w$ and we say that~$w$ is~\defnidx[uniform@$k$-uniform!test function]{$j$-uniform}.
	In general, for a function~$w\colon A\rightarrow\bR$ and a finite set~$X\subseteq A$, we define~\defnidx[$w(X)$]{$w(X):=\sum_{x\in X}w(x)$}.
	If~$w$ is a~$j$-uniform test function, we also use~$w$ to denote the extension of~$w$ to arbitrary subsets of~$\cH$ such that for all~$E\subseteq\cH$, we have~\defnidx[$w(E)$]{$w(E)=w(\unordsubs{E}{j})$}.
	Note that for~$j$-sets~$E\subseteq\cH$, there is no ambiguity since in this case,~$E$ is the only subset of~$E$ that has size~$j$.
	Analogously to the definition of~$(d,\eps,\cC)$-trackability for test systems, we say that a~$j$-uniform test function~$w$ for~$\cH$ is~\defnidx[trackable@$(d,\eps,\cC)$-trackable!test function]{$(d,\eps,\cC)$-trackable} if the following holds.
	\begin{enumerate}[label=\textup{(W\arabic*)}]
		\item\label{item: test weight function total weight} $w(\cH)\geq d^{j+\eps}$;
		\item\label{item: test weight function spread} $w(\cset{ E\in\unordsubs{\cH}{j} }{ E'\subseteq E } )\leq w(\cH)/d^{j'+\eps}$ for all~$j'\in[j-1]$ and~$E'\in\unordsubs{\cH}{j'}$;
		\item\label{item: test weight function neighborhood} $\abs{\cC_e^{(j')}\cap\cC_f^{(j')}}\leq d^{j'-\eps}$ for all~$e,f\in\cH$ with~$w(\cset{E\in\unordsubs{\cH}{j}}{e,f\in E})>0$ and all~$j'\in[\ell-1]$;
		\item\label{item: test weight function no conflicts} $w(E)=0$ for all~$E\in\unordsubs{\cH}{j}$ that are not~$\cC$-free.
	\end{enumerate}
	\begin{theorem}\label{theorem: test functions}
		Assume the setup of Theorem~\ref{theorem: no test systems} and suppose~$\ccW$ is a set of~$(d,\eps,\cC)$-trackable test functions for~$\cH$ of uniformity at most~$\ell$ with~$\abs{\ccW}\leq \exp(d^{\eps^2/\ell})$.
		Then, there exists a $\cC$-free matching~$\cM\subseteq \cH$ of size~$(1-\mu)n/k$ with~$w(\cM)=(1\pm d^{-\eps/900})\paren{\abs\cM/\abs\cH}^j w(\cH)$
		for all~$j$-uniform~$w\in\ccW$.
	\end{theorem}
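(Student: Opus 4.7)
The plan is to mirror the proof of Theorem~\ref{theorem: test systems}, replacing the discrete counts $\abs{\cset{Z\in\cZ}{Z\subseteq\cM_t}}$ with their weighted analogues $w(\cM_t)$. Recall that the conflict-free matching process builds $\cM$ step by step, and the analysis underlying Theorem~\ref{theorem: test systems} tracks, for each test system $\cZ$, the random variable $X_\cZ(t):=\abs{\cset{Z\in\cZ}{Z\subseteq\cM_t}}$, showing it stays within $(1\pm d^{-\eps/900})$ of its target trajectory $\paren{\abs{\cM_t}/\abs{\cH}}^j \abs{\cZ}$ via one-step expectation estimates fed into a differential-equation heuristic together with martingale concentration. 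For each $w\in\ccW$ of uniformity $j$, I would track $Y_w(t):=w(\cM_t)=\sum_{E\in\unordsubs{\cM_t}{j}} w(E)$ in its place, with target $\paren{\abs{\cM_t}/\abs{\cH}}^j w(\cH)$.

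The one-step expectation analysis carries over almost verbatim. By linearity, $\cex{Y_w(t+1)-Y_w(t)}{\cF_t}$ decomposes as a $w$-weighted sum over those $j$-sets $E\in\unordsubs{\cH}{j}$ whose completion is caused by the edge added at step $t+1$. Conditions \ref{item: test weight function spread} and \ref{item: test weight function neighborhood} for a $(d,\eps,\cC)$-trackable test function are precisely the weighted analogues of \ref{item: trackable degrees} and \ref{item: trackable neighborhood} in the definition of a trackable test system, so the codegree and conflict-link bounds used to control one-step increments of $X_\cZ(t)$ translate without loss, yielding the same evolution equation for $Y_w(t)/w(\cH)$ as for $X_\cZ(t)/\abs{\cZ}$. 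Condition \ref{item: test weight function total weight} plays the role of \ref{item: trackable size} in keeping the relative error $o(1)$.

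Concentration is also immediate: because $w(E)\in[0,1]$, the single-step increment $\abs{Y_w(t+1)-Y_w(t)}$ is at most the number of $j$-subsets of $\cM_{t+1}$ containing the newly added edge, exactly matching the per-step bound used in the unweighted case, and the quadratic-variation estimates behave identically. Hence whichever martingale concentration inequality (Azuma--Hoeffding or Freedman-type) is invoked in the proof of Theorem~\ref{theorem: test systems} applies to $Y_w(t)$ with the same quantitative bound, delivering the error $d^{-\eps/900}$ and allowing a union bound over the at most $\exp(d^{\eps^2/\ell})$ test functions in $\ccW$.

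The main technical point is to confirm that each step of the test-system analysis uses only structural information that survives passage to weights: the bound on the $w$-mass of $j$-sets containing a fixed $j'$-set (from \ref{item: test weight function spread}) and the conflict-link bound \ref{item: test weight function neighborhood}. The tempting shortcut of dyadically decomposing $w$ as $\cZ_i:=\cset{E}{w(E)\in((1+\eta)^{-i-1},(1+\eta)^{-i}]}$ and applying Theorem~\ref{theorem: test systems} as a black box to each $\cZ_i$ breaks down because the spread property fails to transfer to level sets: the weighted spread only yields $\Delta_{j'}(\cZ_i)\leq (1+\eta)w(\cH)/(w(\cZ_i)d^{j'+\eps})$, which is insufficient whenever $w(\cZ_i)\ll w(\cH)$, and the tail contributions from non-trackable levels accumulate past the target error. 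Hence the direct weighted extension of the process analysis is the cleanest route.
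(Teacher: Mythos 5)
Your route is genuinely different from the paper's. You propose to rerun the entire process analysis with the weighted observables $w(\cM(i))$ in place of the counts $\abs{\cZ^{[0]}}(i)$. The paper instead reduces test functions to test systems by \emph{randomized rounding}: for each $j$-uniform $w$ it builds $z=\exp(d^{\eps/(500\ell)})$ independent random test systems by including each $Z\in\unordsubs{\cH}{j}$ with probability $w(Z)$, checks via Chernoff that each copy is $(d,\eps/2,\cC)$-trackable with $\abs{\cZ}=(1\pm d^{-1/2})w(\cH)$ and that the average of $\abs{\cZ_i\cap\unordsubs{E}{j}}$ over the $z$ copies recovers $w(E)$ up to a $(1\pm d^{-1})$ factor, and then applies Theorem~\ref{theorem: test systems} as a black box. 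Note that this is exactly a decomposition argument of the kind you dismiss, but it evades your level-set objection: because the inclusion probabilities are the weights themselves, the \emph{expected} degree of the sampled system at any $j'$-set $E'$ equals $w(\cset{E}{E'\subseteq E})\leq w(\cH)/d^{j'+\eps}\approx\abs{\cZ_i}/d^{j'+\eps}$, so \ref{item: test weight function spread} transfers to \ref{item: trackable degrees} directly (and \ref{item: test weight function total weight}, \ref{item: test weight function neighborhood}, \ref{item: test weight function no conflicts} to \ref{item: trackable size}, \ref{item: trackable neighborhood}, \ref{item: trackable no conflicts}), with no loss from small level sets. What the paper's reduction buys is that none of Sections~\ref{section: interactions} and~\ref{section: tracking} needs to be touched; what your direct approach would buy is avoiding the extra $\exp(d^{\eps/(500\ell)})$ blow-up in the number of tracked objects, which is immaterial here.

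The substantive gap in your proposal is the claim that the boundedness and quadratic-variation estimates ``behave identically'' with the crude per-step bound you state. In the paper the one-step change of $\abs{\cZ^{[s]}}$ is \emph{not} bounded by anything as weak as the number of $j$-subsets of $\cM(i+1)$ containing the new edge (that would be of order $\abs{\cM(i)}^{j-1}$ and useless); it is bounded by $d^{s-j-\eps/4}\abs{\cZ}$ via the spreadness event $\cS(i)$, which controls the local-interaction hypergraphs $\cZ_v$, $\cZ_e$, $\cZ_{e,2}$, $\cZ_2$ through the deterministic spread computations of Lemma~\ref{lemma: deterministic spread} and the moment argument of Lemma~\ref{lemma: probabilistic embedding}. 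A direct weighted proof must define weighted analogues of all of these configurations and redo both lemmas for weighted hypergraphs (plausible, since moments of weighted indicator sums behave similarly, but far from verbatim), and must also supply a weighted analogue of Lemma~\ref{lemma: conflict regularization}\ref{item: regular conflicts test systems} to control the $w$-mass destroyed by the conflict regularization. Until those pieces are written out, the concentration step of your argument is not established.
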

	
	Furthermore, we deduce the following version of Theorem~\ref{theorem: test functions} that allows a constant relative deviation of the degrees of~$\cH$, but in turn also only yields a constant fraction of vertices that are not covered by the matching (in general, this cannot be avoided as one can see by considering a slightly unbalanced complete bipartite graph).
	\begin{theorem}\label{theorem: less regularity functions}
		For all~$k\geq 2$, there exists~$\eps_0>0$ such that for all~$\eps\in(0,\eps_0)$, there exists~$d_0$ such that the following holds for all~$d\geq d_0$.
		Suppose~$\ell\geq 2$ is an integer and suppose~$\Gamma\geq 1$ is a real such that~$1/\eps^{\Gamma\ell}\leq d^{\eps^2}$.
		Suppose~$\cH$ is a $k$-graph on~$n\leq \exp(d^{\eps^2/\ell})$ vertices with~$(1-\eps)d\leq \delta(\cH)\leq\Delta(\cH)\leq d$ and~$\Delta_2(\cH)\leq d^{1-\eps}$, suppose~$\cC$ is a~$(d,\ell,\Gamma,\eps)$-bounded conflict system for~$\cH$ and suppose~$\ccW$ is a set of~$(d,\eps,\cC)$-trackable test functions for~$\cH$ of uniformity at most~$1/\eps^{1/3}$ with~$\abs{\ccW}\leq \exp(d^{\eps^2/\ell})$.
		
		Then, there exists a $\cC$-free matching~$\cM\subseteq \cH$ of size at least~$(1-\sqrt\eps)\frac{n}{k}$ with~$w(\cM)
		= (1\pm \sqrt\eps)\paren{\abs\cM/\abs\cH}^jw(\cH)$
		for all~$j$-uniform~$w\in\ccW$.%
	\end{theorem}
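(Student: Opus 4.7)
The plan is to deduce Theorem~\ref{theorem: less regularity functions} from Theorem~\ref{theorem: test functions} via regularization. I embed~$\cH$ as a subgraph of a slightly larger $k$-graph~$\cH^+$ that satisfies the stronger near-regularity assumption $(1-d^{-\eps})d\le\delta(\cH^+)\le\Delta(\cH^+)\le d$ demanded by Theorem~\ref{theorem: test functions}, apply that theorem to~$\cH^+$ with suitably extended conflict and test systems, and restrict the resulting matching back to~$\cH$.

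To construct~$\cH^+$, I add a set~$U$ of~$\Theta(\eps n)$ fresh dummy vertices together with two batches of dummy edges. For each $v\in V(\cH)$, I add $d-d_\cH(v)\le\eps d$ new edges, each consisting of~$v$ together with $k-1$ random vertices from~$U$, so that~$v$ has degree exactly~$d$ in~$\cH^+$. I then add a nearly $d$-regular $k$-graph on~$U$ (via a standard random construction with small pair-degrees) to pull the degrees of vertices in~$U$ into the range~$[(1-d^{-\eps})d,d]$. A routine Chernoff-type concentration argument shows that such~$\cH^+$ exists with additionally $\Delta_2(\cH^+)\le d^{1-\eps}$, $|V(\cH^+)|=(1+O(\eps))n$, and $|\cH^+|=(1+O(\eps))|\cH|$. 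The conflict system~$\cC$, viewed on~$\cH^+$ via the inclusion~$E(\cH)\subseteq E(\cH^+)$, contains no dummy edges and so stays~$(d,\ell,\Gamma,\eps)$-bounded. Each $w\in\ccW$ extends to~$w^+\colon\unordsubs{\cH^+}{j}\to[0,1]$ via $w^+(E):=w(E)$ if~$E\subseteq E(\cH)$ and $w^+(E):=0$ otherwise; conditions~\ref{item: test weight function total weight}--\ref{item: test weight function no conflicts} for~$w^+$ are inherited from~$w$ since all additional terms involve a dummy edge and trivially vanish.

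I then apply Theorem~\ref{theorem: test functions} to $(\cH^+,\cC,\set{w^+\colon w\in\ccW})$ with~$\ell$ replaced by $\ell^+:=\max(\ell,\lceil 1/\eps^{1/3}\rceil)$ (so that~$\ell^+$ bounds both conflict sizes and the uniformities of the extended~$w^+$) and with $\mu^+:=\sqrt\eps/2$. The required condition~$1/(\mu^+)^{\Gamma\ell^+}\le d^{\eps^2}$ is derived from~$1/\eps^{\Gamma\ell}\le d^{\eps^2}$ after possibly shrinking~$\eps_0$, via a case split depending on whether~$\ell\ge 1/\eps^{1/3}$. The output is a~$\cC$-free matching~$\cM^+\subseteq\cH^+$ of size at least~$(1-\mu^+)|V(\cH^+)|/k$ with $w^+(\cM^+)=(1\pm d^{-\eps/900})(|\cM^+|/|\cH^+|)^j w^+(\cH^+)$ for each $j$-uniform $w\in\ccW$. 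Setting $\cM:=\cM^+\cap E(\cH)$, each uncovered original vertex is either not covered by~$\cM^+$ (at most~$\mu^+|V(\cH^+)|$ such vertices) or lies in a dummy edge of~$\cM^+$ (at most~$O(|U|)=O(\eps n)$ such vertices), totalling at most~$\sqrt\eps n$ after choosing the implicit constants appropriately; hence $|\cM|\ge(1-\sqrt\eps)n/k$. For the weight estimate, $w(\cM)=w^+(\cM^+)$, $w(\cH)=w^+(\cH^+)$, and $|\cM^+|/|\cH^+|=(1\pm O(\eps))|\cM|/|\cH|$; raising to the $j$-th power with $j\le 1/\eps^{1/3}$ yields an error of $O(j\eps)=O(\eps^{2/3})$, which combined with the $d^{-\eps/900}$ error gives $w(\cM)=(1\pm\sqrt\eps)(|\cM|/|\cH|)^j w(\cH)$.

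The main obstacle is the parameter balancing when invoking Theorem~\ref{theorem: test functions}: one must accommodate the possibly large uniformity bound~$1/\eps^{1/3}$ while keeping the matching deficit~$\mu^+$ at most~$\sqrt\eps/2$ and still satisfying~$1/(\mu^+)^{\Gamma\ell^+}\le d^{\eps^2}$, which requires a delicate case analysis depending on the relative sizes of~$\ell$ and~$1/\eps^{1/3}$. The regularization construction itself and the verification that all boundedness and trackability conditions are preserved for~$\cH^+$ are routine in comparison.
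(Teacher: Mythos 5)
Your overall strategy is the paper's: regularize $\cH$ into a nearly regular supergraph by adding dummy vertices and random dummy edges, extend $\cC$ and the test functions trivially, apply Theorem~\ref{theorem: test functions}, and restrict the matching back to $\cH$. The extension of the conflict system and test functions, and the final weight computation (ratio error $O(\eps)$ raised to the power $j\le\eps^{-1/3}$ giving $O(\eps^{2/3})\le\sqrt\eps$), are all fine. The gap is in the construction of $\cH^+$ itself.

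You take only $|U|=\Theta(\eps n)$ dummy vertices, precisely so that the matching property gives the deterministic bound ``at most $|U|$ crossing edges lie in $\cM^+$''. But then the second batch of dummy edges, which must raise every $u\in U$ to degree $(1\pm d^{-\eps})d$ using $k$-sets contained in $U$, need not exist: each $u\in U$ can lie in at most $\binom{|U|-1}{k-1}=\Theta\bigl((\eps n)^{k-1}\bigr)$ such sets, while the crossing edges supply at most $(k-1)\sum_v(d-d_\cH(v))/|U|\le (k-1)d/c$ degree on average (and possibly almost nothing, e.g.\ when $\cH$ is already nearly regular), so a top-up of order $d$ from inside $U$ is genuinely required. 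The hypotheses only guarantee $\binom{n-1}{k-1}\ge\delta(\cH)\ge(1-\eps)d$, so $n$ can be as small as roughly $((k-1)!\,d)^{1/(k-1)}$; for such $\cH$ (take $k=2$ and a nearly complete, nearly regular graph on $n\approx d$ vertices with a few vertices of degree $d-1$) one has $\binom{|U|-1}{k-1}=\Theta(\eps^{k-1}d)\ll d$, and no choice of the implicit constant in $\Theta(\eps n)$ repairs this. This is exactly why the paper uses $2n$ dummy vertices (so that $\binom{2n}{k-1}\ge 2^{k-1}\delta(\cH)\ge d$ and the weights of Lemma~\ref{lemma: regularization weight} stay in $[0,1]$), at the price that the deterministic crossing-edge bound becomes vacuous; the paper instead controls the crossing edges by adding the auxiliary indicator test function $w'=\ind_{W'}$ on a set $W'\supseteq W$ with $\eps dn\le|W'|\le2\eps dn$ (which is $(d,\eps,\cC)$-trackable since $|W'|\ge d^{1+\eps}$), so that Theorem~\ref{theorem: test functions} itself forces $|\cM'\cap W|\le w'(\cM')=O(\eps n)$. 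To close your gap you must either enlarge $U$ to size $\Omega(n)$ and import this test-function device (or some other probabilistic bound on $|\cM^+\cap W|$), or restrict to the regime $n\gg d^{1/(k-1)}/\sqrt\eps$, which the theorem does not permit.
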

	
	Finally, we also prove a counting version of Theorem~\ref{thm:matchings simple}.
	Before we present a formal statement that provides a lower bound for the number of large conflict-free matchings, let us consider some heuristic for how many almost-perfect $\cC$-free matchings of size~$m=(1-\mu)n/k$ one may expect at least in the setting of Theorem~\ref{theorem: no test systems}.
	Since we are interested in almost-perfect matchings, we assume that~$\mu$ is sufficiently small, for example~$\mu\leq d^{-\eps^3}$.
	Suppose that~$\cM$ is chosen uniformly at random among all edge sets in~$\unordsubs{\cH}{m}$.
	The edge set~$\cM$ is a matching if there is a set of~$km$ vertices of~$\cH$ that are all contained in exactly one edge~$e\in\cM$.
	Every edge of~$\cH$ is an edge of~$\cM$ with probability~$m/\abs\cH\approx 1/d$.
	Hence, for a fixed vertex~$v\in V(\cH)$, the expected number of edges containing~$v$ is approximately~$d_{\cH}(v)/d\approx 1$, so by the Poisson paradigm, we estimate that the probability of the event that~$v$ is contained in exactly one edge~$e\in\cM$ is approximately~$1/\eul$.
	
	Thus, for a fixed~$U\in\unordsubs{V(\cH)}{km}$, we may expect all vertices~$u\in U$ to be contained in exactly one edge~$e\in\cM$ with probability~$\exp(-km)$ and hence we may expect~$\cM$ to be a matching with probability roughly~$\binom{n}{km}\exp(-km)=(1\pm \sqrt\mu)^{km}\exp(-km)\approx \exp(-km)$.
	Since there were~$\binom{\abs\cH}{m}\approx (\eul\abs\cH/m)^{m}\approx (\eul d)^{m}$ choices for~$\cM$, this suggests that there are roughly~$(\eul d)^{m}\cdot\exp(-km)=(d/\exp(k-1))^{m}$ matchings of size~$m$ in~$\cH$.
	
	To estimate the number of matchings of size~$m$ that are~$\cC$-free, we may again employ the Poisson paradigm.
	For all~$j\in[\ell]_2$, the number of conflicts~$C\in\cC^{(j)}$ is~$\sum_{e\in\cH} d_{\cC^{(j)}}(e)/j\leq \abs\cH\Delta(\cC^{(j)})/j$.
	Hence, again using that every edge of~$\cH$ is an edge of~$\cM$ with probability roughly~$m/\abs\cH$, the expected number of conflicts of arbitrary size that are a subset of~$\cM$ is heuristically at most~$m\sum_{j\in[\ell]_2}\frac{m^{j-1}\Delta(\cC^{(j)})}{j\abs\cH^{j-1}}\leq m\sum_{j\in[\ell]_2}\frac{\Delta(\cC^{(j)})}{jd^{j-1}}$.
	Thus, the Poisson paradigm suggests that~$\cM$ is~$\cC$-free with probability at least~$\exp\paren[\big]{-m\sum_{j\in[\ell]_2}\frac{\Delta(\cC^{(j)})}{jd^{j-1}}}$.
	Combining this with our estimation for the number of matchings of size~$m$ in~$\cH$, this yields
	\begin{equation*}
		\paren[\Bigg]{\frac{ d }{ \exp\paren[\big]{k-1+ \sum_{j\in[\ell]_2}\frac{\Delta(\cC^{(j)})}{jd^{j-1}}}}}^{m}
	\end{equation*}
	as an approximate lower bound for the number of~$\cC$-free matchings of size~$m$ in~$\cH$.
	\begin{theorem}\label{theorem: counting}
		For all~$k,\ell\geq 2$, there exists~$\eps_0>0$ such that for all~$\eps\in(0,\eps_0)$, there exists~$d_0$ such that the following holds for all~$d\geq d_0$.
		Suppose~$\cH$ is a $k$-graph on~$n\leq \exp(d^{\eps^3})$ vertices with~$(1-d^{-\eps})d\leq \delta(\cH)\leq\Delta(\cH)\leq d$ and~$\Delta_2(\cH)\leq d^{1-\eps}$ and suppose~$\cC$ is a~$(d,\ell,\ell,\eps)$-bounded conflict system for~$\cH$.
		
		Then, the number of $\cC$-free matchings~$\cM\subseteq \cH$ of size~$m:=(1-d^{-\eps^3})n/k$ is at least
		\begin{equation*}
			\paren[\Bigg]{\frac{(1-d^{-\eps^4})d}{\exp\paren[\big]{k-1+\sum_{j\in[\ell]_2} \frac{\Delta(\cC^{(j)})}{jd^{j-1}}}} }^{m}.
		\end{equation*}
	\end{theorem}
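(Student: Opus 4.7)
The plan is to run the conflict-free matching process underlying the proofs of Theorems~\ref{theorem: no test systems} and~\ref{theorem: test systems}, and to count, rather than merely produce, $\cC$-free matchings of size~$m$. I run the process for exactly~$m$ steps, write $\sigma=(e_1,\ldots,e_m)$ for the ordered sequence of edges picked (provided the process does not abort earlier), and let~$A_i$ denote the number of edges available at the start of step~$i$. Since each $\cC$-free matching of size~$m$ arises from exactly $m!$ such orderings, and since $\bP[\sigma=\pi]=\prod_{i=1}^m 1/A_i(\pi_{<i})$ for every valid ordering~$\pi$, any lower bound on $\prod_i A_i$ that holds with probability close to one will translate directly into a lower bound on the number~$M_m$ of $\cC$-free matchings of size~$m$.

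The first main task will be to exhibit a deterministic trajectory $\hat A_1,\ldots,\hat A_m$ such that the event $E:=\set{A_i\geq \hat A_i\text{ for all }i\in[m]}$ satisfies $\bP[E]=1-o(1)$. This is exactly the kind of estimate produced by the analysis behind Theorem~\ref{theorem: no test systems}: the set of edges available at step~$i$ consists of the edges of~$\cH$ which neither meet the current matching nor close a conflict with it, and both contributions are controlled via martingale-style concentration around deterministic trajectories. The trajectory one obtains is of the shape $\hat A_i=(1-d^{-c\eps})(nd/k)(1-\tau_i)^k\, q(\tau_i)$, where $\tau_i:=ki/n$ and $q$ is a ``conflict survival'' factor whose integrated effect is $\int_0^1\log q(\tau)\diff\tau=-\gamma$ with $\gamma:=\sum_{j\in[\ell]_2}\Delta(\cC^{(j)})/(jd^{j-1})$.

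Given such a trajectory, the counting step is short. For a fixed $\cC$-free matching~$\cM_0$ of size~$m$ and any ordering~$\pi$ of~$\cM_0$, on $\set{\sigma=\pi}\cap E$ every $A_i(\pi_{<i})$ is at least~$\hat A_i$, so $\bP[\sigma=\pi,E]\leq \prod_i 1/\hat A_i$. Summing over the $m!$ orderings of each~$\cM_0$ and then over all such~$\cM_0$ gives $\bP[E]\leq m!\cdot M_m/\prod_i \hat A_i$, since on~$E$ the process survives up to step~$m$ and so~$\sigma$ is an ordering of some~$\cM_0$. Rearranging yields $M_m\geq(1-o(1))\prod_i \hat A_i/m!$. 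A Stirling-style computation using $m=(1-d^{-\eps^3})n/k$ together with $\int_0^1\log(1-\tau)\diff\tau=-1$ then converts $\prod_i\hat A_i/m!$ into the form $(d(1-d^{-\eps^4})/\exp(k-1+\gamma))^m$ claimed in the theorem.

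The hard part will be producing the trajectory~$\hat A_i$ sharply enough. The $\exp(-(k-1))$ contribution comes from the standard nibble calculation (the $(1-\tau)^k$ factor combined with Stirling), but the $\exp(-\gamma)$ contribution has to be extracted from the precise way conflicts remove edges as the matching grows, and the constants must be tight: in particular~$q(\tau)$ must be pinned down up to a $(1\pm d^{-c\eps})$ multiplicative factor, and this needs to be done uniformly for all $i\in[m]$. Once this is in hand, verifying that the $1-o(1)$ loss from $\bP[E]$ can be absorbed into the $(1-d^{-\eps^4})$ slack, and carrying out the Stirling approximation, are routine.
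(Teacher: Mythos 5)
Your proposal is correct and follows essentially the same route as the paper: the paper also runs the process, defines the event (its $\cT^*(m)$, your $E$) that the number of available edges stays above a deterministic trajectory $\hhat_0^+(i)\approx\frac{dn}{k}\phat_V(i)^k\exp(-\Gammahat_0(i))$, lower-bounds its probability via the tracking theorem, upper-bounds $\pr{\cT^*(m)\cap\set{\cM(m)=\cM}}$ by $m!\prod_i 1/\hhat_0^+(i-1)$, and finishes with the same integral comparison $\sum_i\Gammahat_0(i)\le m\sum_j\Delta(\cC^{(j)})/(jd^{j-1})$ and Stirling estimate. The only detail you elide is that the paper first regularises $\cC$ to a conflict system $\cC'$ satisfying the extra conditions needed by the tracking theorem and then checks that $\exp(-\Gammahat)\ge(1-d^{-2\eps^2})\exp(-\Gammahat_0)$, but this fits under the "pin down $q(\tau)$ up to $1\pm d^{-c\eps}$" task you correctly flag.
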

	It is known~\cite{L:17} that the number of perfect matchings of a $d$-regular $k$-graph on $n$ vertices with small codegrees is at most $((1+o(1))d/\exp(k-1))^{n/k}$. 
	This can be proved using the so-called entropy method. It would be interesting to find out whether this method can also be used to provide an upper bound on the number of conflict-free matchings, complementing our lower bound from Theorem~\ref{theorem: counting}. This seems challenging, even in the case of Steiner triple systems with girth at least 7 (see the discussions in~\cite{BW:19,GKLO:20,KSSS:22}).
	
	\section{Constructing the matching}\label{section: construction}
	\subsection{The setting}
	Let us now describe the setting for our main proof and the random greedy algorithm we analyse.
	Here and in the subsequent sections, we work with the following setup.
	Fix~$k\geq 2$,~$\eps>0$ that is sufficiently small in terms of~$1/k$ and~$d\geq 1$ that is sufficiently large in terms of~$1/\eps$ and~$k$\index{d@$d$}\index{epsilon@$\eps$}\index{k@$k$}. 
	Suppose~$\ell\geq 2$\index{l@$\ell$} is an integer and suppose~$\Gamma\geq 1$\index{Gamma@$\Gamma$} and~$\mu\in(0,1/\ell]$\index{mu@$\mu$} are reals such that
	\begin{equation}\label{equation: growth relation}
		\frac{1}{\mu^{\Gamma \ell}}\leq d^{\eps^{3/2}}.
	\end{equation}
	Let~$\cH$\index{H@$\cH$} denote a $k$-graph on~$n\leq \exp(d^{\eps/(400\ell)})$ vertices such that~$(1-d^{-\eps})d\leq \delta(\cH)\leq\Delta(\cH)\leq d$ and $\Delta_2(\cH)\leq d^{1-\eps}$.
	Let~$\cC$\index{C@$\cC$} denote a~$(d,\ell,\Gamma,\eps)$-bounded\footnote{In fact, when working with the setup provided here, we do not need that~$\abs{\cset{ j\in[\ell]_2 }{ \cC^{(j)}\neq\emptyset }}\leq \Gamma$.} conflict system for~$\cH$ such that in addition to the~$(d,\ell,\Gamma,\eps)$-boundedness, the following conditions are satisfied.
	\begin{enumerate}[label=(C\arabic*)]\setcounter{enumi}{5}
		\item\label{item: conflict regularity} $d^{j-1-\eps/100}\leq (1-d^{-\eps})\Delta(\cC^{(j)})\leq \delta(\usub{\cC}{j})$ for all~$j\in[\ell]_2$ with~$\cC^{(j)}\neq\emptyset$;
		\item\label{item: conflict neighborhood} $\abs{\cC_e^{(j)}\cap \cC_f^{(j)}}\leq d^{j-\eps}$ for all disjoint~$e,f\in\cH$ with~$\set{e,f}\notin\cC^{(2)}$ and all~$j\in[\ell-1]$;
		\item\label{item: conflict matchings} $C$ is a matching for all~$C\in\cC$;
		\item\label{item: conflict no subset} $C_1\not\subseteq C_2$ for all distinct~$C_1,C_2\in\cC$.
	\end{enumerate}
	Considering the links of the conflict graph, note that these are almost~$(d,\eps,\cC)$-trackable test systems in the following sense.
	Condition~\ref{item: conflict matchings} enforces that all semiconflicts are matchings,
	the bound~$d^{j-1-\eps/100}\leq \delta(\cC^{(j)})$ that we impose for all~$j\in[\ell]_2$ plays a similar role as~\ref{item: trackable size}, Condition~\ref{item: conflict codegrees} translates to a property similar to~\ref{item: trackable degrees}, Conditions~\ref{item: conflict double j=2},~\ref{item: conflict neighborhood} and~\ref{item: conflict no subset} together yield~\ref{item: trackable neighborhood} and Condition~\ref{item: conflict no subset} corresponds to~\ref{item: trackable no conflicts}.
	Let~$\ccZ_0$\index{Z0@$\ccZ_0$} denote a set of~$(d,\eps,\cC)$-trackable test systems for~$\cH$ such that~$\abs{\ccZ_0}\leq \exp(d^{\eps/(400\ell)})$.
	Note that besides requiring that~$\cC$ additionally satisfies the conditions~\ref{item: conflict regularity}--\ref{item: conflict no subset}, the conditions that we impose here are weaker than those in Theorem~\ref{theorem: test systems}.
	The weaker bound~$1/\mu^{\Gamma\ell}$ here and the bound on the number of nonempty uniform subgraphs~$\cC^{(j)}$ in Theorem~\ref{theorem: test systems} however allow us to deduce this theorem from the analysis of the construction for conflict graphs additionally satisfying~\ref{item: conflict regularity}--\ref{item: conflict no subset} that we introduce in Subsection~\ref{subsection: algorithm}.
	The fact that we also allow more vertices and test systems here is useful for proving Theorems~\ref{theorem: test functions} and~\ref{theorem: less regularity functions}.
	
	With the setup for this section, which we also keep for the subsequent sections in mind, we conclude this subsection with some further remarks.
	The bound~\eqref{equation: growth relation}, which may be thought of as a way to state that~$d$ is sufficiently large in terms of~$\ell$,~$\Gamma$ and~$1/\mu$, is crucial for many bounds that we derive throughout the paper without explicitly referring to it.
	More specifically, we frequently use it to bound terms that depend on~$\ell$,~$\Gamma$ or~$\mu$ from above using powers of~$d$ with a suitably small fraction of~$\eps$ as their exponent.
	Besides directly using~$1/\mu^{\Gamma\ell}\leq d^{\eps^{3/2}}$, we for example use that~$\ell^\ell\leq d^{\eps/1000}$ and~$\exp(\Gamma)\leq d^{\eps/1000}$.
	Moreover, we often use that for all~$j\in[\ell]_2$, we have~$\Delta(\cC^{(j)})\leq \Gamma d^{j-1}$ as an immediate consequence of~\ref{item: conflict degree}.
	
	\subsection{The algorithm}\label{subsection: algorithm}
	We construct random matchings~$\emptyset=\cM(0)\subseteq \cM(1)\subseteq \ldots$\index{M(i)@$\cM(i)$} in~$\cH$ as follows.
	As an initialization during step~$0$, we set~$\cM(0):=\emptyset$.
	Then, we proceed iteratively where in every step~$i\geq 1$ we obtain~$\cM(i)$ by adding an edge~$e(i)$\index{e(i)@$e(i)$} to~$\cM(i-1)$ that is chosen uniformly at random among all edges that are \defn{available} in the sense that they can be added without generating a subset of~$\cM(i)$ that is a conflict or a nonempty intersection of two distinct edges in~$\cM(i)$.
	If for some step~$i\geq 0$ there are no available edges, we terminate the construction.
	For every step~$i\geq 0$, we use~$V(i):=V\setminus \bigcup_{e\in\cM(i)} e$\index{V(i)@$V(i)$} to denote the set of vertices that are not covered by~$\cM(i)$.
	To keep track of the available edges, we define the random~$k$-graphs~$\cH=\cH(0)\supseteq \cH(1)\supseteq\ldots$\index{H(i)@$\cH(i)$}, where in every step~$i\geq 0$, the vertex set of~$\cH(i)$ is~$V(i)$ and the edge set of~$\cH(i)$ is the set of edges that are available for addition to the matching~$\cM(i)$.
	For all~$e\in\cH(i)$, as a special case of a more general notation that we introduce in Section~\ref{section: variables and trajectories}, we use~$\cC_e^{[1]}(i)$\index{Ce[1](i)@$\cC_e^{[1]}(i)$} to denote the random subgraph of~$\cC_e$ with vertex set~$V(\cC_e)$ and edge set
	\begin{equation*}
		\cC_e^{[1]}(i)=\cset{C\in\cC_{e}}{\abs{C\cap\cH(i)}=1 \tand \abs{C\cap \cM(i)}=\abs{C}-1},
	\end{equation*}
	that is, the set of all semiconflicts~$C$ that stem from conflicts containing~$e$ where all edges in~$C$ except one belong to~$\cM(i)$ and where the remaining edge is available.
	Overall, we make our random choices according to Algorithm~\ref{algorithm: matching}.
	\par\bigskip
	\IncMargin{1em}
	\begin{algorithm}[H]
		\SetAlgoLined
		\DontPrintSemicolon
		
		$\cM(0)\gets \emptyset$\;
		$V(0)\gets V(\cH)$\;
		$\cH(0)\gets \cH$\;
		$i\gets 1$\;
		\While{$\cH(i-1)\neq\emptyset$}{
			choose~$e(i)\in \cH(i-1)$ uniformly at random\;
			$\cM(i)\gets \cM(i-1)\cup\set{e(i)}$\;
			$V(i)\gets V(i-1)\setminus e(i)$\;
			$E_C(i)\gets \cset{e\in\cH(i-1)}{\set{e}=C\setminus\cM(i-1)\tforsome{$C\in\cC_{e(i)}^{[1]}(i-1)$}}$\index{EC(i)@$E_C(i)$}\;
			$\cH_C(i)\gets (V(i-1),\cH(i-1)\setminus E_C(i))$\index{HC(i)@$\cH_C(i)$}\;
			$\cH(i)\gets \cH_C(i)[V(i)]$\;
			$i\gets i+1$\;
		}
		\caption{Construction of the matching}
		\label{algorithm: matching}
	\end{algorithm}
	\par\bigskip
	We refer to the assignments before the first iteration of the loop as step~$0$, for~$i\geq 1$, we refer to the~$i$-th iteration of the loop as step~$i$ and we use~$\cF(i)$\index{F(i)@$\cF(i)$} to denote the~$i$-th element of the natural filtration associated with this random process.
	
	Let~$m:=(1-\mu)n/k$\index{m@$m$}.
	As an immediate consequence of Theorem~\ref{theorem: trajectories} in Section~\ref{section: tracking}, we obtain the following statement.
	\begin{theorem}\label{theorem: process}
		With probability at least~$1-\exp(-d^{\eps/(500\ell)})$, we have 
		\begin{equation*}
			\abs{\cM(m)}=m\quad\text{and}\quad\abs{\cset{Z\in\cZ}{Z\subseteq \cM(m)}}=(1\pm d^{-\eps/75})\paren[\bigg]{\frac{mk}{dn}}^{j}\abs{\cZ},
		\end{equation*}
		for all~$j\in[\ell]$ and all~$j$-uniform~$\cZ\in\ccZ_0$.
	\end{theorem}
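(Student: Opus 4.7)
The plan is to apply the differential-equations / trajectory method to Algorithm~\ref{algorithm: matching}: identify a family of random variables that capture the state of the process, prescribe deterministic trajectories they should follow, and use martingale concentration to confirm that they do throughout the first $m=(1-\mu)n/k$ steps.

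The essential variables to track are $|\cH(i)|$, the degrees $d_{\cH(i)}(v)$ for $v\in V(i)$, the semiconflict counts $|\cC_e^{[j']}(i)|$ for each available edge $e$ and each $j'\in[\ell-1]$, and appropriately defined ``surviving-test'' counts for every $\cZ\in\ccZ_0$, from which one recovers $|\{Z\in\cZ:Z\subseteq\cM(m)\}|$ at the endpoint. The trajectories are driven by the following heuristic: at step $i$, an edge $e\in\cH(i-1)$ is killed either because $e(i)$ shares a vertex with $e$ (probability $\approx k\Delta(\cH(i-1))/|\cH(i-1)|$) or because selecting $e(i)$ completes some semiconflict at $e$ (probability $\approx |\cC_e^{[1]}(i-1)|/|\cH(i-1)|$). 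Solving the continuous analogues yields $|\cH(i)|\approx (nd/k)\exp(-\int\lambda(s)\,ds)$ for an explicit $\lambda$ depending on the trajectories of the semiconflict counts, and predicts that a typical edge ends up in $\cM(m)$ with probability $\approx m/|\cH|\approx(1-\mu)/d$, so a $j$-test is expected to be captured with probability $\approx((1-\mu)/d)^{j}=(mk/(dn))^{j}$.

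Concentration is provided by Freedman's martingale inequality applied to the signed deviation of each tracked variable from its trajectory. One shows inductively that while the deviation lies within its allowed band, the conditional one-step drifts and increments remain small enough that Freedman delivers a tail bound of the form $\exp(-d^{\Omega(\eps)})$ for the first-exit event. A union bound over the tracked variables --- at most $\exp(d^{\eps/(450\ell)})$ thanks to $n,|\ccZ_0|\le\exp(d^{\eps/(400\ell)})$ --- fits comfortably inside the claimed $\exp(-d^{\eps/(500\ell)})$ failure probability, and yields $|\cM(m)|=m$ (because $|\cH(i)|$ stays bounded away from $0$ on $[0,m]$, so the process cannot terminate early) together with the stated approximation for each test count.

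The main obstacle is the consistent coupling of the hierarchy of trajectories: tracking tests relies on sharp edge-survival estimates, which rely on accurate semiconflict counts, which rely on degree estimates, and all must hold \emph{simultaneously} over $m\approx n/k$ steps without catastrophic error amplification. This is precisely where the conflict-system conditions \ref{item: conflict size}--\ref{item: conflict no subset} and the trackability conditions \ref{item: trackable size}--\ref{item: trackable no conflicts} earn their keep, bounding one-step increments and conditional variances so that Freedman's hypotheses are satisfied with ample slack; Theorem~\ref{theorem: process} then drops out as the announced specialisation of the resulting trajectory theorem.
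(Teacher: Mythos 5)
Your overall plan coincides with the paper's: Theorem~\ref{theorem: process} is obtained there as an immediate corollary of a trajectory theorem (Theorem~\ref{theorem: trajectories}), proved by tracking the degrees~$\abs{\cD_v}(i)$, the partially matched semiconflict counts~$\abs{\cC_e^{[s]}}(i)$ and the test counts~$\abs{\cZ^{[s]}}(i)$ along idealized trajectories, verifying trend and boundedness hypotheses for the stopped (``frozen'') deviation processes, and applying Freedman's inequality together with a union bound whose size is absorbed by the bounds on~$n$ and~$\abs{\ccZ_0}$.

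There is, however, one substantive ingredient your sketch omits, and it does not follow from the conditions~\ref{item: conflict size}--\ref{item: conflict no subset} and~\ref{item: trackable size}--\ref{item: trackable no conflicts} ``with ample slack''. To verify the trend hypothesis you must approximate the probability that an available edge~$e$ is killed at step~$i+1$ by~$(\abs{\cC_e^{[1]}}(i)+k\dhat(i))/\abs\cH(i)$; the error is governed by the number of edges~$f$ that are the unique unmatched edge of \emph{two distinct} semiconflicts at~$e$, and the loss term for a test~$Z$ likewise requires that two edges of~$Z$ are rarely killed simultaneously. These are counts of \emph{partially matched} configurations (overlapping pairs of conflicts, conflict--test overlaps, and so on) at time~$i$, and the static degree and codegree hypotheses only bound them at time~$0$. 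The paper devotes Section~\ref{section: interactions} to showing that these counts stay small throughout the process (the spreadness event~$\cS(i)$ of Definition~\ref{definition: spreadness}), via a moment computation: conditional on many edges remaining available, any fixed~$r$-set of edges lies in~$\cM(i)$ with probability at most~$(d^{\eps/(48\ell)}/d)^{r}$ (Lemma~\ref{lemma: edge selection}), which combined with the deterministic spreadness of the configuration hypergraphs bounds the~$r$-th moment of the partially matched counts (Lemma~\ref{lemma: probabilistic embedding}). Without this separate argument neither the trend nor the boundedness hypothesis can be established, so it must appear as an explicit step in your plan; with it added, your argument matches the paper's proof.
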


	In Section~\ref{section: theorem proofs}, we show that Theorems~\ref{theorem: test systems},~\ref{theorem: test functions} and~\ref{theorem: less regularity functions} are consequences of Theorem~\ref{theorem: process}.
	\section{Key random variables and trajectories}\label{section: variables and trajectories}
	In this section, we define key random variables for the analysis of the conflict-free matching process described in Section~\ref{section: construction}.
	Additionally, we provide some intuition for their evolution during the process that leads to idealized trajectories that certain quantities typically follow.
	
	\subsection{Key random variables}\label{subsection: variables}
	The process increases the size of the matching as long as there are available edges, that is, as long as~$\abs{\cH(i)}\geq 1$, so we are interested in analysing the availability of edges.
	To account for the fact that an edge becomes unavailable for the matching when an edge containing one of its vertices is added to the matching, one set of key random variables that we wish to investigate are the random sets of edges of~$\cH(i)$ that contain a given vertex~$v\in V(\cH)$.
	For~$v\in V(\cH)$, we define
	\begin{equation*}
		\cD_v(i):=\cset{e\in \cH(i)}{v\in e}.\index{Dv(i)@$\cD_v(i)$}
	\end{equation*}
	To account for the~$\cC$-free condition, for all~$e\in \cH$, we are interested in tracking the number of conflicts in~$\cC_e$ that have already partially entered the matching in the sense that they contain a given number of edges in the matching.
	As tracking these is a special case of keeping track of how more general uniform hypergraphs whose vertex sets are subsets of~$\cH$ behave with respect to the matching, we can treat the collection of these sets as another set of test systems.
	We define
	\begin{equation*}
		\ccC:=\cset{ \cC^{(j)}_e }{ e\in \cH,j\in [\ell-1],\cC^{(j+1)}\neq\emptyset}\quad\text{and}\quad \ccZ:=\ccZ_0\cup\ccC.\index{C@$\ccC$}\index{Z@$\ccZ$}
	\end{equation*}
	Observe that for all~$\cZ\in\ccC$, the pair~$(e,j)$ with~$\cZ=\cC_e^{(j)}$ is unique.
	Indeed, as~$d^{j-\eps/100}\leq \delta(\cC^{(j+1)})\leq \abs{\cC_e^{(j)}}$,~$\cZ$ is not empty, so~$j$ is uniquely determined and we have~$\set{e}=\cH\setminus V(\cZ)$ determining~$e$.
	Also observe that~$\ccC\cap\ccZ_0=\emptyset$ due to~\ref{item: conflict codegrees} and~\ref{item: trackable size}, so there will never be confusion whether~$\cZ\in\ccZ$ is an element of~$\ccZ_0$ or~$\ccC$.
	These observations are convenient for the following considerations of test systems~$\cZ\in\ccZ$ that at some point become irrelevant for the process.

	If an edge~$e\in\cH$ is not present in~$\cH(i)$ for some~$i\geq 1$, it is no longer relevant for the process, so we no longer have to consider it in our analysis.
	Thus, for~$i\geq 0$, we define
	\begin{equation*}
		\ccC(i):=\cset{ \cC^{(j)}_e }{ e\in \cH(i), j\in[\ell-1],\cC^{(j+1)}\neq\emptyset },\quad\text{and}\quad\ccZ(i):=\ccZ_0\cup\ccC(i).\index{C(i)@$\ccC(i)$}\index{Z(i)@$\ccZ(i)$}
	\end{equation*}
	For~$\cZ\in\ccZ$ and~$e\in\cH$, if~$\cZ=\cC^{(j)}_f$ for some~$j\in[\ell-1]$ and~$f\in\cH$ with~$e\cap f\neq\emptyset$ or~$\set{e,f}\in\cC^{(2)}$, then~$e$ entering the matching in some step~$i\geq 1$ in the sense that~$e(i)=e$ entails~$\cZ$ becoming irrelevant for the process and hence not being present in~$\ccZ(i)$, in a sense getting evicted from~$\ccZ(i-1)$.
	In this case, that is, if~$\cZ=\cC^{(j)}_f$ for some~$j\in[\ell-1]$ and~$f\in\cH$ with~$e\cap f\neq\emptyset$ or~$\set{e,f}\in\cC^{(2)}$, we say that~$e\in\cH$ is an \defnidx{immediate evictor} for~$\cZ\in\ccZ$.
	We write~\defnidx{$e\protect\evicts \cZ$} to mean that~$e$ is an immediate evictor for~$\cZ$ and~\defnidx{$e\protect\nevicts \cZ$} to mean that~$e$ is not an immediate evictor for~$\cZ$.
	Note that besides this immediate eviction~$\cZ=\cC^{(j)}_f$ also becomes irrelevant whenever~$f$ and~$e(i)$ are the only edges in a conflict of arbitrary size that are not in~$\cM(i-1)$.
	Finally, as the uniformity of a test system is crucial for its behavior, for~$i\geq 0$ and~$j\in[\ell]$, we define
	\begin{gather*}
		\ccZ_0^{(j)}:=\cset{ \cZ\in\ccZ_0 }{ \text{$\cZ$ is a~$j$-graph} },\quad
		\ccC^{(j)}:=\cset{ \cC_e^{(j)}\in\ccC }{ e\in\cH },\quad
		\ccZ^{(j)}:=\ccZ_0^{(j)}\cup \ccC^{(j)}\\
		\ccC^{(j)}(i):=\cset{ \cC_e^{(j)}\in\ccC(i) }{ e\in\cH(i) }\quad\text{and}\quad
		\ccZ^{(j)}(i):=\ccZ_0^{(j)}\cup \ccC^{(j)}(i).
		\index{Z0(j)@$\ccZ_0^{(j)}$}\index{C(j)@$\ccC^{(j)}$}\index{Z(j)@$\ccZ^{(j)}$}\index{C(j)(i)@$\ccC^{(j)}(i)$}\index{Z(j)(i)@$\ccZ^{(j)}(i)$}
	\end{gather*}
	We introduce the following notation.
	\begin{definition}[partially matched subgraph]\index{$\cX^{[s]}(i)$}\label{definition: s available}
		For a (not necessarily uniform) hypergraph~$\cX$ with~$V(\cX)\subseteq \cH$ and integer~$i,s\geq 0$, the \defn{partially matched subgraph}~$\cX^{[s]}(i)$ of~$\cX$ with parameter~$s$ at step~$i$ is the random hypergraph with vertex set~$V(\cX)$ and
		\begin{equation*}
			\cX^{[s]}(i)=\cset{X\in \cX}{\abs{X\cap \cH(i)}=s\tand \abs{X\cap\cM(i)}=\abs{X}-s }.
		\end{equation*}
	\end{definition}
	Here, we use square brackets to avoid ambiguity regarding our notation~$\cX^{(j)}$ for the~$j$-uniform subgraph of~$\cX$ with~$V(\cX^{(j)})=V(\cX)$.
	Note that for all~$e\in\cH$ and~$\cZ=\cC_e$ this definition of~$\cZ^{[1]}(i)=\cC^{[1]}_e(i)$ coincides with that given in Section~\ref{section: construction}.
	
	For~$\cZ\in\ccZ$, we are particularly interested in the random hypergraphs~$\cZ^{[s]}(i)$.
	Indeed, the random hypergraphs $\cC^{[1]}_e(i)$ play a crucial role in Algorithm~\ref{algorithm: matching}, for~$\cZ\in\ccZ_0$, we are interested in~$\cZ^{[0]}(m)$ and for all~$j\in[\ell]$,~$\cZ\in\ccZ^{(j)}$,~$i\geq 1$ and~$s\in[j-1]_0$, the tests that enter when transitioning from~$\cZ^{[s]}(i-1)$ to~$\cZ^{[s]}(i)$ are the result of adding an edge~$e\in \cH$ to the matching that is an element of a test in~$\cZ^{[s+1]}(i-1)$.
	
	Occasionally, we have to account for the fact that the test systems~$\cC^{(j)}_f$ in $\ccC$ are, in contrast to those in~$\ccZ_0$, not~$(d,\eps,\cC)$-trackable as~\ref{item: conflict degree} implies that they are too small to satisfy~\ref{item: trackable size} at least by a factor of~$d^{\eps}/\Gamma$.
	
	Following the intuition that every edge of~$\cH$ ends up in the matching roughly independently with probability~$1/d$, we estimate~$\abs{\cZ^{[s]}(m)}\approx \abs\cZ/d^{j-s}$ for all~$j$-uniform~$\cZ\in\ccZ$.
	Thus, if~$\abs\cZ$ is not significantly larger than~$d^{j-s}$, our analysis does not provide concentration around the expectation for~$\abs{\cZ^{[s]}(m)}\approx \abs\cZ/d^{j-s}$ and hence the smaller size of the test systems~$\cC^{(j)}_f\in\ccZ$ with~$e\in\cH$ and~$j\in[\ell-1]$ results in weaker tracking in the sense that we can only guarantee concentration for the random variables~$\abs{\cC^{(j)[s]}_f(i)}$ with~$s\in[\ell]$ and not for~$s=0$ (Note that whenever~$s\geq j+1$, we trivially have~$\cC^{(j)[s]}_f(i)=\emptyset$ for all~$i\geq 0$).
	However, this is sufficient for us because adding an edge of~$\cH$ that is an element of a semiconflict in~$\cC^{(j)[1]}_f$ to the matching makes~$f$ unavailable and hence all~$\cC_f^{(j)}$ with~$j\in[\ell-1]$ irrelevant.

	\subsection{Intuition}\label{subsection: intuition}
	Generally, if~$\cX(i)$ is a (random) hypergraph or a set, we define~$\abs{\cX}(i):=\abs{\cX(i)}$\index{$\abs\cX(i)$} such that for a sequence~$\cX(0),\cX(1),\ldots$, we have a notation for the elements of the associated sequence of sizes that uses a common symbol indexed with~$i$.
	As before, we write such intuitively time-related indices that represent different stages in the evolution of a parameter or iterations in an algorithm as arguments instead of indices to distinguish them from other indices.
	
	The heuristic arguments in this subsection start with the assumption that for all~$i\geq 0$, edges of~$\cH$ are included in~$\cM(i)$ approximately independently with probability~$i/\abs\cH$.
	With a similar intuition for~$V(i)$, we obtain
	\begin{equation*}
		\pr{e\in\cM(i)}\approx\frac{\abs{\cM}(i)}{\abs{\cH}(0)}\approx\frac{ik}{dn}=:\phat_M(i)\quad\text{and}\quad \pr{v\in V(i)}\approx \frac{\abs{V(i)}}{\abs{V(0)}}=1-\frac{ik}{n}=:\phat_V(i)
		\index{pM(i)@$\phat_M(i)$}\index{pV(i)@$\phat_V(i)$}
	\end{equation*}
	for all~$e\in \cH$ and~$v\in V(\cH)$.
	Since we wish to show that~$\cH(i-1)$ typically remains nonempty for all~$i\in[m]$, we are interested in the size of~$\cH(i)$.
	
	For~$i\geq 0$, an edge~$e\in\cH$ is an edge of~$\cH(i)$ if and only if none of its vertices is covered by the matching~$\cM(i)$ and additionally, there is no conflict~$C\in\cC$ with~$e\in C$ that forbids the addition of~$e$ to~$\cM(i)$ in the sense that~$C\setminus\cM(i)=\set{e}$.
	For all~$j\in[\ell]_2$, there are approximately~$\Delta(\usub{\cC}{j})$ conflicts~$C\in\cC^{(j)}$ with~$e\in C$ and for all conflicts~$C\in\cC$, we have~$C\not\subseteq\cM(i)$, so~$C\setminus\cM(i)=\set{e}$ happens if and only if~$C\setminus\set{e}\subseteq\cM(i)$.
	Hence, the expected number of such conflicts that forbid the addition of~$e$ during step~$i$ is
	\begin{equation*}
		\ex{\abs{\cset{C\in\cC}{e\in C\tand C\setminus\cM(i)=\set{e}}}}\approx\sum_{j\in[\ell]_2} \Delta(\usub{\cC}{j}) \cdot \phat_M(i)^{j-1}=:\Gammahat(i)
		\index{Gamma(i)@$\Gammahat(i)$}.
	\end{equation*}
	Thus, assuming approximate independence of relevant events the Poisson paradigm suggests
	\begin{equation*}
		\pr{e\in \cH(i)}\approx \phat_V(i)^k\cdot \exp(-\Gammahat(i)).
	\end{equation*}
	This yields the following idealized trajectories.
	For~$\abs{\cH}(i)$, we estimate
	\begin{equation}\label{equation: size trajectory}
		\ex{\abs{\cH}(i)}\approx\frac{dn}{k}\cdot\pr{e\in \cH(i)}\approx \frac{dn}{k}\cdot \phat_V(i)^k\cdot \exp\paren{-\Gammahat(i)}=:\hhat(i).
		\index{h(i)@$\hhat(i)$}
	\end{equation}
	Note that~$\Gammahat(0)\leq\ldots\leq\Gammahat(n/k)$.
	Hence~\eqref{equation: size trajectory} shows the need for bounding~$\Gammahat(n/k)$ from above and thus illustrates the importance of~\ref{item: conflict degree}.
	Similarly as for~$\abs\cH(i)$, for all~$v\in V(\cH)$, where we only care about~$\cD_v(i)$ as long as~$v\in V(i)$, we estimate
	\begin{equation*}%
		\begin{aligned}
			\cex{\abs{\cD_v}(i)}{v\in V(i)}&=\sum_{\substack{e\in \cH\colon\\ v\in e}}\cpr{e\in \cH(i)}{v\in V(i)}\approx\frac{1}{\phat_V(i)}\sum_{\substack{e\in \cH\colon\\ v\in e}}\pr{e\in \cH(i)}\\
			&\approx d\cdot \phat_V(i)^{k-1}\cdot \exp(-\Gammahat(i))=:\dhat(i).
		\end{aligned}
		\index{d(i)@$\dhat(i)$}
	\end{equation*}
	Finally, for all~$j\in[\ell]$,~$\cZ\in\ccZ^{(j)}$ and~$s\in[j]_0$ with~$s\geq \ind_{\ccC}(\cZ)$, where we only care about~$\cZ^{[s]}(i)$ as long as~$\cZ\in\ccZ(i)$, we estimate
	\begin{equation*}%
		\begin{aligned}
			\cex{\abs{\cZ^{[s]}}(i)}{\cZ\in\ccZ(i)}&\approx\sum_{Z\in\cZ}\sum_{X\in \unordsubs{Z}{s}}\pr[\Big]{\bigcap_{f\in X} \set{f\in \cH(i)} \cap \bigcap_{f\in Z\setminus X}\set{f\in \cM(i)}}\\
			&\approx \abs{\cZ} \cdot\binom{j}{s}\cdot \paren[\big]{\phat_V(i)^k \cdot\exp\paren{-\Gammahat(i) }}^s\cdot  \phat_M(i)^{j-s}= \abs{\cZ} \cdot\zhat_{j,s}(i),
		\end{aligned}
	\end{equation*}
	where
	\begin{equation*}
		\zhat_{j,s}(i):=\binom{j}{s}\cdot \paren[\big]{\phat_V(i)^k \cdot\exp\paren{-\Gammahat(i) }}^s\cdot  \phat_M(i)^{j-s}.\index{zjs(i)@$\zhat_{j,s}(i)$}
	\end{equation*}
	Note that~$\zhat_{j,0}(i)=\phat_M(i)^j$ and hence~$\zhat_{j,0}(m)=\paren[\big]{\frac{mk}{dn}}^j$, which is the term we have in Theorem~\ref{theorem: process}.
	
	As a consequence of the construction of~$E_C(i)$ in Algorithm~\ref{algorithm: matching}, which ensures that the matchings~$\cM(0),\cM(1),\ldots$ are~$\cC$-free, random hypergraphs~$\cC_e^{[1]}=\bigcup_{j\in[\ell]}\cC_e^{(j)[1]}$ with~$e\in\cH$ directly influence the construction of the matchings~$\cM(i)$ and hence the random hypergraphs~$\cC^{[1]}_e(i)$ are particularly important.
	Similarly as above, for all~$e\in\cH$, we obtain
	\begin{equation*}%
		\begin{aligned}
			\cex{\abs{\cC^{[1]}_e}(i)}{e\in\cH(i)}
			&=\sum_{j\in[\ell-1]}\cex{\abs{\cC_e^{(j)[1]}}(i)}{e\in\cH(i)}
			\approx \sum_{j\in[\ell-1]} d_{\cC^{(j+1)}}(e)\cdot \zhat_{j,1}(i)\\
			&\approx \sum_{j\in[\ell-1]} \Delta(\cC^{(j+1)})\cdot \zhat_{j,1}(i)=:\chat(i).
		\end{aligned}
		\index{c(i)@$\chat(i)$}
	\end{equation*}
	In Section~\ref{section: tracking}, we formally prove that~$\abs{\cH}(i)$,~$\abs{\cD_v}(i)$,~$\abs{\cZ^{[s]}}(i)$ and~$\abs{\cC_e^{[1]}}(i)$ indeed typically follow the idealized trajectories~$\hhat(i)$,~$\dhat(i)$,~$\abs{\cZ}\cdot\zhat_{j,s}(i)$ and~$\chat(i)$, respectively.	
	
	\subsection{Controlling key random variables}\label{subsection: differential equation method}
	Roughly speaking, to control the key random variables we use the following approach which resembles the \emph{differential equation method} introduced by Wormald (see~\cite{W:99}).
	Call a finite collection of random processes $\{(X_s(i))_{i\ge 0}\}_{s\in S}$ with associated filtration $(\cF(i))_{i\ge 0}$ \emph{complete} if, for all $i\ge 0$ and $s\in S$, the conditional expectation of $\Delta X_s(i) := X_s(i+1)-X_s(i)$ given $\cF(i)$ may be expressed as a function of $(X_s(i))_{s\in S}$ and~$i$.
	Then, given a complete collection of random processes with appropriate bounds for the increments $\Delta X_s(i)$, martingale concentration techniques ensure that, for all $s\in S$, the process $(X_s(i))_{i\ge 0}$ is tightly concentrated around a deterministic trajectory given by the solution of a system of ordinary differential equations derived from the expressions of the expected increments. In our case, we do not explicitly construct a system of differential equations. Instead, in the previous section we relied on classical heuristics, for example, the Poisson paradigm, to guess the correct trajectories of our random processes.
	
	Let us now elaborate on the martingale technique used to provide concentration. Given~$s\in S$, denote by $(\xhat_s(i))_{i\ge 0}$ the expected trajectory of the process $(X_s(i))_{i\ge 0}$ and choose suitable small error terms~$(\hat \xi_s(i))_{i\ge 0}$ that for all~$i\geq 0$ quantify by how much we allow~$X_s(i)$ to deviate from~$\xhat_s(i)$.
	Then it is our goal to show that with high probability, the auxiliary random variables
	\begin{equation*}
		X_s^+(i) := X_s(i)-(\xhat_s(i)+ \hat \xi_s(i)) \text{ and } X_s^-(i) := (\xhat_s(i)- \hat \xi_s(i)) - X_s(i)
	\end{equation*}
	are negative.
	For the auxiliary processes $(X_s^{\pm}(i))_{i\ge 0}$, we verify that, for all $i\ge 0$ and $s\in S$, we have $\mathbb E[\Delta X_s^{\pm}(i)\mid \cF(i)]\le 0$ (the \emph{trend hypothesis}), and also provide upper bounds for $|\Delta X_s^{\pm}(i)|$ and $\mathbb E[|\Delta X_s^{\pm}(i)|\mid \cF(i)]$ (the \emph{boundedness hypothesis}).
	These bounds allow us to use Freedman's inequality for supermartingales to show that with high probability,~$X_s^\pm(i)$ is negative for all~$i\geq 0$ and~$s\in S$ and hence that the processes $(X_s(i))_{i\ge 0}$ behave as predicted by $(\hat x_s(i))_{i\ge 0}$.

	Let us now take a closer look at our setting. Our approach to construct a random matching of the $k$-graph $\cH$ is iterative: at every step we add to the matching $\cM$ an edge chosen uniformly at random among those edges of $\cH$ that are available in the sense that they neither intersect nor form a conflict with the edges already in $\cM$.
	Our goal is to track the number of available edges~$\abs{\cH}(i)$ to show that with high probability, there is at least one available edge for~$(1-\mu)\tfrac{n}{k}$ steps.
	To employ the approach outlined above, we first need to find a complete collection of processes that contains~$(\abs\cH(i))_{i\geq 0}$, so we are looking for further processes that allow us to express~$\cex{\Delta \abs\cH(i)}{\cF(i)}$.
	With the number of available edges as part of our collection that we wish to complete, we are able to express the probability that in step~$i+1$, the randomly chosen edge~$e(i+1)$ is chosen to be a specific edge~$e$, so for an expression of~$\cex{\Delta \abs\cH(i)}{\cF(i)}$, it remains to quantify the impact any particular choice has on the number of available edges.
	There are two reasons why an available edge~$f$ may become unavailable, hence causing a decrease of the number of available edges.
	Either there may be a nonempty intersection~$e(i+1)\cap f$ or there may be a conflict~$C$ with~$C\subseteq \cM(i)\cup\set{e(i+1),f}$. 
	The~$k$-graph~$\cH$ has small codegree, so for an approximate quantification of the number of available edges~$f$ intersecting with a fixed available edge~$e$, it is sufficient to know the sizes of the neighbourhoods~$\cD_v(i)$ of the vertices~$v\in V(i)$.
	For a fixed available edge~$e$, the number of available edges~$f$ such that there is a conflict~$C$ with~$C\subseteq\cM(i)\cup\set{e,f}$ is approximately equal to the number of conflicts~$C$ containing~$e$ that satisfy~$\abs{C\cap\cH(i)}=2$ and~$\abs{C\cap\cM(i)}=\abs{C}-2$.
	This number of conflicts is given by~$\abs{\cC_e^{[1]}}(i)$.
	Hence, we add the processes~$(\abs{\cD_v}(i))_{i\geq 0}$ with~$v\in V(\cH)$ and~$(\abs{\cC_e^{[1]}}(i))_{i\geq 0}$ with~$e\in\cH$ to our collection.
	As discussed above, the semiconflicts in~$C\in\cC_e^{[s]}(i)$ with~$\abs{C}-1>s\geq 1$ were in some previous step~$i'<i$ semiconflicts in~$\cC_e^{[s+1]}(i')$, so to be able to also express the conditional expectations~$\cex{\Delta \abs{\cC_e^{[s]}}(i)}{\cF(i)}$ with~$s\geq 1$ we also add all the processes~$(\abs{\cC_e^{[s]}}(i))_{i\geq 0}$ with~$e\in\cH$ and~$s\geq 2$ to our collection.
	Observe that now, our collection is essentially complete in the sense that we can express approximations for the conditional expectations in terms of the included processes as desired.
	In fact, we may even remove the process~$(\abs\cH(i))_{i\geq 0}$ from our collection as we can easily recover~$\abs\cH(i)$ from the~$\abs{\cD_v}(i)$ with~$v\in V(\cH)$ whenever necessary.

	The main technical challenge in our proof is to establish trend and boundedness hypotheses for the random processes $(|\cD_v|^{\pm}(i))_{i\ge 0}$ and $(|\cC^{[s]}_e|^{\pm}(i))_{i\ge 0}$, which, using the corresponding trajectories and appropriate error terms, are defined similarly as~$X_s^\pm(i)$ above.
	This is done in Section~\ref{section: tracking} and crucially relies on bounds for lower order terms to guarantee that our approximations are sufficiently precise.
	To give just one such example: in the above discussion, for a fixed available edge $e$, we approximated the number of available edges $f$ such that there exists a conflict $C\in \cC$ with $C\subseteq \cM(i)\cup \{e,f\}$ by $|\cC_e^{[1]}|(i)$. To show that this approximation is very precise, we need to show that there are few $f$ for which there exist two distinct conflicts $C_1,C_2$ with $C_1,C_2\subseteq \cM(i)\cup \{e,f\}$.
	There are many other such ``local interactions'' which we need to control. In Section~\ref{section: interactions}, we employ a moment-based approach which treats them all in a unified way.

	\section{Bounding local interactions}\label{section: interactions}
	In preparation for the proof of Theorem~\ref{theorem: trajectories} (of which Theorem~\ref{theorem: process} is a consequence), we consider certain configurations that consist of one or two conflicts or tests.
	As these configurations mediate local interactions between edges of~$\cH$ concerning their availability, they are relevant for analysing the impact a particular choice of~$e(i)$ in some step~$i$ may have.
	Intuitively, our conditions for the conflict system~$\cC$ and the test systems~$\cZ\in\ccZ$ guarantee that initially, these configurations are spread out.
	We use this section to formally define what this means and we prove that this spreadness typically persists during the iterative construction of the matching.

	Recall that as defined in the previous section,~$\cC$ is a conflict system for~$\cH$ and~$\ccZ$ is the set of all test systems and all links of the uniform subgraphs~$\cC^{(2)},\ldots,\cC^{(\ell)}$.
	As configurations, which we call \defn{local interactions}, we consider the following hypergraphs with vertex set~$\cH$.
	\begin{definition}[local interactions]\label{definition: local interactions}\index{$\cZ_v$}\index{$\cZ_{e,2}$}\index{$\cZ_2$}\index{Ce2@$\cC_{e,2}$}\index{Cef2@$\cC_{e,f,2}$}
		For~$\cZ\in\ccZ$,~$e,f\in \cH$ and~$v\in V(\cH)$, let~$\cZ_v$,~$\cZ_e$,~$\cZ_{e,2}$,~$\cZ_2$,~$\cC_{e,2}$ and~$\cC_{e,f,2}^\star$ denote hypergraphs with vertex set~$\cH$ whose edges represent different types of \defn{local interactions} in the sense that the following holds.
		\begin{enumerate}[label=(\roman*)]
			\item $\cZ_{v}=\cset{Z\in\cZ}{Z\cap \cD_v(0)\neq\emptyset}$;
			\item $\cZ_{e}=\cset{Z\setminus\set{e}}{Z\in \cZ, e\in Z}$;
			\item $\cZ_{e,2}=\cset{Z\cup C}{Z\in \cZ, C\in\cC_e, Z\cap C\neq\emptyset}$;
			\item $\cZ_{2}=\cset{Z\cup C}{Z\in \cZ, C\in\cC, \abs{Z\cap C}\geq 2,g\nevicts\cZ\tforall{$g\in C\setminus Z$}}$;
			\item $\cC_{e,2}=\cset{C_1\cup C_2}{C_1, C_2\in\cC_e, C_1\neq C_2, C_1\cap C_2\neq\emptyset}$;
			\item $\cC_{e,f,2}^{\star}=\cset{ C_f\in\cC_f }{ \abs{C_f}\geq 2,g\in C_f\tforsome{$\set{g}\in\cC_e^{(1)}$}}$.
		\end{enumerate}
	\end{definition}
	Here,~$\cZ_{e}$ is again the link of~$e$ in~$\cZ$ and thus coincides with our previously introduced notation.
	Recall that in Definition~\ref{definition: s available} for a (not necessarily uniform) hypergraph~$\cX$ with~$V(\cX)\subseteq \cH$ and integers~$i,s\geq 0$, we introduced the random hypergraph with vertex set~$V(\cX)$ and
	\begin{equation*}
		\cX^{[s]}(i)=\cset{X\in \cX}{\abs{X\cap \cH(i)}=s\tand \abs{X\cap\cM(i)}=\abs{X}-s }
		\index{$\cX^{[s]}(i)$}.
	\end{equation*}
	For~$\cZ\in\ccZ$,~$e,f\in \cH$,~$v\in V(\cH)$~$i\geq 0$,~$j\in[2\ell]$ and~$s\geq 0$, this yields random hypergraphs
	\begin{gather*}
		\cZ_{v}^{[s]}(i),\quad
		\cZ_{e}^{[s]}(i),\quad
		\cZ_{e,2}^{[s]}(i),\quad
		\cZ_{e,2}^{(j)[s]}(i),\quad
		\cZ_{2}^{[s]}(i),\quad
		\cZ_{2}^{(j)[s]}(i),\\
		\cC_{e,2}^{[s]}(i),\quad
		\cC_{e,2}^{(j)[s]}(i),\quad
		\cC_{e,f,2}^{\star[s]}(i),\quad
		\cC_{e,f,2}^{\star(j)[s]}(i).
	\end{gather*}
	For configurations that yield edges of these random hypergraphs that are particularly important in Section~\ref{section: tracking}, see Figure~\ref{fig: edges}.

	\colorlet{matching}{blue}
	\colorlet{available}{green!50!black}
	\colorlet{fixed}{black}
	\colorlet{conflict}{red}
	
	\pgfdeclarelayer{edgelayer}
	\pgfdeclarelayer{conflictlayer}
	\pgfsetlayers{conflictlayer,edgelayer,main}
	\tikzmath{
		\spacing=0.8;
		\lineoffset=0.05;
		\edgeradius=0.275;
		\vertexoffset=0.165;
		\vertexradius=0.05;
	}
	\newcommand{\vertex}[3]{\path (#1*\spacing,0)--++({90+(#2-1)*360/5}:\vertexoffset) node[pos=1, circle, inner sep=\vertexradius*1cm, fill=#3] {};}
	\newcommand{\edgevertices}[2]{\foreach \i in {1,...,5} {\vertex{#1}{\i}{#2}}}
	\newcommand{\edge}[2]{\begin{pgfonlayer}{edgelayer}\fill[fill=#2!30!white] (#1*\spacing,0) circle (\edgeradius); \node[#2, rotate=-45, anchor=west] at (#1*\spacing-0.075,-0.5) {};\end{pgfonlayer}}
	\newcommand{\skipedges}[1]{\node at (#1*\spacing,0) {$\ldots$};}
	\newcommand{\testedge}[3]{\draw[dashed] (#1*\spacing,0) ++ (0,{\edgeradius+\lineoffset*#3}) arc (-270:-90:{\edgeradius+\lineoffset*#3}) -- (#2*\spacing,{-(\edgeradius+\lineoffset*#3)}) arc (-90:90:{\edgeradius+\lineoffset*#3}) -- cycle;}
	\newcommand{\conflict}[3]{\begin{pgfonlayer}{conflictlayer}\draw[draw=conflict] (#1*\spacing,0) ++ (0,{\edgeradius+\lineoffset*#3}) arc (-270:-90:{\edgeradius+\lineoffset*#3}) -- (#2*\spacing,{-(\edgeradius+\lineoffset*#3)}) arc (-90:90:{\edgeradius+\lineoffset*#3}) -- cycle;\end{pgfonlayer}}
	\newcommand{\randomedge}[3]{\draw[thick] (#1*\spacing,0) ++ (0,{\edgeradius+\lineoffset*#3}) arc (-270:-90:{\edgeradius+\lineoffset*#3}) -- (#2*\spacing,{-(\edgeradius+\lineoffset*#3)}) arc (-90:90:{\edgeradius+\lineoffset*#3}) -- cycle;}
	\newcommand{\fulledge}[2]{\edgevertices{#1}{#2}\edge{#1}{#2}}
	\newcommand{\randompatchout}[2]{\draw[draw=white, line width=1.5pt] ({(#1-1)*\spacing},0) ++ (0,{\edgeradius+\lineoffset*#2}) --++ (2*\spacing,0);\draw[thick] ({(#1-1)*\spacing},0) ++ (0,{\edgeradius+\lineoffset*#2}) ++(-0.1,0)--++(0.1,0) arc (90:0:{\edgeradius+\lineoffset*#2}) arc (-180:0:{\spacing-\edgeradius-\lineoffset*#2}) arc (180:90:{\edgeradius+\lineoffset*#2})--++(0.1,0);}
	\newcommand{\labelnode}[2]{\node at (#1*\spacing,0) {#2};}
	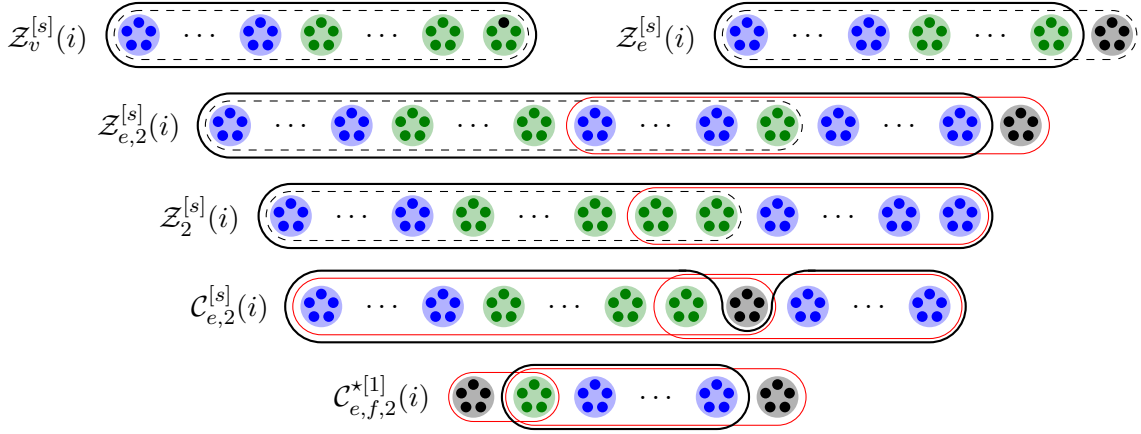
\begin{figure}[H]
		\begin{tikzpicture}
			\labelnode{-0.5}{$\cZ_v^{[s]}(i)$}
			\fulledge{1}{matching}
			\skipedges{2}
			\fulledge{3}{matching}
			\fulledge{4}{available}
			\skipedges{5}
			\fulledge{6}{available}
			\vertex{7}{1}{fixed}
			\vertex{7}{2}{available}
			\vertex{7}{3}{available}
			\vertex{7}{4}{available}
			\vertex{7}{5}{available}\edge{7}{available}
			
			\testedge{1}{7}{1}
			
			\randomedge{1}{7}{3}
			
			\begin{scope}[xshift=10*\spacing*1cm]
				\labelnode{-0.5}{$\cZ_e^{[s]}(i)$}
				\fulledge{1}{matching}
				\skipedges{2}
				\fulledge{3}{matching}
				\fulledge{4}{available}
				\skipedges{5}
				\fulledge{6}{available}
				\fulledge{7}{fixed}
				
				\testedge{1}{7}{1}
				
				\randomedge{1}{6}{3}
			\end{scope}
			
			\begin{scope}[xshift=1.5*\spacing*1cm, yshift=-1.5*\spacing*1cm]
				\labelnode{-0.5}{$\cZ_{e,2}^{[s]}(i)$}
				\fulledge{1}{matching}
				\skipedges{2}
				\fulledge{3}{matching}
				\fulledge{4}{available}
				\skipedges{5}
				\fulledge{6}{available}
				\fulledge{7}{matching}
				\skipedges{8}
				\fulledge{9}{matching}
				\fulledge{10}{available}
				\fulledge{11}{matching}
				\skipedges{12}
				\fulledge{13}{matching}
				\fulledge{14}{fixed}
				
				\testedge{1}{10}{1}
				
				\conflict{7}{14}{2}
				
				\randomedge{1}{13}{3}
			\end{scope}
			
			\begin{scope}[xshift=2.5*\spacing*1cm, yshift=-3*\spacing*1cm]	
				\labelnode{-0.5}{$\cZ_{2}^{[s]}(i)$}
				\fulledge{1}{matching}
				\skipedges{2}
				\fulledge{3}{matching}
				\fulledge{4}{available}
				\skipedges{5}
				\fulledge{6}{available}
				\fulledge{7}{available}
				\fulledge{8}{available}
				\fulledge{9}{matching}
				\skipedges{10}
				\fulledge{11}{matching}
				\fulledge{12}{matching}
				
				\testedge{1}{8}{1}
				
				\conflict{7}{12}{2}
				
				\randomedge{1}{12}{3}
			\end{scope}
			\begin{scope}[xshift=3*\spacing*1cm, yshift=-4.5*\spacing*1cm]
				\labelnode{-0.5}{$\cC_{e,2}^{[s]}(i)$}
				\fulledge{1}{matching}
				\skipedges{2}
				\fulledge{3}{matching}
				\fulledge{4}{available}
				\skipedges{5}
				\fulledge{6}{available}
				\fulledge{7}{available}
				\fulledge{8}{fixed}
				\fulledge{9}{matching}
				\skipedges{10}
				\fulledge{11}{matching}
				
				\conflict{1}{8}{2}
				\conflict{7}{11}{3}
				
				\randomedge{1}{11}{4}
				\randompatchout{8}{4}
			\end{scope}
			\begin{scope}[xshift=5.5*\spacing*1cm, yshift=-6*\spacing*1cm]
				\labelnode{-0.5}{$\cC_{e,f,2}^{\star[1]}(i)$}
				\fulledge{1}{fixed}
				\fulledge{2}{available}
				\fulledge{3}{matching}
				\skipedges{4}
				\fulledge{5}{matching}
				\fulledge{6}{fixed}
				
				\conflict{1}{2}{1}
				\conflict{2}{6}{2}
				
				\randomedge{2}{5}{3}
			\end{scope}
		\end{tikzpicture}
		\caption{For~$\cZ\in\ccZ$,~$v\in V(\cH)$,~$e\in \cH$,~$i\geq 0$ and~$s\in[\ell]$: Possible edges of the respective random hypergraphs are represented by a solid thick black outline.
			Dashed outlines represent tests in~$\cZ$ and solid red outlines represent conflicts.
			Blue dots and discs represent vertices covered by and edges in~$\cM(i)$, green dots and discs represent vertices and edges in~$\cH(i)$, black dots and grey discs represent vertices and edges that are fixed by the choice of~$e$ or~$v$. The number of green discs is~$s$.}
		\label{fig: edges}
	\end{figure}
	
	The goal of this section is to show that during the first~$m$ steps of our construction, these random hypergraphs typically never have too many edges.
	For~$j\geq 1$,~$d_0\geq 0$ and~$\delta\in[0,1]$, we say that a~$j$-graph~$\cX$ is~\defnidx[spread@$(d_0,\delta)$-spread]{$(d_0,\delta)$-spread} if~$\Delta_{j'}(\cX)\leq \delta^{j'}d_0$ for all~$j'\in[j-1]_0$.
	Our notions of~$(d,\eps,\cC)$-trackability and~$(d,\ell,\Gamma,\eps)$-boundedness were carefully chosen to imply $(d_0,\delta)$-spreadness of the introduced hypergraphs as detailed in the following Lemma~\ref{lemma: deterministic spread}.
	To prove Lemma~\ref{lemma: deterministic spread}, let us first observe that the hypergraphs~$\cZ\in\ccZ$ share relevant properties that we collect in the following lemma.
	In particular, the elements of~$\ccZ_0$ and~$\ccC$ share these properties, which substantiates our approach to often treat them similarly.
	\begin{lemma}\label{lemma: conflict links are essentially trackable}
		Let~$j\in[\ell]$ and~$\cZ\in\ccZ^{(j)}$.
		Then, the following holds.
		\begin{enumerate}[label=\textup{(\roman*)}]
			\item\label{item: tracking set size} $\abs\cZ\geq d^{j-\eps/5}$;
			\item\label{item: tracking set degrees} $\Delta_{j'}(\cZ)\leq \abs{\cZ}/d^{j'+4\eps/5}$ for all~$j'\in[j-1]$;
			\item\label{item: tracking set j=1} if~$j=1$, then~$\abs{\cset{\set{e}\in\cZ}{v\in e}}\leq \abs{\cZ}/d^{4\eps/5}$ for all~$v\in V(\cH)$;
			\item\label{item: tracking set neighborhood} $\abs{\cZ\cap\cC_e^{(j)}}\leq \abs{\cZ}/d^{4\eps/5}$ for all~$e\in\cH$ with~$e\nevicts\cZ$;
			\item\label{item: tracking set free} $Z$ is~$\cC$-free for all~$Z\in\cZ$.
		\end{enumerate}
	\end{lemma}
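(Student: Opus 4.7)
The strategy is to split on whether $\cZ\in\ccZ_0$ or $\cZ=\cC_f^{(j)}\in\ccC$ (these two possibilities are disjoint, as already noted). In the first case, the five claims follow from the $(d,\eps,\cC)$-trackability axioms \ref{item: trackable size}--\ref{item: trackable no conflicts} together with the overall bound \ref{item: conflict degree} on $\Delta(\cC^{(j+1)})$ and the trivial bound $\Delta(\cH)\leq d$. In the second case, the claims follow from the $(d,\ell,\Gamma,\eps)$-boundedness axioms \ref{item: conflict size}--\ref{item: conflict no subset}. Throughout, the growth relation \eqref{equation: growth relation} will be used tacitly to absorb factors of $\Gamma$ and $\ell$ into an arbitrarily small power of $d$.

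For $\cZ\in\ccZ_0$: \ref{item: tracking set size} and \ref{item: tracking set degrees} are slight weakenings of \ref{item: trackable size} and \ref{item: trackable degrees}. For \ref{item: tracking set j=1} (only needed when $j=1$), the singletons $\{e\}\in\cZ$ with $v\in e$ are indexed by edges of $\cH$ through $v$, so there are at most $d$ of them, whereas $\abs\cZ/d^{4\eps/5}\geq d^{1+\eps/5}$ by \ref{item: trackable size}. For \ref{item: tracking set neighborhood}, the trivial estimate
\begin{equation*}
\abs{\cZ\cap\cC_e^{(j)}}\leq \abs{\cC_e^{(j)}}=d_{\cC^{(j+1)}}(e)\leq \Gamma d^{j}
\end{equation*}
from \ref{item: conflict degree} comfortably beats $\abs\cZ/d^{4\eps/5}\geq d^{j+\eps/5}$ (and when $j=\ell$ the left side is $0$). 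Finally, \ref{item: tracking set free} is literally \ref{item: trackable no conflicts}.

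For $\cZ=\cC_f^{(j)}\in\ccC$: \ref{item: tracking set size} is exactly the minimum-degree inequality in \ref{item: conflict regularity}. For \ref{item: tracking set degrees}, identify $\Delta_{j'}(\cC_f^{(j)})$ with the maximum number of $(j+1)$-edges of $\cC$ containing $f$ together with a prescribed $j'$-subset, and invoke \ref{item: conflict codegrees} to obtain the bound $d^{j-j'-\eps}$. When $j=1$, the singletons $\{e\}\in\cC_f^{(1)}$ with $v\in e$ are exactly the elements of $N_\cC^{(2)}(f)$ through $v$, so \ref{item: tracking set j=1} follows from \ref{item: conflict j=2}. For \ref{item: tracking set neighborhood}, observe that since the pair $(f,j)$ is uniquely determined by $\cZ$, the assumption $e\nevicts\cZ$ amounts to $e$ and $f$ being disjoint with $\set{e,f}\notin\cC^{(2)}$, so \ref{item: conflict neighborhood} gives $\abs{\cC_f^{(j)}\cap\cC_e^{(j)}}\leq d^{j-\eps}$. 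For \ref{item: tracking set free}, any conflict $C'\subseteq Z\in\cC_f^{(j)}$ would satisfy $C'\subsetneq Z\cup\set{f}\in\cC^{(j+1)}$, contradicting \ref{item: conflict no subset}. Dividing each of these upper bounds by $\abs\cZ\geq d^{j-\eps/100}$ reduces matters to the routine inequality $-\eps\leq -\eps/100-4\eps/5$, which holds for every $\eps>0$.

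There is no real obstacle here: the lemma is a bookkeeping statement whose role is to package the two sources of test systems ($\ccZ_0$ and $\ccC$) under a single uniform interface. The only thing to keep straight is the consistent $\eps$-arithmetic and the repeated use of the growth relation \eqref{equation: growth relation} to swallow $\Gamma$- and $\ell$-factors, both of which are standard.
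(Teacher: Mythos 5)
Your proof is correct and follows essentially the same route as the paper's: a case split between $\cZ\in\ccZ_0$ (where \ref{item: trackable size}--\ref{item: trackable no conflicts}, the bound $\Delta(\cC^{(j+1)})\le\Gamma d^j$ from \ref{item: conflict degree}, and $\Delta(\cH)\le d$ do the work) and $\cZ=\cC_f^{(j)}\in\ccC$ (where \ref{item: conflict regularity}, \ref{item: conflict codegrees}, \ref{item: conflict j=2}, \ref{item: conflict neighborhood} and \ref{item: conflict no subset} are invoked in exactly the same way, after noting $\abs\cZ\ge d^{j-\eps/100}$). The only difference is that you spell out a few of the translations (e.g.\ why $e\nevicts\cZ$ yields the hypotheses of \ref{item: conflict neighborhood}, and the $\eps$-arithmetic) that the paper leaves implicit.
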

	\begin{proof}
		First, suppose that~$\cZ\in\ccZ_0$.
		
		Then,~\ref{item: tracking set size} and~\ref{item: tracking set degrees} are immediate from~\ref{item: trackable size} and~\ref{item: trackable degrees}.
		To see~\ref{item: tracking set j=1}, note that if~$j=1$, then due to~\ref{item: trackable size} we have
		\begin{equation*}
			\abs{\cset{\set{e}\in\cZ}{v\in e}}
			\leq d
			\leq \frac{\abs\cZ}{d^{\eps}}.
		\end{equation*}
		For~\ref{item: tracking set neighborhood}, we may again use~\ref{item: trackable size} to conclude that for all~$e\in\cH$ we have
		\begin{equation*}
			\abs{\cZ\cap \cC_e^{(j)}}\leq \Delta(\cC^{(j+1)})\leq \Gamma d^{j}\leq \Gamma\abs\cZ/d^\eps\leq \abs\cZ/d^{4\eps/5}.
		\end{equation*}
		Finally, observe that~\ref{item: tracking set free} is immediate from~\ref{item: trackable no conflicts}.
		
		Now, suppose that~$\cZ\in\ccC$. Then,~$\abs\cZ\geq d^{j-\eps/100}$ due to~$\delta(\cC^{(j+1)})\geq d^{j-\eps/100}$.
		Hence,~\ref{item: tracking set size} holds and~\ref{item: conflict codegrees} yields~\ref{item: tracking set degrees}.
		Furthermore, again due to~$\abs\cZ\geq d^{j-\eps/100}$,~\ref{item: tracking set j=1} and~\ref{item: tracking set neighborhood} follow from~\ref{item: conflict j=2} and~\ref{item: conflict neighborhood}. Finally,~\ref{item: tracking set free} is a consequence of~\ref{item: conflict no subset}.
	\end{proof}
	
	\begin{lemma}\label{lemma: deterministic spread}
		Let~$j\in[\ell]$,~$\cZ\in\ccZ^{(j)}$ and~$j'\in[2\ell]$.
		Then, the following holds.
		\begin{enumerate}[label=\textup{(\roman*)}]
			\item\label{item: Z_v spreadness} $\cZ_v$ is~$(\abs{\cZ} d^{-\eps/2},d^{-1})$-spread for all~$v\in V(\cH)$;
			\item\label{item: Z_e spreadness} $\cZ_e$ is~$(\abs{\cZ} d^{-1-\eps/2},d^{-1})$-spread for all~$e\in\cH$;
			\item\label{item: Z_e,2 spreadness} $\cZ_{e,2}^{(j')}$ is~$(\abs{\cZ} d^{j'-j-\eps/2},d^{-1})$-spread for all~$e\in\cH$ with~$e\nevicts\cZ$\footnote{That we need the additional assumption that~$e$ is not an immediate evictor for~$\cZ$ will not be an issue as we later circumvent this restriction (see Lemma~\ref{lemma: double leaving sum}).};
			\item\label{item: Z_2 spreadness} $\cZ_2^{(j')}$ is~$(\abs{\cZ} d^{j'-j-\eps/2},d^{-1})$-spread;
			\item\label{item: C_e,2 spreadness} $\cC_{e,2}^{(j')}$ is~$(d^{j'-\eps/2},d^{-1})$-spread for all~$e\in\cH$;
			\item\label{item: C_e,f,2 spreadness} $\cC_{e,f,2}^{\star(j')}$ is~$(d^{j'-\eps/2},d^{-1})$-spread for all disjoint~$e,f\in\cH$.
		\end{enumerate}
	\end{lemma}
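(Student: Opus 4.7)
The plan is to verify each of the six statements by direct counting, invoking Lemma~\ref{lemma: conflict links are essentially trackable} for all bounds on the test system~$\cZ$ and conditions~\ref{item: conflict degree}--\ref{item: conflict double j=2} for bounds on the conflict system~$\cC$. Parts~(i) and~(ii) are immediate: since $\cZ_v\subseteq\cZ$, one has $\Delta_{j''}(\cZ_v)\le\Delta_{j''}(\cZ)$ for every $j''\ge 1$, while
\begin{equation*}
\abs{\cZ_v}\le\sum_{f\in\cD_v(0)}d_{\cZ}(f)\le d\cdot\abs{\cZ}/d^{1+4\eps/5}
\end{equation*}
by Lemma~\ref{lemma: conflict links are essentially trackable}\ref{item: tracking set degrees}. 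Part~(ii) is the identity $d_{\cZ_e}(T)=d_{\cZ}(T\cup\set{e})$, and the right-hand side is again bounded via Lemma~\ref{lemma: conflict links are essentially trackable}\ref{item: tracking set degrees}.

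For parts~(iii)--(vi), my unified strategy is to fix a set~$T$ of size $j''\in[0,j'-1]$ and over-count edges $E\supseteq T$ of the local-interaction hypergraph by the generating pairs $(Z,C)$ or $(C_1,C_2)$, decomposing the count according to which generator each element of~$T$ lies in. For part~(iii), splitting the sum according to $T_Z:=T\cap Z$, the number of admissible~$Z$ is bounded by $\abs{\cZ}/d^{\abs{T_Z}+4\eps/5}$ using Lemma~\ref{lemma: conflict links are essentially trackable}\ref{item: tracking set degrees}, and for each choice of~$Z$ the number of admissible $C\in\cC_e^{(c)}$ (of a fixed size $c\in[1,\ell-1]$) is bounded by adjoining to $T\setminus T_Z$ a witness $f\in Z\cap C$ for the non-empty intersection and applying condition~\ref{item: conflict codegrees} to~$\cC^{(c+1)}$, which contains $C\cup\set{e}$. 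Part~(iv) is analogous, but the hypothesis $\abs{Z\cap C}\ge 2$ supplies the intersection witnesses directly, so no auxiliary~$f$ is required. Parts~(v) and~(vi) replace the test generator~$Z$ by a second conflict; the bounds arise from pairing $\abs{\cC_e^{(c_1)}}\le\Gamma d^{c_1}$ with a codegree in~$\cC^{(c_2+1)}$ obtained by the same ``adjoin $e$'' trick. All combinatorial prefactors $\binom{j}{s}$, $\ell^{O(\ell)}$ and $\Gamma^{O(1)}$ arising from the case analysis are absorbed into a loss of $d^{\eps/3}$ via~\eqref{equation: growth relation}, leaving the required exponent $-\eps/2$.

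The main obstacle is the boundary case in which one of the generating conflicts has size~$2$, so that $\cC_e^{(1)}$ enters as a set of singletons coming from $\cC^{(2)}$: here~\ref{item: conflict codegrees} does not apply, and one must instead invoke the dedicated conditions~\ref{item: conflict j=2} and~\ref{item: conflict double j=2}. In particular, part~(vi) at the extremal uniformity $j'=2$ is essentially a restatement of~\ref{item: conflict double j=2} for the disjoint pair $e,f$, combined with the degree bound $\Delta(\cC^{(2)})\le\Gamma d$. A secondary subtlety is the degenerate sub-case $C\subseteq Z$ in~(iii), in which the ``test part'' and ``conflict part'' of $T$ partially coincide: when $C=Z$, the codegree argument above collapses, and one instead uses the assumption $e\nevicts\cZ$ together with Lemma~\ref{lemma: conflict links are essentially trackable}\ref{item: tracking set neighborhood} to obtain $\abs{\cZ\cap\cC_e^{(j)}}\le\abs{\cZ}/d^{4\eps/5}$. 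Once these two points are handled, the rest is careful but routine case analysis over the parameters $(c,s,T_Z)$.
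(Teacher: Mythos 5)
Your plan is essentially the paper's proof: parts (i) and (ii) read off Lemma~\ref{lemma: conflict links are essentially trackable}, and parts (iii)--(vi) count generating pairs with a case analysis on how the fixed set $T$ splits between the two generators; the paper packages this as the single auxiliary estimate \eqref{equation: spread general pair bound}, and you correctly isolate the same two degenerate situations (size-$2$ conflicts, and the coincidence case handled via $e\nevicts\cZ$ and Lemma~\ref{lemma: conflict links are essentially trackable}\ref{item: tracking set neighborhood}). However, two steps fail as literally written. First, adjoining a \emph{single} witness $f\in Z\cap C$ loses a factor $d^{i-1}$ where $i:=\abs{Z\cap C}$: since $\abs{C}=j'-j+i$, condition~\ref{item: conflict codegrees} applied to $(T\setminus T_Z)\cup\set{f,e}$ only gives $d^{j'-j+i-1-\abs{T\setminus T_Z}-\eps}$ admissible conflicts per $Z$, which for $i\geq 2$ exceeds the target; you must fix $i$ and adjoin the entire intersection $I\in\binom{Z}{i}$ (as the paper does, at the cost of a further $2^{\ell}$), which restores the exponent $j'-j-\abs{T\setminus T_Z}-\eps$. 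Second, the size-$2$-conflict boundary is not resolved by~\ref{item: conflict j=2} or~\ref{item: conflict double j=2}. When the conflict has size $2$ and $T\setminus T_Z=\emptyset$, fixing $Z$ first yields no saving at all; the correct move is to reverse the order of summation and pair $\Delta(\cC^{(2)})\leq\Gamma d$ (from~\ref{item: conflict degree}) with $\Delta_1(\cZ)\leq\abs{\cZ}/d^{1+4\eps/5}$, falling back on Lemma~\ref{lemma: conflict links are essentially trackable}\ref{item: tracking set neighborhood} in the extremal sub-case. Likewise, part (vi) is proved from $\Delta(\cC^{(2)})\cdot\Delta_2(\cC^{(j'+1)})$, i.e.\ from~\ref{item: conflict degree} and~\ref{item: conflict codegrees}; condition~\ref{item: conflict double j=2} counts common $\cC^{(2)}$-neighbours of $e$ and $f$, which is a different quantity from $\abs{\cC_{e,f,2}^{\star(2)}}$ and is only needed later (in Lemma~\ref{lemma: double leaving sum}). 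Both repairs are within the toolkit you have assembled, but the estimates as stated do not close.
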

	\begin{proof}

		Before we consider~\ref{item: Z_v spreadness}--\ref{item: C_e,f,2 spreadness} individually, we prove the following more general statement that is helpful for~\ref{item: Z_e,2 spreadness}--\ref{item: C_e,f,2 spreadness}.
		Let~$j\in[\ell]$,~$\cZ\in\ccZ^{(j)}$ and~$j'\in[2\ell]$ as in the statement.
		\begin{equation}\label{equation: spread general pair bound}
			\begin{minipage}[c]{0.85\textwidth}\em
				Let~$E\subseteq\cH$ with~$\abs E\in[j'-1]_0$. If~$E=\set{e}$ for some~$e\in\cH$, suppose that~$e\nevicts \cZ$. Then, the number of pairs~$(Z,C)\in\cZ\times\cC$ with~$Z\cap C\neq \emptyset$,~$\abs{Z\cup C}=j'$,~$E\subseteq Z\cup C$ and~$\abs{ C\cap (Z\cup E) }\geq 2$ is at most
				\begin{equation*}
					\frac{d^{j'-j-2\eps/3}}{d^{\abs E}}\abs\cZ.
				\end{equation*}
			\end{minipage}\ignorespacesafterend
		\end{equation}
		To see this, we first fix~$i\in[j]$ and~$E_Z,E_C\subseteq E$.
		We assume that there is at least one pair~$(Z,C)\in\cZ\times\cC$ with
		\begin{gather*}
			Z\cap C\neq \emptyset,\quad
			\abs{Z\cup C}=j',\quad
			E\subseteq Z\cup C,\quad
			\abs{ C\cap (Z\cup E) }\geq 2,\\
			\abs{Z\cap C}=i,\quad
			E_Z=E\cap Z\quad\text{and}\quad
			E_C=E\cap C
		\end{gather*}
		and show that the number~$p$ of such pairs is at most
		\begin{equation*}
			\frac{d^{j'-j-3\eps/4}}{d^{\abs E}}\abs\cZ.
		\end{equation*}
		As there were at most~$j\cdot 2^{\abs{E}}\cdot 2^{\abs{E}}\leq \ell 16^\ell\leq d^{\eps/12}$ choices for the parameters~$i$,~$E_Z$,~$E_C$, this shows~\eqref{equation: spread general pair bound}.
		For~$I\subseteq\cH$, and a hypergraph~$\cX$ with~$V(\cX)\subseteq \cH$, we define
		\begin{equation*}
			\cX_I:=\cset{X\in\cX}{I\subseteq X}.
		\end{equation*}
		Note that here, in contrast to our notation for the link, the elements of~$\cX_I$ are edges of~$\cX$ as this is more convenient for this proof.
		For an integer~$j''$, even if the set~$\cX_I$ is interpreted as a hypergraph with vertex set~$V(\cX)$, there is no ambiguity when we write~$\cX^{(j'')}_I$ since~$\cset{ X\in\cX^{(j'')} }{I\subseteq X}=\cset{ X\in\cX_I }{\abs{X}=j''}$.
		Note that~$\abs{C}=j'-j+i$.
		If~$1\leq\abs{E_Z}\leq j-1$, then Lemma~\ref{lemma: conflict links are essentially trackable}~\ref{item: tracking set degrees} with~\ref{item: conflict codegrees} entails
		\begin{align*}
			p
			&\leq \sum_{Z\in \cZ_{E_Z}}\sum_{I\in\unordsubs{Z}{i}}\abs{\cC^{(j'-j+i)}_{I\cup E_C}}
			\leq \Delta_{\abs{E_Z}}(\cZ)\cdot 2^\ell\cdot\Delta_{i+\abs{E}-\abs{E_Z}}(\cC^{(j'-j+i)})\\
			&\leq \frac{\abs\cZ}{d^{\abs{E_Z}+4\eps/5}}\cdot 2^\ell \cdot d^{j'-j-\abs{E}+\abs{E_Z}}
			\leq \frac{d^{j'-j-3\eps/4}}{d^{\abs E}}\abs\cZ,
		\end{align*}
		where we used that~$\abs{I\cup E_C}\geq i+\abs{E_C\setminus Z}=i+\abs{E\setminus Z}=i+\abs{E}-\abs{E_Z}$ for all~$Z\in\cZ_{E_Z}$ and~$I\in\unordsubs{Z}{i}$.
		If~$2\leq \abs{E_C}\leq j'-j+i-1$, then Lemma~\ref{lemma: conflict links are essentially trackable}~\ref{item: tracking set degrees} with~\ref{item: conflict codegrees} and additionally Lemma~\ref{lemma: conflict links are essentially trackable}~\ref{item: tracking set size} if~$\abs{E}-\abs{E_C}+i=j$ entails
		\begin{align*}
			p
			&\leq \sum_{C\in\cC^{(j'-j+i)}_{E_C}}\sum_{I\in\unordsubs{C}{i}}\abs{\cZ_{E_Z\cup I}}
			\leq \Delta_{\abs{E_C}}(\cC^{(j'-j+i)})\cdot 2^\ell\cdot \Delta_{\abs{E}-\abs{E_C}+i}(\cZ)\\
			&\leq d^{j'-j+i-\abs{E_C}-\eps}\cdot 2^\ell \cdot\frac{\abs\cZ}{d^{\abs{E}-\abs{E_C}+i-\eps/5}}
			\leq \frac{d^{j'-j-3\eps/4}}{d^{\abs E}}\abs\cZ.
		\end{align*}
		
		It remains to consider the cases where~$\abs{E_Z}\in\set{0,j}$ and~$\abs{E_C}\in\set{0,1,j'-j+i}$.
		Since we assume~$p\geq 1$ and since for~$p$, we only count pairs~$(Z,C)$ where~$Z\cap C\neq \emptyset$ and~$\abs{C\cap (E\cup Z)}\geq 2$, we may exclude the case where~$\abs{E_Z}=0$ and~$\abs{E_C}=j'-j+i$ and the case where~$\abs{E_Z}=j$ and~$\abs{E_C}\leq 1$.
		Furthermore, since~$\abs{E}\leq j'-1$, we may also exclude the case where~$\abs{E_Z}=j$ and~$\abs{E_C}=j'-j+i$.
		It remains to consider the case where~$\abs{E}=\abs{E_Z}=\abs{E_C}=0$ and the case where~$\abs{E_Z}=0$ and~$\abs{E_C}=1$.
		
		Suppose~$\abs{E}=\abs{E_Z}=\abs{E_C}=0$.
		Since we assume~$p\geq 1$ and only count pairs~$(Z,C)$ where~$\abs{C\cap (E\cup Z)}\geq 2$, we have~$i\geq 2$.
		Furthermore, since all~$Z\in\cZ$ are~$\cC$-free by Lemma~\ref{lemma: conflict links are essentially trackable}~\ref{item: tracking set free}, we also have~$j'\geq j+1$.
		From~\ref{item: conflict codegrees}, we obtain
		\begin{equation*}
			p
			\leq \sum_{Z\in\cZ_{E_Z}}\sum_{I\in\unordsubs{Z}{i}}\abs{\cC^{(j'-j+i)}_{I\cup E_C}}
			\leq \abs\cZ\cdot 2^\ell\cdot\Delta_{i}(\cC^{(j'-j+i)})
			\leq \abs\cZ\cdot 2^\ell\cdot d^{j'-j-\eps}
			\leq \frac{d^{j'-j-3\eps/4}}{d^{\abs{E}}}\abs\cZ.
		\end{equation*}
		
		Finally, suppose~$\abs{E_Z}=0$ and~$\abs{E_C}=1$.
		Here, we also have~$j'\geq j+1$.
		Note that by assumption, the single element~$e$ of~$E_C$ is not an immediate evictor for~$\cZ$.
		If~$j'=j+1$ and~$i=j$, then for all the pairs~$(Z,C)$ that we count, we have~$C=Z\cup\set{e}$ and hence Lemma~\ref{lemma: conflict links are essentially trackable}~\ref{item: tracking set neighborhood} entails
		\begin{equation*}
			p
			\leq \abs{\cZ \cap \cC_{e}^{(j)} }
			\leq \frac{\abs\cZ}{d^{4\eps/5}}
			\leq\frac{d^{j'-j-3\eps/4}}{d^{\abs{E}}}\abs\cZ.
		\end{equation*}
		If~$j'=j+1$ and~$i\leq j-1$, then Lemma~\ref{lemma: conflict links are essentially trackable}~\ref{item: tracking set degrees} with~\ref{item: conflict degree} entails
		\begin{align*}
			p
			&\leq \sum_{C\in\cC^{(j'-j+i)}_{E_C}}\sum_{I\in\unordsubs{C}{i}}\abs{\cZ_{E_Z\cup I}}
			\leq \Delta(\cC^{(j'-j+i)})\cdot 2^\ell \cdot \Delta_i(\cZ)\\
			&\leq \Gamma d^{j'-j+i-1}\cdot 2^\ell \cdot\frac{\abs{\cZ}}{d^{i+4\eps/5}}
			\leq \frac{d^{j'-j-3\eps/4}}{d^{\abs E}}\abs\cZ.
		\end{align*}
		If~$j'\geq j+2$, then, due to~\ref{item: conflict codegrees},
		\begin{equation*}
			p
			\leq \sum_{Z\in\cZ_{E_Z}}\sum_{I\in\unordsubs{Z}{i}}\abs{\cC^{(j'-j+i)}_{I\cup E_C}}
			\leq \abs\cZ\cdot 2^\ell\cdot \Delta_{i+1}(\cC^{(j'-j+i)})
			\leq \abs\cZ\cdot 2^\ell\cdot d^{j'-j-1-\eps}
			\leq \frac{d^{j'-j-3\eps/4}}{d^{\abs E}}\abs\cZ.
		\end{equation*}
		This completes the proof of~\eqref{equation: spread general pair bound}.
		Let us now prove~\ref{item: Z_v spreadness}--\ref{item: C_e,f,2 spreadness} individually.
		\begin{enumerate}[label=\textup{(\roman*)}, itemindent=*, leftmargin=0pt, itemsep=\medskipamount]
			\item Fix~$v\in V(\cH)$ and~$E\subseteq\cH$ with~$\abs{E}\in[j-1]_0$.
			Suppose first that~$\abs{E}=0$.
			If~$j=1$, then as a consequence of Lemma~\ref{lemma: conflict links are essentially trackable}~\ref{item: tracking set j=1} and otherwise as a consequence of~$\Delta(\cH)\leq d$ and Lemma~\ref{lemma: conflict links are essentially trackable}~\ref{item: tracking set degrees}, we obtain
			\begin{equation*}
				d_{\cZ_v}(E)
				\leq \frac{\abs\cZ}{d^{4\eps/5}}
				\leq \frac{\abs{\cZ}d^{-\eps/2}}{d^{\abs{E}}}.
			\end{equation*}
			Suppose that~$\abs{E}\geq 1$.
			Then Lemma~\ref{lemma: conflict links are essentially trackable}~\ref{item: tracking set degrees} yields
			\begin{equation*}
				d_{\cZ_v}(E)
				\leq d_{\cZ}(E)
				\leq \Delta_{\abs{E}}(\cZ)
				\leq \frac{\abs{\cZ}}{d^{\abs{E}+4\eps/5}}
				\leq \frac{\abs{\cZ} d^{-\eps/2}}{d^{\abs{E}}}.
			\end{equation*}
			
			\item Fix~$e\in \cH$ and~$E\subseteq\cH$ with~$\abs{E}\in[j-2]_0$.
			If~$e\in E$, then~$d_{\cZ_e}(E)=0$.
			Hence, we may assume that~$e\notin E$.
			Then, due to Lemma~\ref{lemma: conflict links are essentially trackable}~\ref{item: tracking set degrees},
			\begin{equation*}
				d_{\cZ_e}(E)
				=d_{\cZ}(E\cup\set{e})
				\leq \Delta_{\abs{E}+1}(\cZ)
				\leq \frac{\abs{\cZ}}{d^{\abs{E}+1+4\eps/5}}
				\leq\frac{\abs{\cZ} d^{-1-\eps/2}}{d^{\abs{E}}}.
			\end{equation*}
			
			\item Fix~$e\in\cH$ that is not an immediate evictor for~$\cZ$ and~$E\subseteq\cH$ with~$\abs{E}\in[j'-1]_0$.
			If~$e\notin E$, then the number of pairs~$(Z,C)\in\cZ\times\cC$ with~$Z\cap C\neq\emptyset$,~$\abs{Z\cup C}\in \set{j',j'+1}$,~$E\cup\set{e}\subseteq Z\cup C$ and~$\abs{C\cap (Z\cup E\cup\set{e})}\geq 2$ is an upper bound for~$d_{\cZ_{e,2}^{(j')}}(E)$ and so~\eqref{equation: spread general pair bound} with~$E\cup\set{e}$ playing the role of~$E$ yields the desired bound.
			
			If~$e\in E$, then the number of~$(Z,C)\in\cZ\times\cC$ with~$Z\cap C\neq\emptyset$,~$\abs{Z\cup C}=j'$,~$E\subseteq Z\cup C$ and~$\abs{C\cap (Z\cup E)}\geq 2$ is an upper bound for~$d_{\cZ_{e,2}^{(j')}}(E)$ and so also in this case the desired bound follows from~\eqref{equation: spread general pair bound}.
			
			\item Fix~$E\subseteq\cH$ with~$\abs{E}\in[j'-1]_0$.
			The number of pairs~$(Z,C)\in\cZ\times\cC$ with~$Z\cap C\neq\emptyset$,~$\abs{Z\cup C}=j'$,~$E\subseteq Z\cup C$ and~$\abs{C\cap (Z\cup E)}\geq 2$ is an upper bound for~$d_{\cZ_{2}^{(j')}}(E)$ and so~\eqref{equation: spread general pair bound} yields the desired bound if~$E$ does not contain an immediate evictor for~$\cZ$.
			
			Due to~\ref{item: conflict no subset}, no edge of~$\cZ_2$ contains an edge of~$\cH$ that is an immediate evictor for~$\cZ$.
			Hence, when considering a pair~$(Z,C)\in\cZ\times\cC$ with~$\abs{Z\cap C}\geq 2$,~$\abs{Z\cup C}=j'$ and~$g\nevicts\cZ$ for all~$g\in C\setminus Z$, the union~$Z\cup C$ does not contain an immediate evictor for~$\cZ$.
			Thus, no edge of~$\cZ_2$ contains an immediate evictor for~$\cZ$, so whenever~$E$ contains an immediate evictor for~$\cZ$, we have~$d_{\cZ_2^{(j')}}(E)=0$.
			
			\item Fix~$e\in\cH$ and~$E\subseteq\cH$ with~$\abs{E}\in[j'-1]_0$.
			The number of pairs~$(C_1,C_2)\in\cC_e\times\cC_e$ with~$C_1\neq C_2$,~$C_1\cap C_2\neq \emptyset$,~$\abs{C_1\cup C_2}=j'$ and~$E\subseteq C_1\cup C_2$ is an upper bound for~$d_{\cC_{e,2}^{(j')}}(E)$.
			Note that for any such pair, the semiconflicts~$C_1$ and~$C_2$ both have size at least~$2$ as otherwise, one would be a subset of the other which, for distinct~$C_1,C_2\in\cC_e$, contradicts~\ref{item: conflict no subset}.
			Fix~$j_1,j_2\in[\ell]_2$ and let~$p$ denote the number of such pairs~$(C_1,C_2)$ with additionally~$\abs{C_1}=j_1$ and~$\abs{C_2}=j_2$.
			Since we have~$\ell^2\leq d^{\eps/6}$, it suffices to show that
			\begin{equation*}
				p\leq \frac{d^{j'-2\eps/3}}{d^{\abs{E}}}
			\end{equation*}
			whenever~$p\geq 1$.
			Hence, assume that~$p\geq 1$.
			
			If~$\abs{E}=0$, then, since we only count pairs~$(C_1,C_2)$ with~$C_1\neq C_2$ Condition~\ref{item: conflict no subset} implies~$j_1\leq j'-1$, hence we have~$j_1+j_2-j'+1\leq j_2$, and so we by~\ref{item: conflict degree} and~\ref{item: conflict codegrees}, we obtain
			\begin{align*}
				p
				&\leq \sum_{C_1\in \cC_e^{(j_1)}}\sum_{I\in\unordsubs{C_1}{j_1+j_2-j'}}\abs{\cC^{(j_2+1)}_{I\cup\set{e}}}
				\leq \Delta(\cC^{(j_1+1)})\cdot 2^\ell \cdot \Delta_{j_1+j_2-j'+1}(\cC^{(j_2+1)})\\
				&\leq \Gamma d^{j_1}\cdot 2^\ell \cdot d^{j'-j_1-\eps}
				\leq \frac{d^{j'-2\eps/3}}{d^{\abs{E}}}.
			\end{align*}
			
			No edge of~$\cC_{e,2}$ contains~$e$, so the assumption~$p\geq 1$ entails~$e\notin E$. Hence, if~$\abs{E}\geq 1$, then the number of pairs~$(C_1,C_2)\in \cC_e^{(j_1)}\times \cC^{(j_2+1)}$ with~$C_1\cap C_2\neq\emptyset$,~$\abs{C_1\cup C_2}=j'+1$,~$E\cup\set{e}\subseteq C_1\cup C_2$ and~$\abs{C_2\cap (C_1\cup E\cup\set{e})}\geq 2$ is an upper bound for~$p$ and so~\eqref{equation: spread general pair bound} with~$\cC_e^{(j_1)}$ playing the role of~$\cZ$ yields the desired bound.
			
			\item Fix disjoint~$e,f\in\cH$ and~$E\subseteq\cH$ with~$\abs{E}\in[j'-1]_0$.
			If~$\abs{E}=0$, then, due to~\ref{item: conflict degree} and~\ref{item: conflict codegrees},
			\begin{align*}
				d_{\cC_{e,f,2}^{\star}}(E)
				&\leq \sum_{C_1\in \cC_e^{(1)}}\abs{ \cset{ C_2\in \cC_f^{(j')} }{ C_1\subseteq C_2 } }
				\leq \Delta(\cC^{(2)})\cdot \Delta_2(\cC^{(j'+1)})\\
				&\leq \Gamma d\cdot d^{j'-1-\eps}
				\leq d^{j'-\eps/2}.
			\end{align*}
			
			Clearly, if~$f\in E$, we have~$d_{\cC_{e,f,2}^{\star}}(E)=0$.
			If~$\abs{E}\geq 1$ and~$f\notin E$, then, by~\ref{item: conflict codegrees},
			\begin{equation*}
				d_{\cC_{e,f,2}^\star}(E)\leq d_{\cC^{(j')}}(E\cup f)\leq \Delta_{\abs{E}+1}(\cC^{(j'+1)})\leq d^{j'-\abs{E}-\eps},
			\end{equation*}
			which completes the proof.\qedhere
		\end{enumerate}
	\end{proof}
	
	For an integer~$i\geq 0$, let~$\cA(i)$\index{A(i)@$\cA(i)$} denote the \textit{availability} event that we still have many edges available at the end of step~$i$, or more precisely~$\abs\cH(i)\geq d^{1-\eps/(48\ell)}n/k$ (observe that this is rather a rough lower bound, which not even depends on~$i$).
	If~$\cA(i)$ occurs for some~$i\geq 0$, then there were many choices for the randomly selected edges~$e(1),\ldots,e(i+1)$.
	The following statement is a direct consequence of this observation.
	
	\begin{lemma}\label{lemma: edge selection}
		Let~$i\geq 1$ and~$E\subseteq \cH$.
		Then,
		\begin{equation*}
			\pr{\cA(i-1)\cap \set{E\subseteq \cM(i)}}\leq \paren[\bigg]{\frac{d^{\eps/(48\ell)}}{d}}^{\abs{E}}.
		\end{equation*}
	\end{lemma}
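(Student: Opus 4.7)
The plan is to union-bound the event over all ordered sequences of steps at which the edges of $E$ enter the matching, and then for each fixed sequence bound the joint probability by iteratively applying the tower property together with the lower bound on $\abs\cH(\cdot)$ encoded by $\cA(i-1)$. Concretely, set $s:=\abs{E}$ and fix an arbitrary ordering $(e_1,\ldots,e_s)$ of $E$. Then $\set{E\subseteq\cM(i)}$ is the disjoint union, over injections $\sigma\colon [s]\to [i]$, of the events $\bigcap_{r=1}^{s}\set{e(\sigma(r))=e_r}$. Since any matching in $\cH$ has at most $n/k$ edges, the lemma is vacuous unless $i\leq n/k$, so the number of relevant injections is at most $(n/k)^{s}$.

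The key observation is that $\cH(0)\supseteq\cH(1)\supseteq\cdots$, so $\abs\cH(\cdot)$ is non-increasing and hence $\cA(i-1)$ implies $\cA(j)$ for every $j\leq i-1$. In particular, for any injection $\sigma$,
\begin{equation*}
\ind_{\cA(i-1)}\prod_{r=1}^{s}\ind_{\set{e(\sigma(r))=e_r}}\leq \prod_{r=1}^{s}\ind_{\cA(\sigma(r)-1)\cap\set{e(\sigma(r))=e_r}}.
\end{equation*}
Reindex so that $\sigma(1)<\cdots<\sigma(s)$, and iterate the tower property, conditioning successively on $\cF(\sigma(s)-1),\cF(\sigma(s-1)-1),\ldots,\cF(\sigma(1)-1)$. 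Each factor contributes
\begin{equation*}
\ind_{\cA(\sigma(r)-1)}\cpr{e(\sigma(r))=e_r}{\cF(\sigma(r)-1)}\leq \frac{\ind_{\cA(\sigma(r)-1)}}{\abs\cH(\sigma(r)-1)}\leq \frac{k}{n\,d^{1-\eps/(48\ell)}},
\end{equation*}
using that $\ind_{\cA(\sigma(r)-1)}$ is $\cF(\sigma(r)-1)$-measurable and that conditionally on $\cF(\sigma(r)-1)$ the probability of selecting any particular edge is at most $1/\abs\cH(\sigma(r)-1)$.

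Multiplying the $s$ factors and summing over the at most $(n/k)^{s}$ choices of $\sigma$ yields
\begin{equation*}
\pr{\cA(i-1)\cap\set{E\subseteq\cM(i)}}\leq \paren[\bigg]{\frac{n}{k}}^{s}\paren[\bigg]{\frac{k}{n\,d^{1-\eps/(48\ell)}}}^{s}=\paren[\bigg]{\frac{d^{\eps/(48\ell)}}{d}}^{s},
\end{equation*}
as required. The argument presents no real obstacle; the only conceptual point requiring attention is the monotonicity-based reduction from $\cA(i-1)$ to $\cA(\sigma(r)-1)$ that enables the tower factorization, and the verification that the availability threshold $d^{1-\eps/(48\ell)}n/k$ in the definition of $\cA$ is exactly calibrated to produce the exponent $d^{\eps/(48\ell)}/d$ in the conclusion.
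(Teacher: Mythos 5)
Your proposal is correct and follows essentially the same route as the paper's proof: a union bound over the injections assigning the edges of $E$ to steps, the monotonicity reduction from $\cA(i-1)$ to $\cA(i_{j'}-1)$, and an iterated conditioning giving the factor $k/(d^{1-\eps/(48\ell)}n)$ per edge, multiplied by the at most $(n/k)^{\abs{E}}$ choices of injection. No gaps.
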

	\begin{proof}
		We employ a union bound over all choices of times at which the elements of~$E$ may be chosen as an element of~$\cM(i)$.
		
		Let~$j:=\abs{E}$, fix an injection~$\sigma\colon E\rightarrow[i]$ and consider an ordering~$i_1< \ldots <i_j$ of the image of~$\sigma$.
		For~$j'\in[j]$, let~$e_{j'}:=\sigma^{-1}(i_{j'})$ and~$\cE(j'):=\cA(i_{j'}-1)\cap \set{e(i_{j'})=e_{j'}}$.
		We obtain
		\begin{align*}
			\pr[\Big]{\cA(i-1)\cap \bigcap_{j'\in[j]}\set{e(i_{j'})=e_{j'}}}&\leq \pr[\Big]{\bigcap_{j'\in[j]} \cE(j') }=\prod_{j'\in[j]} \cpr[\Big]{\cE(j')}{\bigcap_{j''\in[j'-1]} \cE(j'') }\\
			&\leq \prod_{j'\in[j]} \cpr[\Big]{e(i_{j'})=e_{j'}}{\cA(i_{j'}-1)\cap\bigcap_{j''\in[j'-1]} \cE(j'') }\\
			&\leq \paren[\bigg]{\frac{k}{d^{1-\eps/(48\ell)}n}}^{j}.
		\end{align*}
		Since~$i^{j}\leq (n/k)^j$, a union bound over all possible choices of~$\sigma$ completes the proof.
	\end{proof}
	
	Using the moment based approach used in~\cite[Proof of Theorem 3.5]{BW:19}, Lemma~\ref{lemma: edge selection} yields the following statement.
	\begin{lemma}\label{lemma: probabilistic embedding}
		Let~$j\in[2\ell]$.
		Suppose~$\cX$ is a~$(d_0,1/d)$-spread~$j$-graph with~$V(\cX)\subseteq\cH$ and let~$i\in[m]$ and~$s\in[j]_0$ with~$s\geq 1$ if~$d_0<d^j$.
		Then,
		\begin{equation*}
			\pr[\bigg]{\cA(i-1)\cap \set[\bigg]{\abs{\cX^{[s]}}(i)\geq \frac{d_0}{d^{j-s-\eps/12}}}}\leq \exp(-d^{\eps/(200\ell)}).
		\end{equation*}
	\end{lemma}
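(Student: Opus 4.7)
The plan is to apply the $t$-th moment method combined with Lemma~\ref{lemma: edge selection}. The case $s=j$ is trivial since $\abs{\cX^{[j]}}(i)\le\abs{\cX}\le d_0$, so I assume $0\le s\le j-1$. The starting observation is that every $X\in\cX^{[s]}(i)$ has a unique $(j-s)$-subset $Y=X\cap\cM(i)$, whence
\begin{equation*}
\abs{\cX^{[s]}}(i) \;\le\; \sum_{Y\in\unordsubs{\cH}{j-s}} d_{\cX}(Y)\cdot\ind_{\set{Y\subseteq\cM(i)}}.
\end{equation*}
Raising this to the $t$-th power, multiplying by $\ind_{\cA(i-1)}$, taking expectations, and using Lemma~\ref{lemma: edge selection} with $E=Y_1\cup\cdots\cup Y_t$ gives
\begin{equation*}
\ex*{\abs{\cX^{[s]}}(i)^t\ind_{\cA(i-1)}} \;\le\; \sum_{Y_1,\ldots,Y_t} \prod_{k=1}^t d_{\cX}(Y_k)\cdot(q/d)^{\abs{Y_1\cup\cdots\cup Y_t}},
\end{equation*}
where $q:=d^{\eps/(48\ell)}$ and each $Y_k$ ranges over $(j-s)$-subsets of $\cH$.

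I would then bound the right-hand side iteratively. Given $F_{k-1}:=Y_1\cup\cdots\cup Y_{k-1}$, decompose $Y_k$ into $A:=Y_k\cap F_{k-1}$ of size $a$ and $B:=Y_k\setminus F_{k-1}$, and swap the order of summation over $B$ and $X\in\cX$ containing $A\cup B$. Since $B\subseteq X\setminus(A\cup F_{k-1})$, a set of size at most $j-a$,
\begin{equation*}
\sum_B d_{\cX}(A\cup B) \;\le\; \binom{j-a}{j-s-a}\,d_{\cX}(A) \;\le\; 4^\ell\Delta_a(\cX) \;\le\; \frac{4^\ell d_0}{d^a},
\end{equation*}
where the last step uses the $(d_0,1/d)$-spreadness of $\cX$, with the hypothesis $d_0\ge d^j$ handling the edge case $s=0$, $a=j$. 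Summing over $A\subseteq F_{k-1}$ of size $a$ and weighting by $(q/d)^{j-s-a}$ yields the per-step bound
\begin{equation*}
\sum_{Y_k} d_{\cX}(Y_k)(q/d)^{\abs{Y_k\setminus F_{k-1}}} \;\le\; \frac{4^\ell d_0}{d^{j-s}}\sum_{a=0}^{j-s}\binom{\abs{F_{k-1}}}{a}q^{j-s-a}.
\end{equation*}

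Setting $t:=\lceil d^{\eps/(100\ell)}\rceil$ forces $\abs{F_{k-1}}\le t\ell\le q$ (using $\ell^\ell\le d^{\eps/1000}$), so each summand satisfies $\binom{\abs{F_{k-1}}}{a}q^{j-s-a}\le q^{j-s}$. The $k$-th factor is thus at most $d^{\eps/500}d_0q^{j-s}/d^{j-s}$ after absorbing the constant $4^\ell(j-s+1)$. Iterating over $t$ steps and applying Markov's inequality with threshold $d_0/d^{j-s-\eps/12}$ yields
\begin{equation*}
\pr*{\cA(i-1)\cap\set[\big]{\abs{\cX^{[s]}}(i)\ge d_0/d^{j-s-\eps/12}}} \;\le\; \bigl(d^{\eps/500}q^{j-s}/d^{\eps/12}\bigr)^t \;\le\; d^{-t\eps/25} \;\le\; \exp(-d^{\eps/(200\ell)})
\end{equation*}
for $d$ large, using $q^{j-s}\le q^{2\ell}=d^{\eps/24}$ and $t\log d\gg d^{\eps/(200\ell)}$.

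The main obstacle I expect is the careful bookkeeping in the iterative step: handling all $a\in\set{0,\ldots,j-s}$ uniformly (with $a=0$ using $\Delta_0(\cX)=\abs{\cX}\le d_0$, and $s=0$ paired with $a=j$ invoking the hypothesis $d_0\ge d^j$), and ensuring the cumulative constants $4^\ell(j-s+1)$ per step stay below $d^{\eps/500}$ after $t\approx d^{\eps/(100\ell)}$ iterations. A second subtlety is that the restriction $\abs{F_{k-1}}\le q$ is what allows the new-edge weight $(q/d)^{j-s-a}$ to dominate the counting factor $\binom{\abs{F_{k-1}}}{a}$; this caps $t$ at roughly $q/\ell$ and thereby determines the final exponent $\eps/(200\ell)$ in the tail bound.
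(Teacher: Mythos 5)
Your proposal is correct and follows essentially the same route as the paper: bound $\ind_{\cA(i-1)}\abs{\cX^{[s]}}(i)$ by the sum over $(j-s)$-subsets of $\cH$ weighted by their $\cX$-degrees, take a high ($t\approx d^{\Theta(\eps/\ell)}$) moment, control the probability of each union of matched parts via Lemma~\ref{lemma: edge selection}, run the same iterative estimate using the $(d_0,1/d)$-spreadness (with the hypothesis $d_0\geq d^j$ covering the $s=0$, $a=j$ case exactly as the paper's inequality~\eqref{equation: stronger spreadness} does), and finish with Markov. The only differences are cosmetic bookkeeping choices (your decomposition $Y_k=A\cup B$ versus the paper's sum over pairs $(X_r,M_r)$ and subsets $N_r$, and a slightly different choice of $t$), plus a harmless constant slip where the per-step ratio is $d^{-\eps(1/24-1/500)}$ rather than $d^{-\eps/25}$, which does not affect the conclusion.
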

	\begin{proof}
		The moments of the random variable~$\ind_{\cA(i-1)}\abs{\cX^{[s]}}(i)$ depend on which unions of subsets of edges of~$\cX$ form a subset of~$\cM(i)$.
		Using Lemma~\ref{lemma: edge selection} to bound the probability that such a union is a subset of~$\cM(i)$ and the spreadness of~$\cX$ to see that there are not too many small unions, for sufficiently large~$r$, we obtain a suitable upper bound for the~$r$-th moment of~$\ind_{\cA(i-1)}\abs{\cX^{[s]}}(i)$.
		This then allows us to obtain the desired upper bound for
		\begin{equation*}
			\pr[\bigg]{\cA(i-1)\cap \set[\bigg]{\abs{\cX^{[s]}}(i)\geq \frac{d_0}{d^{j-s-\eps/12}}}}=\pr[\bigg]{\ind_{\cA(i-1)}\abs{\cX^{[s]}}(i)\geq \frac{d_0}{d^{j-s-\eps/12}}}
		\end{equation*}
		as a consequence of Markov's inequality.
		
		Let us turn to the details.
		First, note the following.
		The~$(d_0,1/d)$-spreadness of~$\cX$ guarantees~$\Delta_{j'}(\cX)\leq d_0/d^{j'}$ for all~$j'\in[j-1]_0$ and if~$s=0$, then we have~$d^j\leq d_0$. Thus, for all~$j'\in[j-s]_0$ we obtain
		\begin{equation}\label{equation: stronger spreadness}
			\Delta_{j'}(\cX)\leq \frac{d_0}{d^{j'}}.
		\end{equation}
		
		To handle the relevant intersections of edges of~$\cX$ with the matching~$\cM(i)$ we introduce
		\begin{equation*}
			\cX_{j-s}:=\cset[\bigg]{(X,M)\in \cX\times \unordsubs{\cH}{j-s}}{M\subseteq X}.
		\end{equation*}
		If~$X_r$ for an integer~$r\geq 1$ denotes a pair in~$\cX_{j-s}$, we use~$M_r$ to denote the second component of~$X_r$.
		For all integers~$r\geq 1$, Lemma~\ref{lemma: edge selection} implies
		\begin{equation}\label{equation: moments to unions}
			\begin{aligned}
				\ex{(\ind_{\cA(i-1)}\abs{\cX^{[s]}}(i))^r}&\leq\ex[\Big]{\ind_{\cA(i-1)}\paren[\Big]{\sum_{X_1\in \cX_{j-s}}\ind_{\set{M_1\subseteq \cM(i)}}}^r}\\
				&= \sum_{X_1,\ldots,X_r\in \cX_{j-s}} \ex[\Big]{\ind_{\cA(i-1)}\prod_{r'\in[r]}\ind_{\set{M_{r'}\subseteq \cM(i)}}}\\
				&\leq \sum_{X_1,\ldots,X_r\in \cX_{j-s}}  \frac{d^{\eps r(j-s)/(48\ell)}}{d^{\abs{\bigcup_{r'\in[r]} M_{r'}}}}
				\leq \sum_{X_1,\ldots,X_r\in \cX_{j-s}}  \frac{d^{\eps r/24}}{d^{\abs{\bigcup_{r'\in[r]} M_{r'}}}}.
			\end{aligned}
		\end{equation}
		Using~\eqref{equation: stronger spreadness} with~$j'=0$, we obtain
		\begin{equation}\label{equation: moment induction start}
			\sum_{X_1\in\cX_{j-s}} \frac{1}{d^{\abs{M_1}}}\leq \frac{\Delta_0(\cX)\cdot \binom{j}{j-s}}{d^{j-s}}\leq 4^{\ell}\frac{d_0}{d^{j-s}}.
		\end{equation}
		Furthermore, for all integers~$r\geq 2$, we have
		\begin{equation*}
			\sum_{X_1,\ldots,X_r\in \cX_{j-s}} \frac{1}{d^{\abs{\bigcup_{r'\in[r]} M_{r'}}}}= \sum_{X_1,\ldots,X_{r-1}\in \cX_{j-s}} \frac{1}{d^{\abs{\bigcup_{r'\in[r-1]} M_{r'}}}} \sum_{X_r\in\cX_{j-s}} \frac{d^{\abs{M_r\cap \bigcup_{r'\in[r-1]} M_{r'}}}}{d^{j-s}}.
		\end{equation*}
		For all~$X_1,\ldots,X_{r-1}\in\cX_{j-s}$ and~$M:=\bigcup_{r'\in[r-1]} M_{r'}$, exploiting again~\eqref{equation: stronger spreadness} for appropriate values of~$j'$, we obtain
		\begin{align*}
			\sum_{X_r\in\cX_{j-s}} \frac{d^{\abs{M_r\cap M}}}{d^{j-s}}
			&\leq \sum_{\substack{N_r\subseteq M\colon\\ \abs{N_r}\leq j-s}}\sum_{\substack{X_r\in\cX_{j-s}\colon\\ N_r\subseteq M_r}} \frac{d^{\abs{N_r}}}{d^{j-s}}
			\leq \sum_{\substack{N_r\subseteq M\colon\\ \abs{N_r}\leq j-s}} \frac{\Delta_{\abs{N_r}}(\cX)\cdot \binom{j-\abs{N_r}}{j-s-\abs{N_r}} \cdot d^{\abs{N_r}}}{d^{j-s}}\\
			&\leq \sum_{\substack{N_r\subseteq M\colon\\ \abs{N_r}\leq j-s}} 4^{\ell}\frac{d_0}{d^{j-s}}
			\leq  (4\ell r)^{2\ell}\frac{d_0}{d^{j-s}}.
		\end{align*}
		Thus, for all integers~$r\geq 2$,
		\begin{equation}\label{equation: moment induction step}
			\sum_{X_1,\ldots,X_r\in \cX_{j-s}} \frac{1}{d^{\abs{\bigcup_{r'\in[r]} M_{r'}}}}\leq (4\ell r)^{2\ell}\frac{d_0}{d^{j-s}}\sum_{X_1,\ldots,X_{r-1}\in \cX_{j-s}} \frac{1}{d^{\abs{\bigcup_{r'\in[r-1]} M_{r'}}}}.
		\end{equation}
		Induction over~$r$ combining~\eqref{equation: moment induction start} and~\eqref{equation: moment induction step} shows that for all integers~$r\geq 1$, we have
		\begin{equation*}
			\sum_{X_1,\ldots,X_r\in \cX_{j-s}} \frac{1}{d^{\abs{\bigcup_{r'\in[r]} M_{r'}}}}\leq (4\ell r)^{2\ell r}\frac{d_0^r}{d^{r(j-s)}}.
		\end{equation*}
		With~\eqref{equation: moments to unions}, this yields
		\begin{equation*}
			\ex{(\ind_{\cA(i-1)}\abs{\cX^{[s]}}(i))^r}\leq (4\ell r)^{2\ell r}\frac{d_0^r}{d^{r(j-s-\eps/24)}}.
		\end{equation*}
		Markov's inequality entails
		\begin{equation*}
			\pr[\bigg]{\ind_{\cA(i-1)}\abs{\cX^{[s]}}(i)\geq \frac{d_0}{d^{j-s-\eps/12}}}=\pr[\bigg]{(\ind_{\cA(i-1)}\abs{\cX^{[s]}}(i))^r\geq \frac{d_0^r}{d^{r(j-s-\eps/12)}}}\leq \frac{(4\ell r)^{2\ell r}}{d^{\eps r/24}}
		\end{equation*}
		and for~$r=d^{\eps/(200\ell)}$, we obtain
		\begin{equation*}
			\frac{(4\ell r)^{2\ell r}}{d^{\eps r/24}}
			= \paren[\bigg]{\frac{(4\ell r)^{2\ell}}{d^{\eps/24}}}^r
			\leq \exp(-r),
		\end{equation*}
		which completes the proof.
	\end{proof}
	For~$i\geq 0$, we introduce the following \textit{spreadness} event~$\cS(i)$ that occurs whenever relevant configurations are spread out at the end of step~$i$.
	\begin{definition}[$\cS(i)$]\index{S(i)@$\cS(i)$}\label{definition: spreadness}
		For~$i\geq 0$, let~$\cS(i)$ denote the event that for all~$j\in[\ell]$ and~$\cZ\in\ccZ^{(j)}$, the following holds.
		\begin{enumerate}[label=(\roman*)]
			\item $\abs{\cZ_{v}^{[s]}}(i)\leq d^{s-j-\eps/3}\abs{\cZ}$ for all~$v\in V(\cH)$ and~$s\in[\ell]$;
			\item $\abs{\cZ_{e}^{[s]}}(i)\leq d^{s-j-\eps/3}\abs{\cZ}$ for all~$e\in\cH$ and~$s\in[\ell]_0$ with~$s\geq \ind_{\ccC}(\cZ)$;
			\item $\abs{\cZ_{e,2}^{[s]}}(i)\leq d^{s-j-\eps/3}\abs{\cZ}$ for all~$e\in\cH$ with~$e\nevicts\cZ$ and~$s\in[\ell]$;
			\item $\abs{\cZ_2^{[s]}}(i)\leq d^{s-j-\eps/3}\abs{\cZ}$ for all~$s\in[\ell]_2$;
			\item $\abs{\cC_{e,2}^{[s]}}(i)\leq d^{s-\eps/3}$ for all~$e\in\cH$ and~$s\in[\ell-1]$;
			\item $\abs{\cC_{e,f,2}^{\star[1]}}(i)\leq d^{1-\eps/3}$ for all disjoint~$e,f\in\cH$.
		\end{enumerate}
	\end{definition}
	Combining Lemmas~\ref{lemma: deterministic spread} and \ref{lemma: probabilistic embedding}, we conclude our observations in this section with the following statement showing that spreadness typically persists during the construction of the matching as long as many edges remain available.
	\begin{lemma}\label{lemma: test spread event}
		We have~$\pr{\comp{\cS(0)}\cup\bigcup_{i\in[m]} (\cA(i-1)\cap\comp{\cS(i)})}\leq \exp(-d^{\eps/400\ell})$.
	\end{lemma}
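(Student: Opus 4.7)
The plan is to bound $\pr{\comp{\cS(0)}}$ deterministically and then, for each $i\in[m]$, bound $\pr{\cA(i-1)\cap\comp{\cS(i)}}$ by a union bound that combines the deterministic spreadness from Lemma~\ref{lemma: deterministic spread} with the probabilistic embedding statement from Lemma~\ref{lemma: probabilistic embedding}, applied separately to every uniform piece of every configuration appearing in Definition~\ref{definition: spreadness}.

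First, I would verify that $\cS(0)$ holds deterministically. At $i=0$ we have $\cM(0)=\emptyset$ and $\cH(0)=\cH$, so for any hypergraph $\cX$ with $V(\cX)\subseteq\cH$, an edge $X\in\cX$ lies in $\cX^{[s]}(0)$ iff $|X|=s$, that is, $\cX^{[s]}(0)=\cX^{(s)}$. Hence $|\cX^{[s]}|(0)=\Delta_0(\cX^{(s)})$, and the required upper bounds on $|\cZ_v^{[s]}|(0)$, $|\cZ_e^{[s]}|(0)$, $|\cZ_{e,2}^{[s]}|(0)$, $|\cZ_2^{[s]}|(0)$, $|\cC_{e,2}^{[s]}|(0)$, $|\cC_{e,f,2}^{\star[1]}|(0)$ are immediate from the corresponding $(\cdot,d^{-1})$-spreadness statements \ref{item: Z_v spreadness}--\ref{item: C_e,f,2 spreadness} of Lemma~\ref{lemma: deterministic spread}, applied with $j'=0$. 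For example, for $s\in[\ell]$ we have $|\cZ_{e,2}^{[s]}|(0)=|\cZ_{e,2}^{(s)}|\leq|\cZ|d^{s-j-\eps/2}\leq d^{s-j-\eps/3}|\cZ|$.

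Next, fix $i\in[m]$. For each configuration $\cX\in\{\cZ_v,\cZ_e,\cZ_{e,2},\cZ_2,\cC_{e,2},\cC_{e,f,2}^{\star}\}$ (with the corresponding restrictions on $v,e,f,\cZ$), and for each uniformity $j'\in[2\ell]$, Lemma~\ref{lemma: deterministic spread} gives that $\cX^{(j')}$ is $(d_0,d^{-1})$-spread for an explicit $d_0$ (respectively $|\cZ|d^{-\eps/2}$, $|\cZ|d^{-1-\eps/2}$, $|\cZ|d^{j'-j-\eps/2}$, $|\cZ|d^{j'-j-\eps/2}$, $d^{j'-\eps/2}$, $d^{j'-\eps/2}$). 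Lemma~\ref{lemma: probabilistic embedding} applied to $\cX^{(j')}$ with parameter $s\in[j']_0$ (choosing $s\geq 1$ whenever $d_0<d^{j'}$, which is consistent with the range of $s$ appearing in Definition~\ref{definition: spreadness}) then yields
\[
\pr[\bigg]{\cA(i-1)\cap\set[\bigg]{|\cX^{(j')[s]}|(i)\geq\frac{d_0}{d^{j'-s-\eps/12}}}}\leq\exp(-d^{\eps/(200\ell)}).
\]
In every case, $d_0/d^{j'-s-\eps/12}$ equals the target bound up to a factor of $d^{\eps/12-\eps/2+\eps/3}=d^{-\eps/12}$, so summing $|\cX^{(j')[s]}|(i)$ over the at most $2\ell\leq d^{\eps/12}$ relevant values of $j'$ recovers the exact bound in Definition~\ref{definition: spreadness} for $|\cX^{[s]}|(i)$.

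Finally, I take a union bound over $i\in[m]$, all $\cZ\in\ccZ$, all $v\in V(\cH)$, $e,f\in\cH$, all $s\in[\ell]_0$ and all $j'\in[2\ell]$. The number of events is at most
\[
m\cdot|\ccZ|\cdot|\cH|^2\cdot\ell\cdot(2\ell)\leq\exp(3d^{\eps/(400\ell)}),
\]
using $n\leq\exp(d^{\eps/(400\ell)})$, $|\ccZ|\leq|\ccZ_0|+\ell|\cH|\leq2\exp(d^{\eps/(400\ell)})$, and the polynomial factors in $d$. Multiplying by $\exp(-d^{\eps/(200\ell)})$ gives a total failure probability at most $\exp(-d^{\eps/(400\ell)})$, as required. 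The step that requires the most care is the case analysis to match the spreadness parameters $d_0$ from Lemma~\ref{lemma: deterministic spread} to the bounds in Definition~\ref{definition: spreadness} and to check that the side condition $s\geq 1$ (when $d_0<d^{j'}$) in Lemma~\ref{lemma: probabilistic embedding} is compatible with the $s$-ranges appearing in Definition~\ref{definition: spreadness}; this is where the particular choice of those $s$-ranges (for example, $s\geq\ind_{\ccC}(\cZ)$ in the $\cZ_e$ case) does its work.
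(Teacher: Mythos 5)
Your proposal is correct and follows essentially the same route as the paper: $\cS(0)$ holds deterministically by Lemma~\ref{lemma: deterministic spread}, and for each $i\in[m]$ one combines the $(d_0,d^{-1})$-spreadness of each uniform piece $\cX^{(j')}$ from Lemma~\ref{lemma: deterministic spread} with Lemma~\ref{lemma: probabilistic embedding}, absorbs the factor $2\ell\le d^{\eps/12}$ from summing over $j'$, and finishes with a union bound over all parameters. The only cosmetic difference is that your displayed count of events omits the explicit factor $n$ for the choice of $v$ and the constant $6$ for the type of configuration, but these fit comfortably within your bound $\exp(3d^{\eps/(400\ell)})$, so nothing is affected.
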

	\begin{proof}
		Choose~$i\in[m]$,~$s\in[\ell]_0$ with~$s\geq \ind_\ccC(\cZ)$,~$j\in[\ell]$,~$\cZ\in\ccZ^{(j)}$,~$v\in V(\cH)$,~$e,f\in\cH$ and~$\cX\in\set{\cZ_v,\cZ_e,\cZ_{e,2},\cZ_2,\cC_{e,2},\cC_{e,f,2}^\star}$ such that~$e$ is not an immediate evictor for~$\cZ$ if~$\cX=\cZ_{e,2}$.
		Note that there were at most
		\begin{equation*}
			m\cdot (\ell+1)\cdot \ell\cdot (\abs{\ccZ_0}+\abs\cH)\cdot n\cdot \abs\cH^2\cdot 6
			\leq 12\ell^2 d^2 n^4 (\abs{\ccZ_0}+\ell d n)
			\leq \exp(9d^{\eps/(400\ell)})
		\end{equation*}
		possible choices for these parameters.
		If~$\cX\in \set{\cZ_v,\cZ_e,\cZ_{e,2},\cZ_2}$, let~$a:=\abs{\cZ}$ and otherwise let~$a:=d^{j}$.
		Lemma~\ref{lemma: deterministic spread} shows that for all~$j'\in[2\ell]$, the~$j'$-graph~$\cX^{(j')}$ is~$(ad^{j'-j-\eps/2},d^{-1})$-spread\footnote{Note here that for many values of~$j'$, the spreadness holds trivially since the respective~$j'$-graph is empty. For example, when considering~$\cZ_e$, the only relevant case is when~$j'=j-1$.} and thus Lemma~\ref{lemma: probabilistic embedding} yields
		\begin{align*}
			\pr{\cA(i-1)\cap\set{\abs{\cX^{[s]}}(i)\geq ad^{s-j-\eps/3}}}
			&\leq \pr[\bigg]{\bigcup_{j'\in[2\ell]}\cA(i-1)\cap\set[\bigg]{\abs{\cX^{(j')[s]}}(i)\geq  \frac{ad^{s-j-\eps/3}}{2\ell}}}\\
			&\leq \pr[\bigg]{\bigcup_{j'\in[2\ell]}\cA(i-1)\cap\set[\bigg]{\abs{\cX^{(j')[s]}}(i)\geq  \frac{ad^{j'-j-\eps/2}}{d^{j'-s-\eps/12}}}}\\
			&\leq \exp(-d^{\eps/(300\ell)}).
		\end{align*}
		Hence, considering the definition of the event~$\cS(i)$, with a suitable union bound over the at most~$\exp(9d^{\eps/(400\ell)})$ choices for the parameters, we obtain
		\begin{equation*}
			\pr[\Big]{\bigcup_{i\in[m]} \cA(i-1)\cap\comp{\cS(i)}}
			\leq \exp(9d^{\eps/(400\ell)})\cdot \exp(-d^{\eps/(300\ell)})
			\leq \exp(-d^{\eps/400\ell}).
		\end{equation*}
		As Lemma~\ref{lemma: deterministic spread} shows that~$\comp{\cS(0)}=\emptyset$, this completes the proof.
	\end{proof}
	
	\section{Tracking key random variables}\label{section: tracking}
	In this section, our goal is to prove Theorem~\ref{theorem: process} by formally showing that the relevant quantities indeed typically follow the idealized trajectories given in Section~\ref{section: variables and trajectories}.
	
	To this end we extend the previously defined~$\phat_V(i)$,~$\phat_M(i)$,~$\Gammahat(i)$,~$\hhat(i)$,~$\dhat(i)$,~$\zhat_{j,s}(i)$ and~$\chat(i)$ to continuous trajectories by introducing the following functions.
	\begin{definition}[$\phat_V,\phat_M,\Gammahat,\hhat,\dhat,\zhat_{j,s},\chat$]\label{definition: trajectories}\index{pV@$\phat_V$}\index{pM@$\phat_M$}\index{Gamma@$\Gammahat$}\index{d@$\dhat$}\index{zjs@$\zhat_{j,s}$}\index{c@$\chat$}
		For~$j\in[\ell]$ and~$s\in[\ell]_0$, let~$\phat_V,\phat_M,\Gammahat,\hhat,\dhat,\zhat_{j,s},\chat$ denote functions such that for all~$x\in [0,n/k]$,
		\begin{gather*}
			\phat_V(x)=1-\frac{kx}{n},\quad
			\phat_M(x)=\frac{kx}{dn},\quad
			\Gammahat(x)=\sum_{j\in[\ell]_2}\Delta(\usub{\cC}{j}) \cdot\phat_M(x)^{j-1},\\
			\dhat(x)=d\cdot \phat_V(x)^{k-1}\cdot \exp(-\Gammahat(x)),\quad
			\hhat(x)=\frac{n}{k}\cdot\phat_V(x)\cdot \dhat(x),\\
			\zhat_{j,s}(x)=\binom{j}{s}\cdot\paren[\big]{\phat_V(x)^{k}\cdot \exp\paren{-\Gammahat(x) }}^s \cdot \phat_M(x)^{j-s}
			\quad\text{and}\quad
			\chat(x)=\sum_{j\in[\ell-1]} \Delta(\cC^{(j+1)})\cdot \zhat_{j,1}(x),
		\end{gather*}
		where we set~$0^0:=1$ and~$\binom{j}{s}:=0$ whenever~$s\notin[j]_0$.
	\end{definition}
	Furthermore, we introduce the following error functions.
	\begin{definition}[$\xi,\delta,\eta,\zeta_{j,s},\gamma$]\label{definition: errors}\index{xi@$\xi$}\index{eta@$\eta$}\index{delta@$\delta$}\index{zetajs@$\zeta_{j,s}$}\index{gamma@$\gamma$}
		For~$j\in[\ell]$ and~$s\in[\ell]_0$, let~$\xi,\delta,\eta,\zeta_{j,s},\gamma$ denote functions such that for all~$x\in [0,n/k]$,
		\begin{gather*}
			\xi(x)=\paren[\bigg]{\frac{1}{\phat_V(x)}}^{300k\ell \Gamma}\cdot d^{-\eps/32},\quad
			\delta(x)=\xi(x)\cdot\dhat(x),\quad
			\eta(x)=\xi(x)\cdot\hhat(x),\\
			\zeta_{j,s}(x)=\xi(x)\cdot\paren[\bigg]{\zhat_{j,s}(x)+\binom{j}{s}\frac{\dhat(x)^s}{\Gamma\ell d^j}}\quad\text{and}\quad
			\gamma(x)=2\sum_{j\in[\ell-1]} \Delta(\cC^{(j+1)})\cdot\zeta_{j,1}(x).
		\end{gather*}
	\end{definition}
	Let us gather some useful bounds for the trajectories and error functions.
	\begin{remark}\label{remark: basic bounds}
		For all~$j\in[\ell]$,~$s\in[\ell]_0$ and~$x\in[0,m]$, we have
		\begin{gather*}
			\mu\leq\phat_V(x)\leq 1,\quad
			0\leq\phat_M(x)\leq \frac{1}{d},\quad
			0\leq\Gammahat(x)\leq \Gamma,\quad\\
			d^{1-\eps/400}\leq \dhat(x)\leq d,\quad
			d^{1-\eps/400}n\leq \hhat(x)\leq dn,\\
			0\leq \zhat_{j,s}(x)\leq d^{s-j+\eps/400},\quad
			0\leq \chat(x)\leq d^{1+\eps/400},\\
			d^{-\eps/32}\leq \xi(x)\leq d^{-\eps/64},\quad
			d^{1-\eps/16}\leq \delta(x)\leq d^{1-\eps/64},\\
			d^{1-\eps/16}n\leq \eta(x)\leq d^{1-\eps/64}n,\quad
			d^{s-j-\eps/16}\leq \zeta_{j,s}(x)\leq d^{s-j-\eps/100}.
		\end{gather*}
	\end{remark}
	For~$i\geq 0$, we introduce the following \textit{tracking} event~$\cT(i)$ that occurs whenever relevant quantities are close to the corresponding trajectories at the end of step~$i$.
	\begin{definition}[$\cT(i)$]\label{definition: tracking}\index{T(i)@$\cT(i)$}
		For~$i\geq 0$, let~$\cT(i)$ denote the event that for all~$v\in V(i)$,~$e\in\cH(i)$,~$j\in[\ell]$,~$\cZ\in\ccZ^{(j)}(i)$ and~$s\in[\ell]_0$ satisfying~$s\geq \ind_{\ccC}(\cZ)$, we have
		\begin{gather*}
			\abs{\cH}(i)=\hhat(i)\pm\eta(i),\quad
			\abs{\cD_v}(i)=\dhat(i)\pm \delta(i),\\
			\abs{\cC_e^{[1]}}(i)=\chat(i)\pm\gamma(i)\quad\text{and}\quad
			\abs{\cZ^{[s]}}(i)=(\zhat_{j,s}(i)\pm\zeta_{j,s}(i))\abs{\cZ}.
		\end{gather*}
	\end{definition}
	At the end of this section, we prove the following statement.
	\begin{theorem}\label{theorem: trajectories}
		We have~$\pr{\cT(0)\cap\ldots\cap\cT(m)}\geq 1-\exp(-d^{\eps/(500\ell)})$.
	\end{theorem}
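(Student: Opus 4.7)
The proof employs the differential equation method in the style of Wormald, adapted to the coupled family of processes considered here. For each random variable $X(i)$ tracked by $\cT(i)$, namely $\abs\cH(i)$, $\abs{\cD_v}(i)$, $\abs{\cC_e^{[1]}}(i)$ and $\abs{\cZ^{[s]}}(i)$, with associated trajectory $\xhat(i)$ and error function $\xi_X(i)$ supplied by Definitions~\ref{definition: trajectories} and~\ref{definition: errors}, the plan is to introduce auxiliary upper and lower processes
\[
X^+(i) := X(i)-(\xhat(i)+\xi_X(i)),\qquad X^-(i) := (\xhat(i)-\xi_X(i))-X(i),
\]
frozen once the object becomes irrelevant (when $v\notin V(i)$, $\cZ \notin \ccZ(i)$) or once the spreadness event $\cS(i)$ from Section~\ref{section: interactions} fails. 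A direct check shows that $\cT(0)$ holds deterministically by the hypotheses on $\cH$ and $\cC$, using Lemma~\ref{lemma: conflict links are essentially trackable}, so the initial values $X^{\pm}(0)$ are strictly negative with margin of order $\xi_X(0)\xhat(0)$. Showing via Freedman's inequality that each $X^{\pm}$ stays negative throughout the first $m$ steps with probability at least $1-\exp(-d^{\Omega(\eps/\ell)})$, followed by a union bound over the at most $\exp(d^{\eps/(400\ell)})$ such processes (permitted by $n,\abs{\ccZ_0}\le\exp(d^{\eps^2/\ell})$) combined with Lemma~\ref{lemma: test spread event} to handle spreadness, yields the claimed bound.

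The heart of the argument is the trend hypothesis $\cex{\Delta X^\pm(i+1)}{\cF(i)}\le 0$ on $\cT(i)\cap\cS(i)$. Since $e(i+1)$ is uniform in $\cH(i)$, each conditional expected increment becomes an average, over choices of $e(i+1)\in\cH(i)$, of a deterministic count. For $\abs{\cD_v}$, an edge $f\in\cD_v(i)$ is eliminated either because some vertex of $f$ lies in $e(i+1)$ (contributing $(k-1)\dhat(i)\abs{\cD_v}(i)/\hhat(i)$ to leading order, using $\Delta_2(\cH)\le d^{1-\eps}$ together with Condition~\ref{item: conflict j=2}) or because $f$ completes a conflict with $e(i+1)$ modulo $\cM(i)$ (contributing $\chat(i)\abs{\cD_v}(i)/\hhat(i)$); summing these reproduces $\dhat'(i)$ up to lower-order corrections. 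Analogous decompositions handle $\abs\cH(i)$, $\abs{\cC_e^{[1]}}(i)$ (with inflow from $\cC_e^{[2]}$ balanced against outflow from edges of a semiconflict becoming blocked), and each $\abs{\cZ^{[s]}}(i)$ (inflow from $\cZ^{[s+1]}$ and the two kinds of outflow). The lower-order corrections count precisely those edges killed by two mechanisms at once, and are bounded by the sizes of the local-interaction hypergraphs $\cZ_{e,2}^{[s]}$, $\cZ_2^{[s]}$, $\cC_{e,2}^{[s]}$ and $\cC_{e,f,2}^{\star[1]}$; on $\cS(i)$, each of these contributes a factor of at most $d^{-\eps/3}$ relative to the trajectory scale.

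The principal difficulty is matching these per-step error contributions of order $d^{-\eps/3}\xhat(i)/n$ to $\Delta\xi_X(i)$ uniformly across $i\in[m]$. This is precisely why the self-correcting factor $(1/\phat_V(x))^{300k\ell\Gamma}$ in Definition~\ref{definition: errors} is essential: a short calculus computation gives
\[
\Delta\xi(i)\;\gtrsim\; \frac{300k\ell\Gamma\cdot k\, \xi(i)}{n\,\phat_V(i)},
\]
which dominates the accumulated single-step errors by a factor of roughly $\Gamma$ throughout the process. Verifying this matching is exactly where the assumption $1/\mu^{\Gamma\ell}\leq d^{\eps^{3/2}}$ is genuinely used, via the estimate $1/\phat_V(m)^{300k\ell\Gamma}=1/\mu^{300k\ell\Gamma}\le d^{\eps/100}$, which keeps the trajectories and errors at the scales recorded in Remark~\ref{remark: basic bounds}. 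The boundedness hypothesis $\abs{\Delta X(i+1)}\le d^{1-\Omega(\eps)}$ and the conditional second-moment bound follow routinely from the codegree assumptions on $\cH$ and $\cC$ together with Definition~\ref{definition: spreadness} applied at step $i$, and Freedman's inequality with these parameters and deviation $\xi_X(0)\xhat(0)\gtrsim d^{1-\eps/32}$ then yields the per-process failure probability $\exp(-d^{\Omega(\eps/\ell)})$ needed for the union bound.
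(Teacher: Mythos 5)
Your proposal follows essentially the same route as the paper: freezing the deviation processes at $\tau_{\cS}$, $\tau_{\cT}$ and the relevance times, establishing the trend and boundedness hypotheses via the leaving-probability estimates and the spreadness of the local-interaction hypergraphs, applying Freedman's inequality with initial margin of order $d^{1-\eps/32}$ relative to the trajectory scale, and closing the argument with a union bound together with Lemma~\ref{lemma: test spread event} (the paper's only organisational difference is that it first reduces to the two families $\abs{\cD_v}$ and $\abs{\cZ^{[s]}}$ via Lemma~\ref{lemma: trajectory of h} and recovers $\abs\cH$ and $\abs{\cC_e^{[1]}}$ from them, rather than tracking all four quantities as separate supermartingales). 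The key technical points you identify --- the growth rate of $\xi$ dominating the second-order corrections, and the use of $1/\mu^{\Gamma\ell}\le d^{\eps^{3/2}}$ to keep the error functions at the scales of Remark~\ref{remark: basic bounds} --- are exactly those of the paper's proof.
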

	Note that since relevant values of the error functions are sufficiently small as detailed in Remark~\ref{remark: basic bounds}, Theorem~\ref{theorem: process} is a direct consequence of Theorem~\ref{theorem: trajectories}.
	Indeed, if~$\cT(m)$ occurs, then~$\abs\cH(i)\geq\abs\cH(m)>0$ for all~$i\in[m]$ which entails~$\abs{\cM}(m)=m$ and furthermore, for all~$j\in[\ell]$ and~$\cZ\in\ccZ_0^{(j)}$, we have
	\begin{equation*}
		\abs{ \cset{ Z\in\cZ }{ Z\subseteq\cM(m) } }
		=\abs{\cZ^{[0]}}(m)
		=(\zhat_{j,0}(m)\pm\zeta_{j,0}(m))\abs\cZ
		=(1\pm d^{-\eps/75})\paren[\bigg]{\frac{km}{dn}}^j\abs\cZ.
	\end{equation*}

	We write~\defnidx{$X\eventeq{\cE}Y$} for two expressions~$X$ and~$Y$ and an event~$\cE$, to express the statement that~$X$ and~$Y$ represent (possibly constant) random variables that are equal whenever~$\cE$ occurs, or equivalently, to express that~$X\cdot\ind_{\cE}=Y\cdot\ind_{\cE}$.
	Similarly, we write~\defnidx{$X\eventleq{\cE}Y$} to mean~$X\cdot\ind_{\cE}\leq Y\cdot\ind_{\cE}$ and~\defnidx{$X\eventgeq{\cE}Y$} to mean~$X\cdot\ind_{\cE}\geq Y\cdot\ind_{\cE}$.
	Hence, whenever we use~$\eventeq{\cE}$,~$\eventleq{\cE}$ or~$\eventgeq{\cE}$ to relate random variables, this allows us to assume that~$\cE$ occurred.
	Usually~$\cE$ will be an event with~$\cE\subseteq\cS(i)\cup\cT(i)$ for some~$i\geq 0$ which then allows us to employ the properties used to define the events~$\cS(i)$ and~$\cT(i)$ in Definitions~\ref{definition: spreadness} and~\ref{definition: tracking}.
	Note that whenever~$X\eventeq{\cE} Y$, then also for all events~$\cE'\subseteq\cE$, we have~$X\eventeq{\cE'}Y$ (and similarly for~$\eventleq{\cE}$ and~$\eventgeq{\cE}$).
	
	In this section we often encounter probabilities and expectations conditioned on the elements of the filtration~$\cF(0),\cF(1),\ldots$, so for an event~$\cE$, a random variable~$X$ and~$i\geq 0$, we introduce the shorthands~\defnidx[Pi[E]@$\protect\pri{\cE}$]{$\pri{\cE}:=\cpr{\cE}{\cF(i)}$} and~\defnidx[Ei[X]@$\protect\exi{X}$]{$\exi{X}:=\cex{X}{\cF(i)}$}.
	
	To prove Theorem~\ref{theorem: trajectories}, first observe that it suffices to focus on~$\cD_v(i)$ and~$\cZ^{[s]}(i)$.
	
	\begin{lemma}\label{lemma: trajectory of h}
		Let~$i\geq 0$ and let~$\cT'(i)$ denote the event that for all~$v\in V(i)$,~$j\in[\ell]$,~$\cZ\in\ccZ^{(j)}(i)$ and~$s\in[\ell]_0$ with~$s\geq \ind_{\ccC}(\cZ)$, we have
		\begin{equation*}
			\abs{\cD_v}(i)= \dhat(i)\pm \delta(i)\quad\text{and}\quad \abs{\cZ^{[s]}}(i)=(\zhat_{j,s}(i)\pm\zeta_{j,s}(i))\abs{\cZ}.
		\end{equation*}
		Then,~$\cT'(i)=\cT(i)$.
	\end{lemma}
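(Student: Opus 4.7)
The inclusion $\cT(i)\subseteq\cT'(i)$ is immediate since the constraints defining $\cT'(i)$ are a subset of those defining $\cT(i)$, so it suffices to prove the reverse direction: assuming $\cT'(i)$ holds, I would derive the two additional constraints of $\cT(i)$, namely $\abs{\cH}(i)=\hhat(i)\pm\eta(i)$ and $\abs{\cC_e^{[1]}}(i)=\chat(i)\pm\gamma(i)$ for every $e\in\cH(i)$. Both identities should follow deterministically from the tracking bounds already present in $\cT'(i)$, combined with the structural hypotheses on $\cC$ and the definitions of the trajectories and error functions, so no probabilistic input is needed.

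For the first identity, the plan is double counting: $k\abs{\cH}(i)=\sum_{v\in V(i)}\abs{\cD_v}(i)$. On $\cT'(i)$ each summand equals $\dhat(i)\pm\delta(i)$. Since each executed iteration of Algorithm~\ref{algorithm: matching} removes exactly $k$ vertices, $\abs{V(i)}=n-k\abs{\cM(i)}$; the positivity $\dhat(i)-\delta(i)>0$ on $\cT'(i)$ rules out premature termination of the process in the range $i\leq m$ that matters for Theorem~\ref{theorem: trajectories}, forcing $\abs{\cM(i)}=i$ and hence $\abs{V(i)}=n\phat_V(i)$. Substituting yields $\abs{\cH}(i)=\frac{n\phat_V(i)}{k}(\dhat(i)\pm\delta(i))=\hhat(i)\pm\eta(i)$, using $\hhat(i)=\frac{n}{k}\phat_V(i)\dhat(i)$ and $\eta(i)=\xi(i)\hhat(i)$.

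For the second identity, fix $e\in\cH(i)$. Decomposing by semiconflict size gives $\abs{\cC_e^{[1]}}(i)=\sum_{j\in[\ell-1]}\abs{\cC_e^{(j)[1]}}(i)$. Whenever $\cC^{(j+1)}\neq\emptyset$, we have $\cC_e^{(j)}\in\ccC^{(j)}(i)\subseteq\ccZ^{(j)}(i)$, so $\cT'(i)$ applied with $s=1$ (valid since $\ind_{\ccC}(\cC_e^{(j)})=1$) yields $\abs{\cC_e^{(j)[1]}}(i)=(\zhat_{j,1}(i)\pm\zeta_{j,1}(i))\abs{\cC_e^{(j)}}$, while condition~\ref{item: conflict regularity} provides $\abs{\cC_e^{(j)}}=(1\pm d^{-\eps})\Delta(\cC^{(j+1)})$. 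Expanding the product and summing over $j$ isolates $\chat(i)=\sum_j\Delta(\cC^{(j+1)})\zhat_{j,1}(i)$ as the main term, with residual error of magnitude at most $\sum_j\Delta(\cC^{(j+1)})\bigl(\zeta_{j,1}(i)+d^{-\eps}\zhat_{j,1}(i)+d^{-\eps}\zeta_{j,1}(i)\bigr)$. The first piece of the sum equals $\gamma(i)/2$ by definition; the remaining two contribute at most $d^{-\eps}\chat(i)+d^{-\eps}\gamma(i)/2$, and the inequality $\zeta_{j,1}(i)\geq\xi(i)\zhat_{j,1}(i)\geq d^{-\eps/32}\zhat_{j,1}(i)$ forces this to be at most $\gamma(i)/2$ for $d$ large enough (since then $\gamma(i)/2\geq d^{-\eps/32}\chat(i)\gg d^{-\eps}\chat(i)$). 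Hence the total error is at most $\gamma(i)$. The factor $2$ in the definition of $\gamma$ was built in precisely to absorb the interplay between the regularity slack in~\ref{item: conflict regularity} and the tracking error $\zeta_{j,1}$; this mild error accounting is the only real content of the proof and the main (though easy) obstacle to verify.
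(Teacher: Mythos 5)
Your proposal is correct and follows essentially the same route as the paper: the double-counting identity $k\abs{\cH}(i)=\sum_{v\in V(i)}\abs{\cD_v}(i)$ together with $\abs{V(i)}=n\phat_V(i)$ for the first constraint, and the decomposition $\abs{\cC_e^{[1]}}(i)=\sum_{j\in[\ell-1]}\abs{\cC_e^{(j)[1]}}(i)$ combined with the near-regularity condition~\ref{item: conflict regularity} and the inequality $\xi(i)\zhat_{j,1}(i)\leq\zeta_{j,1}(i)$ to absorb the slack into the factor $2$ in $\gamma$ for the second. Your error bookkeeping is organised slightly differently but is equivalent to the paper's, and your explicit remark that $\cT'(i)$ rules out premature termination (so that $\abs{V(i)}=n-ki$) is a point the paper leaves implicit.
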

	\begin{proof}
		Obviously, we have~$\cT(i)\subseteq \cT'(i)$, so it suffices to show that~$\cT'(i)\subseteq\cT(i)$, that is that whenever~$\cT'(i)$ occurs, we have~$\abs\cH(i)=\hhat(i)\pm\eta(i)$ and~$\abs{\cC_e^{[1]}}(i)=\chat(i)\pm\gamma(i)$ for all~$e\in\cH(i)$.
		
		Let~$\cX:=\cT'(i)$.
		We have
		\begin{equation*}
			\abs{\cH}(i)
			=\frac{1}{k}\sum_{v\in V(i)} \abs{\cD_v}(i)
			\Xeq(\dhat(i)\pm\delta(i))\frac{n-ki}{k}
			=(\dhat(i)\pm\delta(i))\frac{n}{k}\phat_V(i)
			=\hhat(i)\pm\eta(i).
		\end{equation*}
		Furthermore, for all~$e\in\cH(i)$ and~$j\in[\ell-1]$, using~$\delta(\cC^{(j+1)})\geq (1-d^{-\eps})\Delta(\cC^{(j+1)})$, we obtain
		\begin{align*}
			\abs{\cC_e^{(j)[1]}}(i)
			&\Xeq (\zhat_{j,1}(i)\pm\zeta_{j,1}(i))\abs{\cC_e^{(j)}}
			= (\zhat_{j,1}(i)\pm \zeta_{j,1}(i))(1\pm d^{-\eps})\Delta(\cC^{(j+1)})\\
			&=\paren[\bigg]{\zhat_{j,1}(i)\pm \frac{\xi(i)\zhat_{j,1}(i)}{2}\pm \frac{3\zeta_{j,1}(i)}{2}}\Delta(\cC^{(j+1)})
			=(\zhat_{j,1}(i)\pm 2\zeta_{j,1}(i))\Delta(\cC^{(j+1)}).
		\end{align*}
		Thus
		\begin{equation*}
			\abs{\cC_e^{[1]}}(i)=\sum_{j\in[\ell-1]} \abs{\cC_e^{(j)[1]}}(i)\Xeq\chat(i)\pm\gamma(i),
		\end{equation*}
		which completes the proof.
	\end{proof}
	
	To control~$\abs{\cD_v}(i)$ and~$\abs{\cZ^{[s]}}(i)$, we employ the following version of Freedman's inequality for supermartingales (Lemma~\ref{lemma: freedman}).%
	
	\begin{lemma}[Freedman's inequality for supermartingales~\cite{freedman:75}]\label{lemma: freedman}
		Suppose~$X(0),X(1),\ldots$ is a supermartingale with respect to a filtration~$\cF(0),\cF(1),\ldots$ such that~$\abs{X(i+1)-X(i)}\leq a$ for all~$i\geq 0$ and~$\sum_{i\geq 0} \cex{\abs{X(i+1)-X(i)}}{\cF(i)}\leq b$.
		Then, for all~$t>0$,
		\begin{equation*}
			\pr{X(i)\geq X(0)+t\tforsome{$i\geq 0$}}\leq\exp\paren[\bigg]{-\frac{t^2}{2a(t+b)}}.
		\end{equation*}
	\end{lemma}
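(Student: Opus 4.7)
The plan is to prove this by the classical exponential-supermartingale argument. Setting $D_i := X(i+1) - X(i)$, the hypotheses give $\cex{D_i}{\cF(i)} \le 0$, $|D_i| \le a$, and $\sum_{i \ge 0} \cex{|D_i|}{\cF(i)} \le b$. The pointwise bound $D_i^2 \le a|D_i|$, which is immediate from $|D_i|\le a$, will allow me to convert any control in terms of the conditional quadratic variation into control in terms of the conditional absolute expectations that are actually bounded in the hypothesis.

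The central construction is, for each $\lambda > 0$, the process
\[
M(n) := \exp\paren[\bigg]{\lambda(X(n) - X(0)) - \frac{\eul^{\lambda a} - 1 - \lambda a}{a}\sum_{i=0}^{n-1}\cex{|D_i|}{\cF(i)}}.
\]
To show that $M$ is a nonnegative supermartingale I would use the elementary convexity fact that $y\mapsto (\eul^y - 1 - y)/y^2$ is non-decreasing on $\bR$; applied with $y=\lambda D_i$ and $|y|\le \lambda a$, this yields $\eul^{\lambda D_i} \le 1 + \lambda D_i + (\eul^{\lambda a} - 1 - \lambda a) D_i^2/a^2$. Taking $\cex{\cdot}{\cF(i)}$, discarding the non-positive first-order term, replacing $\cex{D_i^2}{\cF(i)}$ by $a\cex{|D_i|}{\cF(i)}$, and then absorbing $1+x$ into $\eul^x$ gives $\cex{M(n+1)}{\cF(n)} \le M(n)$. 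Since $M(0)=1$, this is the required exponential supermartingale.

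With $M$ in hand I would apply optional stopping to the hitting time $\tau := \inf\set{i \ge 0 : X(i) - X(0) \ge t}$. On $\set{\tau < \infty}$ the compensator in the exponent is at most $(\eul^{\lambda a} - 1 - \lambda a)b/a$, so $M(\tau) \ge \exp\paren{\lambda t - (\eul^{\lambda a} - 1 - \lambda a)b/a}$. Applying the supermartingale inequality at the bounded stopping time $\tau \wedge n$, using Markov, and letting $n \to \infty$ via monotone convergence, yields
\[
\pr{\tau < \infty} \le \exp\paren[\bigg]{-\lambda t + \frac{\eul^{\lambda a} - 1 - \lambda a}{a}b}
\]
for every $\lambda>0$; the left-hand side is precisely the probability of the event in the statement.

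The main obstacle is then analytic rather than probabilistic: optimizing the exponent in $\lambda$ and massaging it into the stated form. Differentiating shows that the optimum is $\lambda^\star = a^{-1}\log(1+t/b)$, which reduces the exponent to $-a^{-1}\paren{(t+b)\log(1+t/b) - t}$. The remaining step is to apply the elementary inequality $(1+u)\log(1+u) - u \ge u^2/(2(1+u))$ for $u\ge 0$, taken with $u=t/b$, which converts the bound into $\exp(-t^2/(2a(t+b)))$ as required. This last inequality is the most delicate bookkeeping step and can be checked by noting that both sides vanish at $u=0$ and that the second derivative of the difference simplifies to $u(2+u)/(1+u)^3 \ge 0$, so the difference is convex and non-decreasing from zero.
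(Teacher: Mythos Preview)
The paper does not supply a proof of this lemma; it is quoted as a known tool with a citation to Freedman~\cite{freedman:75} and used as a black box in Subsection~\ref{subsection: supermartingale argument}. Your argument is the standard exponential-supermartingale proof and is correct: the convexity bound $\eul^{\lambda D_i}\le 1+\lambda D_i+\frac{\eul^{\lambda a}-1-\lambda a}{a^2}D_i^2$ combined with $D_i^2\le a|D_i|$ indeed makes $M(n)$ a nonnegative supermartingale, the optional-stopping step is handled correctly via $\tau\wedge n$, and the optimisation in $\lambda$ together with the inequality $(1+u)\log(1+u)-u\ge u^2/(2(1+u))$ yields exactly the stated bound. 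Since there is no proof in the paper to compare against, there is nothing further to contrast.
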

	
	To this end, for~$i\geq 0$,~$v\in V(\cH)$,~$j\in[\ell]$,~$\cZ\in\ccZ^{(j)}$ and~$s\in[\ell]_0$ with~$s\geq \ind_{\ccC}(\cZ)$, we define the differences
	\begin{gather*}
		\abs{\cD_v}^{+}(i):=\abs{\cD_v}(i)-(\dhat(i)+\delta(i)),\quad
		\abs{\cD_v}^{-}(i):=(\dhat(i)-\delta(i))-\abs{\cD_v}(i),\\
		\abs{\cZ^{[s]}}^{+}(i):=\abs{\cZ^{[s]}}(i)-(\zhat_{j,s}(i)+\zeta_{j,s}(i))\abs{\cZ}\quad\text{and}\quad
		\abs{\cZ^{[s]}}^{-}(i):=(\zhat_{j,s}(i)-\zeta_{j,s}(i))\abs{\cZ}-\abs{\cZ^{[s]}}(i)
		\index{Dv+(i)@$\abs{\cD_v}^{+}(i)$}\index{Dv-(i)@$\abs{\cD_v}^{-}(i)$}\index{Zs+(i)@$\abs{\cZ^{[s]}}^{+}(i)$}\index{Zs+(i)@$\abs{\cZ^{[s]}}^{-}(i)$}
	\end{gather*}
	that measure by how much the respective random variable exceeds the permitted deviation from its idealized trajectory.
	Hence, we aim to show that these four quantities are negative.
	We wish to analyze the process only while it is well behaved.
	To this end, for~$v\in V(\cH)$ and~$\cZ\in\ccZ$, we define the (random) freezing times
	\begin{gather*}
		\tau_{\cS}:=\min\cset{i\geq 0}{\text{$\comp{\cS(i)}$ occurs}},\quad
		\tau_{\cT}:=\min\cset{i\geq 0}{\text{$\comp{\cT(i)}$ occurs}},\\
		\tau_v:=\min\cset{i\geq 0}{v\notin V(i+1)}\quad\text{and}\quad
		\tau_{\cZ}:=\min\cset{i\geq 0}{\cZ\notin \ccZ(i+1)})
		\index{tauS@$\tau_{\cS}$}\index{tauT@$\tau_{\cT}$}\index{tauv@$\tau_{v}$}\index{tauZ@$\tau_{\cZ}$}
	\end{gather*}	
	where we set~$\min\emptyset:=\infty$.
	Note that~$\tau_{\cS}$ and~$\tau_{\cT}$ are stopping times with respect to the filtration~$\cF(0),\cF(1),\ldots$, that is, we have~$\set{\tau_{\cS}=i},\set{\tau_{\cT}=i}\in\cF(i)$ for all~$i\geq 0$, while~$\tau_v$ and~$\tau_e$ are not.
	As we do not use that these random variables are stopping times with respect to our filtration, we generally avoid the term stopping time and call them freezing times instead.
	Also note that~$\tau_{\cZ}$ is essentially only meaningful for~$\cZ\in\ccC$.
	Indeed, for~$\cZ\in\ccZ_0$, we have~$\tau_{\cZ}=\infty$ and for~$\cZ=\cC_{e}^{(j)}\in\ccC$, we have~$\tau_{\cZ}=\min\cset{i\geq 0}{e\notin\cH(i+1)})$.
	
	Using these freezing times, we define the following random variables forming processes that correspond to those introduced above and that freeze whenever something undesirable happens.
	\begin{gather*}
		\abs{\cD_{v}}^{+}_{\f}(i):=\abs{\cD_v}^+(\min(\tau_{\cS},\tau_{\cT},\tau_v,m,i)),\quad \abs{\cD_{v}}^-_\f(i):=\abs{\cD_v}^-(\min(\tau_{\cS},\tau_{\cT},\tau_v,m,i)),\\ \abs{\cZ^{[s]}}^+_\f(i):=\abs{\cZ^{[s]}}^+(\min(\tau_{\cS},\tau_{\cT},\tau_{\cZ},m,i))\quad\text{and}\quad \abs{\cZ^{[s]}}^-_\f(i):=\abs{\cZ^{[s]}}^-(\min(\tau_{\cS},\tau_{\cT},\tau_{\cZ},m,i)).
		\index{Dv+f(i)@$\abs{\cD_v}^{+}_\f(i)$}\index{Dv-f(i)@$\abs{\cD_v}^{-}_\f(i)$}\index{Zs+f(i)@$\abs{\cZ^{[s]}}^{+}_\f(i)$}\index{Zs+f(i)@$\abs{\cZ^{[s]}}^{-}_\f(i)$}
	\end{gather*}
	Dedicating the following subsections to the details, our argument that proves Theorem~\ref{theorem: trajectories} goes as follows.
	Since for all~$i\leq m$, we have
	\begin{equation}\label{equation: tracking implies availability}
		\hhat(i)-\eta(i)\geq \frac{dn\mu^k\exp(-\Gamma)}{2k}\geq \mu^{2k\Gamma}\frac{dn}{k}\geq \mu^{\Gamma\ell/(48\eps^{1/2}\ell)}\frac{dn}{k}\geq \frac{d^{1-\eps/(48\ell)}n}{k},
	\end{equation}
	the event~$\cA(i)$ occurs whenever~$\cT(i)$ occurs, so
	Lemma~\ref{lemma: test spread event} shows that with high probability, spreadness is given at the start in the sense that~$\cS(0)$ occurs and that for all steps~$i\in[m-1]_0$ where~$\cT(i)$  occurs, we have spreadness in the next step in the sense that~$\cS(i+1)$ occurs.
	In other words, Lemma~\ref{lemma: test spread event} implies that~$\tau_{\cS}> \min(\tau_{\cT},m)$ happens with high probability.
	Investigations of the one step changes of the processes
	\begin{gather*}
		\abs{\cD_v}^+_\f(0),\abs{\cD_v}^+_\f(1),\ldots,\quad 
		\abs{\cD_v}^-_\f(0),\abs{\cD_v}^-_\f(1),\ldots,\\
		\abs{\cZ^{[s]}}^+_\f(0),\abs{\cZ^{[s]}}^+_\f(1),\ldots\quad\text{and}\quad
		\abs{\cZ^{[s]}}^-_\f(0),\abs{\cZ^{[s]}}^-_\f(1),\ldots
	\end{gather*}
	show that
	\begin{equation}\label{equation: end all non positive}
		\abs{\cD_v}^{+}_{\f}(m)\leq 0,\quad
		\abs{\cD_v}^{-}_{\f}(m)\leq 0,\quad
		\abs{\cZ^{[s]}}^+_\f(m)\leq 0\quad\text{and}\quad
		\abs{\cZ^{[s]}}^-_\f(m)\leq 0
	\end{equation}
	happen with high probability as a consequence of Freedman's inequality (Lemma~\ref{lemma: freedman}).
	Note that Lemma~\ref{lemma: trajectory of h} shows that whenever~$\tau_{\cT}\leq\min(\tau_{\cS},m)$, then there are~$*\in\set{+,-}$ and~$v\in V(\cH)$ with~$\abs{\cD_v}^{*}_{\f}(\tau_{\cT})>0$ or~$*\in\set{+,-}$,~$\cZ\in\ccZ$ and~$s\in[\ell]_0$ with~$s\geq \ind_{\ccC}(\cZ)$ and~$\abs{\cZ^{[s]}}^*_\f(m)>0$ which, due to the freezing, propagates to step~$m$ in the sense that~\eqref{equation: end all non positive} is violated.
	Hence, as this happens only with very low probability, we typically have~$\tau_{\cT}>\min(\tau_{\cS},m)$.
	Knowing that both~$\tau_{\cS}> \min(\tau_{\cT},m)$ and~$\tau_{\cT}>\min(\tau_{\cS},m)$ typically happen, we conclude that typically~$\tau_{\cT}>m$ holds and thus~$\cT(m)$ typically occurs as claimed in Theorem~\ref{theorem: trajectories}.
	
	For our analysis in this section it is often crucial that the process is well behaved in step~$i$ for some~$i\geq 0$ in the sense that~$\cS(i)$ and~$\cT(i)$ occurred.
	Hence, we define the \textit{good} event~$\cG(i):=\cS(i)\cap \cT(i)$\index{G(i)@$\cG(i)$}.
	Recall that configurations that yield edges of the random hypergraphs considered in the definition of~$\cS(i)$ and that are particularly important for many of the proofs in this section are visualized in Figure~\ref{fig: edges}.
	
	\subsection{Derivatives and auxiliary bounds}
	Before we turn to the ingredients for the application of Freedman's inequality (Lemma~\ref{lemma: freedman}), let us state some further properties related to the derivatives of the trajectories and their corresponding error terms.
	
	The main motivation for the choice of~$\xi$ in the definition of the error functions is the fact that~$\xi'(x)/\xi(x)$ is a suitable multiple of the upper bound~$\frac{2k\Gamma}{n\phat_V(x)}$ in Lemma~\ref{lemma: error term inspiration} and the factor~$\frac{3k \ell\Gamma}{n\phat_V(x)}$ in Lemma~\ref{lemma: s and s+1 errors}.
	This then yields the lower bounds for the derivatives of the error terms given in Remark~\ref{remark: first derivatives lower bounds} which in turn are crucial for proving that for~$*\in\set{+,-}$, the processes
	\begin{equation*}
		\abs{\cD_{v}}^*_\f(0),\abs{\cD_{v}}^*_\f(1),\ldots\quad\text{and}\quad
		\abs{\cZ^{[s]}}^*_\f(0),\abs{\cZ^{[s]}}^*_\f(1),\ldots
	\end{equation*}
	are supermartingales.
	\begin{lemma}\label{lemma: error term inspiration}
		Let~$x\in[0,m]$.
		Then,
		\begin{equation*}
			\frac{\chat(x)+\dhat(x)}{\hhat(x)}\leq \frac{2k\Gamma}{n\phat_V(x)}.
		\end{equation*}
	\end{lemma}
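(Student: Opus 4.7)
The plan is to reduce the inequality to a single elementary estimate on the polynomial $(j-1)(1-t)t^{j-2}$. The starting observation is that $\dhat(x)/\hhat(x)=k/(n\phat_V(x))$ directly from the definition of $\hhat$, so it suffices to prove $\chat(x)/\hhat(x)\le (2\Gamma-1)k/(n\phat_V(x))$, equivalently $\phat_V(x)\chat(x)/\hhat(x)\le (2\Gamma-1)k/n$.

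Next I would compute $\chat/\hhat$ explicitly. Unfolding
\[
\zhat_{j,1}(x)=j\phat_V(x)^{k}\exp(-\Gammahat(x))\phat_M(x)^{j-1}=j\phat_V(x)\dhat(x)\phat_M(x)^{j-1}/d,
\]
using the identity $\phat_V^{k}\exp(-\Gammahat)=\phat_V\dhat/d$, and dividing by $\hhat=(n/k)\phat_V\dhat$, the reindexing $j\mapsto j-1$ gives
\[
\frac{\chat(x)}{\hhat(x)}=\frac{k}{dn}\sum_{j=2}^{\ell}(j-1)\Delta(\cC^{(j)})\phat_M(x)^{j-2}=\Gammahat'(x),
\]
where the last equality is simply the derivative of $\Gammahat(x)=\sum_{j=2}^{\ell}\Delta(\cC^{(j)})\phat_M(x)^{j-1}$, recalling that $\phat_M'(x)=k/(dn)$.

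The statement thus reduces to $\phat_V(x)\Gammahat'(x)\le (2\Gamma-1)k/n$. Writing $t:=kx/n$ so that $\phat_V(x)=1-t$ and $\phat_M(x)=t/d$, and setting $\alpha_j:=\Delta(\cC^{(j)})/d^{j-1}$, one rewrites
\[
\phat_V(x)\Gammahat'(x)=\frac{k}{n}\sum_{j=2}^{\ell}\alpha_j\cdot(j-1)(1-t)t^{j-2}.
\]
The key ingredient is the elementary inequality $(j-1)(1-t)t^{j-2}\le 1$ for every integer $j\ge 2$ and every $t\in[0,1]$: for $j=2$ it reads $1-t\le 1$, and for $j\ge 3$ a calculus check shows the maximum is attained at $t=(j-2)/(j-1)$ with value $(1-1/(j-1))^{j-2}<1$. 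Combined with $\sum_{j}\alpha_j\le\Gamma$ from~\ref{item: conflict degree}, this yields $\phat_V\Gammahat'\le k\Gamma/n$, so that
\[
\frac{\chat+\dhat}{\hhat}=\Gammahat'+\frac{k}{n\phat_V}\le\frac{(\Gamma+1)k}{n\phat_V}\le\frac{2k\Gamma}{n\phat_V},
\]
where the final step uses $\Gamma\ge 1$. There is no real obstacle here: beyond unfolding definitions, the entire content of the lemma is encoded in the one-line polynomial inequality $(j-1)(1-t)t^{j-2}\le 1$.
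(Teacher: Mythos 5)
Your proposal is correct and follows essentially the same route as the paper: both reduce to $\dhat/\hhat=k/(n\phat_V)$ plus the expansion of $\chat/\hhat$ (which the paper also records as $\Gammahat'$ in Remark~\ref{remark: first derivatives}), and both hinge on the same elementary bound $m(1-t)t^{m-1}\le 1$ together with~\ref{item: conflict degree}. The only cosmetic difference is that you verify that bound by calculus while the paper reads it off as one term of the binomial expansion of $(t+(1-t))^{m}$.
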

	\begin{proof}
		Recall that for all~$j\in[\ell]_2$, the Binomial theorem implies that
		\begin{equation*}
			\binom{j}{1}\paren[\bigg]{1-\frac{kx}{dn}}\paren[\bigg]{\frac{kx}{dn}}^{j-1}\leq\sum_{s=0}^{j}\binom{j}{s}\paren[\bigg]{\frac{kx}{n}}^{j-s}\paren[\bigg]{1-\frac{kx}{n}}^{s}=\paren[\bigg]{\frac{kx}{n}+1-\frac{kx}{n}}^{j}=1.
		\end{equation*}
		This yields
		\begin{align*}
			\frac{\chat(x)+\dhat(x)}{\hhat(x)}
			&=\frac{k}{n\phat_V(x)}+\frac{k}{dn}\sum_{j\in[\ell-1]}\Delta(\cC^{(j+1)})\cdot \binom{j}{1}\cdot\phat_M(x)^{j-1}\\
			&=\frac{k}{n\phat_V(x)}+\frac{k}{n\phat_V(x)}\sum_{j\in[\ell-1]}\frac{\Delta(\cC^{(j+1)})}{d^j}\cdot \binom{j}{1}\paren[\bigg]{1-\frac{kx}{n}}\paren[\bigg]{\frac{kx}{n}}^{j-1}\\
			&\leq \frac{k}{n\phat_V(x)}+\frac{k\Gamma}{n\phat_V(x)},
		\end{align*}
		which completes the proof.
	\end{proof}
	
	\begin{lemma}\label{lemma: s and s+1 errors}
		Let~$j\in[\ell]$,~$s\in[\ell]_0$ and~$x\in[0,m]$.
		Then,
		\begin{equation*}
			(s+1)\frac{\zeta_{j,s+1}(x)}{\hhat(x)}\leq  \frac{3k\ell\Gamma}{n\phat_V(x)}\zeta_{j,s}(x).
		\end{equation*}
	\end{lemma}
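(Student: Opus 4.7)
The plan is to unpack everything via the auxiliary quantity $A_s := \binom{j}{s}\dhat(x)^s/(\Gamma\ell d^j)$, so that $\zeta_{j,s}(x) = \xi(x)(\zhat_{j,s}(x) + A_s)$ and similarly for $\zeta_{j,s+1}$. Cancelling the common factor $\xi(x)$, the claim reduces to
\[
(s+1)\frac{\zhat_{j,s+1}(x) + A_{s+1}}{\hhat(x)} \le \frac{3k\ell\Gamma}{n\phat_V(x)}\bigl(\zhat_{j,s}(x) + A_s\bigr).
\]
First I would dispose of the case $s \ge j$ (where $\binom{j}{s+1} = 0$ kills both terms on the left) and from now on assume $s \in [j-1]_0$. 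The boundary value $x = 0$ can either be checked by a direct computation or handled by continuity from $x > 0$.

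The next step is to establish two elementary shift identities. The binomial identity $(s+1)\binom{j}{s+1} = (j-s)\binom{j}{s}$ combined with $\phat_V^k \exp(-\Gammahat)/\phat_M = \hhat/x$, which follows from $\dhat = d\phat_V^{k-1}\exp(-\Gammahat)$, $\phat_M = kx/(dn)$ and $\hhat = (n/k)\phat_V\dhat$, gives $(s+1)\zhat_{j,s+1}/\hhat = (j-s)\zhat_{j,s}/x$. Similarly, $A_{s+1}/A_s = (j-s)\dhat/(s+1)$ together with $\dhat/\hhat = k/(n\phat_V)$ yields $(s+1)A_{s+1}/\hhat = k(j-s) A_s/(n\phat_V)$. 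Introducing $r := \zhat_{j,s}/A_s$, a direct computation using $\dhat^s/d^s = (\phat_V^{k-1}\exp(-\Gammahat))^s$ gives $r = \Gamma\ell\phat_V(x)^s(kx/n)^{j-s}$. Substituting and simplifying (divide by $A_s$, then multiply by $3n\phat_V/(k\ell\Gamma)$) reduces the target to
\[
(j-s)\phat_V(x)^{s+1}(kx/n)^{j-s-1} + \frac{j-s}{\ell\Gamma} \le 3 + 3r.
\]

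Since $\Gamma \ge 1$ and $j-s \le \ell$, the second term on the left is at most $1/\Gamma \le 1$; as $r \ge 0$, it therefore suffices to prove the pointwise bound $(j-s)\,p^{s+1} q^{j-s-1} \le 2$, where $p := \phat_V(x) \in [0,1]$ and $q := kx/n = 1-p$. The main technical observation -- really the only place where a nontrivial argument is used -- is to recognise $\binom{j}{s+1}\, p^{s+1} q^{j-s-1}$ as the binomial probability $\Pr[\mathrm{Bin}(j,p) = s+1] \le 1$, whence $(j-s)\,p^{s+1} q^{j-s-1} \le (j-s)/\binom{j}{s+1} \le 1$. The last inequality uses $\binom{j}{s+1} \ge j-s$, which holds since $\binom{j}{s+1} \ge \binom{j}{1} = j \ge j-s$ for $1 \le s+1 \le j-1$, while $\binom{j}{j} = 1 = j-s$ for $s = j-1$. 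The main obstacle is really the bookkeeping of balancing $\zhat_{j,s}$ against $A_s$; once the clean reduction above is in place, the final estimate is immediate from the binomial probability interpretation.
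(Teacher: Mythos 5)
Your proof is correct. The skeleton matches the paper's: both arguments cancel the common factor $\xi(x)$, use the shift identities $\binom{j}{s+1}=\tfrac{j-s}{s+1}\binom{j}{s}$ and $\phat_V(x)^k\exp(-\Gammahat(x))=\phat_V(x)\dhat(x)/d$ to compare the two summands of $\zeta_{j,s+1}$ with those of $\zeta_{j,s}$, and your treatment of the $A_{s+1}$-versus-$A_s$ comparison is identical to the paper's. The difference is in how $\zhat_{j,s+1}$ is handled. The paper splits into the cases $kx/n\le 1/2$ and $kx/n\ge 1/2$, bounding $\xi(x)\zhat_{j,s+1}(x)$ against the $A_s$-part of $\zeta_{j,s}$ in the first case (via $(j-s)2^{-(j-s-1)}\le 1$) and against the $\xi(x)\zhat_{j,s}(x)$-part in the second (via $n/(kx)\le 2$); this two-case argument is why the paper needs both summands of $\zeta_{j,s}$ on the right and ends up with the constant $3\Gamma\ell$. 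You instead observe that $(j-s)\,\phat_V(x)^{s+1}(kx/n)^{j-s-1}\le (j-s)/\binom{j}{s+1}\le 1$ holds uniformly in $x$ by the binomial-probability interpretation, which removes the case split entirely and lets you discard the $3r$ term on the right; in fact your estimate shows the lemma holds with $2k\ell\Gamma$ in place of $3k\ell\Gamma$. Two cosmetic remarks: the multiplier in ``multiply by $3n\phat_V/(k\ell\Gamma)$'' should read $n\phat_V(x)/(k\ell\Gamma)$ (the displayed target you then state is the correct one), and the separate treatment of $x=0$ is genuinely needed since your first shift identity divides by $x$ --- the continuity argument you invoke does dispose of it.
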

	\begin{proof}
		We assume that~$s+1\leq j$, as otherwise~$\zeta_{j,s+1}(x)=0$.
		We show that~$\zeta_{j,s+1}(x)\leq \frac{3\ell\Gamma}{s+1}\dhat(x)\zeta_{j,s}(x)$ which is equivalent to the inequality in the statement.
		Recall that for all~$s\in[j]_0$, we have~$\zeta_{j,s}(x)=\xi(x)\zhat_{j,s}(x)+\xi(x)\binom{j}{s}\frac{\dhat(x)^{s}}{\Gamma\ell d^j}$.
		We bound each of~$\xi(x)\zhat_{j,s+1}$ and~$\xi(x)\binom{j}{s+1}\frac{\dhat(x)^{s+1}}{\Gamma\ell d^j}$ separately, where for the first one we use a multiple of~$\xi(x)\binom{j}{s}\frac{\dhat(x)^s}{\Gamma\ell d^j}$ as an upper bound whenever~$x$ is small and a multiple of~$\xi(x)\zhat_{j,s}(x)$ as an upper bound otherwise.
		
		First, consider~$\xi(x)\zhat_{j,s+1}(x)$.	
		If~$kx/n\leq 1/2$, then
		\begin{align*}
			\xi(x)\zhat_{j,s+1}(x)
			&=\xi(x)\cdot\binom{j}{s+1}\cdot(\phat_V(x)^k\cdot\exp(-\Gammahat(x)))^{s+1}\cdot\paren[\bigg]{\frac{kx}{dn}}^{j-s-1}\\
			&=(j-s)\paren[\bigg]{\frac{kx}{n}}^{j-s-1}\phat_V(x)^{s+1}\cdot\frac{\Gamma\ell}{s+1}\dhat(x)\cdot \xi(x)\binom{j}{s}\frac{\dhat(x)^s}{\Gamma\ell d^j}\\
			&\leq (j-s)\paren[\bigg]{\frac{1}{2}}^{j-s-1}\phat_V(x)^{s+1}\cdot\frac{\Gamma\ell}{s+1}\dhat(x)\cdot \zeta_{j,s}(x)
			\leq \frac{\Gamma\ell}{s+1}\dhat(x)\cdot \zeta_{j,s}(x).
		\end{align*}
		If~$kx/n\geq 1/2$, then
		\begin{align*}
			\xi(x)\zhat_{j,s+1}(x)
			&=\frac{j-s}{s+1}\phat_V(x)\dhat(x)\frac{n}{kx}\cdot\xi(x)\zhat_{j,s}(x)\leq \frac{2\ell}{s+1}\dhat(x)\zeta_{j,s}(x).
		\end{align*}
		Thus, for arbitrary~$x$,
		\begin{equation}\label{equation: first summand}
			\xi(x)\zhat_{j,s+1}(x)\leq \frac{2\Gamma\ell}{s+1}\dhat(x)\zeta_{j,s}(x).
		\end{equation}
		
		Next, consider~$\xi(x)\binom{j}{s+1}\frac{\dhat(x)^{s+1}}{\Gamma\ell d^j}$.
		We have
		\begin{equation*}
			\xi(x)\binom{j}{s+1}\frac{\dhat(x)^{s+1}}{\Gamma\ell d^j}
			=\frac{j-s}{s+1}\dhat(x)\cdot\xi(x)\binom{j}{s}\frac{\dhat(x)^{s}}{\Gamma\ell d^j}
			\leq \frac{\ell}{s+1}\dhat(x)\cdot\zeta_{j,s}(x).
		\end{equation*}
		Combining this with~\eqref{equation: first summand} yields
		\begin{equation*}
			\zeta_{j,s+1}(x)=\xi(x)\zhat_{j,s+1}(x)+\xi(x)\binom{j}{s+1}\frac{\dhat(x)^{s+1}}{\Gamma\ell d^j}\leq \frac{3\Gamma\ell}{s+1}\dhat(x)\zeta_{j,s}(x),
		\end{equation*}
		which completes the proof.
	\end{proof}

	\begin{remark}\label{remark: first derivatives}
		Let~$x\in[0,m]$,~$j\in[\ell]$ and~$s\in[\ell]_0$.
		Then,
		\begin{align*}
			\Gammahat'(x)&=\sum_{j\in[\ell]_2} \frac{\Delta(\usub{\cC}{j})}{d^{j-1}} \cdot (j-1)\cdot \paren[\bigg]{\frac{k}{n}}^{j-1} x^{j-2}=\frac{\chat(x)}{\hhat(x)},\\
			\xi'(x)&=\frac{300k^2\ell\Gamma}{n\phat_V(x)}\xi(x),\\
			\dhat'(x)&=-\paren[\bigg]{\Gammahat'(x)+\frac{k(k-1)}{n\phat_V(x)}}\dhat(x)=-\paren[\bigg]{\frac{\chat(x)+(k-1)\dhat(x)}{\hhat(x)}}\dhat(x),\\
			\zhat_{j,s}'(x)&=\frac{s+1}{\hhat(x)}\zhat_{j,s+1}(x)-s\paren[\bigg]{\Gammahat'(x)+\frac{k^2}{n\phat_V(x)}}\zhat_{j,s}(x)=\frac{s+1}{\hhat(x)}\zhat_{j,s+1}(x)-s\frac{\chat(x)+k\dhat(x)}{\hhat(x)}\zhat_{j,s}(x),\\
			\delta'(x)&=\paren[\bigg]{\frac{300k^2\ell\Gamma}{n\phat_V(x)}-\Gammahat'(x)-\frac{k(k-1)}{n\phat_V(x)}}\delta(x),\\
			\zeta_{j,s}'(x)&=\paren[\bigg]{\frac{300k^2\ell\Gamma}{n\phat_V(x)}-s\Gammahat'(x)-\frac{k^2s}{n\phat_V(x)}}\zeta_{j,s}(x)
			\\&\hphantom{=}\mathrel{}\quad+(s+1)\frac{\xi(x)\zhat_{j,s+1}(x)}{\hhat(x)}+s\binom{j}{s}\frac{\xi(x)\dhat(x)^{s+1}}{\Gamma\ell d^{j}\hhat(x)}.
		\end{align*}
	\end{remark}
	Let us provide some intuition for~$\dhat'$ and~$\zhat_{j,s}'$.
	For~$i\in[m-1]_0$, consider the choice of~$e(i+1)$ in step~$i+1$ of Algorithm~\ref{algorithm: matching} assuming that~$\cG(i)$ occurred.
	For all~$v\in V(i)$, all of the approximately~$\dhat(i)$ edges~$e\in\cD_v(i)$ may become unavailable due to a conflict~$C\in\cC$ with~$\set{e,e(i+1)}=C\setminus\cM(i)$ or due to a nonempty intersection~$e\cap e(i+1)$.
	Since for all conflicts~$C\in\cC$, all distinct edges~$f,f'\in C$ are disjoint,~$e$ becomes unavailable either due to a conflict or a nonempty intersection, never both at once.
	For an edge~$e\in\cH(i)$ containing a vertex~$v$ that will not be covered by~$\cM(i+1)$ the number of possible choices for~$e(i+1)$ that make~$e$ unavailable may be estimated as follows, where for the approximations we ignore some overcounting which is negligible due to~$\cS(i)$ occurring and~$\Delta_2(\cH)\leq d^{1-\eps}$ as we show in the subsequent subsections.
	The number of possible choices that make~$e$ unavailable due to conflicts is approximately the number of conflicts~$C\in\cC$ with~$e\in C$ and~$\abs{C\setminus\cM(i)}=2$, so there are~$\abs{\cC_e^{[1]}}(i)\approx\chat(i)$ such choices.
	Since we assume that~$v$ will not be covered by~$\cM(i+1)$, the number of possible choices for~$e(i+1)$ that make~$e$ unavailable due to a nonempty intersection is approximately~$\sum_{u\in e\setminus\set{v}} \abs{\cD_u}(i)\approx(k-1)\dhat(i)$.
	Since for all~$e\in\cH(i)$, the probability that~$e$ is chosen to be~$e(i+1)$ is~$1/\abs\cH(i)\approx 1/\hhat(i)$, this suggests~$\dhat'(i)$ for the one step change~$\abs{\cD_v(i+1)}-\abs{\cD_v(i)}$.
	
	Similarly, for all~$j\in[\ell]$,~$\cZ\in\ccZ^{(j)}(i)$ and~$s\in[j]_0$, tests~$Z\in\cZ^{[s+1]}(i)$ where one of the~$s+1$ available edges~$e\in Z$ is chosen to be~$e(i+1)$ will be present in~$\cZ^{[s]}(i)$ and tests~$Z\in\cZ^{[s]}(i)$ where one of the~$s$ available edges~$e\in Z$ becomes unavailable (due to conflicts or intersections) will no longer be contained in~$\cZ^{[s]}(i+1)$.
	Again, for all~$e\in\cH(i)$, the probability that~$e$ is chosen to be~$e(i+1)$ is approximately~$1/\hhat(i)$ and similarly as above, now without the constraint that~$e$ contains some vertex~$v$ that will not be covered by~$\cM(i+1)$, the probability~$e$ becomes unavailable is approximately~$(\chat(i)+k\dhat(i))/\hhat(i)$.
	Hence, when transitioning from~$\cZ^{[s]}(i)$ to~$\cZ^{[s]}(i+1)$, we expect to gain approximately~$(s+1)\abs{\cZ^{[s+1]}}(i)/\hhat(i)\approx (s+1)\zhat_{j,s+1}(i)\abs\cZ/\hhat(i)$ tests and to lose approximately~$s(\chat(i)+k\dhat(i))\zhat_{j,s}(i)\abs\cZ/\hhat(i)$ tests.
	This suggests~$\zhat_{j,s}'(i)\cdot\abs\cZ$ for the one step change~$\abs{\cZ^{[s]}}(i+1)-\abs{\cZ^{[s]}}(i)$.
	\begin{remark}\label{remark: first derivatives lower bounds}
		Let~$x\in[0,m]$,~$j\in[\ell]$ and~$s\in[\ell]_0$.
		Then,
		\begin{gather*}
			\Gammahat'(x)\leq \frac{k\ell\Gamma}{n\phat_V(x)},\quad
			\delta'(x)
			\geq \frac{200k^2\ell\Gamma}{n\phat_V(x)}\delta(x) \geq\frac{d^{1-\eps/2}}{n},\\
			\zeta_{j,s}'(x)
			\geq \frac{200k^2\ell\Gamma}{n\phat_V(x)}\zeta_{j,s}(x)
			\geq \ind_{[j]_0}(s)\frac{d^{s-j-\eps/2}}{n}.
		\end{gather*}
	\end{remark}
	We also use the following crude upper bounds concerning the derivatives.
	\begin{remark}\label{remark: first derivatives upper bounds}
		Let~$x\in[0,m]$,~$j\in[\ell]$ and~$s\in[\ell]_0$.
		Then,
		\begin{gather*}
			\abs{\dhat'(x)}\leq\frac{d^{1+\eps/32}}{n},\quad
			\frac{s+1}{\hhat(x)}\zhat_{j,s+1}(x)\leq \frac{d^{s-j+\eps/32}}{n},\quad
			s\frac{\chat(x)+k\dhat(x)}{\hhat(x)}\zhat_{j,s}(x)\leq \frac{d^{s-j+\eps/32}}{n},\\
			\abs{\zhat_{j,s}'(x)}\leq \frac{d^{s-j+\eps/32}}{n},\quad
			\abs{\delta'(x)}\leq\frac{d}{n},\quad
			\abs{\zeta_{j,s}'(x)}\leq \frac{d^{s-j}}{n}.\\
		\end{gather*}
	\end{remark}
	
	To obtain the first order approximations of the one step changes of the trajectories~$\dhat$ and~$\zhat_{j,s}$ as well as the error functions~$\delta$ and~$\zeta_{j,s}$ with~$j\in[\ell]$ and~$s\in[\ell]_0$ that are presented in Remark~\ref{remark: one step changes trajectories}, we employ Taylor's theorem with remainder.
	More specifically, we use the following special case.
	
	\begin{lemma}[Taylor's theorem]\label{lemma: taylor}
		Let~$a<x<x+1<b$ and suppose~$f\colon (a,b)\rightarrow\bR$ is twice continuously differentiable.
		Then,
		\begin{equation*}
			f(x+1)=f(x)+f'(x)\pm \max_{\xi\in [x,x+1]}\abs{f''(\xi)}.
		\end{equation*}
	\end{lemma}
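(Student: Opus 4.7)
The plan is to reduce directly to the classical Taylor's theorem with Lagrange remainder. Since $f \in C^2(a,b)$ and $[x, x+1] \subseteq (a,b)$, that standard result applied on the interval $[x, x+1]$ yields some $\xi_0 \in (x, x+1)$ with
\begin{equation*}
f(x+1) = f(x) + f'(x) + \tfrac{1}{2} f''(\xi_0).
\end{equation*}
Bounding the remainder as $\lvert \tfrac{1}{2} f''(\xi_0)\rvert \leq \tfrac{1}{2} \max_{\xi \in [x, x+1]} \lvert f''(\xi)\rvert \leq \max_{\xi \in [x, x+1]} \lvert f''(\xi)\rvert$ then gives the claim, with a factor of $2$ to spare.

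If one prefers a self-contained derivation, integrating by parts once in the fundamental theorem of calculus applied to $f'$ on $[x, x+1]$ yields the integral form of the remainder,
\begin{equation*}
f(x+1) - f(x) - f'(x) = \int_x^{x+1} (x+1-t)\, f''(t)\, \diff t.
\end{equation*}
Pulling $\max_{\xi \in [x,x+1]} \lvert f''(\xi)\rvert$ out of the integral and using $\int_x^{x+1}(x+1-t)\, \diff t = \tfrac{1}{2}$ gives the same conclusion.

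There is no real obstacle here: the lemma is merely a specialization of the standard second-order Taylor expansion to step size~$1$, packaged in a deliberately loose quantitative form that will be convenient when passing from the derivative bounds of the trajectory and error functions $\dhat$, $\zhat_{j,s}$, $\delta$ and $\zeta_{j,s}$ to their one-step differences in the subsequent analysis.
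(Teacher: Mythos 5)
Your proof is correct and is exactly what the paper intends: the lemma is stated without proof as a standard special case of Taylor's theorem with remainder, and your reduction to the Lagrange (or integral) form of the remainder, which even yields the sharper constant $\tfrac12$, is the standard argument.
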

	
	To obtain the approximation errors given in Remark~\ref{remark: one step changes trajectories} we provide expressions for the second derivatives in Remarks~\ref{remark: second derivatives} and~\ref{remark: second derivatives upper bounds}. To obtain these, note that~$(1/\hhat(x))'=(\chat(x)+k\dhat(x))/\hhat(x)^2$.%
	\begin{remark}\label{remark: second derivatives}
		Let~$x\in[0,m]$,~$j\in[\ell]$ and~$s\in[\ell]_0$.
		Then,
		\begin{align*}
			\Gammahat''(x)&=\sum_{j\in[\ell]_3} \frac{\Delta(\usub{\cC}{j})}{d^{j-1}} \cdot (j-1)(j-2)\cdot \paren[\bigg]{\frac{k}{n}}^{j-1} x^{j-3},\\
			\xi''(x)&=\frac{300k^3\ell\Gamma}{n^2\phat_V(x)^2}\xi(x)+\frac{300k^2\ell\Gamma}{n\phat_V(x)}\xi'(x),\\
			\dhat''(x)&=-\paren[\bigg]{\Gammahat''(x)+\frac{k^2(k-1)}{n^2\phat_V(x)^2}}\dhat(x)-\paren[\bigg]{\Gammahat'(x)-\frac{k(k-1)}{n\phat_V(x)}}\dhat'(x),\\
			\zhat_s''(x)&=(s+1)\frac{\chat(x)+k\dhat(x)}{\hhat(x)^2}\zhat_{j,s+1}(x)+\frac{s+1}{\hhat(x)}\zhat_{j,s+1}'(x)\\&\hphantom{=}\mathrel{}\quad-s\paren[\bigg]{\Gammahat''(x)+\frac{k^3}{n^2\phat_V(x)^2}}\zhat_{j,s}(x)-s\paren[\bigg]{\Gammahat'(x)+\frac{k^2}{n\phat_V(x)}}\zhat_{j,s}'(x),\\
			\delta''(x)&=\paren[\bigg]{\frac{300k^3\ell\Gamma}{n^2\phat_V(x)^2}-\Gammahat''(x)-\frac{k^2(k-1)}{n^2\phat_V(x)^2}}\delta(x)+\paren[\bigg]{\frac{300k^2\ell\Gamma}{n\phat_V(x)}-\Gammahat'(x)-\frac{k(k-1)}{n\phat_V(x)}}\delta'(x),\\
			\zeta_{j,s}''(x)&=\paren[\bigg]{\frac{300k^3\ell\Gamma}{n^2\phat_V(x)^2}-s\Gammahat''(x)-\frac{k^3s}{n^2\phat_V(x)^2}}\zeta_{j,s}(x)+\paren[\bigg]{\frac{300k^2\ell\Gamma}{n\phat_V(x)}-s\Gammahat'(x)-\frac{k^2s}{n\phat_V(x)}}\zeta_{j,s}'(x)
			\\&\hphantom{=}\mathrel{}\quad+(s+1)\paren[\bigg]{\frac{\chat(x)+k\dhat(x)}{\hhat(x)^2}\zeta_{j,s+1}(x)
				+\frac{\zeta_{j,s+1}'(x)}{\hhat(x)}}
			\\&\hphantom{=}\mathrel{}\quad+(2s-j)\binom{j}{s}\paren[\bigg]{\frac{\chat(x)+k\dhat(x)}{\hhat(x)^2}\frac{\xi(x)\dhat(x)^{s+1}}{\Gamma\ell d^j}
				+\frac{300k^2\ell\Gamma}{n\phat_V(x)}\frac{\xi(x)\dhat(x)^{s+1}}{\Gamma\ell d^j\hhat(x)}
				\\&\hphantom{=}\mathrel{}\qquad+(s+1)\dhat'(x)\frac{\xi(x)\dhat(x)^{s}}{\Gamma\ell d^j \hhat(x)}}.\\
		\end{align*}
	\end{remark}
	\begin{remark}\label{remark: second derivatives upper bounds}
		Let~$x\in[0,m]$,~$j\in[\ell]$ and~$s\in[\ell]_0$.
		Then,
		\begin{gather*}
			\abs{\Gammahat''(x)}\leq\frac{\ell^2k^2\Gamma}{n^2\mu^2},\quad
			\abs{\dhat''(x)}\leq\frac{d^{1+\eps}}{n^2},\quad
			\abs{\zhat_{j,s}''(x)}\leq \frac{d^{s-j+\eps}}{n^2},\quad
			\abs{\delta''(x)}\leq\frac{d^{1+\eps}}{n^2},\quad
			\abs{\zeta_{j,s}''(x)}\leq \frac{d^{s-j+\eps}}{n^2}.
		\end{gather*}
	\end{remark}
	With these bounds for the second derivatives, using~$n\geq d^{1/k}$, Taylor's theorem with remainder (Lemma~\ref{lemma: taylor}) entails the following approximations.
	\begin{remark}\label{remark: one step changes trajectories}
		Let~$i\in [m-1]_0$,~$j\in[\ell]$ and~$s\in[\ell]_0$.
		Then,%
		\begin{gather*}
			\dhat(i+1)-\dhat(i)=\dhat'(i)\pm\frac{d^{1-\eps}}{n},\quad
			\zhat_{j,s}(i+1)-\zhat_{j,s}(i)=\zhat_{j,s}'(i)\pm\frac{d^{s-j-\eps}}{n}\\
			\delta(i+1)-\delta(i)=\delta'(i)\pm \frac{d^{1-\eps}}{n},\quad\text{and}\quad
			\zeta_{j,s}(i+1)-\zeta_{j,s}(i)=\zeta_{j,s}'(i)\pm\frac{d^{s-j-\eps}}{n}.
		\end{gather*}
	\end{remark}
	
	\subsection{Expected changes}\label{subsection: trend}
	In general, if~$X(0),X(1),\ldots$ is a sequence of numbers or random variables and~$i\geq 0$, we define~$\Delta X(i):=X(i+1)-X(i)$.
	In this subsection, we show that for all~$v\in V(H)$,~$\cZ\in\ccZ$,~$s\in[\ell]_0$ with~$s\geq\ind_{\ccC}(\cZ)$ and~$*\in\set{+,-}$, the processes
	\begin{gather*}
		\abs{\cD_{v}}^*_\f(0),\abs{\cD_{v}}^*_\f(1)\ldots\quad\text{and}\quad 
		\abs{\cZ^{[s]}}^*_\f(0),\abs{\cZ^{[s]}}^*_\f(1),\ldots
	\end{gather*}
	are supermartingales with suitably bounded expected one-step changes in the sense that for all~$i\geq 0$, the conditional expectations~$\exi{\abs{\Delta\abs{\cD_{v}}^*_\f(i)}}$ and~$\exi{\abs{\Delta\abs{\cZ^{[s]}}^*_\f(i)}}$ are never too large.
	
	First, we present four statements where we bound probabilities related to the removal of edges.
	Given an edge~$e\in\cH$ that is available in some step~$i$ and~$\emptyset\neq U\subseteq e$, the first result closely approximates the probability that in the next step, the following happens: the edge~$e$ becomes unavailable due to conflicts or due to an intersection of~$e(i+1)$ with~$U$.
	\begin{lemma}\label{lemma: leaving probability}
		Let~$i\in[m-1]_0$,~$e\in\cH$ and~$\emptyset\neq U\subseteq e$.
		Then,
		\begin{equation*}
			\pri{e\notin\cH_C(i+1)\tor U\cap e(i+1)\neq\emptyset}\eventeq{\cG(i)\cap \set{e\in\cH(i)}}(1\pm 7\xi(i))\frac{\chat(i)+\abs{U}\dhat(i)}{\hhat(i)}.
		\end{equation*}
	\end{lemma}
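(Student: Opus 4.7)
The plan is to unpack the event, express the conditional probability as a ratio of sizes of explicit edge subsets of~$\cH(i)$, and then estimate the numerator and denominator by combining~$\cT(i)$ (for the main terms) with~$\cS(i)$ (for the second-order overcounts).

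Working on the event~$\cG(i)\cap\set{e\in\cH(i)}$ and conditioning on~$\cF(i)$, the edge~$e(i+1)$ is drawn uniformly from~$\cH(i)$, so the event in question equals~$\set{e(i+1)\in F\cup F'}$, where
\begin{equation*}
F:=\bigcup_{C\in\cC_e^{[1]}(i)}(C\cap\cH(i))\quad\text{and}\quad F':=\cset{f\in\cH(i)}{f\cap U\neq\emptyset}.
\end{equation*}
Hence the probability equals~$\abs{F\cup F'}/\abs{\cH}(i)$, and by~$\cT(i)$ the denominator is~$(1\pm\xi(i))\hhat(i)$.

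Next I would estimate~$\abs{F}$,~$\abs{F'}$ and~$\abs{F\cap F'}$ separately. Since every~$C\in\cC_e^{[1]}(i)$ contributes exactly one available edge, $\abs{F}\leq\abs{\cC_e^{[1]}}(i)$, and the defect is bounded by twice the number of unordered pairs of distinct~$C,C'\in\cC_e^{[1]}(i)$ sharing that edge, each of which embeds into~$\cC_{e,2}^{[1]}(i)$. Thus~$\cS(i)$ gives $\abs{\cC_e^{[1]}}(i)-\abs{F}\leq 2d^{1-\eps/3}$ while~$\cT(i)$ gives~$\abs{\cC_e^{[1]}}(i)=\chat(i)\pm\gamma(i)$. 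Inclusion--exclusion combined with $\Delta_2(\cH)\leq d^{1-\eps}$ and $\cT(i)$ yields
\begin{equation*}
\abs{F'}=\sum_{u\in U}\abs{\cD_u}(i)\pm k^2d^{1-\eps}=\abs{U}\dhat(i)\pm\paren{\abs{U}\delta(i)+k^2d^{1-\eps}}.
\end{equation*}
For~$\abs{F\cap F'}$, the key observation is that $u\in U\subseteq e\in\cH(i)$ forces $u\in V(i)$, so any edge~$f$ through~$u$ is still available; hence $f\in F\cap F'$ entails that~$f$ is the unique available edge of some semiconflict in $(\cC_e^{(j)})_u^{[1]}(i)$ for some $j\in[\ell-1]$. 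The first bullet of~$\cS(i)$ applied to $\cZ=\cC_e^{(j)}\in\ccC$ thus yields $\abs{F\cap F'}\leq\sum_{u\in U}\sum_{j}\abs{(\cC_e^{(j)})_u^{[1]}}(i)\leq k\ell\Gamma d^{1-\eps/3}$.

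Combining these via $\abs{F\cup F'}=\abs{F}+\abs{F'}-\abs{F\cap F'}$ produces numerator $\chat(i)+\abs{U}\dhat(i)$ with additive error controlled by a constant multiple of $\xi(i)(\chat(i)+\abs{U}\dhat(i))$; here one uses that $\gamma(i)\leq 2\xi(i)(\chat(i)+\dhat(i))$ (which follows from Definition~\ref{definition: errors} together with $\sum_{j}\Delta(\cC^{(j+1)})/d^{j}\leq\ell\Gamma$), $\delta(i)=\xi(i)\dhat(i)$, and Remark~\ref{remark: basic bounds} to absorb the terms $d^{1-\eps/3}$ and $d^{1-\eps}$ into $\xi(i)\dhat(i)\geq d^{1-\eps/16}$. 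Dividing by $(1\pm\xi(i))\hhat(i)$ and applying $1/(1\pm\xi(i))=1\pm 2\xi(i)$ finally yields the claimed $(1\pm 7\xi(i))$ factor. The main obstacle is bookkeeping: the target accuracy $7\xi(i)$ leaves little slack, so each of the three lower-order contributions (the $\cC_{e,2}^{[1]}$ overcount in $F$, the pairwise codegree overcount in $F'$, and the interaction $\abs{F\cap F'}$) must be estimated tightly enough that, after summing and dividing, the cumulative multiplicative error stays within $7\xi(i)$.
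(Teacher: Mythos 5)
Your proposal is correct and follows essentially the same route as the paper's proof: split the event into the conflict-caused and intersection-caused parts, read off the main terms from $\cT(i)$ via $\abs{\cC_e^{[1]}}(i)$ and the $\abs{\cD_u}(i)$, and control the overcounts via $\cC_{e,2}^{[1]}(i)$ from $\cS(i)$ and via $\Delta_2(\cH)\leq d^{1-\eps}$. Two minor remarks: the paper observes that since every conflict is a matching (condition~\ref{item: conflict matchings}) the two events are in fact disjoint, so your set $F\cap F'$ is empty and the third estimate is unnecessary; and your bound on the defect $\abs{\cC_e^{[1]}}(i)-\abs{F}$ should carry an extra factor of at most $4^\ell$ for the number of pairs $(C,C')$ producing the same union in $\cC_{e,2}$, which is harmlessly absorbed since $4^\ell d^{1-\eps/3}\ll \xi(i)\dhat(i)$.
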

	\begin{proof}
		Let~$\cX:=\cG(i)\cap \set{e\in\cH(i)}$.
		We are only interested in the conditional probability if~$\cG(i)$ happened and in this case we have approximations for all key quantities in step~$i$ which makes it easy to obtain
		\begin{equation*}
			\pri{e\notin\cH_C(i+1)}\cdot\ind_{\cX}\approx \frac{\chat(i)}{\hhat(i)}\cdot\ind_{\cX}
		\end{equation*}
		as well as
		\begin{equation*}
			\pri{U\cap e(i+1)\neq\emptyset}\cdot\ind_{\cX}\approx \frac{\abs{U}\dhat(i)}{\hhat(i)}\cdot\ind_{\cX}.
		\end{equation*}
		Quantifying the approximation error takes some additional care.
		
		Let us turn to the details.
		Define the events~$\cE_{C,e}:=\set{e\notin\cH_C(i+1)}$ and~$\cE_U:=\set{U\cap e(i+1)\neq\emptyset}$.
		The edges in any conflict are disjoint.
		Thus, we have~$\cE_{C,e}\cap \cE_U=\emptyset$ and hence
		\begin{equation*}
			\pri{\cE_{C,e}\cup\cE_U}=\pri{\cE_{C,e}}+\pri{\cE_U}.
		\end{equation*}
		Let us first consider~$\pri{\cE_{C,e}}$.
		Using~$X_e(i)$ to denote the (random) number of edges~$f\in\cH(i)$ with~$\set{f}=C\setminus\cM(i)$ for some~$C\in\cC_e$, we have~$\pri{\cE_{C,e}}\Xeq X_e(i)/\abs{\cH}(i)$.
		An upper bound for~$X_e(i)$ is given by the (random) number of semiconflicts~$C\in\cC_e$ with~$\abs{C\cap \cH(i)}=1$ and~$\abs{C\cap \cM(i)}=\abs{C}-1$.
		Thus, taking the definition of~$\cT(i)$ (see Definition~\ref{definition: tracking}) into account, we have
		\begin{equation*}
			X_e(i)\leq \abs{\cC_e^{[1]}}(i)\Xleq \chat(i)+\gamma(i).
		\end{equation*}
		As for one edge~$f\in\cH(i)$, there may be two distinct~$C,C'\in\cC_e$ with~$C\setminus\cM(i)=C'\setminus\cM(i)=\set{f}$, the random variable~$\abs{\cC_e^{[1]}}(i)$ may be strictly larger than~$X_e(i)$.
		For all such~$C$,~$C'$, the union~$C\cup C'$ is a set in~$\cC_{e,2}$ with~$(C\cup C')\setminus\cM(i)=\set{f}$ and for all~$C_2\in\cC_{e,2}$, there are at most~$4^\ell$ pairs~$(C,C')\in\cC^2$ with~$C\cup C'=C_2$.
		Thus, taking the definition of~$\cS(i)$ (see Definition~\ref{definition: spreadness}) into account, we obtain
		\begin{equation*}
			X_e(i)\geq \abs{\cC_e^{[1]}}(i)-4^\ell\abs{\cC_{e,2}^{[1]}}(i)\Xgeq \chat(i)-\gamma(i)-d^{1-\eps/4}\geq \chat(i)-2\gamma(i).
		\end{equation*}
		Hence, we have
		\begin{equation}\label{equation: conflict deletion bounds}
			\frac{\chat(i)-2\gamma(i)}{\hhat(i)+\eta(i)}\Xleq \pri{\cE_{C,e}}\Xleq \frac{\chat(i)+\gamma(i)}{\hhat(i)-\eta(i)}.
		\end{equation}
		
		Let us now consider~$\pri{\cE_U}$.
		Using a union bound, we have
		\begin{equation*}
			\pri{\cE_U}
			\leq \sum_{u\in U} \pri{u\in e(i+1)}
			= \sum_{u\in U} \frac{\abs{\cD_u}(i)}{\abs{\cH}(i)}
			\Xleq \frac{\abs{U}(\dhat(i)+\delta(i))}{\hhat(i)-\eta(i)}.
		\end{equation*}
		Furthermore, with another union bound over the pairwise intersections of the events~$\set{u\in e(i+1)}$ with~$u\in U$, the pair degree bound~$\Delta_2(\cH)\leq d^{1-\eps}$ together with Remark~\ref{remark: basic bounds} yields
		\begin{align*}
			\pri{\cE_U}
			&\geq \sum_{u\in U} \pri{u\in e(i+1)}-\sum_{u,v\in U} \pri{u,v\in e(i+1)}
			\Xgeq \paren[\bigg]{\sum_{u\in U} \frac{\abs{\cD_u}(i)}{\abs{\cH}(i)}}-\abs{U}^2 \frac{d^{1-\eps}}{\hhat(i)-\eta(i)}\\
			&\Xgeq \frac{\abs{U}(\dhat(i)-2\delta(i))}{\hhat(i)+\eta(i)}.
		\end{align*}
		Hence, we have
		\begin{equation}\label{equation: intersection deletion bounds}
			\frac{\abs{U}(\dhat(i)-2\delta(i))}{\hhat(i)+\eta(i)}\Xleq \pri{\cE_U}\Xleq \frac{\abs{U}(\dhat(i)+\delta(i))}{\hhat(i)-\eta(i)}.
		\end{equation}
		
		Combining~\eqref{equation: conflict deletion bounds} and~\eqref{equation: intersection deletion bounds} and using the upper bound
		\begin{align*}
			\gamma(i)
			&=2\xi(i)\sum_{j\in[\ell-1]}\Delta(\cC^{(j+1)})\cdot\paren[\bigg]{\zhat_{j,1}(i) + j\frac{\dhat(i)}{\Gamma\ell d^{j}} }
			\leq 2\xi(i)\chat(i)+\frac{2\xi(i)\dhat(i)}{\Gamma}\sum_{j\in[\ell-1]}\frac{\Delta(\cC^{(j+1)})}{d^{j}}\\
			&\leq 2\xi(i)(\chat(i)+\dhat(i)),
		\end{align*}
		we obtain
		\begin{equation*}
			\pri{\cE_{C,e}\cup\cE_U}
			\Xleq \frac{\chat(i)+\abs{U}\dhat(i)+\xi(i)(2\chat(i)+2\dhat(i)+\abs{U}\dhat(i))}{\hhat(i)-\eta(i)}
			\leq (1+7\xi(i))\frac{\chat(i)+\abs{U}\dhat(i)}{\hhat(i)}
		\end{equation*}
		and
		\begin{equation*}
			\pri{\cE_{C,e}\cup\cE_U}
			\Xgeq \frac{\chat(i)+\abs{U}\dhat(i)-\xi(i)(4\chat(i)+4\dhat(i)+2\abs{U}\dhat(i))}{\hhat(i)+\eta(i)}
			\geq (1-7\xi(i))\frac{\chat(i)+\abs{U}\dhat(i)}{\hhat(i)},
		\end{equation*}
		which completes the proof.
	\end{proof}
	Whenever we are given two disjoint edges~$e,f\in\cH$ that are available at some step~$i$, the next result provides a rough upper bound for the very small probability that in the next step, the following happens: the edge~$e$ becomes unavailable due to an intersection with~$e(i+1)$ while~$f$ also becomes unavailable.
	\begin{lemma}\label{lemma: double leaving probability intersection}
		Let~$e,f\in \cH$ with~$e\cap f=\emptyset$ and let~$i\in[m-1]_0$.
		Then,
		\begin{equation*}
			\pri{e\cap e(i+1)\neq\emptyset \tand f\notin\cH(i+1)}\eventleq{\cG(i)\cap\set{f\in\cH(i)}} \frac{1}{d^{\eps/4}n}.
		\end{equation*}
	\end{lemma}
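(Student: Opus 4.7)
The plan is to union-bound the conditional probability by counting the edges $g\in\cH(i)$ that could be chosen as $e(i+1)$ and witness the event. Since the edges of every conflict are pairwise disjoint and $e\cap f=\emptyset$, the event splits into two sub-events according to how $f$ leaves $\cH$: either (A) $g\cap e\ne\emptyset$ and $g\cap f\ne\emptyset$, or (B) $g\cap e\ne\emptyset$ and there exists a conflict $C\in\cC$ with $f,g\in C$ and $C\setminus\{f,g\}\subseteq\cM(i)$.

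For (A), any such $g$ contains a vertex $u\in e$ and a (distinct) vertex $v\in f$; by $\Delta_2(\cH)\le d^{1-\eps}$ there are at most $k^2 d^{1-\eps}$ such $g$. For (B) we split according to $|C|$. If $|C|=2$, then $g\in N^{(2)}_{\cC}(f)$ and $v\in g$ for some $v\in e$; condition~\ref{item: conflict j=2} gives at most $d^{1-\eps}$ candidates for each $v$, hence at most $kd^{1-\eps}$ in total. If $|C|=j+1\ge 3$, set $D:=C\setminus\{f\}\in\cC_f^{(j)}$ with $j\in[\ell-1]_2$; then $D\in\cC_f^{(j)[1]}(i)$ and $g$ is its unique edge in $\cH(i)$. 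Pick $v\in g\cap e$: because $v\in e\in\cH(i)$, no edge of $\cM(i)$ covers $v$, so the single edge of $D\cap\cH(i)$ must be the one containing $v$, i.e.\ $D\in(\cC_f^{(j)})_v^{[1]}(i)$. Viewing $\cC_f^{(j)}\in\ccC\subseteq\ccZ$ as a test system, the spreadness event $\cS(i)$ gives
\begin{equation*}
	\bigl|(\cC_f^{(j)})_v^{[1]}(i)\bigr|\le d^{1-j-\eps/3}\,|\cC_f^{(j)}|\le d^{1-j-\eps/3}\cdot \Gamma d^j = \Gamma d^{1-\eps/3},
\end{equation*}
using~\ref{item: conflict degree}. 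Summing over $v\in e$ and $j\in[\ell-1]_2$ yields at most $k\ell\Gamma d^{1-\eps/3}$ candidates.

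Combining, the total number of witnesses is at most $3k\ell\Gamma d^{1-\eps/3}$. On $\cG(i)\subseteq\cT(i)$ one has $|\cH|(i)\ge\hhat(i)-\eta(i)\ge d^{1-\eps/(48\ell)}n/k$ by~\eqref{equation: tracking implies availability}, so dividing gives an upper bound of the form $3k^2\ell\Gamma\, d^{-\eps/3+\eps/(48\ell)}/n$, which is absorbed into $1/(d^{\eps/4}n)$ since $3k^2\ell\Gamma\le d^{\eps/100}$ by the growth relation~\eqref{equation: growth relation} and since $\eps/12-\eps/(48\ell)\ge 7\eps/96$ for $\ell\ge 2$.

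The main point to get right is sub-case (B) with $|C|\ge 3$: we need the spreadness bound of $\cS(i)$ to control the number of semiconflicts $D$ whose unique available edge intersects $e$. The key step is the observation that any $v\in e$ lies in $V(i)$ and so cannot be covered by the edges of $D\setminus\{g\}\subseteq\cM(i)$, which pins the unique $\cH(i)$-edge of $D$ down to the one containing $v$ and thereby places $D$ in the controlled set $(\cC_f^{(j)})_v^{[1]}(i)$.
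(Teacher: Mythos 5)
Your proof is correct and follows essentially the same route as the paper: split according to whether $f$ leaves $\cH$ by intersecting $e(i+1)$ or via a conflict, bound the first case by $\Delta_2(\cH)\le d^{1-\eps}$, and bound the second by the spreadness of $(\cC_f^{(j)})_v^{[1]}(i)$ for $v\in e$ (the paper folds your $|C|=2$ subcase into the same spreadness bound with $j=1$, which in turn rests on~\ref{item: conflict j=2} exactly as you use it directly). One cosmetic slip: you justify ``no edge of $\cM(i)$ covers $v$'' by writing $v\in e\in\cH(i)$, but $e\in\cH(i)$ is not part of the conditioning; the correct reason is that $v\in g=e(i+1)\in\cH(i)$, so $v\in V(i)$ — this does not affect the argument.
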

	\begin{proof}
		Conceptually, this proof is similar to that of Lemma~\ref{lemma: leaving probability} except that we only care about an upper bound.
		Observe that there are two reasons why~$f$ might be unavailable in step~$i+1$ if it was available in step~$i$, namely~$f$ may become unavailable due to an intersection with~$e(i+1)$ or~$f$ may become unavailable due to conflicts.
		
		Define the events~$\cX:=\cG(i)\cap\set{f\in\cH(i)}$,~$\cE_e:=\set{ e\cap e(i+1)\neq\emptyset }$,~$\cE_f:=\set{ f\cap e(i+1)\neq\emptyset }$ and~$\cE_{C,f}:=\set{ f\notin\cH_C(i+1) }$.
		Then, we have
		\begin{equation*}
			\pri{ \cE_e\cap (\cE_f\cup \cE_{C,f}) }\leq \pri{\cE_e \cap \cE_f} + \pri{\cE_e\cap \cE_{C,f}}.
		\end{equation*}
		We bound the two summands separately.
		
		First, note that taking the definition of~$\cT(i)$ (see Definition~\ref{definition: tracking}) into account,~$\Delta_2(\cH)\leq d^{1-\eps}$ implies
		\begin{equation*}
			\pri{\cE_e\cap \cE_f}
			\leq\sum_{u\in e}\sum_{v\in f}\frac{d_\cH(uv)}{\abs{\cH}(i)}
			\Xleq \frac{k^2 d^{1-\eps}}{\hhat(i)-\eta(i)}
			\leq \frac{2k^2 d^{1-\eps}}{\hhat(i)}
			\leq\frac{1}{2d^{\eps/4}n}.
		\end{equation*}
		
		Furthermore, considering the definitions of partially matched subgraphs and local interactions (see Definitions~\ref{definition: s available} and~\ref{definition: local interactions}) as well as the definition of~$\cS(i)$ (see Definition~\ref{definition: spreadness}), for all~$v\in V(\cH)$, the (random) number of edges~$g\in\cH(i)$ with~$v\in g$ and~$\set{g}=C\setminus\cM(i)$ for some~$C\in\cC_f$ is at most
		\begin{equation*}
			\sum_{j\in[\ell-1]} \abs{ (\cC_f^{(j)})_v^{[1]} }(i)
			\Xleq \sum_{j\in[\ell-1]}\abs{\cC_f^{(j)}} d^{1-j-\eps/3}
			\leq d^{1-\eps/3}\sum_{j\in[\ell-1]}\frac{\Delta(\cC^{(j+1)})}{d^j}
			\leq \Gamma d^{1-\eps/3}
		\end{equation*}
		and thus we have
		\begin{equation*}
			\pri{\cE_e\cap \cE_{C,f}}
			\Xleq \sum_{v\in e} \frac{\Gamma d^{1-\eps/3}}{\abs{\cH}(i)}
			\Xleq \frac{k\Gamma d^{1-\eps/3}}{\hhat(i)-\eta(i)}
			\leq \frac{2k\Gamma d^{1-\eps/3}}{\hhat(i)}
			\leq \frac{1}{2d^{\eps/4}n}.
		\end{equation*}
	\end{proof}
	Whenever we are given two disjoint edges~$e,f\in\cH$ that are available at some step~$i$ and that do not form a conflict of size~$2$, the next result provides a rough upper bound for the very small probability that in the next step both edges become unavailable.
	\begin{lemma}\label{lemma: double leaving probability}
		Let~$e,f\in \cH$ with~$e\cap f=\emptyset$ and~$\set{e,f}\notin\cC^{(2)}$ and let~$i\in[m-1]_0$.
		Then,
		\begin{equation*}
			\pri{e,f\notin\cH(i+1)}\eventleq{\cG(i)\cap\set{e,f\in\cH(i)}} \frac{1}{d^{\eps/5}n}.
		\end{equation*}
	\end{lemma}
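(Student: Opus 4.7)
The plan is to decompose $\{e,f\notin\cH(i+1)\}$ by the mechanism of removal for each of $e$ and $f$. Writing $\cE_e:=\{e\cap e(i+1)\neq\emptyset\}$, $\cE_{C,e}:=\{e\notin\cH_C(i+1)\}$, and $\cE_f,\cE_{C,f}$ analogously, we have $\{e\notin\cH(i+1)\}=\cE_e\cup\cE_{C,e}$ and likewise for $f$. Distributing yields four joint events; three of them---$\cE_e\cap\cE_f$, $\cE_e\cap\cE_{C,f}$, and $\cE_{C,e}\cap\cE_f$---are each contained in $\cE_e\cap(\cE_f\cup\cE_{C,f})$ or $\cE_f\cap(\cE_e\cup\cE_{C,e})$, both of which are bounded by $1/(d^{\eps/4}n)$ via Lemma~\ref{lemma: double leaving probability intersection} applied once with $(e,f)$ and once with $(f,e)$. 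The substance of the proof therefore lies in bounding the remaining term $\pri{\cE_{C,e}\cap\cE_{C,f}}$.

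Since $e(i+1)$ is uniform on $\cH(i)$ given $\cF(i)$, this conditional probability equals $|B|/|\cH(i)|$, where $B$ is the set of those $g\in\cH(i)$ for which there exist conflicts $C_1\ni e,g$ and $C_2\ni f,g$ with $(C_1\cup C_2)\setminus\{e,f,g\}\subseteq\cM(i)$. On $\cG(i)\cap\{e,f\in\cH(i)\}$ we have $|\cH(i)|\geq\hhat(i)/2\geq d^{1-\eps/400}n/(2k)$ by Remark~\ref{remark: basic bounds}, so it suffices to prove $|B|\leq 4\ell\Gamma d^{1-\eps/3}$. I split $B$ according to the sizes $j_1:=|C_1|$ and $j_2:=|C_2|$.

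When $j_1=j_2=2$, the edge $g$ lies in $N_\cC^{(2)}(e)\cap N_\cC^{(2)}(f)$, contributing at most $d^{1-\eps}$ by~\ref{item: conflict double j=2}. When $j_1=2$ and $j_2\geq 3$ (or symmetrically), $C_2\setminus\{f\}\in\cC_{e,f,2}^{\star[1]}(i)$, so Definition~\ref{definition: spreadness}(vi) bounds this contribution by $d^{1-\eps/3}$. The dominant case is $j_1,j_2\geq 3$: set $\cZ:=\cC_f^{(j_2-1)}\in\ccC\subseteq\ccZ$, $Z:=C_2\setminus\{f\}\in\cZ$, and $C:=C_1\setminus\{e\}\in\cC_e$. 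The hypotheses $e\cap f=\emptyset$ and $\{e,f\}\notin\cC^{(2)}$ are precisely what ensure $e\nevicts\cZ$, so $Y:=Z\cup C\in\cZ_{e,2}$ and Definition~\ref{definition: spreadness}(iii) is applicable. The containments $C_1\setminus\{e,g\},C_2\setminus\{f,g\}\subseteq\cM(i)$ force $Y\cap\cH(i)=\{g\}$ and $Y\setminus\{g\}\subseteq\cM(i)$, so $Y\in\cZ_{e,2}^{[1]}(i)$; since the unique element of $Y\cap\cH(i)$ is $g$, distinct bad $g$ yield distinct $Y$. Using $|\cZ|\leq\Gamma d^{j_2-1}$, the spreadness bound yields at most $\Gamma d^{1-\eps/3}$ bad $g$ per $j_2\in[3,\ell]$, hence $\leq\ell\Gamma d^{1-\eps/3}$ in total.

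Combining the four subcases gives $|B|\leq 4\ell\Gamma d^{1-\eps/3}$; dividing by $|\cH(i)|$ and absorbing the factors $k,\ell,\Gamma$ using~\eqref{equation: growth relation} produces $\pri{\cE_{C,e}\cap\cE_{C,f}}\leq 1/(2d^{\eps/5}n)$, and adding the three contributions from Lemma~\ref{lemma: double leaving probability intersection} completes the proof. The one delicate step is the identification in the last subcase of $Y$ with an element of the partially matched local interaction $\cZ_{e,2}^{[1]}(i)$: this is what makes the hypothesis $\{e,f\}\notin\cC^{(2)}$ essential, and it lets us piggyback on the existing spreadness event $\cS(i)$ rather than needing a new local interaction tailored to the two-large-conflict configuration.
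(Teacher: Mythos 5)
Your proof is correct and follows the paper's route: the same three-term decomposition, with Lemma~\ref{lemma: double leaving probability intersection} (applied to both orderings of $e,f$) absorbing the intersection terms, and the spreadness event $\cS(i)$ controlling $\pri{\cE_{C,e}\cap\cE_{C,f}}$ via an injection of the bad choices $g=e(i+1)$ into partially matched local interactions. The only difference is organizational: the paper bounds the conflict--conflict term in one stroke by $\sum_{j\in[\ell-1]}\abs{(\cC_e^{(j)})_{f,2}^{[1]}}(i)\leq\Gamma d^{1-\eps/3}$, which already covers semiconflicts of size $1$ on either side, so your four-way case split on $(\abs{C_1},\abs{C_2})$ using \ref{item: conflict double j=2} and $\cC_{e,f,2}^{\star[1]}(i)$ for the small cases is valid but unnecessary; in the paper's unified count the hypothesis $\set{e,f}\notin\cC^{(2)}$ is exactly what makes $f$ a non-evictor for every $\cC_e^{(j)}$, not only in the two-large-conflict configuration.
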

	\begin{proof}
		This proof is an extension of the proof of Lemma~\ref{lemma: double leaving probability intersection} and conceptually similar.
		Let~$\cX:=\cG(i)\cap\set{e,f\in\cH(i)}$ and for~$g\in\set{e,f}$, define the events~$\cE_{C,g}:=\set{g\notin \cH_C(i+1)}$ and~$\cE_g:=\set{g\cap e(i+1)\neq\emptyset}$.
		We obtain
		\begin{equation*}
			\pri{ e,f\notin\cH(i+1) }\leq \pri{\cE_e\cap \set{ f\notin\cH(i+1) }}+\pri{\cE_f\cap \set{ e\notin\cH(i+1) }}+ \pri{\cE_{C,e}\cap\cE_{C,f}}.
		\end{equation*}
		Since Lemma~\ref{lemma: double leaving probability intersection} shows that~$\frac{1}{d^{\eps/4}n}$ is an upper bound for the first two summands, it suffices to obtain an appropriate upper bound for~$\pri{\cE_{C,e}\cap\cE_{C,f}}$.
		
		For~$j\in[\ell-1]$, the edge~$f$ is not an immediate evictor for~$\cC_{e}^{(j)}$, and so, considering the definitions of partially matched subgraphs and local interactions (see Definitions~\ref{definition: s available} and~\ref{definition: local interactions}) as well as the definition of~$\cS(i)$ (see Definition~\ref{definition: spreadness}), the (random) number of edges~$g\in\cH(i)$ with~$\set{g}=C_e\setminus\cM(i)=C_f\setminus\cM(i)$ for some~$C_e\in\cC_e$ and~$C_f\in\cC_f$ is at most
		\begin{equation*}
			\sum_{j\in[\ell-1]} \abs{ (\cC_e^{(j)})_{f,2}^{[1]} }(i)
			\Xleq \sum_{j\in[\ell-1]}\abs{\cC_e^{(j)}} d^{1-j-\eps/3}
			\leq d^{1-\eps/3}\sum_{j\in[\ell-1]}\frac{\Delta(\cC^{(j+1)})}{d^j}
			\leq \Gamma d^{1-\eps/3}
			\leq d^{1-\eps/4}
		\end{equation*}
		and thus using Remark~\ref{remark: basic bounds}, we obtain
		\begin{equation*}
			\pri{\cE_{C,e}\cap\cE_{C,f}}
			\Xleq \frac{d^{1-\eps/4}}{\abs{\cH}(i)}
			\Xleq \frac{d^{1-\eps/4}}{\hhat(i)-\eta(i)}
			\leq \frac{2d^{1-\eps/4}}{\hhat(i)}
			\leq \frac{1}{2d^{\eps/5}n},
		\end{equation*}
		which completes the proof.
	\end{proof}
	
	For all~$\cZ\in \ccZ$ and~$s\in[j]$, we use that the freezing has negligible impact on the expected one step changes of the process~$\abs{\cZ^{[s]}}^+_\f(0),\abs{\cZ^{[s]}}^+_\f(1),\ldots$ in the sense that for all~$i\in[m-1]_0$, we have~$\exi{\Delta\abs{\cZ^{[s]}}^+_\f(i)}\approx \exi{\Delta\abs{\cZ^{[s]}}^+(i)}$.
	Recall that freezing happens in particular if~$\cZ\notin\ccZ(i+1)$, which happens if and only if~$\cZ=\cC_e^{(j)}$ for some~$j\in[\ell-1]$ and~$e\in\cH$ with~$e\notin\cH(i+1)$.
	The two expectations may differ due to events where freezing occurs as a consequence of~$\cZ\in\ccZ(i)\setminus\ccZ(i+1)$ while additionally, an available edge~$f^-$ that is contained in some test~$Z\in\cZ^{[s]}(i)$ becomes unavailable or an edge~$f^+$ that is contained in some~$Z\in\cZ^{[s+1]}(i)$ is chosen to be~$e(i+1)$.
	Because of such an edge becoming unavailable or chosen to be~$e(i+1)$, these events may provide contributions to~$\exi{\Delta\abs{\cZ^{[s]}}^+(i)}$, but, due to the freezing triggered by~$\cZ\notin\ccZ(i+1)$, no contribution to~$\exi{\Delta\abs{\cZ^{[s]}}^+_\f(i)}$.
	To see that the contribution of those events where an edge~$f^+$ that is contained in some~$Z\in\cZ^{[s+1]}(i)$ is chosen to be~$e(i+1)$ is very small, we consider~$\cC_e^{[s+1]}$.
	To see the contribution of those events where an available edge~$f^-$ that is contained in some test~$Z\in\cZ^{[s]}(i)$ becomes unavailable is also very small, we employ the following lemma.
	If~$\cZ$ has uniformity at least~$3$, then this lemma follows from Lemma~\ref{lemma: double leaving probability}. However, Lemma~\ref{lemma: double leaving probability} can only be applied for edges~$e,f$ with~$\set{e,f}\notin\cC^{(2)}$ which prevents us from using the same argument based on Lemma~\ref{lemma: double leaving probability} if~$\cZ$ is~$2$-uniform.
	The exclusion of~$e,f$ with~$\set{e,f}\in\cC^{(2)}$ in Lemma~\ref{lemma: double leaving probability} is a consequence of the restriction that~$e$ in Lemma~\ref{lemma: deterministic spread}~\ref{item: Z_e,2 spreadness} cannot be an immediate evictor for~$\cZ$.
	This can be traced back to excluding~$e,f$ with~$\set{e,f}\in\cC^{(2)}$ in Condition~\ref{item: conflict neighborhood}.
	This exclusion is crucial for our approach to omitting such a condition entirely in Theorem~\ref{theorem: test systems}.
	To circumvent the resulting restrictions, we need some additional arguments, in particular building on~\ref{item: conflict double j=2}.
	
	\begin{lemma}\label{lemma: double leaving sum}
		Let~$j\in[\ell]$,~$\cZ\in\ccZ^{(j)}$,~$s\in[j]$ and~$i\in[m-1]_0$.
		Then,
		\begin{equation*}
			\sum_{Z\in \cZ^{[s]}(i)}\sum_{f\in Z\setminus\cM(i)} \pri{\cZ\notin\ccZ(i+1)\tand f\notin\cH(i+1)}\eventleq{\cG(i)\cap\set{\cZ\in\ccZ(i)}} \frac{d^{s-j-\eps/6}}{n}\abs{\cZ}.
		\end{equation*}
	\end{lemma}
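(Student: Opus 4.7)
The plan is to split on whether $\cZ$ lies in $\ccZ_0$ or in $\ccC$. If $\cZ \in \ccZ_0$, then $\cZ$ never leaves $\ccZ$, so $\pri{\cZ \notin \ccZ(i+1)} = 0$ and the statement is trivial. Thus I may assume $\cZ = \cC_e^{(j)} \in \ccC(i)$ for some $e \in \cH(i)$, and the condition $\cZ \notin \ccZ(i+1)$ becomes $e \notin \cH(i+1)$. For every pair $(Z, f)$ appearing in the sum, $Z \cup \set{e} \in \cC^{(j+1)}$, and since conflicts are matchings by~\ref{item: conflict matchings}, the edges $e$ and $f$ are disjoint.

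If $j \geq 2$, then $Z \cup \set{e}$ is a conflict of size at least $3$ properly containing $\set{e, f}$, so~\ref{item: conflict no subset} forces $\set{e, f} \notin \cC^{(2)}$. Lemma~\ref{lemma: double leaving probability} then gives $\pri{e, f \notin \cH(i+1)} \leq 1/(d^{\eps/5}n)$ on $\cG(i) \cap \set{e, f \in \cH(i)}$, and summing over the at most $s\abs{\cZ^{[s]}}(i) \leq 2s d^{s-j+\eps/400}\abs\cZ$ pairs $(Z, f)$ using $\cT(i)$ together with Remark~\ref{remark: basic bounds} yields the desired bound.

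The obstructive case is $j = 1$, where $s = 1$, $Z = \set{f}$ and $\set{e, f} \in \cC^{(2)}$, so Lemma~\ref{lemma: double leaving probability} is unavailable. Here I would bound $\pri{e, f \notin \cH(i+1)}$ by enumerating the random choice $e(i+1) = g$: (a) $g \in \set{e, f}$; (b) $g$ intersects both $e$ and $f$; (c)--(d) $g$ intersects exactly one of them while the other lies in $E_C(i+1)$; and (e) $g$ is disjoint from both and $e, f \in E_C(i+1)$. Cases (a)--(d) contribute at most $d^{-\eps/4}\abs\cZ/n$ via straightforward combinations of $\Delta_2(\cH) \leq d^{1-\eps}$, the trivial $\abs{\cH}(i)^{-1}$ factor, and the spreadness estimates for $(\cC_e^{(j')})_u^{[1]}(i)$ and $(\cC_f^{(j')})_u^{[1]}(i)$ from $\cS(i)$ part~(i).

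The main obstacle is case~(e), where both $e$ and $f$ are removed through conflicts $C_e, C_f \in \cC$ meeting at $g$. Splitting on the sizes, when $\abs{C_e} = \abs{C_f} = 2$ I apply~\ref{item: conflict double j=2} directly to bound $\abs{N_\cC^{(2)}(e) \cap N_\cC^{(2)}(f)} \leq d^{1-\eps}$, and when exactly one of the sizes is $2$ the larger semiconflict lies in $\cC_{e,f,2}^{\star[1]}(i)$ (or its symmetric analogue), which $\cS(i)$ part~(vi) bounds by $d^{1-\eps/3}$. The genuinely new difficulty is the subcase where both $\abs{C_e}, \abs{C_f} \geq 3$: Lemma~\ref{lemma: deterministic spread}~\ref{item: Z_e,2 spreadness} cannot be invoked with $\cZ = \cC_f^{(j_f)}$ because $\set{e, f} \in \cC^{(2)}$ makes $e$ an immediate evictor for $\cC_f^{(j_f)}$. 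I circumvent this by reversing the order of summation: first fix a pair $(g, C_e)$ with $C_e \setminus \set{e} \in (\cC_e^{(j_e)})^{[1]}(i)$ having $g$ as its unique available edge (so $e$ and $g$ are automatically disjoint), and then observe that each valid $(f, C_f)$ yields a distinct $C_f \setminus \set{g} \in \cC_{e,g,2}^{\star[1]}(i)$, since $\set{f} \in \cC_e^{(1)}$ witnesses $C_f \setminus \set{g} \in \cC_{e,g,2}^\star$. The count of such semiconflicts is at most $d^{1-\eps/3}$ by $\cS(i)$ part~(vi), while the outer sum is controlled by $\sum_{j_e \geq 2} \abs{(\cC_e^{(j_e)})^{[1]}}(i) \leq 2\ell\Gamma d^{1+\eps/400}$ from $\cT(i)$. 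Dividing by $\abs{\cH}(i)$ and absorbing the remaining factors using $\abs\cZ \geq d^{1-\eps/100}$ from~\ref{item: conflict regularity} closes the argument.
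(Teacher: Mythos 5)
Your proposal is correct and follows essentially the same route as the paper: reduce to $\cZ=\cC_e^{(j)}\in\ccC$, dispatch $j\geq 2$ via Lemma~\ref{lemma: double leaving probability} (using~\ref{item: conflict matchings} and~\ref{item: conflict no subset} to verify its hypotheses), and for $j=1$ separate the intersection-type removals from the conflict--conflict case, which you then handle with~\ref{item: conflict double j=2}, the spreadness of $\cC_{e,f,2}^{\star[1]}(i)$ and $\cC_{e,g,2}^{\star[1]}(i)$ from $\cS(i)$, and the reversal of summation (fixing the candidate $g=e(i+1)$ first) to sidestep the immediate-evictor restriction in Lemma~\ref{lemma: deterministic spread}~\ref{item: Z_e,2 spreadness}. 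Your case split on $(\abs{C_e},\abs{C_f})$ is a harmless reorganisation of the paper's split on the uniformities $(j_1,j_2)$, and the resulting bounds close in the same way.
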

	\begin{proof}
		For~$j\geq 2$, the statement is a consequence of Lemma~\ref{lemma: double leaving probability}, for~$j=1$ we additionally use condition~\ref{item: conflict double j=2}.
		
		Let us now turn to the details.
		Note that for~$\cZ\in\ccZ_0$, we have~$\set{\cZ\notin\ccZ(i+1)}=\emptyset$, so we assume that~$\cZ=\cC_{e}^{(j)}$ for some~$e\in\cH$.
		Let~$\cX:=\cG(i)\cap\set{e\in\cH(i)}$ and for~$f\in\cH$, define the events~$\cE_f:=\set{e(i+1)\cap f\neq\emptyset}$ and~$\cE_{C,f}:=\set{ f\notin\cH_C(i+1) }$.
		
		Recall that for all~$Z\in\cZ^{[s]}(i)$, all edges~$f\in Z\setminus\cM(i)$ are available in step~$i$ in the sense that~$e\in\cH(i)$.
		Furthermore, no conflict in~$\cC$ is a proper subset of another conflict by~\ref{item: conflict no subset} and all edges in a conflict are disjoint. So if~$j\geq 2$, we may combine Remark~\ref{remark: basic bounds} and Lemma~\ref{lemma: double leaving probability} such that, taking the definition of~$\cT(i)$ (see Definition~\ref{definition: tracking}) into account, we obtain
		\begin{equation*}
			\sum_{Z\in \cZ^{[s]}(i)}\sum_{f\in Z\setminus\cM(i)} \pri{ e,f\notin\cH(i+1) }
			\Xleq (\zhat_{j,s}(i)+\zeta_{j,s}(i))\abs{\cZ}\cdot s\cdot \frac{1}{d^{\eps/5}n}
			\leq \frac{d^{s-j-\eps/6}}{n}\abs{\cZ}.
		\end{equation*}
		
		Now consider the case where~$j=1$.
		In this case, we have~$s=1$.
		Hence
		\begin{equation*}
			\sum_{Z\in \cZ^{[s]}(i)}\sum_{f\in Z\setminus\cM(i)} \pri{ e,f\notin\cH(i+1) }=\sum_{\set{f}\in\cZ^{[1]}(i)}\pri{ e,f\notin\cH(i+1) }.
		\end{equation*}
		We have
		\begin{align*}
			\sum_{\set{f}\in\cZ^{[1]}(i)}\pri{ e,f\notin\cH(i+1) }
			&\leq \sum_{\set{f}\in\cZ^{[1]}(i)}\pri{\cE_e\cap\set{f\notin\cH(i+1)}}+\pri{ \cE_f\cap \set{e\notin\cH(i+1)} }\\&\hphantom{\leq}\mathrel{}\quad+\sum_{\set{f}\in\cZ^{[1]}(i)}\pri{\cE_{C,e}\cap\cE_{C,f}}.
		\end{align*}
		Since combining Remark~\ref{remark: basic bounds} and Lemma~\ref{lemma: double leaving probability intersection} and taking the definition of~$\cT(i)$ (see Definition~\ref{definition: tracking}) into account yields
		\begin{align*}
			\sum_{\set{f}\in\cZ^{[1]}(i)}\pri{\cE_e\cap\set{f\notin\cH(i+1)}}+\pri{ \cE_f\cap \set{e\notin\cH(i+1)}}\hspace{-3cm}\\&\Xleq \frac{2\abs{\cZ^{[1]}}(i)}{d^{\eps/4}n}
			\Xleq 2\frac{\zhat_{j,s}(i)+\zeta_{j,s}(i)}{d^{\eps/4}n}\abs{\cZ}
			\leq \frac{d^{s-j-\eps/5}}{n}\abs{\cZ},
		\end{align*}
		it suffices to find an appropriate upper bound for
		\begin{equation*}
			\sum_{\set{f}\in\cZ^{[1]}(i)}\pri{\cE_{C,e}\cap\cE_{C,f}}.
		\end{equation*}
		
		For all~$f,g\in\cH$ and~$j'\in[\ell]_2$, let~$I_{f,g}^{(j')}$ denote the indicator random variable of the event~$\set{C\setminus\cM(i)=\set{f,g}\tforsome{$C\in\cC^{(j')}$}}$.
		Note that for all edges~$f\in\cH(i)$, the event~$\cE_{C,f}$ occurs if and only if there is a conflict~$C\in\cC$ with~$C\setminus\cM(i)=\set{f,e(i+1)}$.
		Hence,
		\begin{equation*}
			\cE_{C,e}\cap\cE_{C,f}=\set{ I_{e,e(i+1)}^{(j_1)}=I_{f,e(i+1)}^{(j_2)}=1\tforsome{$j_1,j_2\in[\ell]_2$} }
		\end{equation*} 
		and thus
		\begin{align*}
			\sum_{\set{f}\in\cZ^{[1]}(i)}\pri{\cE_{C,e}\cap\cE_{C,f}}
			&=\sum_{\set{f}\in\cZ^{[1]}(i)}\sum_{g\in\cH}\sum_{j_1,j_2\in[\ell]_2} I_{e,g}^{(j_1)}\cdot I_{f,g}^{(j_2)}\cdot \pri{e(i+1)=g}\\
			&\leq\frac{1}{\abs{\cH}(i)}\sum_{j_1,j_2\in[\ell]_2}\sum_{g\in\cH\colon I_{e,g}^{(j_1)}= 1} \abs{\cset{\set{f}\in\cC_{e}^{(1)[1]}(i)}{ I_{f,g}^{(j_2)}=1 }}.
		\end{align*}
		First, let us bound the size of~$F_g^{(j_2)}:=\cset{\set{f}\in\cC_{e}^{(1)[1]}(i)}{ I_{f,g}^{(j_2)}=1 }$ for all~$j_1,j_2\in[\ell]_2$ and~$g\in\cH$ with~$d_{\cC^{(j_1)}}(eg)\geq 1$ (which is necessary for~$I_{e,g}^{(j_1)}=1$ to be possible).
		
		Fix~$j_1,j_2\in[\ell]_2$ and~$g\in\cH$ with~$d_{\cC^{(j_1)}}(eg)\geq 1$.
		If~$j_1\geq 3$, the set~$\set{e,g}$ is a proper subset of a conflict and hence not a conflict itself by~\ref{item: conflict no subset}.
		If~$j_2=2$, then~$\abs{F_g^{(j_2)}}\leq d^{1-\eps}$ as a consequence of~\ref{item: conflict double j=2} if~$j_1=2$ and as a consequence of~\ref{item: conflict neighborhood} if~$j_1\geq 3$.
		If~$j_2\geq 3$, for all~$\set{f}\in F_g^{(j_2)}$, assign an arbitrary semiconflict~$C_f\in\cC_g^{(j_2)}$ with~$C_f\setminus\cM(i)=\set{f}$ to~$f$.
		Note that for distinct~$f,f'$, the assigned semiconflicts~$C_{f}$ and~$C_{f'}$ are distinct.
		All assigned semiconflicts are elements of~$\cC_{e,g,2}^{\star[1]}(i)$ (see definitions of partially matched subgraphs and local interactions in Definitions~\ref{definition: s available} and~\ref{definition: local interactions}) and so by definition of~$\cS(i)$ (see Definition~\ref{definition: spreadness}), we obtain
		\begin{equation*}
			\abs{F_g^{(j_2)}}\leq \abs{\cC_{e,g,2}^{\star[1]}(i)}\Xleq d^{1-\eps/3}.
		\end{equation*}
		
		Taking the definition of~$\cT(i)$ (see Definition~\ref{definition: tracking}) into account, we use Remark~\ref{remark: basic bounds} and~$d^{1-\eps/100}\leq \delta(\cC^{(2)})\leq\abs\cZ$ to conclude that
		\begin{align*}
			\sum_{\set{f}\in\cZ^{[1]}}\pri{\cE_{C,e}\cap\cE_{C,f}}
			&\Xleq \frac{ d^{1-\eps/3}}{\abs\cH(i)}\sum_{j_1,j_2\in[\ell]_2}\abs{\cset{ g\in\cH(i) }{ I_{e,g}^{(j_1)}=1 }}
			\leq \frac{\ell d^{1-\eps/3}}{\abs\cH(i)}\sum_{j_1\in[\ell-1]}\abs{ \cC_e^{(j_1)[1]} }(i)\\
			&\Xleq \frac{\ell d^{1-\eps/3}}{\abs\cH(i)}\sum_{j_1\in[\ell-1]} (\zhat_{j_1,1}(i)+\zeta_{j_1,1}(i))\abs{\cC_e^{(j_1)}}\\
			&\Xleq \frac{2\ell d^{1-\eps/3}}{\hhat(i)}\sum_{j_1\in[\ell-1]} (\zhat_{j_1,1}(i)+\zeta_{j_1,1}(i))\Delta(\cC^{(j_1+1)})\\
			&\leq \frac{2\ell d^{1-\eps/3}}{\hhat(i)}\sum_{j_1\in[\ell-1]} d^{1-j_1+\eps/12}\cdot \Gamma d^{j_1}
			\leq \frac{4\Gamma \ell^2 d^{2-\eps/4}}{\hhat(i)}
			\leq \frac{d^{-\eps/5}}{n}d^{1-\eps/100}\\
			&\leq \frac{d^{s-j-\eps/5}}{n}\abs{\cZ},
		\end{align*}
		which completes the proof.
	\end{proof}
	
	\begin{lemma}\label{lemma: trend degrees}
		Let~$v\in V(\cH)$ and~$*\in\set{+,-}$.
		Then, the process~$\abs{\cD_v}^*_\f(0),\abs{\cD_v}^*_\f(1),\ldots$ is a supermartingale.
		Moreover, for all~$i\geq 0$, we have%
		\begin{equation*}
			\exi{\abs{\Delta\abs{\cD_v}^*_\f(i)}}
			\leq \frac{d^{1+\eps/16}}{n}.
		\end{equation*}
	\end{lemma}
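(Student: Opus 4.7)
The plan is to reduce the analysis to the $\cF(i)$-measurable event $\cY(i):=\cG(i)\cap\set{v\in V(i)}\cap\set{i<m}$, on whose complement the freezing has already been triggered (by $\tau_{\cS}$, $\tau_{\cT}$, $\tau_v$, or $m$), so $\Delta\abs{\cD_v}^*_{\f}(i)=0$ and both claims hold trivially. On $\cY(i)$, the frozen increment equals the genuine increment $\Delta\abs{\cD_v}^*(i)$ if additionally $v\in V(i+1)$ and vanishes otherwise; hence
\begin{equation*}
\exi{\Delta\abs{\cD_v}^*_{\f}(i)}\eventeq{\cY(i)}\exi{\Delta\abs{\cD_v}^*(i)\ind_{\set{v\in V(i+1)}}}.
\end{equation*}

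For the supermartingale property with $*=+$, I decompose $\Delta\abs{\cD_v}^+(i)=\Delta\abs{\cD_v}(i)-\Delta\dhat(i)-\Delta\delta(i)$; the main task is to approximate $\exi{\Delta\abs{\cD_v}(i)\ind_{\set{v\in V(i+1)}}}=-\sum_{e\in\cD_v(i)}\pri{e\notin\cH(i+1)\tand v\in V(i+1)}$. For any such $e$ (which contains $v$), the two triggers for $e\notin\cH(i+1)$ are compatible with $v\in V(i+1)$ only when $v\notin e(i+1)$: the intersection trigger by definition, and the conflict trigger because otherwise $e$ and $e(i+1)$ would both contain $v$ and lie in a common conflict, impossible since conflicts are matchings. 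Applying Lemma~\ref{lemma: leaving probability} with $U=e\setminus\set{v}$ of size $k-1$ and subtracting the tiny contribution of $\set{v\in e(i+1)}$ via $\Delta_2(\cH)\leq d^{1-\eps}$ yields
\begin{equation*}
\pri{e\notin\cH(i+1)\tand v\in V(i+1)}\eventeq{\cY(i)\cap\set{e\in\cD_v(i)}}(1\pm 7\xi(i))\frac{\chat(i)+(k-1)\dhat(i)}{\hhat(i)}\pm\frac{2kd^{-\eps/2}}{n}.
\end{equation*}
Summing with $\abs{\cD_v}(i)=\dhat(i)\pm\delta(i)$ on $\cT(i)$, recognising the main term as $-\dhat'(i)$ via Remark~\ref{remark: first derivatives}, and combining with $\Delta\dhat(i)=\dhat'(i)\pm d^{1-\eps}/n$ and the analogous expansion for $\delta$ from Remark~\ref{remark: one step changes trajectories}, together with $\pri{v\in V(i+1)}=1\pm O(d^{\eps^{3/2}}/n)$, I arrive at
\begin{equation*}
\exi{\Delta\abs{\cD_v}^+_{\f}(i)}\eventeq{\cY(i)}-\delta'(i)\pm O\!\paren[\bigg]{\frac{k\Gamma\xi(i)\dhat(i)}{n\phat_V(i)}+\frac{d^{1-\eps/2}}{n}},
\end{equation*}
where Lemma~\ref{lemma: error term inspiration} has been used to absorb $(\chat+\dhat)/\hhat$ into $k\Gamma/(n\phat_V)$. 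Since Remark~\ref{remark: first derivatives lower bounds} gives $\delta'(i)\geq 200k^2\ell\Gamma\delta(i)/(n\phat_V(i))$, which dominates the first error by a factor of at least $100k\ell$ and dwarfs the second, the supermartingale property follows; the $*=-$ case is symmetric.

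For the expected magnitude, using $|\Delta\abs{\cD_v}(i)|=-\Delta\abs{\cD_v}(i)$ together with Lemma~\ref{lemma: leaving probability} applied with $U=e$ of size $k$ yields $\exi{|\Delta\abs{\cD_v}(i)|}\leq 2\abs{\cD_v}(i)(\chat(i)+k\dhat(i))/\hhat(i)= O(k\Gamma d/(n\phat_V(i)))$ on $\cY(i)$. Combining with the crude upper bounds $|\Delta\dhat(i)|,|\Delta\delta(i)|\leq d^{1+\eps/32}/n$ from Remark~\ref{remark: first derivatives upper bounds} and the tradeoffs $\Gamma\leq d^{\eps/1000}$ and $1/\mu^{\Gamma\ell}\leq d^{\eps^{3/2}}$ (hence $\Gamma/\phat_V(i)\leq d^{\eps/500}$) implied by~\eqref{equation: growth relation}, all contributions are bounded by $d^{1+\eps/16}/n$. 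The main obstacle is the delicate bookkeeping: the non-stopping-time nature of $\tau_v$ forces the $\ind_{\set{v\in V(i+1)}}$ factor into the calculation, and the cushion in $\delta'(i)$ must simultaneously absorb all sources of first-order error (the multiplicative $7\xi$ slack from Lemma~\ref{lemma: leaving probability}, the $\pm\delta$ slack in $\abs{\cD_v}(i)$, the $\set{v\in e(i+1)}$ correction, and the Taylor remainders for $\Delta\dhat$ and $\Delta\delta$).
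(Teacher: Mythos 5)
Your proposal is correct and follows essentially the same route as the paper: reduce to the good event, insert the indicator of $\set{v\in V(i+1)}$ to handle the non-stopping-time freezing, split each loss into the conflict trigger and the intersection trigger with $U=e\setminus\set{v}$ (using that conflicts are matchings to rule out the overlap), apply Lemma~\ref{lemma: leaving probability}, correct via $\Delta_2(\cH)\leq d^{1-\eps}$, and let the cushion in $\delta'$ absorb the errors. One bookkeeping caution: you state the additive error as $O(d^{1-\eps/2}/n)$, which is the same order as the lower bound $\delta'(i)\geq d^{1-\eps/2}/n$ you invoke, so ``dwarfs'' is not literally justified as written — but the quantities in question (the $\set{v\in e(i+1)}$ correction summed over $\cD_v(i)$ and the Taylor remainders) are genuinely $O(d^{1-\eps}/n)$, so the argument goes through once you carry the tighter bound.
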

	\begin{proof}
		Only considering the frozen processes allows us to essentially assume that~$\cG(i)$ happened.
		As in the proofs of Lemmas~\ref{lemma: leaving probability} and~\ref{lemma: double leaving probability}, this provides approximations for all key quantities in step~$i$ which then makes it easy to obtain~$\exi{\Delta\abs{\cD_v}(i)}\approx\Delta\dhat(i)$.
		As the error function~$\delta$ grows sufficiently fast, this yields the supermartingale property.
		On the contrary, for the boundedness of the expected one step changes it is crucial that~$\delta$ does not grow too fast.
		
		Let us turn to the details.
		Fix~$i\in[m-1]_0$.
		Let~$\cX:=\cG(i)\cap \set{v\in V(i)}$.
		We need to prove that~$\exi{\Delta\abs{\cD_v}^*_\f(i)}\leq 0$ and~$\exi{\abs{\Delta\abs{\cD_v}^*_\f(i)}}\leq d^{1+\eps/16}/n$.
		Due to~$\Delta\abs{\cD_v}^*_\f\eventeq{\comp{\cX}} 0$, both bounds follow if~$\exi{\Delta\abs{\cD_v}^*_\f(i)}\Xleq 0$ and~$\exi{\abs{\Delta\abs{\cD_v}^*_\f(i)}}\Xleq d^{1+\eps/16}/n$,
		so we aim to show these two bounds with~$\Xleq$ instead of~$\leq$.
		Let~$\cE_v:=\set{v\in V(i+1)}$ and for an edge~$e\in\cH$, define the events~$\cE_{C,e}:=\set{e\notin\cH_C(i+1)}$ and~$\cE_e:=\set{(e\setminus\set{v})\cap e(i+1)\neq\emptyset}$.
		Let us first argue why it suffices to obtain
		\begin{equation}\label{equation: degree expectation bounds}
			\exi{\ind_{\cE_v}\Delta \abs{\cD_v}(i)} \Xeq \paren[\bigg]{\dhat'(i)\pm\frac{1}{2}\delta'(i)}\pri{\cE_v}.
		\end{equation}
		To this end, note that Remarks~\ref{remark: first derivatives lower bounds} and~\ref{remark: first derivatives upper bounds} provide the bounds
		\begin{equation}\label{equation: degree derivative bounds}
			\abs{\dhat'(x)}\leq \frac{d^{1+\eps/32}}{n}\quad\text{and}\quad\frac{d^{1-\eps/2}}{n}\leq \delta'(x)\leq \abs{\delta'(x)}\leq \frac{d}{n}.
		\end{equation}
		Note that
		\begin{equation*}
			\exi{\Delta\abs{\cD_v}^+_\f(i)}=\exi{\ind_{\cE_v}\Delta\abs{\cD_v}^+(i)}=\exi{\ind_{\cE_v}\Delta \abs{\cD_v}(i)}-(\Delta\dhat(i)+\Delta\delta(i))\pri{\cE_v}.
		\end{equation*}
		If~\eqref{equation: degree expectation bounds} holds, then this together with Remark~\ref{remark: one step changes trajectories} yields
		\begin{equation*}
			\paren[\bigg]{-\frac{3}{2}\delta'(i)-\frac{2d^{1-\eps}}{n}}\pri{\cE_v}
			\Xleq \exi{\Delta\abs{\cD_v}^+_\f(i)}
			\Xleq\paren[\bigg]{-\frac{1}{2}\delta'(i)+\frac{2d^{1-\eps}}{n}}\pri{\cE_v}
		\end{equation*}
		and consequently the bound~$\delta'(i)\geq d^{1-\eps/2}/n$ in~\eqref{equation: degree derivative bounds} then entails~$\exi{\Delta\abs{\cD_v}^+_\f(i)}\Xleq 0$.
		Furthermore, observe that
		\begin{align*}
			\exi{\abs{\Delta\abs{\cD_v}^+_\f(i)}}
			&= \exi{\ind_{\cE_v}\abs{\Delta\abs{\cD_v}^+(i)}}
			\leq\exi{\ind_{\cE_v}\abs{\Delta \abs{\cD_v}(i)}}+(\abs{\Delta\dhat(i)}+\abs{\Delta\delta(i)})\pri{\cE_v}\\
			&= -\exi{\ind_{\cE_v}\Delta \abs{\cD_v}(i)}+(\abs{\Delta\dhat(i)}+\abs{\Delta\delta(i)})\pri{\cE_v}.
		\end{align*}
		If~\eqref{equation: degree expectation bounds} holds, then, again using Remark~\ref{remark: one step changes trajectories}, this yields
		\begin{equation*}
			\exi{\abs{\Delta\abs{\cD_v}^+_\f(i)}}\Xleq 2\abs{\dhat'(x)}+2\abs{\delta'(x)}+\frac{2d^{1-\eps}}{n}
		\end{equation*}
		and consequently the two bounds~$\abs{\dhat'(x)}\leq d^{1+\eps/32}/n$ and~$\abs{\delta'(x)}\leq d/n$ in~\eqref{equation: degree derivative bounds} then entail~$\exi{\abs{\Delta\abs{\cD_v}^+_\f(i)}}\Xleq d^{1+\eps/16}/n$.
		Similar arguments can be made to see that~\eqref{equation: degree expectation bounds} implies~$\ex{\Delta\abs{\cD_v}^-_\f(i)}\Xleq 0$ as well as~$\exi{\abs{\Delta\abs{\cD_v}^-_\f(i)}}\Xleq d^{1+\eps/16}/n$.
		Hence, to obtain the claimed statement, it suffices to prove~\eqref{equation: degree expectation bounds}.
		
		\medskip
		
		Before we continue with a proof of~\eqref{equation: degree expectation bounds}, note that taking the definition of~$\cT(i)$ (see Definition~\ref{definition: tracking}) into account, Remark~\ref{remark: basic bounds} entails
		\begin{equation}\label{equation: lower bound E_v}
			\pri{\cE_v}
			\Xeq 1-\frac{\abs{\cD_v}(i)}{\abs{\cH}(i)}
			\Xgeq 1-\frac{\dhat(i)+\delta(i)}{\hhat(i)-\eta(i)}
			\geq 1-\frac{2k}{n\phat_V(i)}
			\geq 1-d^{-\eps}
			\geq 1-\xi(i).
		\end{equation}
		The edges in any conflict are disjoint.
		Thus, we have~$\cE_{C,e}\cap\cE_e=\emptyset$ and~$\comp{\cE_v}\cap\cE_{C,e}=\emptyset$ for all~$e\in\cD_v(i)$ and hence
		\begin{equation}\label{equation: degree trend starting point}
			\begin{aligned}
				\exi{\ind_{\cE_v}\Delta\abs{\cD_v}(i)}
				&=-\exi[\Big]{\ind_{\cE_v} \sum_{e\in\cD_v(i)} \ind_{\cE_{C,e}\cup\cE_e} }
				=-\sum_{e\in\cD_v(i)} \pri{\cE_v\cap (\cE_{C,e}\cup\cE_{e}) }\\
				&=-\sum_{e\in\cD_v(i)} (\pri{\cE_{C,e}\cup\cE_{e} }-\pri{\comp{\cE_v}\cap\cE_e }).
			\end{aligned}
		\end{equation}
		We employ Lemma~\ref{lemma: leaving probability} and use~$\Delta_2(\cH)\leq d^{1-\eps}$ to obtain
		\begin{equation*}
			\exi{\ind_{\cE_v}\Delta\abs{\cD_v}(i)}
			\Xeq -\sum_{e\in\cD_v(i)} \paren[\bigg]{ (1\pm 7\xi(i))\frac{\chat(i)+(k-1)\dhat(i)}{\hhat(i)}\pm \frac{kd^{1-\eps}}{\hhat(i)\pm \eta(i)} }.
		\end{equation*}
		Using Remark~\ref{remark: basic bounds} and~\eqref{equation: lower bound E_v}, this yields
		\begin{align*}
			\exi{\ind_{\cE_v}\Delta\abs{\cD_v}(i)}
			&\Xeq-\sum_{e\in\cD_v(i)} \paren[\bigg]{ (1\pm 7\xi(i))\frac{\chat(i)+(k-1)\dhat(i)}{\hhat(i)}\pm \frac{\xi(i)\dhat(i)}{\hhat(i)} }\\
			&\Xeq -(1\pm 10\xi(i))\cdot \dhat(i)\cdot\frac{\chat(i)+(k-1)\dhat(i)}{\hhat(i)}\\
			&\Xeq-(1\pm 12\xi(i))\cdot \dhat(i)\cdot\frac{\chat(i)+(k-1)\dhat(i)}{\hhat(i)}\cdot\pri{\cE_v}.
		\end{align*}
		With Lemma~\ref{lemma: error term inspiration} and the expression for~$\dhat'(x)$ given in Remark~\ref{remark: first derivatives} this implies
		\begin{equation*}
			\exi{\ind_{\cE_v}\Delta\abs{\cD_v}(i)}\Xeq \paren[\bigg]{\dhat'(i)\pm\frac{24k^2\Gamma}{n\phat_V(i)}\delta(i)}\pri{\cE_v}
		\end{equation*} 
		and thus, with the first lower bound for~$\delta'(x)$ given in Remark~\ref{remark: first derivatives lower bounds}, we conclude that~\eqref{equation: degree expectation bounds} holds.
	\end{proof}
	
	The following statement is the analog of Lemma~\ref{lemma: trend degrees} where for~$*\in\set{+,-}$, the process~$\abs{\cD_v}^*_\f(0),\abs{\cD_v}^*_\f(1),\ldots$ is replaced by~$\abs{\cZ^{[s]}}^*_\f(0),\abs{\cZ^{[s]}}^*_\f(1),\ldots$ with suitable choices for~$\cZ$ and~$s$.
	\begin{lemma}\label{lemma: trend tests}
		Let~$j\in[\ell]$,~$\cZ\in\ccZ^{(j)}$~$s\in[\ell]_0$ with~$s\geq \ind_{\ccC}(\cZ)$ and~$*\in\set{+,-}$.
		Then, the process~$\abs{\cZ^{[s]}}^*_\f(0),\abs{\cZ^{[s]}}^*_\f(1),\ldots$ is a supermartingale.
		Moreover, for all~$i\geq 0$, we have
		\begin{equation*}
			\exi{\abs{\Delta\abs{\cZ^{[s]}}^*_\f(i)}}
			\leq \frac{d^{s-j+\eps/16}}{n}\abs{\cZ}.
		\end{equation*}
	\end{lemma}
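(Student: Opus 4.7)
The proof follows the same blueprint as that of Lemma~\ref{lemma: trend degrees}. Fix $i\in[m-1]_0$ and set $\cX := \cG(i)\cap\set{\cZ\in\ccZ(i)}$. Since $\Delta\abs{\cZ^{[s]}}^*_\f(i)\eventeq{\comp{\cX}}0$, and since freezing forces $\Delta\abs{\cZ^{[s]}}^*_\f(i)=0$ whenever $\cZ\notin\ccZ(i+1)$, both required bounds will follow once I establish the single approximation
\begin{equation*}
\exi{\ind_{\cE_\cZ}\Delta\abs{\cZ^{[s]}}(i)} \;\Xeq\; \paren[\big]{\zhat_{j,s}'(i) \pm \tfrac12\zeta_{j,s}'(i)}\pri{\cE_\cZ}\abs{\cZ},
\end{equation*}
where $\cE_\cZ := \set{\cZ\in\ccZ(i+1)}$. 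Indeed, Remarks~\ref{remark: first derivatives lower bounds}, \ref{remark: first derivatives upper bounds} and~\ref{remark: one step changes trajectories} will then supply the supermartingale property and the upper bound on $\exi{\abs{\Delta\abs{\cZ^{[s]}}^*_\f(i)}}$ by exactly the arithmetic appearing at the end of the proof of Lemma~\ref{lemma: trend degrees}, with Lemma~\ref{lemma: s and s+1 errors} used to absorb the $(s+1)\zeta_{j,s+1}(i)/\hhat(i)$ terms arising from the error budget of $\abs{\cZ^{[s+1]}}$ into $\zeta_{j,s}'(i)$.

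To establish the approximation I decompose $\Delta\abs{\cZ^{[s]}}(i)$ into a gain, counting tests in $\cZ^{[s+1]}(i)\cap\cZ^{[s]}(i+1)$, and a loss, counting tests in $\cZ^{[s]}(i)\setminus\cZ^{[s]}(i+1)$. A test $Z\in\cZ^{[s+1]}(i)$ contributes to the gain exactly when $e(i+1)\in Z\cap\cH(i)$ and no other available edge of $Z$ is evicted, while a test $Z\in\cZ^{[s]}(i)$ contributes to the loss exactly when $e(i+1)\in Z$ or some $f\in Z\cap\cH(i)$ becomes unavailable. Since each available edge is chosen with probability $1/\abs\cH(i)$ and, by Lemma~\ref{lemma: leaving probability} with $U=f$, each $f\in Z\cap\cH(i)$ becomes unavailable with probability $(1\pm 7\xi(i))(\chat(i)+k\dhat(i))/\hhat(i)$, the main-order gain minus loss matches
\begin{equation*}
(s+1)\frac{\zhat_{j,s+1}(i)}{\hhat(i)}\abs\cZ \;-\; s\frac{\chat(i)+k\dhat(i)}{\hhat(i)}\zhat_{j,s}(i)\abs\cZ \;=\; \zhat_{j,s}'(i)\abs\cZ,
\end{equation*}
after using $\cT(i)$ to replace $\abs{\cZ^{[s']}}(i)$ by $(\zhat_{j,s'}(i)\pm\zeta_{j,s'}(i))\abs\cZ$ and recognising the identity of Remark~\ref{remark: first derivatives}.

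The substantive work is to bound the correction terms by $\tfrac12\zeta_{j,s}'(i)\pri{\cE_\cZ}\abs\cZ$. I group these into three types: (i) overcounts in the loss where a single choice of $e(i+1)$ evicts two distinct available edges of the same $Z$, or where one edge is evicted via two different conflicts, which I plan to bound via the spreadness of $\cZ_2^{[s]}(i)$, $\cC_{e,2}^{[s]}(i)$ and $\cZ_{e,2}^{[s]}(i)$ from Definition~\ref{definition: spreadness} together with $\Delta_2(\cH)\leq d^{1-\eps}$; (ii) gain-side corrections where, after choosing $e(i+1)\in Z$, some other available edge of $Z$ is killed by a conflict also containing $e(i+1)$ (again controlled by the spreadness of $\cZ_{e,2}^{[s]}(i)$, noting via Lemma~\ref{lemma: deterministic spread}~\ref{item: Z_e,2 spreadness} that $e(i+1)$ is never an immediate evictor of $\cZ$ in the nontrivial gain cases); and (iii) the discrepancy between $\ex_i[\cdot]$ and its $\cE_\cZ$-normalised version, which is only relevant for $\cZ\in\ccC$ and comes from events on which $\cZ\notin\ccZ(i+1)$ while simultaneously some $f\in Z\cap\cH(i)$ is evicted.

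The main obstacle is correction (iii), and in particular the subcase $j=s=1$, where a direct appeal to Lemma~\ref{lemma: double leaving probability} fails because one might have $\set{e,f}\in\cC^{(2)}$; this is precisely why Lemma~\ref{lemma: double leaving sum} is formulated to cover this case separately using hypothesis~\ref{item: conflict double j=2}. Its bound $d^{s-j-\eps/6}\abs\cZ/n$ is comfortably absorbed by the lower bound $\zeta_{j,s}'(i)\geq d^{s-j-\eps/2}/n$ from Remark~\ref{remark: first derivatives lower bounds}, provided $\pri{\cE_\cZ}$ is bounded away from $0$; the latter follows from $\cT(i)$ and Lemma~\ref{lemma: error term inspiration} by exactly the same calculation that gives~\eqref{equation: lower bound E_v}. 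After collecting all three corrections and normalising by $\pri{\cE_\cZ}$, the proof closes as in Lemma~\ref{lemma: trend degrees}, combining Lemmas~\ref{lemma: error term inspiration} and~\ref{lemma: s and s+1 errors} to rewrite the total correction in the form $\tfrac12\zeta_{j,s}'(i)\pri{\cE_\cZ}\abs\cZ$.
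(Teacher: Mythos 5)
Your outline is correct and follows essentially the same route as the paper's proof: the same gain/loss decomposition with $\cE_\cZ$, the same use of Lemma~\ref{lemma: leaving probability}, the spreadness of $\cZ_2$, $\cZ_{e,2}$ and $\cC_{e,2}$ for the correction terms, Lemma~\ref{lemma: double leaving sum} for the freezing discrepancy (correctly singling out the $j=s=1$ case), and Lemmas~\ref{lemma: error term inspiration} and~\ref{lemma: s and s+1 errors} to absorb everything into $\zeta_{j,s}'$. One small caveat: unlike $\abs{\cD_v}(i)$, the quantity $\abs{\cZ^{[s]}}(i)$ is not monotone, so the bound on $\exi{\abs{\Delta\abs{\cZ^{[s]}}^*_\f(i)}}$ does not follow from the single signed approximation by ``exactly the arithmetic'' of Lemma~\ref{lemma: trend degrees} (which uses $\exi{\ind_{\cE_v}\abs{\Delta\abs{\cD_v}(i)}}=-\exi{\ind_{\cE_v}\Delta\abs{\cD_v}(i)}$); you need separate two-sided estimates for the expected gain and the expected loss, which your argument in fact produces along the way.
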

	\begin{proof}
		Conceptually, this proof is similar to that of Lemma~\ref{lemma: trend degrees}, but technically more involved.
		
		We assume that~$s\in[j]_0$ as otherwise~$\abs{\cZ^{[s]}}^+_\f(i)=0$ for all~$i\geq 0$.
		Fix~$i\in[m-1]_0$.
		Let~$\cX:=\cG(i)\cap \set{\cZ\in\ccZ(i)}$ and~$\cE_{\cZ}:=\set{\cZ\in\ccZ(i+1)}$.
		
		When transitioning from step~$i$ to step~$i+1$,
		some tests in~$\cZ^{[s+1]}(i)$ containing~$e(i+1)$ may move to~$\cZ^{[s]}(i+1)$ while some tests in~$\cZ^{[s]}(i)$ may no longer be present in~$\cZ^{[s]}(i+1)$ due to the choice of~$e(i+1)$.
		
		Considering the expected gain~$E^+$ and expected loss~$E^-$, where
		\begin{equation*}
			E^+:=\exi{\ind_{\cE_{\cZ}}\abs{ \cZ^{[s]}(i+1)\setminus \cZ^{[s]}(i) }}\quad\text{and}\quad E^-:=\exi{\ind_{\cE_{\cZ}}\abs{ \cZ^{[s]}(i)\setminus \cZ^{[s]}(i+1) }},
		\end{equation*}
		we have~$\exi{\ind_{\cE_{\cZ}}\Delta\abs{\cZ^{[s]}}(i)}=E^+-E^-$.
		We bound~$E^+$ and~$E^-$ separately.
		Reflecting this separation, we also split the value~$\zhat_{j,s}'(i)=\zhat_{j,s}'^+(i)-\zhat_{j,s}'^-(i)$ of the derivative~$\zhat_{j,s}'$ into a gain contribution~$\zhat_{j,s}'^+(i)$ and a loss contribution~$\zhat_{j,s}'^-(i)$, where
		\begin{equation*}
			\zhat_{j,s}'^+(i):=\frac{s+1}{\hhat(i)}\zhat_{j,s+1}(i)\quad\text{and}\quad
			\zhat_{j,s}'^-(i):=s\frac{\chat(i)+k\dhat(i)}{\hhat(i)}\zhat_{j,s}(i).
		\end{equation*}
		Recall that we already encountered this separation into gain and loss in the discussion of the derivative~$\zhat_{j,s}'$ after Remark~\ref{remark: first derivatives}.
		Formally, gain and loss contribution correspond to~$E^+$ and~$E^-$ in the sense that we will obtain
		\begin{equation}\label{equation: gain loss expectation bounds}
			E^+
			\Xeq \paren[\bigg]{ \zhat_{j,s}'^+(i)\pm\frac{1}{4}\zeta_{j,s}'(i) }\abs{\cZ}\pri{\cE_{\cZ}}\quad\text{and}\quad
			E^-
			\Xeq \paren[\bigg]{ \zhat_{j,s}'^-(i)\pm\frac{1}{4}\zeta_{j,s}'(i) }\abs{\cZ}\pri{\cE_{\cZ}}.
		\end{equation}
		
		Let us first argue why it suffices to show that~\eqref{equation: gain loss expectation bounds} holds.
		To this end, note that Remarks~\ref{remark: first derivatives lower bounds} and~\ref{remark: first derivatives upper bounds} provide the bounds
		\begin{equation}\label{equation: test derivative bounds}
			\begin{gathered}
				\abs{\zhat_{j,s}'^+(i)}=\zhat_{j,s}'^+(i)\leq \frac{d^{s-j+\eps/32}}{n},\quad
				\abs{\zhat_{j,s}'^-(i)}=\zhat_{j,s}'^-(i)\leq \frac{d^{s-j+\eps/32}}{n}\\\text{and}\quad
				\frac{d^{s-j-\eps/2}}{n}\leq\zeta_{j,s}'(i)=\abs{\zeta_{j,s}'(i)}\leq \frac{d^{s-j}}{n}.
			\end{gathered}
		\end{equation}
		The bounds in~\eqref{equation: gain loss expectation bounds} imply
		\begin{equation}\label{equation: combined gain loss expectation bounds}
			\exi{\ind_{\cE_{\cZ}}\Delta\abs{\cZ^{[s]}}(i)}=\paren[\bigg]{ \zhat_{j,s}'(i)\pm\frac{1}{2}\zeta_{j,s}'(i) }\abs\cZ\pri{\cE_\cZ}.
		\end{equation}
		Note that
		\begin{equation*}
			\exi{\Delta\abs{\cZ^{[s]}}^+_\f(i)}
			=\exi{ \ind_{\cE_\cZ}\Delta\abs{\cZ^{[s]}}(i) }-(\Delta\zhat_{j,s}(i)+\Delta\zeta_{j,s}(i))\abs\cZ\pri{\cE_\cZ}.
		\end{equation*}
		If~\eqref{equation: combined gain loss expectation bounds}, which follows from~\eqref{equation: gain loss expectation bounds}, holds, then this together with Remark~\ref{remark: one step changes trajectories} yields
		\begin{equation*}
			\paren[\bigg]{ -\frac{3}{2}\zeta_{j,s}'(i)- \frac{2d^{s-j-\eps}}{n} }\abs{\cZ}\pri{\cE_\cZ}
			\Xleq \exi{\Delta\abs{\cZ^{[s]}}^+_\f(i)}
			\Xleq \paren[\bigg]{ -\frac{1}{2}\zeta_{j,s}'(i)+ \frac{2d^{s-j-\eps}}{n} }\abs{\cZ}\pri{\cE_\cZ}
		\end{equation*}
		and consequently the bound~$\zeta_{j,s}'(i)\geq d^{s-j-\eps/2}/n$ in~\eqref{equation: test derivative bounds} then entails~$\exi{\Delta\abs{\cZ^{[s]}}^+_\f(i)}\Xleq 0$.
		Furthermore, observe that
		\begin{align*}
			\exi{\abs{\Delta\abs{\cZ^{[s]}}^+_\f(i)}}
			&\leq \exi{\abs{\ind_{\cE_{\cZ}}\abs{ \cZ^{[s]}(i+1)\setminus \cZ^{[s]}(i) }}} +\exi{\abs{\ind_{\cE_{\cZ}}\abs{ \cZ^{[s]}(i)\setminus \cZ^{[s]}(i+1) }}} \\&\hphantom{\leq}\mathrel{}\quad +(\abs{\Delta\zhat_{j,s}(i)}+\abs{\Delta\zeta_{j,s}(i)})\abs\cZ\pri{\cE_\cZ}\\
			&=E^+ + E^- + (\abs{\Delta\zhat_{j,s}(i)}+\abs{\Delta\zeta_{j,s}(i)})\abs\cZ\pri{\cE_\cZ}.
		\end{align*}
		If~\eqref{equation: gain loss expectation bounds} holds, then this together with Remark~\ref{remark: one step changes trajectories} yields
		\begin{equation*}
			\exi{\abs{\Delta\abs{\cZ^{[s]}}^+_\f(i)}}\Xleq \paren[\bigg]{\zhat_{j,s}'^+(i)+\zhat_{j,s}'^-(i)+\abs{\zhat_{j,s}'(i)}+2\abs{\zeta_{j,s}'(i)}+2\frac{d^{s-j-\eps}}{n}}\abs\cZ\pri{\cE_\cZ}
		\end{equation*}
		and consequently the bounds~$\abs{\zhat_{j,s}'^+(i)}\leq d^{s-j+\eps/32}/n$,~$\abs{\zhat_{j,s}'^-(i)}\leq d^{s-j+\eps/32}/n$ and~$\abs{\zeta_{j,s}'(i)}\leq d^{s-j}/n$ in~\eqref{equation: test derivative bounds} then entail~$\exi{\abs{\Delta\abs{\cZ^{[s]}}^+_\f(i)}}\Xleq (d^{s-j+\eps/16}/n)\abs\cZ$.
		Similar arguments can be made to see that~\eqref{equation: gain loss expectation bounds} implies~$\exi{\Delta\abs{\cZ^{[s]}}^-_\f(i)}\Xleq 0$ as well as~$\exi{\abs{\Delta\abs{\cZ^{[s]}}^-_\f(i)}}\Xleq (d^{s-j+\eps/16}/n)\abs{\cZ}$.
		Hence, to obtain the claimed statement, it suffices to prove~\eqref{equation: gain loss expectation bounds}.

		\medskip
		
		Before we continue with a proof for~\eqref{equation: gain loss expectation bounds}, note that if~$\cZ\in\ccC$, then~$\cZ=\cC_e^{(j)}$ for some~$e\in\cH$.
		In this case, we have~$\comp{\cE_\cZ}=\set{e\notin \cH(i+1)}$.
		If~$\cZ\in\ccZ_0$, then we have~$\comp{\cE_{\cZ}}=\emptyset$.
		Hence, Lemma~\ref{lemma: leaving probability} entails that in any case
		\begin{equation*}
			\pri{\cE_{\cZ}}
			\Xgeq 1-(1+7\xi(i))\frac{\chat(i)+k\dhat(i)}{\hhat(i)}
		\end{equation*}
		and thus, as a consequence of Remark~\ref{remark: basic bounds} and Lemma~\ref{lemma: error term inspiration},
		\begin{equation}\label{equation: lower bound E_Z}
			\pri{\cE_{\cZ}}
			\Xgeq 1-2k\frac{\chat(i)+\dhat(i)}{\hhat(i)}
			\geq 1-\frac{4k^2\Gamma}{n\phat_V(i)}
			\geq 1-d^{-\eps}
			\geq 1-\xi(i).
		\end{equation}

		To prove the estimate for~$E^+$ in~\eqref{equation: gain loss expectation bounds}, we first obtain a lower and an upper bound for~$E^+$ which we subsequently combine to obtain the desired bounds for~$E^+$ after some further analysis.
		
		First, we consider an upper bound for~$E^+$.
		The tests that, depending on the choice of~$e(i+1)$, may enter the test system when transitioning from~$\cZ^{[s]}(i)$ to~$\cZ^{[s]}(i+1)$ are elements of~$\cZ^{[s+1]}(i)$.
		Every such test~$Z\in \cZ^{[s+1]}(i)$ is a test in~$\cZ^{[s]}(i+1)$ only if~$e(i+1)\in Z\cap \cH(i)$.
		Hence,~$\pri{Z\in\cZ^{[s]}(i+1)}\leq (s+1)/\abs\cH(i)$, so we obtain
		\begin{equation}\label{equation: upper bound gain}
			E^+
			\leq \exi{ \abs{\cZ^{[s]}(i+1)\setminus\cZ^{[s]}(i)} }
			\leq \abs{\cZ^{[s+1]}}(i)\frac{s+1}{\abs\cH(i)}.
		\end{equation}
		
		For a lower bound, observe that a test~$Z\in\cZ^{[s+1]}(i)$ must enter the test system when transitioning from~$\cZ^{[s]}(i)$ to~$\cZ^{[s]}(i+1)$ if~$e(i+1)\in Z$ unless there is a conflict~$C\in\cC$ containing two distinct available edges~$f,g\in Z$ and~$\abs{C}-2$ edges in~$\cM(i)$, thus enforcing that~$g$ is unavailable in step~$i+1$ if~$e(i+1)$ is chosen to be~$f$.
		Note that every test~$Z$ for which such a conflict exists is a subset of a set in~$\cZ_2^{[s+1]}(i)$ (see definitions of partially matched subgraphs and local interactions in Definitions~\ref{definition: s available} and~\ref{definition: local interactions}).
		Every set in~$\cZ_2^{[s+1]}(i)$ has size at most~$2\ell$ and hence at most~$2^{2\ell}=4^\ell$ subsets.
		Thus, by definition of~$\cS(i)$ (see Definition~\ref{definition: spreadness}) and Remark~\ref{remark: basic bounds},
		\begin{equation*}
			4^\ell\abs{\cZ^{[s+1]}_2}(i)\Xleq 4^\ell d^{s-j+1-\eps/3}\abs{\cZ}\leq \zeta_{j,s+1}(i)\abs\cZ
		\end{equation*}
		is an upper bound for the (random) number of tests~$Z\in\cZ^{[s+1]}(i)$ for which such a conflict exists.
		Furthermore, recall that as stated before, we either have~$\comp{\cE_\cZ}=\emptyset$ or~$\cZ=\cC_e^{(j)}$ and hence~$\comp{\cE_\cZ}=\set{ e\notin\cH(i+1) }$ for some edge~$e\in\cH$ with~$\set{e\in\cH(i)}=\set{\cZ\in\ccZ(i)}$.
		Similarly as above, observe the following.
		Considering the second case where~$\cZ=\cC_e^{(j)}$, for~$Z\in\cZ^{[s+1]}(i)$ choosing~$e(i+1)$ to be an element of~$Z$ while simultaneously making~$e$ unavailable in step~$i+1$ is only possible if there is a conflict~$C\in\cC$ containing~$e$, another available edge~$f\in Z$ and~$\abs{C}-2$ edges in~$\cM(i)$, thus enforcing that~$e$ is unavailable in step~$i+1$ if~$e(i+1)$ is chosen to be~$f$.
		Here, note that every test~$Z$ for which such a conflict exists is a subset of a set in~$\cC_{e,2}^{[s+1]}(i)$ (again, see Definitions~\ref{definition: s available} and~\ref{definition: local interactions}).
		Every set in~$\cC_{e,2}^{[s+1]}(i)$ has size at most~$2\ell$ and hence at most~$2^{2\ell}=4^\ell$ subsets.
		Thus, by definition of~$\cS(i)$ (see Definition~\ref{definition: spreadness}), $d^{j-\eps/100}\leq\delta(\cC^{(j+1)})\leq\abs\cZ$ and Remark~\ref{remark: basic bounds},
		\begin{equation*}
			4^\ell\abs{\cC_{e,2}^{[s+1]}}(i)\Xleq 4^\ell d^{s+1-\eps/3}\leq 4^\ell d^{s-j+1-\eps/4}\abs{\cZ}\leq \zeta_{j,s+1}(i)\abs\cZ
		\end{equation*}
		is an upper bound for the (random) number of tests~$Z\in\cZ^{[s+1]}(i)$ for which such a conflict exists.
		In any case, we obtain

		\begin{equation}\label{equation: lower bound gain}
			E^+
			\Xgeq (\abs{\cZ^{[s+1]}}(i)- 2\zeta_{j,s+1}(i)\abs\cZ)\frac{s+1}{\abs{\cH}(i)}.
		\end{equation}
		
		Combining~\eqref{equation: upper bound gain} and~\eqref{equation: lower bound gain} and taking the definition of~$\cT(i)$ (see Definition~\ref{definition: tracking}) into account, we obtain
		\begin{align*}
			E^+
			&\Xeq (\abs{\cZ^{[s+1]}}(i)\pm 2\zeta_{j,s+1}(i)\abs\cZ)\frac{s+1}{\abs{\cH}(i)}
			\Xeq (s+1)\frac{\zhat_{j,s+1}(i)\pm 3\zeta_{j,s+1}(i)}{\hhat(i)\pm\eta(i)}\abs\cZ\\
			&=(s+1)(1\pm 2\xi(i))\frac{\zhat_{j,s+1}(i)\pm 3\zeta_{j,s+1}(i)}{\hhat(i)}\abs\cZ\\
			&=\paren[\bigg]{ (s+1)\frac{\zhat_{j,s+1}(i)}{\hhat(i)}\pm (s+1)\frac{2\xi(i)\zhat_{j,s+1}(i)+4\zeta_{j,s+1}(i)}{\hhat(i)} }\abs\cZ\\
			&=\paren[\bigg]{ \zhat_{j,s}'^+(i)\pm 6(s+1)\frac{\zeta_{j,s+1}(i)}{\hhat(i)} }\abs\cZ.
		\end{align*}
		Using~\eqref{equation: lower bound E_Z}, this yields
		\begin{equation*}
			E^+\Xeq \paren[\bigg]{ \zhat_{j,s}'^+(i)\pm 8(s+1)\frac{\zeta_{j,s+1}(i)}{\hhat(i)} }\abs\cZ \pri{\cE_Z}.
		\end{equation*}
		With Lemma~\ref{lemma: s and s+1 errors} and the first lower bound for~$\zeta_{j,s}'(i)$ given in Remark~\ref{remark: first derivatives lower bounds}, this entails the bounds for~$E^+$ stated in~\eqref{equation: gain loss expectation bounds}.
		
		It remains to prove the bounds for~$E^-$ given in~\eqref{equation: gain loss expectation bounds}.
		We proceed similarly to the approach we used for~$E^+$.
		For all~$e\in\cH$, let~$\cE_e:=\set{e\notin\cH(i+1)}$.
		A test leaves the test system when transitioning from~$\cZ^{[s]}(i)$ to~$\cZ^{[s]}(i+1)$ if and only if one of its~$s$ available elements becomes unavailable due to the choice of~$e(i+1)$, so we have
		\begin{equation}\label{equation: loss as sum}
			E^-=\sum_{Z\in \cZ^{[s]}(i)} \pri[\Big]{\bigcup_{e\in Z\setminus\cM(i)}\cE_\cZ\cap \cE_e}.
		\end{equation}
		
		For an upper bound, simply note that
		\begin{equation}\label{equation: loss upper bound}
			E^-
			\leq \sum_{Z\in \cZ^{[s]}(i)}\sum_{e\in Z\setminus\cM(i)} \pri{\cE_e}.
		\end{equation}
		
		For a lower bound, we employ Lemma~\ref{lemma: double leaving sum} to obtain
		\begin{align*}
			E^-
			&\geq\sum_{Z\in \cZ^{[s]}(i)} \paren[\Big]{\sum_{e\in Z\setminus\cM(i)} \pri{\cE_\cZ\cap \cE_e} -\sum_{e,f\in Z\setminus\cM(i)\colon e\neq f}\pri{\cE_e\cap\cE_f}}\\
			&= \sum_{Z\in \cZ^{[s]}(i)} \paren[\Big]{ \sum_{e\in Z\setminus\cM(i)} \pri{\cE_e} -\sum_{e,f\in Z\setminus\cM(i)\colon e\neq f}\pri{\cE_e\cap\cE_f}-\sum_{e\in Z\setminus\cM(i)}\pri{\comp{\cE_\cZ}\cap\cE_e}}\\
			&\Xgeq -\frac{d^{s-j-\eps/6}}{n}\abs\cZ + \sum_{Z\in \cZ^{[s]}(i)} \paren[\Big]{ \sum_{e\in Z\setminus\cM(i)} \pri{\cE_e} -\sum_{e,f\in Z\setminus\cM(i)\colon e\neq f}\pri{\cE_e\cap\cE_f}}.
		\end{align*}
		Due to Lemma~\ref{lemma: double leaving probability} and the fact that all tests~$Z\in\cZ$ are~$\cC$-free, this yields
		\begin{equation}\label{equation: loss lower bound}
			E^-
			\Xgeq-\frac{d^{s-j-\eps/6}}{n}\abs\cZ + \sum_{Z\in \cZ^{[s]}(i)} \paren[\bigg]{-\frac{s^2}{d^{\eps/5}n}+ \sum_{e\in Z\setminus\cM(i)} \pri{\cE_e}}.
		\end{equation}
		
		Combining~\eqref{equation: loss upper bound} and~\eqref{equation: loss lower bound}, using Lemma~\ref{lemma: leaving probability} as well as Remark~\ref{remark: basic bounds} and taking the definition of~$\cT(i)$ (see Definition~\ref{definition: tracking}) into account, we obtain%
		\begin{align*}
			E^-
			&\Xeq  (\zhat_{j,s}(i))\pm\zeta_{j,s}(i))\paren[\bigg]{(1\pm 7\xi(i)) s \frac{\chat(i)+k\dhat(i)}{\hhat(i)}\pm\frac{s^2}{d^{\eps/5}n}}\pm\frac{d^{s-j-\eps/6}}{n}\abs\cZ\\
			&= (\zhat_{j,s}(i))\pm\zeta_{j,s}(i))\paren[\bigg]{(1\pm 7\xi(i)) s \frac{\chat(i)+k\dhat(i)}{\hhat(i)}\pm s\frac{\xi(i)\dhat(i)}{\hhat(i)}}\pm\frac{\zeta_{j,s}(i)\dhat(i)}{\hhat(i)}\abs\cZ\\
			&=(\zhat_{j,s}(i))\pm\zeta_{j,s}(i))\paren[\bigg]{(1\pm 8\xi(i)) s \frac{\chat(i)+k\dhat(i)}{\hhat(i)}}\abs\cZ\pm\frac{\zeta_{j,s}(i)\dhat(i)}{\hhat(i)}\abs\cZ\\
			&=\paren[\bigg]{s\frac{\chat(i)+k\dhat(i)}{\hhat(i)}\zeta_{j,s}(i) \pm \ell\frac{\chat(i)+k\dhat(i)}{\hhat(i)}(8\xi(i)\zhat_{j,s}(i)+3\zeta_{j,s}(i)) }\abs\cZ\\
			&=\paren[\bigg]{\zhat_{j,s}'^-(i) \pm 11\ell\frac{\chat(i)+k\dhat(i)}{\hhat(i)}\zeta_{j,s}(i)}\abs\cZ.
		\end{align*}
		Using~\eqref{equation: lower bound E_Z}, this yields
		\begin{equation*}
			E^-\Xeq \paren[\bigg]{\zhat_{j,s}'^-(i) \pm 13\ell\frac{\chat(i)+k\dhat(i)}{\hhat(i)}\zeta_{j,s}(i)}\abs\cZ\pri{\cE_Z}.
		\end{equation*}
		With Lemma~\ref{lemma: error term inspiration} and the first lower bound for~$\zeta_{j,s}'(i)$ given in Remark~\ref{remark: first derivatives lower bounds}, this entails the bounds for~$E^-$ stated in~\eqref{equation: gain loss expectation bounds}.
	\end{proof}
	
	\subsection{Absolute changes}\label{subsection: boundedness}
	In this subsection, we show that for all~$v\in V(\cH)$,~$\cZ\in\ccZ$,~$s\in[\ell]_0$ with~$s\geq \ind_{\ccC}(\cZ)$,~$*\in\set{+,-}$ and~$i\geq 0$, the absolute one step changes
	\begin{equation*}
		\abs{\Delta \abs{\cD_{v}}^*_\f(i)}\quad\text{and}\quad 
		\abs{\Delta \abs{\cZ^{[s]}}^*_\f(i) }
	\end{equation*}
	are never too large.
	In the following two lemmas, we consider both quantities separately.
	\begin{lemma}\label{lemma: boundedness degrees}
		Let~$v\in V(\cH)$,~$i\geq 0$ and~$*\in\set{+,-}$.
		Then, 
		\begin{equation*}
			\abs{\Delta \abs{\cD_{v}}^*_\f(i)}\leq d^{1-\eps/4}.
		\end{equation*}
	\end{lemma}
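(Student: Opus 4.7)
In the non-frozen regime the bound reduces to a combinatorial estimate on $|\Delta\abs{\cD_v}(i)|$, which splits into a codegree contribution (controlled by $\Delta_2(\cH)\leq d^{1-\eps}$) and a conflict contribution (controlled by the spreadness event $\cS(i)$ applied to the link $\cC_{e(i+1)}^{(j)}$).

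First I would note that $\Delta\abs{\cD_v}^{*}_\f(i)=0$ unless $\min(\tau_{\cS},\tau_{\cT},\tau_v,m)\geq i+1$, so one may assume the good event $\cG(i)=\cS(i)\cap\cT(i)$ holds, $v\notin e(i+1)$ and $i<m$. In this regime the triangle inequality gives
\[
\bigl|\Delta\abs{\cD_v}^{*}_\f(i)\bigr|\leq\bigl|\Delta\abs{\cD_v}(i)\bigr|+\abs{\Delta\dhat(i)}+\abs{\Delta\delta(i)},
\]
and Remarks~\ref{remark: first derivatives upper bounds} and~\ref{remark: one step changes trajectories} make the last two terms of order at most $d^{1+\eps/32}/n$, which is negligible compared to $d^{1-\eps/4}$ because the minimum-degree and codegree assumptions on $\cH$ force $n\geq d^{\eps}$. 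Thus the task is to bound $|\Delta\abs{\cD_v}(i)|$.

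Since $v\notin e(i+1)$ we have $\cD_v(i+1)\subseteq\cD_v(i)$, and an edge $e\in\cD_v(i)\setminus\cD_v(i+1)$ is removed for one of two reasons: either \textbf{(a)} $e$ shares a vertex $u\neq v$ with $e(i+1)$, or \textbf{(b)} there is a conflict $C\in\cC$ with $\set{e,e(i+1)}\subseteq C$ and $C\setminus\set{e,e(i+1)}\subseteq\cM(i)$. The number of type-(a) edges is at most $\sum_{u\in e(i+1)}d_{\cH}(uv)\leq k\Delta_2(\cH)\leq kd^{1-\eps}$. For type (b), each such $e$ corresponds to a distinct semiconflict $C':=C\setminus\set{e(i+1)}\in\cC_{e(i+1)}^{(j)}$ for some $j\in[\ell-1]$ whose single available edge is $e$ and contains $v$; hence the number of type-(b) edges is at most $\sum_{j\in[\ell-1]}\abs{(\cC_{e(i+1)}^{(j)})_v^{[1]}}(i)$. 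Recognising $\cC_{e(i+1)}^{(j)}$ as an element of $\ccZ^{(j)}$ and invoking Definition~\ref{definition: spreadness}(i) yields
\[
\abs{(\cC_{e(i+1)}^{(j)})_v^{[1]}}(i)\leq d^{1-j-\eps/3}\abs{\cC_{e(i+1)}^{(j)}}\leq \Gamma d^{1-\eps/3}
\]
by~\ref{item: conflict degree}; summing over $j\in[\ell-1]$ and using that~\eqref{equation: growth relation} makes $\ell\Gamma$ negligible produces a total of at most $d^{1-\eps/4}/2$. Combined with the type-(a) bound and the deterministic drift, this gives $|\Delta\abs{\cD_v}^{*}_\f(i)|\leq d^{1-\eps/4}$.

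The only genuinely delicate point is recognising that $\cS(i)$ is the correct tool to control the type-(b) removals: the spreadness event was engineered to track links $\cC_e^{(j)}\in\ccC\subseteq\ccZ$ of the conflict graph in addition to the user-supplied test systems in $\ccZ_0$, and it is precisely this inclusion that permits the above one-line estimate.
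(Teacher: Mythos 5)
Your proposal is correct and follows essentially the same route as the paper's proof: reduce to the unfrozen case on $\cG(i)\cap\set{v\in V(i+1)}$, absorb the deterministic drift $\abs{\Delta\dhat(i)}+\abs{\Delta\delta(i)}$ via the derivative bounds, and split the removals from $\cD_v(i)$ into an intersection part controlled by $\Delta_2(\cH)\le d^{1-\eps}$ and a conflict part controlled by the spreadness bound on $\abs{(\cC_{e(i+1)}^{(j)})_v^{[1]}}(i)$ summed over $j\in[\ell-1]$. The only cosmetic difference is your justification $n\gtrsim d^{\eps}$ for the drift terms where the paper uses $n\ge d^{1/k}$; both suffice.
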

	\begin{proof}
		On a high level, this is a consequence of~$\Delta_2(\cH)\leq d^{1-\eps}$ as well as the fact that we freeze the process whenever~$\comp{\cS(i)}$ or~$v\notin V(i+1)$ occurs.
		
		Let~$\cX:=\cG(i)\cap\set{v\in V(i+1)}$.
		Due to~$\Delta\abs{\cD_v}^*_\f\eventeq{\comp{\cX}} 0$, the desired bound follows if~$\abs{\Delta \abs{\cD_{v}}^*_\f(i)}\Xleq d^{1-\eps/4}$,
		so we aim to show the bound with~$\Xleq$ instead of~$\leq$.
		Let us first argue why it suffices to obtain
		\begin{equation}\label{equation: degree change bound}
			\abs{\Delta \abs{\cD_{v}}(i)}\Xleq 2d^{1-7\eps/24}.
		\end{equation}
		Assuming~\eqref{equation: degree change bound}, Remark~\ref{remark: one step changes trajectories} entails
		\begin{equation*}
			\abs{\Delta \abs{\cD_{v}}^+(i)}
			\leq \abs{\Delta \abs{\cD_{v}}(i)}+\abs{\Delta\dhat(i)}+\abs{\Delta \delta(i)}
			\Xleq 3 d^{1-7\eps/24}+\abs{\dhat'(i)}+\abs{\delta'(i)}
		\end{equation*}
		and thus using Remark~\ref{remark: first derivatives upper bounds}, we conclude that~$\abs{\Delta \abs{\cD_{v}}^+(i)} \Xleq d^{1-\eps/4}$.
		Also starting with~\eqref{equation: degree change bound}, a similar argument using Remarks~\ref{remark: one step changes trajectories} and~\ref{remark: first derivatives upper bounds} shows that~$\abs{\Delta \abs{\cD_{v}}^-(i)}\Xleq d^{1-\eps/4}$.
		
		Let us now prove~\eqref{equation: degree change bound}.
		We have
		\begin{equation}\label{equation: split degree change}
			\abs{\Delta \abs{\cD_{v}}(i)}
			\leq\abs{\cD_v(i)\cap E_C(i+1)}+\sum_{w\in e(i+1)}\abs{\cD_v(i)\cap \cD_w(i)}.
		\end{equation}
		We bound the two summands separately.
		
		Whenever~$e$ is an element of~$\cD_v(i)\cap E_C(i+1)$, then it is the single available element of an edge~$C\in (\cC_{e(i+1)}^{(j)})_v^{[1]}(i)$ for some~$j\in[\ell-1]$ (see definitions of partially matched subgraphs and local interactions in Definitions~\ref{definition: s available} and~\ref{definition: local interactions}).
		Thus, taking the definition of~$\cS(i)$ (see Definition~\ref{definition: spreadness}) into account, we have
		\begin{equation*}
			\abs{\cD_v(i)\cap E_C(i+1)}
			\leq \sum_{j\in[\ell-1]} \abs{(\cC_{e(i+1)}^{(j)})_v^{[1]}}(i)
			\Xleq  d^{1-\eps/3}\sum_{j\in[\ell-1]}  \frac{\abs{\cC_{e(i+1)}^{(j)}}}{d^j}
			\leq \Gamma d^{1-\eps/3}
			\leq d^{1-7\eps/24}.
		\end{equation*}
		Furthermore,~$\Delta_2(\cH)\leq d^{1-\eps}$ entails
		\begin{equation*}
			\sum_{w\in e(i+1)}\abs{\cD_v(i)\cap\cD_w(i)}
			\Xleq k\cdot d^{1-\eps}
			\leq d^{1-7\eps/24},
		\end{equation*}
		which completes the proof.
	\end{proof}
	\begin{lemma}\label{lemma: boundedness tests}
		Let~$j\in[\ell]$,~$\cZ\in\ccZ^{(j)}$,~$s\in[\ell]_0$ with~$s\geq\ind_{\ccC}(\cZ)$,~$i\geq 0$ and~$*\in\set{+,-}$.
		Then,
		\begin{equation*}
			\abs{ \Delta\abs{\cZ^{[s]}}^*_\f(i) }\leq d^{s-j-\eps/4}\abs{\cZ}.
		\end{equation*}
	\end{lemma}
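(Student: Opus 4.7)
My proof parallels that of Lemma~\ref{lemma: boundedness degrees}. First I observe that if $\tau_\cS\leq i$, $\tau_\cT\leq i$, $\tau_\cZ\leq i$ or $i\geq m$, then $\Delta\abs{\cZ^{[s]}}^*_\f(i)=0$, so I may restrict attention to the event $\cX:=\cG(i)\cap\set{\cZ\in\ccZ(i+1)}$. Since $\abs{\Delta\zhat_{j,s}(i)}+\abs{\Delta\zeta_{j,s}(i)}$ is of order $d^{s-j}/n$ by Remarks~\ref{remark: one step changes trajectories} and~\ref{remark: first derivatives upper bounds}, and thus negligible compared to $d^{s-j-\eps/4}\abs\cZ$ (since $\abs\cZ\geq d^{j-\eps/5}$), it is enough to bound $\abs{\Delta\abs{\cZ^{[s]}}(i)}\Xleq \tfrac{1}{2} d^{s-j-\eps/4}\abs\cZ$, which I do via the split
\begin{equation*}
\abs{\Delta\abs{\cZ^{[s]}}(i)}\leq\abs{\cZ^{[s]}(i+1)\setminus\cZ^{[s]}(i)}+\abs{\cZ^{[s]}(i)\setminus\cZ^{[s]}(i+1)}
\end{equation*}
and treating the two terms separately.

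Every $Z\in\cZ^{[s]}(i+1)\setminus\cZ^{[s]}(i)$ must satisfy $e(i+1)\in Z$ and $Z\in\cZ^{[s+1]}(i)$, so via the bijection $Z\mapsto Z\setminus\set{e(i+1)}$ the gain is at most $\abs{\cZ_{e(i+1)}^{[s]}(i)}\Xleq d^{s-j-\eps/3}\abs\cZ$ by Definition~\ref{definition: spreadness}~(ii) (the assumption $s\geq\ind_\ccC(\cZ)$ is exactly what is needed here). For the loss, any $Z\in\cZ^{[s]}(i)\setminus\cZ^{[s]}(i+1)$ contains some $f\in Z\cap\cH(i)$ with $f\notin\cH(i+1)$, and I distinguish three cases by the reason $f$ left $\cH$: (a) $f=e(i+1)$; (b) $f\neq e(i+1)$ with $f\cap e(i+1)\neq\emptyset$; (c) $f\in E_C(i+1)$. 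Case (b) contributes at most $\sum_{w\in e(i+1)}\abs{\cZ_w^{[s]}(i)}\Xleq k\cdot d^{s-j-\eps/3}\abs\cZ$ by Definition~\ref{definition: spreadness}~(i).

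For case (c), I first check that $e(i+1)\nevicts\cZ$ under $\cX$: for $\cZ=\cC_{f_0}^{(j)}\in\ccC$, either $e(i+1)\cap f_0\neq\emptyset$ or $\set{e(i+1),f_0}\in\cC^{(2)}$ would force $f_0\in E_C(i+1)$ or $f_0\notin V(i+1)$, hence $f_0\notin\cH(i+1)$, contradicting $\cZ\in\ccZ(i+1)$. Each lost $Z$ of type (c) is witnessed by some $C\in\cC_{e(i+1)}^{[1]}(i)$ with $\set{f}=C\setminus\cM(i)$ and $f\in Z$; a brief check shows that then $Z\cup C\in\cZ_{e(i+1),2}^{[s]}(i)$, and since at most $4^\ell$ distinct tests $Z$ can map to a single such union, Definition~\ref{definition: spreadness}~(iii) bounds this case by $4^\ell\cdot d^{s-j-\eps/3}\abs\cZ$. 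For case (a), the contribution is at most $\abs{\cZ_{e(i+1)}^{[s-1]}(i)}$; when $s-1\geq\ind_\ccC(\cZ)$, Definition~\ref{definition: spreadness}~(ii) gives $d^{s-1-j-\eps/3}\abs\cZ$. The remaining case $\cZ=\cC_{f_0}^{(j)}\in\ccC$ with $s=1$ contributes nothing under $\cX$: a hypothetical $Z\in\cZ^{[1]}(i)$ with $e(i+1)\in Z$ would make $Z\cup\set{f_0}\in\cC$ a conflict with $(Z\cup\set{f_0})\setminus\cM(i+1)=\set{f_0}$, forcing $f_0\notin\cH(i+1)$ and contradicting $\cZ\in\ccZ(i+1)$. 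Summing all four contributions with $k,4^\ell\leq d^{\eps/24}$ then yields the desired bound.

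The main obstacle is precisely case (a) with $\cZ\in\ccC$ and $s=1$, where the spreadness bound of Definition~\ref{definition: spreadness}~(ii) is unavailable by design; its resolution (and the analogous verification that $e(i+1)\nevicts\cZ$ used in case (c)) hinges on the observation that any edge $e(i+1)$ completing such a semiconflict immediately evicts $\cZ$ and thereby triggers the freezing enforced by $\tau_\cZ$, which is exactly the mechanism that justifies omitting these configurations from the spreadness event in the first place.
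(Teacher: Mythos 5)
Your proof is correct and follows essentially the same approach as the paper: the same reduction via the derivative bounds, the same gain/loss decomposition controlled by the spreadness event through $\cZ_{e(i+1)}^{[s]}$, $\cZ_v^{[s]}$ and $\cZ_{e(i+1),2}^{[s]}$, and the same key observation that under $\set{\cZ\in\ccZ(i+1)}$ the edge $e(i+1)$ cannot be an immediate evictor for $\cZ$. The only (cosmetic) difference is that the paper absorbs your case (a) into case (b) — since $f=e(i+1)$ trivially meets $e(i+1)$, those tests already lie in $\cZ_v^{[s]}(i)$ for $v\in e(i+1)$ — which renders your separate treatment of $\abs{\cZ_{e(i+1)}^{[s-1]}}(i)$ and the ad hoc argument for $s=1$, $\cZ\in\ccC$ unnecessary, though both are correct as written.
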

	\begin{proof}
		This proof is conceptually very similar to that of Lemma~\ref{lemma: boundedness degrees}.
		Let~$\cX:=\cG(i)\cap\set{\cZ\in\ccZ(i+1)}$.
		Let us first argue why it suffices to obtain
		\begin{equation}\label{equation: test change bound}
			\abs{ \Delta\abs{\cZ^{[s]}}(i) }\Xleq 3d^{s-j-7\eps/24}\abs\cZ.
		\end{equation}
		Assuming~\eqref{equation: test change bound}, Remark~\ref{remark: one step changes trajectories} entails
		\begin{align*}
			\abs{\Delta\abs{\cZ^{[s]}}^+(i)}
			&\leq \abs{ \Delta \abs{\cZ^{[s]}}(i) } +(\abs{\Delta\zhat_{j,s}(i)}+\abs{\Delta\zeta_{j,s}(i)})\abs{\cZ}\\
			&\Xleq (4d^{s-j-7\eps/24}+\abs{\zhat_{j,s}'(i)}+\abs{\zeta_{j,s}'(i)})\abs{\cZ}
		\end{align*}
		and thus using Remark~\ref{remark: first derivatives upper bounds}, we conclude that~$\abs{\Delta\abs{\cZ^{[s]}}^+(i)}\Xleq d^{s-j-\eps/4}\abs{\cZ}$.
		Also starting with~\eqref{equation: test change bound}, a similar argument using Remarks~\ref{remark: one step changes trajectories} and~\ref{remark: first derivatives upper bounds} shows that~$\abs{\Delta\abs{\cZ^{[s]}}^-(i)}\Xleq  d^{s-j-\eps/4}\abs{\cZ}$.
		
		Let us now prove~\eqref{equation: test change bound}.
		Observe that
		\begin{equation*}\label{equation: boundedness conflicts}
			\begin{aligned}
				\abs{ \Delta\abs{\cZ^{[s]}}(i) }&\leq \abs{\cset{Z\in\cZ^{[s+1]}(i)}{e(i+1)\in Z}}
				+\abs{\cset{Z\in\cZ^{[s]}(i)}{Z\cap E_C(i+1)\neq\emptyset}}\\&\hphantom{\leq}\mathrel{}\quad
				+\sum_{v\in e(i+1)}\abs{\cset{Z\in\cZ^{[s]}(i)}{Z\cap\cD_v(i)\neq\emptyset}}.
			\end{aligned}
		\end{equation*}
		We bound the three summands separately.
		
		For every test~$Z\in \cZ^{[s+1]}(i)$ with~$e(i+1)\in Z$, the set~$Z'=Z\setminus\set{e(i+1)}$ is a set in~$\cZ_{e(i+1)}^{[s]}(i)$ (see definitions of partially matched subgraphs and local interactions in Definitions~\ref{definition: s available} and~\ref{definition: local interactions}) and for distinct~$Z_1,Z_2\in \cZ^{[s+1]}(i)$ with~$e(i+1)\in Z_1,Z_2$, the sets~$Z_1'$ and~$Z_2'$ are distinct, so, taking the definition of~$\cS(i)$ (see Definition~\ref{definition: spreadness}) into account, we have
		\begin{equation*}
			\abs{\cset{Z\in\cZ^{[s+1]}(i)}{e(i+1)\in Z}}\leq \abs{\cZ_{e(i+1)}^{[s]}}(i)\Xleq d^{s-j-\eps/3}\abs{\cZ}.
		\end{equation*}
		It remains to bound the other two summands.
		
		If~$s=0$, the two remaining summands are zero.
		Hence, suppose that~$s\geq 1$.
		Whenever~$Z$ is a test in~$\cZ^{[s]}(i)$ with~$Z\cap E_C(i+1)\neq\emptyset$, then~$Z$ is a subset of a set in~$\cZ_{e(i+1),2}^{[s]}(i)$ (again, see Definitions~\ref{definition: s available} and~\ref{definition: local interactions}) and every set in~$\cZ_{e(i+1),2}^{[s]}(i)$ has size at most~$2\ell$ and hence at most~$2^{2\ell}=4^\ell$ subsets.
		Furthermore, if~$\cZ\in\ccZ(i+1)$, then~$e(i+1)$ is not an immediate evictor for~$\cZ$, so, again taking the definition of~$\cS(i)$ (see Definition~\ref{definition: spreadness}) into account, we have
		\begin{equation*}
			\abs{\cset{Z\in\cZ^{[s]}(i)}{Z\cap E_C(i+1)\neq\emptyset}}\leq 4^\ell\abs{\cZ_{e(i+1),2}^{[s]}}(i)\Xleq 4^\ell d^{s-j-\eps/3}\abs{\cZ}.
		\end{equation*}
		Whenever~$Z$ is a test in~$\cZ^{[s]}(i)$ with~$Z\cap \cD_v(i)\neq\emptyset$ for some~$v\in V(\cH)$, then~$Z$ is a set in~$\cZ_v^{[s]}(i)$ (again, see Definitions~\ref{definition: s available} and~\ref{definition: local interactions}), so, again taking the definition of~$\cS(i)$ (see Definition~\ref{definition: spreadness}) into account, we have
		\begin{equation*}
			\sum_{v\in e(i+1)}\abs{\cset{Z\in\cZ^{[s]}(i)}{Z\cap\cD_v(i)\neq\emptyset}}
			=\sum_{v\in e(i+1)}\abs{\cZ_{v}^{[s]}}(i)
			\Xleq k\cdot d^{s-j-\eps/3}\abs{\cZ}.
		\end{equation*}
		Combining those bounds, we conclude that
		\begin{equation*}
			\abs{ \Delta\abs{\cZ^{[s]}}(i) }
			\Xleq d^{s-j-\eps/3}\abs\cZ + 4^\ell d^{s-j-\eps/3}\abs\cZ + k d^{s-j-\eps/3}\abs\cZ
		\end{equation*}
		and hence~\eqref{equation: test change bound} holds.
	\end{proof}
	\subsection{Supermartingale argument}\label{subsection: supermartingale argument}
	Using the results from the previous two subsections, we immediately obtain the following two statements from Freedman's inequality (Lemma~\ref{lemma: freedman}).
	\begin{lemma}\label{lemma: degree tracking}
		For all~$*\in\set{+,-}$, we have
		\begin{equation*}
			\pr{\abs{\cD_v}^*_\f(i)> 0\tforsome{$v\in V(\cH)$,~$i\geq 0$}}\leq \exp(-d^{\eps/32}).
		\end{equation*}
	\end{lemma}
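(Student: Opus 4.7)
The plan is to obtain Lemma~\ref{lemma: degree tracking} as a direct application of Freedman's inequality (Lemma~\ref{lemma: freedman}) to each of the $2n$ frozen processes $\abs{\cD_v}^*_\f(0),\abs{\cD_v}^*_\f(1),\ldots$ indexed by $v\in V(\cH)$ and $*\in\{+,-\}$, followed by a union bound. The supermartingale property and the bound $\exi{\abs{\Delta\abs{\cD_v}^*_\f(i)}}\leq d^{1+\eps/16}/n$ needed to check the trend and boundedness-in-expectation hypotheses of Freedman's inequality are provided by Lemma~\ref{lemma: trend degrees}, while the almost-sure one-step bound $\abs{\Delta\abs{\cD_v}^*_\f(i)}\leq d^{1-\eps/4}$ needed for the boundedness hypothesis is provided by Lemma~\ref{lemma: boundedness degrees}. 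Note also that $\Delta\abs{\cD_v}^*_\f(i)=0$ for $i\geq m$, so the relevant sum of conditional absolute increments is at most $m\cdot d^{1+\eps/16}/n\leq d^{1+\eps/16}/k$.

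Next I will verify that the initial values are strictly negative by a margin of essentially $\delta(0)$, so that Freedman's inequality produces a nontrivial deviation bound. Using $\phat_V(0)=1$, we have $\xi(0)=d^{-\eps/32}$, $\dhat(0)=d$ and hence $\delta(0)=d^{1-\eps/32}$. The assumption $(1-d^{-\eps})d\leq \delta(\cH)\leq \Delta(\cH)\leq d$ then yields
\begin{equation*}
\abs{\cD_v}^{+}_\f(0)= d_\cH(v)-d-\delta(0)\leq -\delta(0)
\quad\text{and}\quad
\abs{\cD_v}^{-}_\f(0)= d-\delta(0)-d_\cH(v)\leq d^{1-\eps}-\delta(0)\leq -\tfrac{1}{2}\delta(0),
\end{equation*}
so in both cases $\abs{\cD_v}^*_\f(0)\leq -t$ with $t:=\tfrac{1}{2}\delta(0)=\tfrac{1}{2}d^{1-\eps/32}$.

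Finally, I will plug these parameters into Freedman's inequality with $a=d^{1-\eps/4}$ and $b=d^{1+\eps/16}/k$. Since $b\geq t$, we have
\begin{equation*}
\frac{t^2}{2a(t+b)}\geq \frac{t^2}{4ab}\geq \frac{k\,d^{2-\eps/16}}{16\,d^{2-\eps/4+\eps/16}}\geq d^{\eps/16},
\end{equation*}
giving $\pr{\abs{\cD_v}^*_\f(i)>0 \text{ for some } i\geq 0}\leq \exp(-d^{\eps/16})$ for each fixed $v$ and $*$. A union bound over the at most $2n\leq 2\exp(d^{\eps/(400\ell)})$ choices of $v$ and $*$ loses only a factor absorbed into the exponent, leaving the claimed bound $\exp(-d^{\eps/32})$. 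There is no real obstacle here: the heavy lifting sits in Lemmas~\ref{lemma: trend degrees} and~\ref{lemma: boundedness degrees}, and the only thing to watch is bookkeeping of the $\eps$-exponents so that $t^2$ dominates $ab$ with enough room to absorb the union bound over $V(\cH)$.
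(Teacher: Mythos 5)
Your proposal is correct and follows essentially the same route as the paper: the same lower bound $-\tfrac12\delta(0)=-\tfrac12 d^{1-\eps/32}$ on the initial values, the same application of Freedman's inequality with $a=d^{1-\eps/4}$, $b$ of order $d^{1+\eps/16}$ and $t=\tfrac12 d^{1-\eps/32}$ (yielding an exponent of order $d^{\eps/8}$, comfortably above the $d^{\eps/16}$ you quote), and the same union bound over $v$ and $*$ absorbed by $n\leq\exp(d^{\eps/(400\ell)})$. Nothing to add.
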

	\begin{proof}
		As the results in the previous subsections allow us to apply Freedman's inequality (Lemma~\ref{lemma: freedman}), this is a consequence of the fact that~$\abs{\cD_v}^+_\f(0)$ and~$\abs{\cD_v}^-_\f(0)$ are sufficiently small for all~$v\in V(\cH)$.
		
		Let~$*\in\set{+,-}$ and~$v\in V(\cH)$.
		First, note that
		\begin{equation*}
			\abs{\cD_v}^*_\f(0)
			=\pm(\abs{\cD_v}(i)-\dhat(0))-\delta(0)
			=\pm d^{1-\eps}-d^{1-\eps/32}
			\leq -\frac{d^{1-\eps/32}}{2}.
		\end{equation*}

		Lemmas~\ref{lemma: trend degrees} and~\ref{lemma: boundedness degrees} allow us to apply Freedman's inequality (Lemma~\ref{lemma: freedman}) with~$d^{1-\eps/4}$, $d^{1+\eps/16}$,~$d^{1-\eps/32}/2$ playing the roles of~$a$,~$b$,~$t$ to obtain
		\begin{align*}
			\pr{\abs{\cD_v}^*_\f(i)>0\tforsome{$i\geq 0$}}
			&\leq \pr[\bigg]{\abs{\cD_v}^*_\f(i)\geq \abs{\cD_v}^*_\f(0)+\frac{d^{1-\eps/32}}{2}\tforsome{$i\geq 0$}}\\
			&\leq \exp\paren[\bigg]{-\frac{d^{2(1-\eps/32)}}{8 d^{1-\eps/4}\cdot(d^{1-\eps/32}+d^{1+\eps/16})}}
			\leq\exp\paren[\bigg]{-\frac{d^{\eps/8}}{16}}.
		\end{align*}
		A union bound completes the proof.
	\end{proof}
	
	\begin{lemma}\label{lemma: test tracking}
		For all~$*\in\set{+,-}$, we have
		\begin{equation*}
			\pr{\abs{\cZ^{[s]}}^*_\f(i)> 0\tforsome{$\cZ\in\ccZ$,~$s\in[\ell]_0$ with~$s\geq\ind_{\ccC}(\cZ)$}}\leq \exp(-d^{\eps/32}).
		\end{equation*}
	\end{lemma}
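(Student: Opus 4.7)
The plan is to mimic the proof of Lemma~\ref{lemma: degree tracking} almost verbatim, with Lemmas~\ref{lemma: trend tests} and~\ref{lemma: boundedness tests} playing the roles of Lemmas~\ref{lemma: trend degrees} and~\ref{lemma: boundedness degrees}. Fix $*\in\{+,-\}$, $j\in[\ell]$, $\cZ\in\ccZ^{(j)}$ and $s\in[\ell]_0$ with $s\geq\ind_{\ccC}(\cZ)$.

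First I would compute the initial value $\abs{\cZ^{[s]}}^*_\f(0)$. Since $\cM(0)=\emptyset$ and $\cH(0)=\cH$, Definition~\ref{definition: s available} gives $\cZ^{[s]}(0)=\cZ$ when $s=j$ and $\cZ^{[s]}(0)=\emptyset$ when $s<j$. Using $\phat_M(0)=0$ (with the convention $0^0=1$), one checks $\zhat_{j,s}(0)=\ind_{\{s=j\}}$, so in every admissible case $\abs{\cZ^{[s]}}^*_\f(0)=-\zeta_{j,s}(0)\abs{\cZ}$. By Remark~\ref{remark: basic bounds}, $\zeta_{j,s}(0)\geq d^{s-j-\eps/16}$, so $\abs{\cZ^{[s]}}^*_\f(0)\leq -d^{s-j-\eps/16}\abs{\cZ}$.

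Next I would invoke Freedman's inequality (Lemma~\ref{lemma: freedman}) on the supermartingale $\abs{\cZ^{[s]}}^*_\f(0),\abs{\cZ^{[s]}}^*_\f(1),\ldots$ provided by Lemma~\ref{lemma: trend tests}. Lemma~\ref{lemma: boundedness tests} yields the per-step increment bound $a:=d^{s-j-\eps/4}\abs{\cZ}$, while Lemma~\ref{lemma: trend tests} gives the expected absolute increment bound $d^{s-j+\eps/16}\abs{\cZ}/n$ per step; summing over the at most $m\leq n/k$ steps that matter before freezing yields a total $b\leq d^{s-j+\eps/16}\abs{\cZ}$. Choosing $t:=\tfrac{1}{2}d^{s-j-\eps/16}\abs{\cZ}$, the exponent in Freedman's bound becomes
\begin{equation*}
\frac{t^2}{2a(t+b)}
\geq \frac{\tfrac{1}{4}d^{2(s-j)-\eps/8}\abs{\cZ}^2}{4\, d^{s-j-\eps/4}\abs{\cZ}\cdot d^{s-j+\eps/16}\abs{\cZ}}
= \frac{d^{3\eps/16-\eps/8}}{16}
\geq d^{\eps/20},
\end{equation*}
so that $\pr{\abs{\cZ^{[s]}}^*_\f(i)>0 \text{ for some } i\geq 0}\leq \exp(-d^{\eps/20})$ for each individual choice of $(\cZ,s,*)$.

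Finally I would apply a union bound. Since $\abs{\ccZ}\leq \abs{\ccZ_0}+\ell\abs{\cH}\leq 2\exp(d^{\eps/(400\ell)})$ and there are at most $\ell+1$ choices for $s$ and two choices for $*$, the union bound cost is at most $4(\ell+1)\exp(d^{\eps/(400\ell)})$, which is easily absorbed and yields the desired bound $\exp(-d^{\eps/32})$. I do not expect any serious obstacle: the only care needed is the slightly different scaling of the initial error, per-step error, and single-step jump in terms of $d^{s-j}$ rather than $d$, but since all three scale by the same power of $d^{s-j}\abs{\cZ}$ the $\abs{\cZ}$ and $d^{s-j}$ factors cancel neatly in Freedman's quotient, exactly as they do (with $s=j=1$ and $\abs{\cZ}=1$) in the proof of Lemma~\ref{lemma: degree tracking}.
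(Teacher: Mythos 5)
Your proposal is correct and follows essentially the same route as the paper's proof: the same initial-value computation $\abs{\cZ^{[s]}}^*_\f(0)=-\zeta_{j,s}(0)\abs\cZ\leq -d^{s-j-\eps/16}\abs\cZ$, the same application of Freedman's inequality with $a=d^{s-j-\eps/4}\abs\cZ$ and $b=d^{s-j+\eps/16}\abs\cZ$ (the paper takes $t$ equal to the full initial gap rather than half of it, which is immaterial), and the same union bound over $\ccZ=\ccZ_0\cup\ccC$. No gaps.
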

	\begin{proof}
		Let~$*\in\set{+,-}$,~$j\in [\ell]$,~$\cZ\in\ccZ^{(j)}$ and~$s\in[j]_0$ with~$s\geq\ind_{\ccC}(\cZ)$.
		First, note that~$\abs{\cZ^{[s]}}(0)=\zhat_{j,s}(0)\abs\cZ$.
		Hence,
		\begin{align*}
			\abs{\cZ^{[s]}}^*_\f(0)
			&=\pm(\abs{\cZ^{[s]}}(0)-\zhat_{j,s}(0)\abs\cZ)-\zeta_{j,s}(0)\abs\cZ
			=-\zeta_{j,s}(0)\abs\cZ
			\leq -\xi(0)\binom{j}{s}\frac{\dhat(0)^s}{\Gamma\ell d^j}\abs{\cZ}\\
			&\leq -d^{s-j-\eps/16}\abs{\cZ}.
		\end{align*}
		
		Lemmas~\ref{lemma: trend tests} and~\ref{lemma: boundedness tests} allow us to apply Freedman's inequality (Lemma~\ref{lemma: freedman}) with~$d^{s-j-\eps/4}\abs{\cZ}$, $d^{s-j+\eps/16}\abs{\cZ}$, $d^{s-j-\eps/16}\abs{\cZ}$ playing the roles of~$a$,~$b$,~$t$ to obtain
		\begin{align*}
			\pr{ \abs{\cZ^{[s]}}^*_\f(i)>0\tforsome{$i\geq 0$} }
			&\leq \pr{  \abs{\cZ^{[s]}}^*_\f(i)>\abs{\cZ^{[s]}}^*_\f(0)+d^{s-j-\eps/16}\abs{\cZ}\tforsome{$i\geq 0$}}\\
			&\leq \exp\paren[\bigg]{-\frac{ d^{2(s-j-\eps/16)}\abs{\cZ}^2 }{ 2d^{s-j-\eps/4}\abs{\cZ}\cdot(d^{s-j-\eps/16}+d^{s-j+\eps/16})\abs{\cZ} }}\\
			&\leq \exp\paren[\bigg]{-\frac{d^{\eps/16}}{4}}.
		\end{align*}
		A union bound completes the proof.
	\end{proof}
	
	We are now ready to prove Theorem~\ref{theorem: trajectories} which in turn yields Theorem~\ref{theorem: process}.
	\begin{proof}[Proof of Theorem~\ref{theorem: trajectories}]
		We argue as described towards the end of the introduction to this section.
		Due to~\eqref{equation: tracking implies availability}, we have~$\cT(i)\subseteq\cA(i)$ for all~$i\leq m$ and thus Lemma~\ref{lemma: test spread event} entails
		\begin{equation*}
			\pr{ \tau_\cS\leq\min(\tau_\cT,m) }\leq \exp(-d^{\eps/(400\ell)}).
		\end{equation*}
		Lemma~\ref{lemma: trajectory of h} shows that if the event~$\set{\tau_\cT\leq\min(\tau_\cS,m) }$ occurs, then at least one of the events
		\begin{gather*}
			\set{ \abs{\cD_v}^+_\f(m)>0\tforsome{$v\in V(\cH) $}},\quad
			\set{ \abs{\cD_v}^-_\f(m)>0\tforsome{$v\in V(\cH) $}},\\
			\set{\abs{\cZ^{[s]}}^+_\f(m)>0\tforsome{$ \cZ\in\ccZ $,~$s\in[\ell]_0$ with~$s\geq\ind_{\ccC}(\cZ)$} },\\
			\text{and}\quad \set{\abs{\cZ^{[s]}}^-_\f(m)>0\tforsome{$ \cZ\in\ccZ $,~$s\in[\ell]_0$ with~$s\geq\ind_{\ccC}(\cZ)$} }
		\end{gather*}
		occurs (see the discussion after Lemma~\ref{lemma: freedman}).
		Hence, Lemmas~\ref{lemma: degree tracking} and~\ref{lemma: test tracking} entail
		\begin{equation*}
			\pr{ \tau_\cT\leq\min(\tau_\cS,m) }\leq 4\exp(-d^{\eps/32}).
		\end{equation*}
		This yields
		\begin{align*}
			\pr[\Big]{\comp{\paren[\Big]{\bigcap_{i\in[m]_0}\cT(m)}}}
			&= \pr{\tau_{\cT}\leq m}
			\leq\pr{\tau_{\cT}\leq m\tand \tau_\cT\leq\tau_\cS}+\pr{\tau_{\cT}\leq m\tand \tau_\cS\leq\tau_\cT}\\
			&\leq \pr{\tau_\cT\leq \min(\tau_{\cS},m)}+\pr{\tau_\cS\leq \min(\tau_{\cT},m)}
			\leq \exp(-d^{\eps/(500\ell)}).
		\end{align*}
	\end{proof}
	\section{Proofs for the theorems in Section~\ref{section: variations}}\label{section: theorem proofs}
	In this section, we provide proofs for Theorems~\ref{theorem: test systems}--\ref{theorem: less regularity functions} by showing that they follow from Theorem~\ref{theorem: process}.
	Here, we do not use the setup stated in the beginning of Section~\ref{section: construction}.
	
	For the probabilistic constructions in this section, we need the following concentration inequalities.
	\begin{lemma}[Chernoff's inequality]\label{lemma: chernoff}
		Suppose~$X_1,\ldots,X_n$ are independent Bernoulli random variables and let~$X:= \sum_{i\in[n]} X_i$.
		Then, the following holds.
		\begin{enumerate}[label=\textup{(\roman*)}]
			\item $\pr{X\neq (1\pm\delta)\ex{X}}\leq 2\exp(-\delta^2\ex{X}/3)$ for all~$\delta\in(0,1)$;
			\item $\pr{X\geq 2t}\leq \exp(-t/3)$ for all positive~$t\geq \ex{X}$.
		\end{enumerate}
	\end{lemma}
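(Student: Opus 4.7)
The plan is to use the standard exponential moment method (Cramér--Chernoff). For any $\lambda>0$, Markov's inequality applied to $e^{\lambda X}$ gives $\pr{X\geq a}\leq e^{-\lambda a}\ex{e^{\lambda X}}$. By independence, $\ex{e^{\lambda X}}=\prod_{i\in[n]}\ex{e^{\lambda X_i}}$, and for a Bernoulli $X_i$ with parameter $p_i$ we have $\ex{e^{\lambda X_i}}=1+p_i(e^\lambda-1)\leq\exp(p_i(e^\lambda-1))$ using $1+x\leq e^x$. Writing $\mu:=\ex{X}=\sum p_i$, this yields the master bound $\pr{X\geq a}\leq\exp(\mu(e^\lambda-1)-\lambda a)$, with an analogous statement for $\lambda<0$ giving lower-tail bounds.

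For (i), I would first handle the upper tail by setting $a=(1+\delta)\mu$ and choosing $\lambda=\ln(1+\delta)>0$ to obtain
\begin{equation*}
\pr{X\geq (1+\delta)\mu}\leq \paren[\bigg]{\frac{e^\delta}{(1+\delta)^{1+\delta}}}^{\mu}.
\end{equation*}
The analytic core is then the elementary inequality $\delta-(1+\delta)\ln(1+\delta)\leq -\delta^2/3$ for $\delta\in(0,1)$, which one verifies by computing the Taylor series of the left side (its derivative is $-\ln(1+\delta)$, so one integrates $\ln(1+\delta)=\delta-\delta^2/2+\delta^3/3-\cdots$ and compares termwise). This gives the $\exp(-\delta^2\mu/3)$ bound. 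For the lower tail, pick $\lambda=\ln(1-\delta)<0$ with $a=(1-\delta)\mu$, yielding an analogous expression bounded by $\exp(-\delta^2\mu/2)\leq\exp(-\delta^2\mu/3)$ via the inequality $-\delta-(1-\delta)\ln(1-\delta)\leq-\delta^2/2$, again by Taylor comparison. A union bound over the two tails produces the factor of $2$.

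For (ii), I would apply the upper-tail bound again, this time with $a=2t$ and the choice $\lambda=\ln 2$, giving
\begin{equation*}
\pr{X\geq 2t}\leq \exp(\mu(e^{\ln 2}-1)-(\ln 2)\cdot 2t)=\exp(\mu-(2\ln 2)t).
\end{equation*}
Since $t\geq\mu$ and $2\ln 2 -1\geq 1/3$, this is at most $\exp(-t/3)$, as desired.

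The proof is essentially routine, with no real obstacle beyond verifying the elementary numerical inequalities for the exponent; the only subtle point is ensuring the constants ($1/3$ in (i), $1/3$ in (ii)) come out as stated, which is handled by the Taylor bounds above. No deeper structure from the paper is needed, so I would likely just cite a standard reference in the final write-up rather than include the full derivation.
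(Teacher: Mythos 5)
Your proof is correct and entirely standard; the paper itself states this Chernoff bound without proof, as a textbook fact. The exponential-moment derivation you outline (with the elementary inequalities $\delta-(1+\delta)\ln(1+\delta)\leq-\delta^2/3$ for the upper tail, $-\delta-(1-\delta)\ln(1-\delta)\leq-\delta^2/2$ for the lower tail, and $2\ln 2-1>1/3$ for part (ii)) checks out, so citing a standard reference, as you suggest, is exactly what the authors do.
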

	
	\begin{lemma}[McDiarmid's inequailty]\label{lemma: mcdiarmid}
		Suppose~$X_1,\ldots,X_n$ are independent random variables and suppose~$f\colon \im(X_1)\times\cdots\times\im(X_n)\rightarrow\bR$ is a function such that for all~$i\in[n]$, changing the~$i$-th coordinate of~$x\in\dom(f)$ changes~$f(x)$ by at most~$c_i>0$.
		Then, for all~$t>0$,
		\begin{equation*}
			\pr{f(X_1,\ldots,X_n)-\ex{f(X_1,\ldots,X_n)}\geq t}\leq\exp\paren[\bigg]{-\frac{2t^2}{\sum_{i\in[n]} c_i^2}}.
		\end{equation*} 
	\end{lemma}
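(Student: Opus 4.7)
The plan is to prove this classical result via the Doob martingale construction together with the Azuma--Hoeffding bounded differences argument. Let $Y := f(X_1,\ldots,X_n)$ and define the Doob martingale $M_i := \ex{Y \mid X_1,\ldots,X_i}$ for $i \in [n]_0$, so that $M_0 = \ex{Y}$ and $M_n = Y$. The target probability then becomes $\pr{M_n - M_0 \geq t}$, which is the quantity controlled by Azuma--Hoeffding, provided one can bound the increments $D_i := M_i - M_{i-1}$ in a suitable way.

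The key step I would carry out first is to show that each $D_i$ lies in an interval of length at most $c_i$ almost surely. To do this, fix $i \in [n]$ and define, for each $x \in \im(X_i)$, the function $g_i(x) := \ex{Y \mid X_1,\ldots,X_{i-1},X_i = x}$, which by independence equals an average of $f$ over the remaining coordinates with the first $i$ coordinates fixed (the first $i-1$ to their realised values and the $i$-th to $x$). The bounded differences hypothesis on $f$, combined with the fact that the same underlying distribution of $(X_{i+1},\ldots,X_n)$ is being averaged in both cases, yields $|g_i(x) - g_i(x')| \leq c_i$ for all $x,x' \in \im(X_i)$. Letting $A_i := \inf_x g_i(x) - M_{i-1}$ and $B_i := \sup_x g_i(x) - M_{i-1}$, we obtain $D_i \in [A_i, B_i]$ with $B_i - A_i \leq c_i$, and moreover $\cex{D_i}{X_1,\ldots,X_{i-1}} = 0$.

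The second step is to bound the conditional moment generating function of each increment via Hoeffding's lemma: any random variable $Z$ supported in an interval of length $L$ with $\ex{Z}=0$ satisfies $\ex{\eul^{\lambda Z}} \leq \exp(\lambda^2 L^2 / 8)$. Applied conditionally, this gives
\begin{equation*}
\cex{\eul^{\lambda D_i}}{X_1,\ldots,X_{i-1}} \leq \exp\paren[\bigg]{\frac{\lambda^2 c_i^2}{8}}.
\end{equation*}
Iterating by the tower property yields $\ex{\eul^{\lambda(M_n - M_0)}} \leq \exp(\lambda^2 \sum_{i\in[n]} c_i^2 / 8)$. A standard Markov-inequality/Chernoff optimisation in $\lambda > 0$ then produces
\begin{equation*}
\pr{M_n - M_0 \geq t} \leq \exp\paren[\bigg]{-\frac{2t^2}{\sum_{i\in[n]} c_i^2}},
\end{equation*}
which is the required bound.

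The step that I expect to require the most care is the first one, namely verifying rigorously that the bounded differences hypothesis on $f$ translates into a uniform bound on $\sup g_i - \inf g_i$ after integrating out the later coordinates. The argument relies essentially on independence (so that conditioning on $X_1,\ldots,X_{i-1},X_i$ leaves the joint law of $(X_{i+1},\ldots,X_n)$ unchanged), which lets one couple the two conditional expectations by the same realisation of the remaining coordinates and apply the pointwise bound $|f(\ldots, x, \ldots) - f(\ldots, x', \ldots)| \leq c_i$ before taking expectations. Everything else (Hoeffding's lemma, the Chernoff optimisation) is entirely standard and can be cited.
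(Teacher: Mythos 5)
The paper states McDiarmid's inequality as a classical tool without giving a proof, so there is nothing to compare against; your Doob-martingale argument (bounded increments via independence, Hoeffding's lemma applied conditionally, then Chernoff optimisation at $\lambda=4t/\sum_{i\in[n]}c_i^2$) is the standard and correct proof of this result.
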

	
	\subsection{Fractional degrees}
	In preparation for the proofs of the theorems in Section~\ref{section: variations}, consider the following situation.
	Suppose~$V$ is a finite set and~$k\geq 1$ is an integer.
	Suppose that we are assigning a weight~$w(e)\geq 0$ to every set~$e\in\unordsubs{V}{k}$ and suppose that for all~$v\in V$, we are given a target value~$d(v)\geq 0$ that we wish to realise as the total weight~$\sum_{e\in\unordsubs{V}{k}\colon v\in e} w(e)$ of all sets containing~$v$.
	This can be interpreted as a fractional version of the problem of finding a~$k$-graph~$\cH$ with vertex set~$V$ where~$d_{\cH}(v)=d(v)$ for all~$v\in V$.
	In Lemma~\ref{lemma: regularization weight}, which is a consequence of Lemma~\ref{lemma: regularization weight function property}, we consider one possible assignment of weights that achieves the stated goal approximately.
	\begin{lemma}\label{lemma: regularization weight function property}
		Suppose~$k\geq 1$ is an integer and~$A$ is a finite set.
		Let~$f_{\max}> 0$ and suppose~$f\colon A\rightarrow [0,f_{\max}]$ is a function with~$f(A)>0$.
		Then,
		\begin{equation*}
			\sum_{(a_1,\ldots,a_k)\in\ordsubs{A}{k}}\prod_{i\in[k]} f(a_i)=\paren[\bigg]{ 1\pm\frac{k^2 f_{\max}}{f(A)}} f(A)^k.
		\end{equation*}
	\end{lemma}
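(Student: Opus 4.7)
The plan is to compare the sum over ordered $k$-tuples of distinct elements with the unrestricted $k$-fold sum $f(A)^k=\bigl(\sum_{a\in A}f(a)\bigr)^k=\sum_{(a_1,\ldots,a_k)\in A^k}\prod_{i\in[k]}f(a_i)$, and bound the difference by controlling the contribution of tuples with a repeated coordinate.

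First I would note the trivial upper bound: since $\ordsubs{A}{k}\subseteq A^k$ and all terms are nonnegative, $\sum_{(a_1,\ldots,a_k)\in\ordsubs{A}{k}}\prod_{i\in[k]}f(a_i)\leq f(A)^k$, which immediately yields one direction with room to spare. For the matching lower bound, the complementary set $A^k\setminus\ordsubs{A}{k}$ consists precisely of tuples in which at least two coordinates coincide, so by a union bound over the $\binom{k}{2}$ index pairs $\{i,j\}$,
\begin{equation*}
f(A)^k-\sum_{(a_1,\ldots,a_k)\in\ordsubs{A}{k}}\prod_{i\in[k]}f(a_i)
\;\leq\;\sum_{1\leq i<j\leq k}\sum_{\substack{(a_1,\ldots,a_k)\in A^k\\ a_i=a_j}}\prod_{i'\in[k]}f(a_{i'}).
\end{equation*}

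For each fixed pair $i<j$, factor the inner sum by first choosing the common value $a\in A$ of $a_i=a_j$, contributing $f(a)^2$, and then the remaining $k-2$ coordinates freely, contributing $f(A)^{k-2}$. Using $f(a)\leq f_{\max}$ gives $\sum_{a\in A}f(a)^2\leq f_{\max}\cdot f(A)$, so each pair contributes at most $f_{\max}\cdot f(A)^{k-1}$. Summing over the $\binom{k}{2}\leq k^2/2$ pairs yields
\begin{equation*}
\Bigl|f(A)^k-\sum_{(a_1,\ldots,a_k)\in\ordsubs{A}{k}}\prod_{i\in[k]}f(a_i)\Bigr|
\;\leq\;\frac{k^2}{2}\,f_{\max}\,f(A)^{k-1}
\;\leq\;\frac{k^2 f_{\max}}{f(A)}\,f(A)^k,
\end{equation*}
which is exactly the claimed estimate.

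There is no real obstacle here: the argument is a one-line inclusion-exclusion/union bound, and the factor $k^2 f_{\max}/f(A)$ in the statement is slack enough (a factor of $2$) that no care is needed with lower-order corrections from tuples having three or more repeated coordinates. The only mild point is making sure the proof does not implicitly assume $k^2 f_{\max}/f(A)\leq 1$: since the upper bound $\leq f(A)^k$ is automatic, the estimate is trivially true when $k^2 f_{\max}/f(A)\geq 1$, and in the complementary regime the computation above gives the lower bound.
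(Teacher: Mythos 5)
Your proof is correct, and it takes a genuinely different route from the paper's. The paper proves the lower bound by induction on $k$: it peels off the last coordinate, bounds $\sum_{a_k\in A\setminus\{a_1,\ldots,a_{k-1}\}}f(a_k)\geq f(A)-(k-1)f_{\max}\geq(1-kf_{\max}/f(A))f(A)$, and iterates to get the factor $(1-kf_{\max}/f(A))^{k-1}\geq 1-k^2f_{\max}/f(A)$ (after reducing to the case $f(A)\geq k^2f_{\max}$). You instead bound the defect $f(A)^k-\sum_{\ordsubs{A}{k}}$ directly by a union bound over the $\binom{k}{2}$ coordinate pairs that can coincide, using $\sum_{a}f(a)^2\leq f_{\max}f(A)$ to get $f_{\max}f(A)^{k-1}$ per pair. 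Both arguments are elementary and of comparable length; yours is slightly more direct (no induction, no case split on the size of $f(A)$ — though your remark about the regime $k^2f_{\max}/f(A)\geq 1$ is harmless extra caution, since the union bound is valid unconditionally once $f(A)>0$), and it even yields the marginally sharper constant $k^2/2$ in place of $k^2$. The only cosmetic point is the degenerate exponent $f(A)^{k-2}$ when $k=1$, but there the pair sum is empty and $\ordsubs{A}{1}=A^1$, so nothing goes wrong.
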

	\begin{proof}
		The upper bound is easy to see, the lower bound follows by induction on~$k$.
		In more detail, for the upper bound, note that
		\begin{equation*}
			\sum_{(a_1,\ldots,a_k)\in\ordsubs{A}{k}}\prod_{i\in[k]} f(a_i)
			\leq \sum_{a_1,\ldots,a_k\in A}\prod_{i\in[k]} f(a_i)
			=f(A)^k.
		\end{equation*}
		For the lower bound, we may assume that~$f(A)\geq k^2f_{\max}$.
		Observe the following.
		If~$k=1$, then
		\begin{equation*}
			\sum_{(a_1,\ldots,a_k)\in\ordsubs{A}{k}}\prod_{i\in[k]} f(a_i)
			=\sum_{a_1\in A}f(a_1)
			=f(A)
		\end{equation*}
		and if~$k\geq 2$, then
		\begin{align*}
			\sum_{(a_1,\ldots,a_k)\in\ordsubs{A}{k}}\prod_{i\in[k]} f(a_i)
			&= \sum_{(a_1,\ldots,a_{k-1})\in\ordsubs{A}{k-1}}\paren[\Big]{\prod_{i\in[k-1]} f(a_i)}\sum_{a_k\in A\setminus\set{a_1,\ldots,a_{k-1}}} f(a_k)\\
			&\geq \paren[\bigg]{1-\frac{k f_{\max}}{f(A)}}f(A)\sum_{(a_1,\ldots,a_{k-1})\in\ordsubs{A}{k-1}}\prod_{i\in[k-1]} f(a_i).
		\end{align*}
		Thus, induction on~$k$ shows that
		\begin{equation*}
			\sum_{(a_1,\ldots,a_k)\in\ordsubs{A}{k}}\prod_{i\in[k]} f(a_i)
			\geq \paren[\bigg]{1-\frac{k f_{\max}}{f(A)}}^{k-1} f(A)^k
			\geq \paren[\bigg]{1-\frac{k^2 f_{\max}}{f(A)}} f(A)^k,
		\end{equation*}
		which completes the proof.
	\end{proof}
	\begin{lemma}\label{lemma: regularization weight}
		Suppose~$k\geq 1$ is an integer and~$V$ is a finite set.
		Let~$d_{\max}> 0$ and suppose~$d\colon V\rightarrow [0,d_{\max}]$ is a function with~$d(V)\geq 2kd_{\max}$.
		For~$e\in\unordsubs{V}{k}$, let
		\begin{equation*}
			w(e):=\frac{(k-1)!\prod_{v\in e}d(v)}{d(V)^{k-1}}.
		\end{equation*}
		Then, for all~$j\in[k]_0$ and~$U\in\unordsubs{V}{j}$,
		\begin{equation*}
			\sum_{e\in\unordsubs{V}{k}\colon U\subseteq e}w(e)
			= \paren[\bigg]{1\pm\frac{4k^2 d_{\max}}{d(V)}}\frac{(k-1)!\prod_{u\in U}d(u)}{(k-j)!d(V)^{j-1}}.
		\end{equation*}
		In particular, for all~$v\in V$,
		\begin{equation*}
			\sum_{e\in\unordsubs{V}{k}\colon v\in e}w(e)
			= \paren[\bigg]{1\pm\frac{4k^2 d_{\max}}{d(V)}}d(v).
		\end{equation*}
	\end{lemma}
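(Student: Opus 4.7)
The plan is to reduce the statement directly to Lemma~\ref{lemma: regularization weight function property} by separating the $U$-contribution from the rest and then passing from unordered to ordered tuples. Fix $j\in[k]_0$ and $U\in\unordsubs{V}{j}$. Every $e\in\unordsubs{V}{k}$ with $U\subseteq e$ decomposes uniquely as $e=U\cup e'$ with $e'\in\unordsubs{V\setminus U}{k-j}$, so $\prod_{v\in e}d(v)=\prod_{u\in U}d(u)\cdot\prod_{v\in e'}d(v)$. Pulling out the constants, this gives
\[
\sum_{e\in\unordsubs{V}{k}\colon U\subseteq e}w(e)=\frac{(k-1)!\prod_{u\in U}d(u)}{d(V)^{k-1}\cdot(k-j)!}\sum_{(a_1,\ldots,a_{k-j})\in\ordsubs{V\setminus U}{k-j}}\prod_{i\in[k-j]}d(a_i),
\]
where I have replaced the unordered sum by an ordered one at the cost of dividing by $(k-j)!$.

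Next I would apply Lemma~\ref{lemma: regularization weight function property} with $A:=V\setminus U$, $f:=d\restr{A}$, $f_{\max}:=d_{\max}$, and $k-j$ playing the role of $k$. The hypothesis $d(V)\geq 2kd_{\max}$ together with $\sum_{u\in U}d(u)\leq jd_{\max}\leq kd_{\max}$ gives $d(A)\geq kd_{\max}>0$, which legitimises the application and also yields $d(A)=d(V)(1\pm jd_{\max}/d(V))$. The lemma produces a factor $\left(1\pm(k-j)^2d_{\max}/d(A)\right)d(A)^{k-j}$, so it remains to compare $d(A)^{k-j}$ with $d(V)^{k-j}$ and to combine the multiplicative errors.

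For that comparison, I would use the elementary bound $(1\pm x)^{k-j}=1\pm 2(k-j)x$ valid whenever $|x|\leq 1/(2k)$, which is ensured by $jd_{\max}/d(V)\leq 1/2$. This converts $d(A)^{k-j}$ into $d(V)^{k-j}(1\pm 2k\cdot jd_{\max}/d(V))$. Substituting everything back and collecting the error terms of the form $c\cdot k^2 d_{\max}/d(V)$ produced by Lemma~\ref{lemma: regularization weight function property} and by the power approximation, a straightforward numerical estimate (using $2k d_{\max}\leq d(V)$ to convert additive errors into multiplicative ones) yields the claimed bound with total relative error at most $4k^2 d_{\max}/d(V)$.

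The only subtlety is the constant $4$ in the error, which requires tracking the bounds carefully but presents no conceptual obstacle; the ``In particular'' clause is simply the case $j=1$, where $(k-1)!/(k-1)!=1$ and $d(V)^{0}=1$ reduce the right-hand side to $d(v)$. Boundary cases $j=k$ (single term $w(U)$, no error in Lemma~\ref{lemma: regularization weight function property} since $k-j=0$) and $j=0$ (empty product equal to $1$) are consistent and need no separate treatment.
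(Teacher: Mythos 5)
Your proposal is correct and follows essentially the same route as the paper: split off the $U$-factor, pass to ordered tuples over $V\setminus U$ at the cost of $(k-j)!$, apply Lemma~\ref{lemma: regularization weight function property} with $A=V\setminus U$, and then compare $d(V\setminus U)^{k-j}$ with $d(V)^{k-j}$ using $d(V\setminus U)\geq d(V)-kd_{\max}\geq d(V)/2$. The only point worth noting is that your symmetric bound $(1\pm x)^{k-j}=1\pm 2(k-j)x$ is slightly lossier than necessary; to land cleanly on the constant $4$ one should use the one-sided fact that $\bigl(d(V\setminus U)/d(V)\bigr)^{k-j}\leq 1$, which makes the cross terms harmless, exactly as in the paper's computation.
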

	\begin{proof}
		Fix~$j\in[k]_0$ and~$U\in\unordsubs{V}{j}$.
		Then, Lemma~\ref{lemma: regularization weight function property} entails
		\begin{align*}
			\sum_{e\in\unordsubs{V}{k}\colon U\subseteq e}w(e)
			&=\frac{(k-1)!\prod_{u\in U}d(u)}{(k-j)!d(V)^{k-1}}\cdot \sum_{(v_{j+1},\ldots,v_k)\in \ordsubs{(V\setminus U)}{k-j}}\prod_{j'\in[k]_{j+1}}d(v_{j'})\\
			&=\paren[\bigg]{1\pm\frac{k^2 d_{\max}}{d(V\setminus U)}}\cdot\frac{d(V\setminus U)^{k-j}}{d(V)^{k-j}}\cdot\frac{(k-1)!\prod_{u\in U}d(u)}{(k-j)!d(V)^{j-1}}.\\
			&=\paren[\bigg]{1\pm\frac{4k^2 d_{\max}}{d(V)}}\frac{(k-1)!\prod_{u\in U}d(u)}{(k-j)!d(V)^{j-1}},
		\end{align*}
		where we used~$d(V\setminus U)\geq d(V)-kd_{\max}$ and~$d(V)\geq 2kd_{\max}$.
	\end{proof}

	\subsection{Proof of Theorem~\ref{theorem: test systems}}\label{subsection: proof test systems}
	To prove Theorem~\ref{theorem: test systems}, we employ Lemma~\ref{lemma: conflict regularization} which shows that instead of directly working with a~$(d,\ell,\Gamma,\eps)$-bounded conflict system, we may transition to a more restrictive conflict system~$\cC$ that satisfies~\ref{item: conflict regularity}--\ref{item: conflict no subset} while essentially retaining the boundedness properties and the conflict-freeness of the tests.
	In the proof of Lemma~\ref{lemma: conflict regularization}, we use Lemma~\ref{lemma: conflict neighborhood} to establish~\ref{item: conflict neighborhood}.
	
	\begin{lemma}\label{lemma: conflict neighborhood}
		For all~$k\geq 2$, there exists~$\eps_0>0$ such that for all~$\eps\in(0,\eps_0)$, there exists~$d_0$ such that the following holds for all~$d\geq d_0$.
		Suppose~$\ell\geq 2$ is an integer and suppose~$\Gamma\geq 1$ and~$\mu\in (0,1/\ell]$ are reals such that~$1/\mu^{\Gamma\ell}\leq d^{\eps^{2}}$.
		Suppose~$\cH$ is a~$k$-graph, suppose~$\cC$ is a~$(d,\ell,\Gamma,\eps)$-bounded conflict system for~$\cH$ and suppose~$\ccZ$ is a set of~$(d,\eps,\cC)$-trackable test systems for~$\cH$ of uniformity at most~$\ell$.
		
		Then, there exists a~$(d,\ell,2\Gamma,\eps/3)$-bounded conflict system~$\cC'$ for~$\cH$ with~$\cC\subseteq\cC'$ such that the following holds.
		\begin{enumerate}[label=\textup{(\roman*)}]
			\item\label{item: new degrees not too large} $\Delta(\cC'^{(2)})\leq \Delta(\cC^{(2)})+d^{1-\eps/3}$ and~$\Delta(\cC'^{(j)})=\Delta(\cC^{(j)})$ for all~$j\in[\ell]_3$;
			\item\label{item: new neighborhood} $\abs{{\cC'}_e^{(j)}\cap{\cC'}_f^{(j)}}\leq d^{j-\eps/3}$ for all disjoint~$e,f\in\cH$ with~$\set{e,f}\notin\cC'^{(2)}$ and all~$j\in[\ell-1]$;
			\item\label{item: new conflict freeness} for all~$\cZ\in\ccZ$, all tests~$Z\in\cZ$ are~$\cC'$-free.
		\end{enumerate}
	\end{lemma}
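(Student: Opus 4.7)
The plan is to obtain $\cC'$ from $\cC$ by adjoining a set $\cB$ of new 2-conflicts, where $\cB$ consists of all unordered pairs $\set{e,f}$ of disjoint edges of $\cH$ with $\set{e,f}\notin\cC^{(2)}$ that satisfy $\abs{\cC_e^{(j)}\cap\cC_f^{(j)}}> d^{j-\eps/3}$ for some $j\in[\ell-1]_2$. I call such pairs \emph{bad}. Adding each bad pair as a 2-conflict ensures that property~\ref{item: new neighborhood} holds for all $j\in[\ell-1]_2$ automatically, since for every disjoint pair with $\set{e,f}\notin\cC'^{(2)}$ the pair is not bad by construction, and $\cC'^{(j)}_e=\cC^{(j)}_e$ when $j\geq 2$. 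The $j=1$ case of \ref{item: new neighborhood} reduces to verifying \ref{item: conflict double j=2} for $\cC'$, which I will check separately.

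The central counting step is to show that the maximum degree of the new 2-graph $\cB$ is small. For fixed $e\in\cH$ and $j\in[\ell-1]_2$, double-counting gives
\begin{equation*}
\sum_{f\in\cH}\abs{\cC_e^{(j)}\cap\cC_f^{(j)}}
=\sum_{C\in\cC_e^{(j)}}\abs{\cset{f\in\cH}{C\cup\set{f}\in\cC^{(j+1)}}}
\leq\abs{\cC_e^{(j)}}\cdot\Delta_j(\cC^{(j+1)})\leq \Gamma d^{j+1-\eps},
\end{equation*}
using \ref{item: conflict degree} and \ref{item: conflict codegrees}. Markov then bounds the number of $f$ violating the threshold $d^{j-\eps/3}$ by at most $\Gamma d^{1-2\eps/3}$ for each $j$; summing over $j\in[\ell-1]_2$ and invoking the growth hypothesis $1/\mu^{\Gamma\ell}\leq d^{\eps^2}$ together with $\mu\leq 1/\ell\leq 1/2$ (which forces $\ell\Gamma\ll d^{\eps/100}$) yields at most $d^{1-\eps/2}\leq d^{1-\eps/3}$ bad $f$'s through~$e$. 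A word-for-word identical computation, further restricting to $f\ni v$ for fixed $v\in V(\cH)$ and using the same extension bound $\Delta_j(\cC^{(j+1)})\leq d^{1-\eps}$, gives the vertex-refined estimate $\abs{\cset{f\in\cH}{\set{e,f}\in\cB,v\in f}}\leq d^{1-\eps/2}$. These two bounds, combined with the hypothesised $d^{1-\eps}$ estimates from \ref{item: conflict j=2} and \ref{item: conflict double j=2} for $\cC$, yield \ref{item: new degrees not too large} and the upgraded versions of \ref{item: conflict j=2} and \ref{item: conflict double j=2} needed for $(d,\ell,2\Gamma,\eps/3)$-boundedness of $\cC'$.

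The remainder of the boundedness verification is routine bookkeeping: \ref{item: conflict size} is immediate because every new conflict has size~$2$, and \ref{item: conflict codegrees} is vacuous for $j=2$ and reduces to the hypothesis for $\cC$ when $j\geq 3$ because $\cC'^{(j)}=\cC^{(j)}$. The sum in \ref{item: conflict degree} picks up only an additional $d^{-\eps/3}$ summand from the new 2-conflicts, so is at most $\Gamma+d^{-\eps/3}\leq 2\Gamma$.

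The one point where I expect genuine care is property~\ref{item: new conflict freeness}, that every test remains $\cC'$-free. This is precisely where the trackability hypothesis earns its keep via \ref{item: trackable neighborhood}: whenever $e,f$ both lie in some test $Z\in\cZ\in\ccZ$ we have $d_{\cZ}(ef)\geq 1$ and hence $\abs{\cC_e^{(j')}\cap\cC_f^{(j')}}\leq d^{j'-\eps}<d^{j'-\eps/3}$ for every $j'\in[\ell-1]$, so $\set{e,f}$ cannot be bad and is therefore never added. Consequently no new conflict is contained in any test, and the tests remain $\cC'$-free. The mismatch between the $d^{j'-\eps}$ in \ref{item: trackable neighborhood} and the threshold $d^{j'-\eps/3}$ defining badness is exactly the slack that makes this step work.
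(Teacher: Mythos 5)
Your proposal is correct and follows essentially the same route as the paper: the paper likewise adjoins as new $2$-conflicts exactly the ``bad'' disjoint pairs $\set{e,f}\notin\cC^{(2)}$ with $\abs{\cC_e^{(j)}\cap\cC_f^{(j)}}$ above a threshold (it uses $d^{j-\eps/2}$ where you use $d^{j-\eps/3}$, which makes no difference), bounds the number of bad $f$ through a fixed $e$ by the same double count $\Delta(\cC^{(j+1)})\cdot\Delta_j(\cC^{(j+1)})/d^{j-\eps/2}$ summed over $j$, and derives test-freeness from~\ref{item: trackable neighborhood} exactly as you do. The remaining boundedness checks, including the $j=1$ case of~\ref{item: new neighborhood} via the upgraded~\ref{item: conflict double j=2}, are handled the same way.
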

	\begin{proof}
		For~$j\in[\ell-1]_2$, we say that a pair~$(e,f)$ of disjoint edges~$e,f\in\cH$ with~$\set{e,f}\notin\cC^{(2)}$ is~\defn{$j$-bad} if~$\abs{{\cC}_e^{(j)}\cap{\cC}_f^{(j)}}\geq d^{j-\eps/2}$, we say that it is \defn{bad} if it is~$j$-bad for some~$j\in[\ell-1]_2$ and we consider the conflict system
		\begin{equation*}
			\cC':=\cC\cup\cset{\set{e,f}}{\text{$(e,f)$ is bad}}.
		\end{equation*}
		Due to~\ref{item: trackable neighborhood}, this construction preserves the conflict-freeness of all tests of test systems~$\cZ\in\ccZ$ in the sense that~\ref{item: new conflict freeness} holds.
		Property~\ref{item: new neighborhood} follows by construction if~$j\geq 2$ and if~$j=1$, then the bound follows if~$\cC'$ is~$(d,\ell,2\Gamma,\eps/3)$-bounded, so
		it remains to show that the conflict system~$\cC'$ satisfies~\ref{item: new degrees not too large} and that it is~$(d,\ell,2\Gamma,\eps/3)$-bounded.
		
		To this end, we first bound the maximum degree of~$\cC'^{(2)}$.
		For all edges~$e\in\cH$ and all~$j\in[\ell-1]_2$, there are at most~$\Delta(\cC^{(j+1)})\cdot \Delta_{j}(\cC^{(j+1)})$ pairs~$(C_e,C_f)$ of conflicts~$C_e,C_f\in\cC^{(j+1)}$ with~$e\in C_e$ and
		\begin{equation}\label{equation: f at the other end}
			C_e\setminus\set{e}= C_f\setminus\set{f}
		\end{equation} for some edge~$f\in C_f$ and for all such pairs, the edge~$f\in C_f$ with~\eqref{equation: f at the other end} is unique.
		Furthermore, for all edges~$f\in\cH$ such that~$(e,f)$ is~$j$-bad, there are at least~$d^{j-\eps/2}$ pairs~$(C_e,C_f)$ of conflicts~$C_e,C_f\in\cC^{(j+1)}$ with~$e\in C_e$ and~\eqref{equation: f at the other end}.
		Using that for all~$j\in[\ell]_2$, we have~$\Delta(\cC^{(j)})\leq \Gamma d^{j-1}$ as a consequence of~\ref{item: conflict degree} and additionally employing~\ref{item: conflict codegrees}, we conclude that for all edges~$e\in\cH$, the number of edges~$f\in\cH$ such that~$(e,f)$ is bad is at most
		\begin{equation*}
			\sum_{j=2}^{\ell-1}\frac{\Delta(\cC^{(j+1)})\cdot \Delta_{j}(\cC^{(j+1)})}{d^{j-\eps/2}}
			\leq \ell\frac{\Gamma d^{j+1-\eps}}{d^{j-\eps/2}}
			\leq d^{1-2\eps/5}.
		\end{equation*}
		Hence, for all~$e\in\cH$, there are at most~$d^{1-2\eps/5}$ conflicts~$C\in\cC'^{(2)}$ containing~$e$ that are not conflicts in~$\cC$.
		Since we only added conflicts of size~$2$ during the construction of~$\cC'$, this shows that~\ref{item: new degrees not too large} holds and that the~$(d,\ell,2\Gamma,\eps/3)$-boundedness of~$\cC'$ follows from the~$(d,\ell,\Gamma,\eps)$-boundedness of~$\cC$.
	\end{proof}
	
	\begin{lemma}\label{lemma: conflict regularization}
		For all~$k\geq 2$, there exists~$\eps_0>0$ such that for all~$\eps\in(0,\eps_0)$, there exists~$d_0$ such that the following holds for all~$d\geq d_0$.
		Suppose~$\ell\geq 2$ is an integer and suppose~$\Gamma\geq 1$ and~$\mu\in (0,1/\ell]$ are reals such that~$1/\mu^{\Gamma\ell}\leq d^{\eps^{2}}$.
		Suppose~$\cH$ is a~$k$-graph on~$n\leq \exp(d^{\eps})$ vertices with~$(1-d^{-\eps})d\leq\delta(\cH)\leq\Delta(\cH)\leq d$, suppose~$\cC$ is a~$(d,\ell,\Gamma,\eps)$-bounded conflict system for~$\cH$ and suppose~$\ccZ$ is a set of~$(d,\eps,\cC)$-trackable test systems for~$\cH$ of uniformity at most~$\ell$ with~$\abs\ccZ\leq\exp(d^{\eps/(400\ell)})$
		
		Then, there exists a~$(d,\ell,3\Gamma,\eps/4)$-bounded conflict system~$\cC'$ for~$\cH$ with the following properties.
		\begin{enumerate}[label=\textup{(\roman*)}]
			\item\label{item: regular conflicts degrees} $d_{\cC'^{(j)}}(e)=(1\pm d^{-\eps/4})(1+d^{-\eps/\ell})\max\paren{ d^{j-1-\eps/600},\Delta(\cC^{(j)}) }$ for all~$j\in[\ell]_2$ with~$\cC'^{(j)}\neq\emptyset$ and all~$e\in\cH$;
			\item\label{item: regular conflicts neighborhood} $\abs{\cC'^{(j)}_e\cap \cC'^{(j)}_f}\leq d^{j-\eps/4}$ for all disjoint~$e,f\in\cH$ with~$\set{e,f}\notin\cC'^{(2)}$ and all~$j\in[\ell-1]$;
			\item\label{item: regular conflicts matchings} $C$ is a matching for all~$C\in\cC'$;
			\item\label{item: regular conflicts no inclusion} $C\not\subseteq C'$ for all distinct~$C,C'\in\cC'$;
			\item\label{item: regular conflicts stronger} every~$\cC'$-free subset of~$\cH$ is~$\cC$-free;
			\item\label{item: regular conflicts test systems} $\abs{\cset{ Z\in\cZ }{ \text{$Z$ is not~$\cC'$-free} }}\leq \abs\cZ/d^{\eps}$ for all~$\cZ\in\ccZ$.
		\end{enumerate}
	\end{lemma}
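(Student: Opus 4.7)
My plan is to build $\cC'$ in three stages. First, I invoke Lemma~\ref{lemma: conflict neighborhood} on $\cC$ to obtain a $(d,\ell,2\Gamma,\eps/3)$-bounded system $\cC^{\ast}\supseteq\cC$ for which the pairwise conflict-neighbourhoods satisfy $\abs{\cC^{\ast(j)}_e\cap\cC^{\ast(j)}_f}\le d^{j-\eps/3}$ whenever $e,f$ are disjoint with $\set{e,f}\notin\cC^{\ast(2)}$, and for which every test $Z\in\cZ$ (for any $\cZ\in\ccZ$) is still $\cC^{\ast}$-free. Next, I clean up: discard every conflict of $\cC^{\ast}$ that is not a matching and every conflict that strictly contains a smaller one, producing $\cC^{\dagger}$. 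This only removes edges, so boundedness and the neighbourhood bound survive; it establishes~\ref{item: regular conflicts matchings} and~\ref{item: regular conflicts no inclusion}; and it preserves~\ref{item: regular conflicts stronger} in the sense relevant for matchings (a matching containing a discarded superset still contains the retained sub-conflict, and a matching cannot contain a non-matching at all).

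The heart of the proof is the third stage, where I regularise the degrees via random addition. For each $j\in[\ell]_2$ with $\cC^{(j)}\neq\emptyset$, set
\[
D_j \;:=\; (1+d^{-\eps/\ell})\max\!\bigl(d^{\,j-1-\eps/600},\,\Delta(\cC^{(j)})\bigr),
\qquad
\rmdef_j(e) \;:=\; D_j - d_{\cC^{\dagger(j)}}(e)\;\ge\;0.
\]
Independently for every $\cC^{\dagger}$-free $j$-matching $M=\set{e_1,\dots,e_j}\subseteq\cH$, I include $M$ as a new $j$-conflict with probability
\[
p_M \;:=\; \min\!\Bigl\{1,\;\tfrac{(j-1)!\,\prod_{i=1}^{j}\rmdef_j(e_i)}{\rmdef_j(\cH)^{j-1}}\Bigr\}.
\]
The product structure mirrors Lemma~\ref{lemma: regularization weight}: for each fixed $e\in\cH$ the expected number of newly added $j$-conflicts through $e$ is $(1\pm O(d^{-\eps/\ell}))\rmdef_j(e)$, so the expected final degree tracks $D_j$ as demanded by~\ref{item: regular conflicts degrees}. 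Defining $\cC'$ as $\cC^{\dagger}$ together with all selected sets (over all $j$) keeps~\ref{item: regular conflicts matchings}--\ref{item: regular conflicts stronger} intact.

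To finish, I need concentration. The degree $d_{\cC'^{(j)}}(e)$ is a sum of independent $0/1$ variables with mean $\approx D_j\ge d^{j-1-\eps/600}$, so Chernoff's inequality (Lemma~\ref{lemma: chernoff}) delivers $d_{\cC'^{(j)}}(e)=(1\pm d^{-\eps/4})D_j$ with failure probability $\exp(-d^{\Omega(\eps)})$; a union bound over all $e,j$ yields~\ref{item: regular conflicts degrees}. For the codegree and pairwise-neighbourhood conditions, the expected number of added $j$-conflicts containing a prescribed $j'$-set (or contributing to $\cC'^{(j)}_e\cap\cC'^{(j)}_f$) factors through $\prod\rmdef_j(e_i)$, yielding mean $O(D_j^{j-j'-\eps/2})$ respectively $O(D_j^{j-1-\eps/2})$, comfortably below the required thresholds; another Chernoff plus union bound establishes~\ref{item: regular conflicts neighborhood} and the codegree part of $(d,\ell,3\Gamma,\eps/4)$-boundedness. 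For~\ref{item: regular conflicts test systems}, for each $\cZ\in\ccZ$ of uniformity $j_{\cZ}$ I bound the expected number of tests $Z\in\cZ$ that contain a freshly added conflict: each such event requires selecting a $\cC^{\dagger}$-free $j$-matching whose incidence with $Z$ is governed by the trackability conditions~\ref{item: trackable degrees}--\ref{item: trackable neighborhood}, so a first-moment calculation together with McDiarmid's inequality (Lemma~\ref{lemma: mcdiarmid}) applied coordinate-wise gives concentration, and a union bound over $\abs\ccZ\le\exp(d^{\eps/(400\ell)})$ closes the argument.

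\smallskip

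\textbf{Main obstacle.} The delicate part is verifying~\ref{item: regular conflicts test systems} simultaneously with the codegree and neighbourhood conditions: trackability yields only spreadness (not absolute bounds), so showing that the random addition loses at most a $d^{-\eps}$-fraction of tests per system—uniformly across all $j$, all test systems in $\ccZ$, and all freshly regularised uniformities—requires careful moment bookkeeping analogous to that of Section~\ref{section: interactions}, and is where the exponent budget is tightened from $\eps$ to $\eps/4$.
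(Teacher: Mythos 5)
Your skeleton — regularise neighbourhoods via Lemma~\ref{lemma: conflict neighborhood}, clean up, then add random $j$-matchings with product-of-deficit weights and conclude by Chernoff/McDiarmid — is the paper's. The genuine gap is your claim that taking $\cC'$ to be $\cC^{\dagger}$ together with all selected sets ``keeps \ref{item: regular conflicts matchings}--\ref{item: regular conflicts stronger} intact'': property~\ref{item: regular conflicts no inclusion} fails for this construction. Requiring each added $j$-matching $M$ to be $\cC^{\dagger}$-free only excludes containments in one direction (no old conflict inside $M$); nothing prevents $M$ from being a proper subset of a retained larger conflict of $\cC^{\dagger}$, nor a freshly added $j$-matching from sitting inside a freshly added $j'$-matching with $j'>j$. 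Whenever $\cC$ has conflicts of size at least $3$, the expected number of such containments is of order $\abs{\cC^{(3)}}\cdot d^{3\eps}/n\approx d^{3+3\eps}$, so with high probability there are many violations — and \ref{item: conflict no subset} is genuinely needed downstream, being invoked repeatedly in Sections~\ref{section: interactions} and~\ref{section: tracking}. The paper avoids this by regularising the uniformities \emph{inductively} in increasing order: at stage $j$ the candidate $j$-matchings must not contain any conflict of the current system $\cC_{j-1}$ (which already includes everything added at earlier stages), and after the random addition every \emph{larger} conflict containing a newly added $j$-conflict is deleted. That deletion pass is what preserves the no-inclusion invariant while keeping \ref{item: regular conflicts stronger} (the deleted conflict leaves behind the smaller conflict that caused its deletion) and perturbing higher-uniformity degrees only negligibly; it is also why the deficits for uniformity $j$ are computed only after the earlier rounds are finished. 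Your single-shot scheme could be repaired by appending such a deletion pass, but you would then need to re-verify the degree, codegree and test-system bounds for the post-deletion system — exactly the bookkeeping the induction organises.

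Two smaller points. First, your target $D_j$ must be measured against the system produced by Lemma~\ref{lemma: conflict neighborhood} rather than the original $\cC$: for $j=2$ the added ``bad pair'' conflicts can push $d_{\cC^{\dagger(2)}}(e)$ above $(1+d^{-\eps/\ell})\Delta(\cC^{(2)})$ when $\ell\in\set{2,3}$, making your deficit negative; the discrepancy is then absorbed into the $(1\pm d^{-\eps/4})$ error on translating back, which is what the paper's intermediate bound \eqref{equation: stricter regular conflict degrees} is for. Second, your ``main obstacle'' misidentifies the delicate point: \ref{item: regular conflicts test systems} follows from a short first-moment bound using the weight estimate \eqref{equation: conflict weight bound} together with \ref{item: trackable degrees} and McDiarmid; the real structural difficulty is the inclusion-freeness discussed above.
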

	\begin{proof}
		Employing Lemma~\ref{lemma: conflict neighborhood}, we may assume that
		\begin{equation*}\label{equation: given conflict neighborhood}
			\abs{{\cC}_e^{(j)}\cap{\cC}_f^{(j)}}\leq d^{j-\eps/3}
		\end{equation*}
		holds for all disjoint~$e,f\in\cH$ with~$\set{e,f}\notin\cC^{(2)}$ and all~$j\in[\ell-1]$ at the cost of~$\cC$ being not necessarily~$(d,\ell,\Gamma,\eps)$-bounded, but still~$(d,\ell,2\Gamma,\eps/3)$-bounded, and then it suffices to show that there exists a~$(d,\ell,3\Gamma,\eps/4)$-bounded conflict system~$\cC'$ for~$\cH$ that satisfies
		\begin{equation}\label{equation: stricter regular conflict degrees}
			d_{\cC'^{(j)}}(e)=(1\pm d^{-\eps/3})(1+d^{-\eps/\ell})\max\paren{ d^{j-1-\eps/600},\Delta(\cC^{(j)}) }
		\end{equation}
		for all~$j\in[\ell]_2$ with~$\cC'^{(j)}\neq\emptyset$ and all~$e\in\cH$ as well as~\ref{item: regular conflicts neighborhood}--\ref{item: regular conflicts test systems}.
		To this end, we inductively show the existence of conflict systems~$\cC_1,\cC_2,\ldots,\cC_\ell$ for~$\cH$ such that ~$\cC_j$ with~$j\in[\ell]$ is in a certain sense as desired up to and including uniformity~$j$, which makes it an admissible step in our construction and in which case we call it~$j$-admissible.
		In particular,~$\cC_\ell$ is then a conflict system with the desired properties.

		Let us turn to the details.
		Formally, we define~$j$-admissibility as follows.
		For~$j\in[\ell]$, we say that a conflict system~$\cC_j$ for~$\cH$ is~\defn{$j$-admissible} if the following holds.
		\begin{enumerate}[label=\textup{(A\arabic*)}]
			\item\label{item: admissible boundedness} $\bigcup_{j'\in[j]_2}\cC_j^{(j')}$ is~$(d,\ell,3\Gamma,\eps/4)$-bounded;
			\item\label{item: admissible empty stays empty} $\cC_j^{(j')}=\emptyset$ for all~$j'\in[j]$ with~$\cC^{(j')}=\emptyset$;
			\item\label{item: admissible subsystem} $\cC_j^{(j')}\subseteq\cC^{(j')}$ for all~$j'\geq j+1$;
			\item\label{item: admissible degrees} $d_{\cC_j^{(j')}}(e)=(1\pm d^{-\eps/3})(1+d^{-\eps/\ell})\max\paren{ d^{j-1-\eps/600},\Delta(\cC^{(j')}) }$ for all~$j'\in[j]_2$ with~$\cC^{(j')}\neq\emptyset$ and all~$e\in\cH$;
			\item\label{item: admissible neighborhood} $\abs{(\cC_j^{(j')})_e\cap (\cC_j^{(j')})_f}\leq d^{j'-\eps/4}$ for all disjoint~$e,f\in\cH$ with~$\set{e,f}\notin\cC_j^{(2)}$ and all~$j'\in[j-1]$;
			\item\label{item: admissible matchings} $C$ is a matching for all~$C\in\cC_j$;
			\item\label{item: admissible no inclusion} $C\not\subseteq C'$ for all distinct~$C,C'\in\cC_j$;
			\item\label{item: admissible original conflicts contained} for all conflicts~$C\in\cC$, there is a subset~$C'\subseteq C$ with~$C'\in\cC_j$;
			\item\label{item: admissible test systems} $\abs{\cset{ Z\in\cZ }{ \text{$Z$ is not~$\cC_j$-free} }}\leq 4^j\abs\cZ/d^{2\eps}$ for all~$\cZ\in\ccZ$.
		\end{enumerate}
		For~$j\in[\ell-1]$, we show that if there exists a~$j$-admissible conflict system~$\cC_j$, then there also exists a~$(j+1)$-admissible conflict system.
		Since
		\begin{equation*}
			\cC_1:=\cset{ C\in\cC }{ \text{$C$ is a matching with~$C'\notin\cC$ for all~$C'\subsetneq C$}}
		\end{equation*}
		is~$1$-admissible, this inductively proves that there is an~$\ell$-admissible conflict system~$\cC_\ell$.
		As every~$\ell$-admissible conflict system~$\cC_\ell$ is~$(d,\ell,3\Gamma,\eps/4)$-bounded as a consequence of~\ref{item: admissible boundedness} together with~\ref{item: admissible subsystem}, and additionally satisfies~\eqref{equation: stricter regular conflict degrees} and~\ref{item: regular conflicts neighborhood}--\ref{item: regular conflicts test systems} due to~\ref{item: admissible degrees}--\ref{item: admissible test systems}, this then finishes the proof.

		We proceed with the inductive proof for the existence of an~$\ell$-admissible conflict system.
		As in Sections~\ref{section: construction}--\ref{section: tracking} we frequently use the inequality~$1/\mu^{\Gamma\ell}\leq d^{\eps^2}$ to bound terms that depend on~$\ell$,~$\Gamma$ or~$\mu$ from above using powers of~$d$ with a suitably small fraction of~$\eps$ as their exponent.
		We also use that as an immediate consequence of the~$(d,\ell,2\Gamma,\eps/3)$-boundedness of~$\cC$, we have~$\Delta(\cC^{(j)})\leq 2\Gamma d^{j-1}$ for all~$j\in[\ell]_2$.
		Additionally, to relate~$d$ and~$n$, we often use that~$d\leq n^k$.
		
		Fix~$j\in[\ell]_2$ and suppose~$\cC_{j-1}$ is a~$(j-1)$-admissible conflict system.
		We show that a~$j$-admissible conflict system~$\cC_{j}$ can be obtained by randomly adding conflicts of size~$j$ to~$\cC_{j-1}$ followed by deleting those conflicts of size at least~$j+1$ that contain one of the added conflicts as a subset.
		
		If~$\cC_{j-1}^{(j)}=\emptyset$, then~$\cC_{j-1}$ is also~$j$-admissible and we do not add any conflicts.
		Now, assume~$\cC_{j-1}^{(j)}\neq\emptyset$.
		Define the target degree
		\begin{equation*}
			d_\rmtar:=(1+d^{-\eps/\ell})\max\paren{d^{j-1-\eps/600},\Delta(\cC^{(j)})}\leq 4\Gamma d^{j-1}.
		\end{equation*}
		For all~$e\in\cH$, the target degree~$d_\rmtar$ serves as a target value for~$d_{\cC_j^{(j)}}(e)$ that we aim for (but possibly only meet approximately).
		To this end, consider the function~$d_\rmdef\colon \cH\rightarrow \bR$, that for all~$e\in \cH$, maps~$e$ to the degree deficit~$d_\rmdef(e):=d_{\rmtar}-d_{\cC_{j-1}^{(j)}}(e)$ and note that
		\begin{equation}\label{equation: def tar bounds}
			\frac{d_\rmtar}{d^{2\eps/\ell}}
			=d_\rmdef(e)+d_{\cC_{j-1}^{(j)}}(e)-(1-d^{-2\eps/\ell})d_\rmtar
			\leq d_\rmdef(e)+ \Delta(\cC_{j-1}^{(j)})-\Delta(\cC^{(j)})
			\leq d_\rmdef(e)
			\leq d_\rmtar,
		\end{equation}
		so in particular
		\begin{equation}\label{equation: def bounds}
			d^{j-1-2\eps}\leq d_\rmdef(e)\leq 4\Gamma d^{j-1}.
		\end{equation}
		For~$C\in\unordsubs{\cH}{j}$, motivated by Lemma~\ref{lemma: regularization weight}, consider the weight
		\begin{equation}\label{equation: conflict weight bound}
			w_j(C)
			:=\frac{(j-1)!\prod_{e\in C} d_\rmdef(e)}{d_\rmdef(\cH)^{j-1}}
			\leq \frac{(j-1)! d_\rmtar^j}{\paren[\big]{\frac{d_\rmtar}{d^{2\eps/\ell}}\cdot\abs\cH}^{j-1}}
			\leq \frac{2^{\ell} k^\ell \ell^\ell d^{2\eps} d_\rmtar}{d^{j-1}n^{j-1}}
			\leq \frac{d^{3\eps}}{n^{j-1}}.
		\end{equation}
		In particular, note that~$0\leq w_j(C)\leq 1$.
		
		Let~$\cX_j$ denote the (random) hypergraph with vertex set~$\cH$ whose edge set is obtained by including every matching~$C\in\unordsubs{\cH}{j}$ satisfying~$C'\not\subseteq C$ for all~$C'\in\cC_{j-1}$ independently at random with probability~$w_j(C)$.
		Let~$\cC_j$ denote the (random) hypergraph obtained from~$\cC_{j-1}\cup\cX_j$ by removing every edge~$C'\in\cC_{j-1}$ satisfying~$C\subseteq C'$ for some~$C\in\cX_j$.
		Note that indeed, as claimed above, during this construction only edges of size~$j$ are added and only edges of size at least~$j+1$ are removed.
		
		The properties~\ref{item: admissible empty stays empty},~\ref{item: admissible subsystem} and~\ref{item: admissible matchings}--\ref{item: admissible original conflicts contained} hold by construction, so it remains to consider the properties~\ref{item: admissible boundedness},~\ref{item: admissible degrees},~\ref{item: admissible neighborhood} and~\ref{item: admissible test systems}.
		To this end, let us argue why it suffices to show that the following holds with positive probability.
		\begin{enumerate}[label=\textup{(\Roman*)}]
			\item\label{item: next step almost regular} $d_{\cC_j^{(j)}}(e)=(1\pm d^{-\eps})d_\rmtar$ for all~$e\in\cH$;
			\item\label{item: next step codegree} $\Delta_{j'}(\cC_j^{(j)})\leq d^{j-j'-\eps/4}$ for all~$j'\in[j-1]_2$;
			\item\label{item: next step vertex} if~$j=2$, then~$\abs{\cset{ f\in N_{\cC_j}^{(2)}(e) }{ v\in f }}\leq d^{1-\eps/4}$ for all~$v\in V(\cH)$ and~$e\in\cH$;
			\item\label{item: next step double j=2} if~$j=2$, then~$\abs{ N_{\cC_j}^{(2)}(e)\cap N_{\cC_j}^{(2)}(f) }\leq d^{1-\eps/4}$ for all disjoint~$e,f\in\cH$.
			\item\label{item: next step double} $\abs{(\cC_j)_e^{(j-1)}\cap(\cC_j)_f^{(j-1)}  }\leq d^{j-1-\eps/4}$ for all disjoint~$e,f\in\cH$ with~$\set{e,f}\notin\cC_j^{(2)}$;
			\item\label{item: next step test systems} $\abs{\cset{ \cZ\in\ccZ }{ \text{$\cZ$ is not~$\cC_j$-free} }}\leq 4^j\abs\cZ/d^{2\eps}$ for all~$\cZ\in\ccZ$.
		\end{enumerate}
		Since we only added conflicts of size~$j$ and only removed conflicts of size at least~$j+1$ during the construction of~$\cC_j$, Properties~\ref{item: admissible degrees},~\ref{item: admissible neighborhood} and~\ref{item: admissible test systems} follow from the~$(j-1)$-admissibility of~$\cC_{j-1}$ and~\ref{item: next step almost regular},~\ref{item: next step double} and~\ref{item: next step test systems}, so let us turn to the~$(d,\ell,3\Gamma,\eps/4)$-boundedness of~$\bigcup_{j'\in[j]_2}\cC_j^{(j')}$.
		For~\ref{item: conflict size}, note that~$2\leq \abs C\leq \ell$ holds for all~$C\in \cC_j$ by construction.
		For~\ref{item: conflict degree} observe that~\ref{item: admissible empty stays empty} yields~$\cset{ j'\in[\ell]_2 }{ \cC_j^{(j')}\neq\emptyset }\subseteq \cset{ j'\in[\ell]_2 }{ \cC^{(j')}\neq\emptyset }$ and that with additionally~\ref{item: admissible degrees} and the~$(d,\ell,2\Gamma,\eps/3)$-boundedness of~$\cC$, we obtain
		\begin{equation*}
			\sum_{j'\in[j]_2}\frac{\Delta(\cC_j^{(j')})}{d^{j'-1}}
			\leq \frac{5}{4}\sum_{j'\in[j]_2\colon \cC_j^{(j')}\neq\emptyset}\frac{d^{j'-1-\eps/600}+\Delta(\cC^{(j')})}{d^{j'-1}}
			\leq \frac{5}{4}\sum_{j'\in[j]_2\colon \cC^{(j')}\neq\emptyset}d^{-\eps/600}+\frac{\Delta(\cC^{(j')})}{d^{j'-1}}
			\leq 3\Gamma.
		\end{equation*}
		For~\ref{item: conflict codegrees}--\ref{item: conflict double j=2} note that since we only add conflicts of size~$j$ during the construction of~$\cC_j$, the desired bounds follow from the~$(j-1)$-admissibility of~$\cC_{j-1}$, in particular the~$(d,\ell,3\Gamma,\eps/4)$-boundedness of~$\bigcup_{j'\in[j-1]_2} \cC_{j-1}^{(j')}$, and~\ref{item: next step codegree}--\ref{item: next step double j=2}.

		\medskip
		
		We finish the proof by showing that~\ref{item: next step almost regular}--\ref{item: next step test systems} hold with positive probability as a consequence of Chernoff's and McDiarmid's inequality (Lemmas~\ref{lemma: chernoff} and~\ref{lemma: mcdiarmid}).
		As every relevant random variable in~\ref{item: next step almost regular}--\ref{item: next step test systems} is a sum or suitable function of independent Bernoulli random variables, it suffices to show that their expected values satisfy the desired bounds with some room for small relative errors.
		
		For the degrees~$d_{\cC_j^{(j)}}(e)$ of the edges~$e\in\cH$ that are relevant for~\ref{item: next step almost regular}, this essentially follows from Lemma~\ref{lemma: regularization weight} which we apply with~$j$,~$\cH$,~$d_\rmdef$ playing the roles of~$k$,~$V$,~$d$.
		For the random variables in~\ref{item: next step codegree}--\ref{item: next step test systems}, the desired bounds simply follow from the upper bound on the weights~$w_j(C)$ that we deduced in~\eqref{equation: conflict weight bound}.
		
		First, for~\ref{item: next step almost regular} and~\ref{item: next step codegree}, we consider the degrees.
		For all~$e\in\cH$, let~$\cY_e$ denote the set of those~$j$-sets of edges of~$\cH$ that we do not allow as candidates for randomly added sets containing~$e$ during the construction of~$\cC_j$, that is let~$\cY_e$ denote the set of those sets~$C\in\unordsubs{\cH}{j}$ containing~$e$ that are not matchings or that contain a conflict~$C'\in\cC_{j-1}$ as a proper subset.
		
		We employ~\eqref{equation: conflict weight bound} to obtain
		\begin{equation*}
			\ex{d_{\cC_j^{(j)}}(e) }
			=d_{\cC_{j-1}^{(j)}}(e) +\paren[\Big]{\sum_{C\in\unordsubs{\cH}{j}\colon e\in C}w_j(C)} \pm \abs{\cY_e}\frac{d^{3\eps}}{n^{j-1}}.
		\end{equation*}
		Observe that~\eqref{equation: def bounds} implies
		\begin{equation}\label{equation: regularization weight error bound}
			\frac{4 j^2 \max_{e\in\cH} d_\rmdef(e) }{d_\rmdef(\cH)}
			\leq \frac{4 \ell^2 \cdot 4\Gamma d^{j-1}}{\abs\cH\cdot d^{j-1-2\eps}}
			\leq \frac{32k\ell^2 \Gamma d^{2\eps}}{dn}
			\leq \frac{1}{d}.
		\end{equation}
		Hence, Lemma~\ref{lemma: regularization weight} with~$j$,~$\cH$,~$d_\rmdef$,~$\set{e}$ playing the roles of~$k$,~$V$,~$d$,~$U$ yields
		\begin{equation*}
			\ex{d_{\cC_j^{(j)}}(e) }
			= d_{\cC_{j-1}^{(j)}}(e)+(1\pm d^{-1})d_\rmdef(e)\pm \abs{\cY_e}\frac{d^{3\eps}}{n^{j-1}}
			= (1\pm d^{-1})d_\rmtar\pm \abs{\cY_e}\frac{d^{3\eps}}{n^{j-1}}.
		\end{equation*}
		We may bound~$\abs{\cY_e}$ as follows.
		There are at most
		\begin{equation}\label{equation: non matching bound}
			\abs{\cH}^{j-2}\cdot (j-1)k\Delta(\cH)\leq d^{j-1+\eps}n^{j-2}
		\end{equation}
		sets~$C\in\unordsubs{\cH}{j}$ containing~$e$ that are not matchings.
		Furthermore, for all~$j'\in[j]_2$, as a consequence of the~$(j-1)$-admissibility of~$\cC_{j-1}$, more specifically~\ref{item: admissible degrees} and~\ref{item: admissible subsystem} with~$\cC_{j-1}$ playing the role of~$\cC_j$, we have
		\begin{equation*}
			\Delta(\cC_{j-1}^{(j')})
			\leq 2\max\paren{d^{j'-1},\Delta(\cC^{(j')})}
			\leq 4\Gamma d^{j'-1}
		\end{equation*}
		and
		\begin{equation*}
			\abs{\cC_{j-1}^{(j')}}
			\leq\abs\cH\cdot \Delta(\cC_{j-1}^{(j')})
			\leq \frac{dn}{k}\cdot 4\Gamma d^{j'-1}
			\leq 4\Gamma d^{j'}n.
		\end{equation*}
		Thus, for all~$e\in\cH$, the number of sets~$C\in\unordsubs{\cH}{j}$ with~$e\in C$ and a subset~$C'\subseteq C$ with~$e\in C'$ and~$C'\in\cC_{j-1}$ is at most
		\begin{equation*}
			\sum_{j'\in[j]_2} \Delta(\cC_{j-1}^{(j')})\cdot\abs\cH^{j-j'}
			\leq \sum_{j'\in[j]_2} 4\Gamma d^{j'-1} \paren[\bigg]{\frac{dn}{k}}^{j-j'}
			\leq 4\Gamma\ell d^{j-1} n^{j-2}
			\leq d^{j-1+\eps}n^{j-2}
		\end{equation*}
		and the number of sets~$C\in\unordsubs{\cH}{j}$ with~$e\in C$ and a subset~$C'\subseteq C$ with~$e\notin C'$ and~$C'\in\cC_{j-1}$ is at most
		\begin{equation*}
			\sum_{j'\in[j-1]_2} \abs{\cC_{j-1}^{(j')}}\cdot\abs\cH^{j-j'-1}
			\leq \sum_{j'\in[j-1]_2}4\Gamma d^{j'}n \paren[\bigg]{\frac{dn}{k}}^{j-j'-1}
			\leq 4\Gamma\ell d^{j-1}n^{j-2}
			\leq d^{j-1+\eps}n^{j-2}.
		\end{equation*}
		With~\eqref{equation: non matching bound}, this yields~$\abs{\cY_e}\leq 3d^{j-1+\eps}n^{j-2}$ and hence
		\begin{equation*}
			\ex{d_{\cC_j^{(j)}}(e) }=(1\pm d^{-2\eps})d_\rmtar.
		\end{equation*}
		
		For all edge sets~$E\subseteq \cH$ of size~$j'\in[j-1]_2$, using~\eqref{equation: conflict weight bound}, we obtain
		\begin{equation*}
			\ex{d_{\cC_j^{(j)}}(E)}
			\leq d_{\cC^{(j)}}(E)+ \paren[\bigg]{\frac{dn}{k}}^{j-j'}\cdot \frac{d^{3\eps}}{n^{j-1}}
			\leq d^{j-j'-\eps/3}+\frac{d^{j-j'+3\eps}}{n}
			\leq \frac{d^{j-j'-\eps/4}}{2}.
		\end{equation*}
		
		It remains to consider~\ref{item: next step vertex}--\ref{item: next step test systems}.
		Before dealing with the special case~$j=2$ where~\ref{item: next step vertex} and~\ref{item: next step double j=2} are relevant, we turn to~\ref{item: next step double} and~\ref{item: next step test systems}.
		Note that all~$e,f\in\cH$ with~$\set{e,f}\notin\cC_j^{(2)}$ satisfy~$\set{e,f}\notin\cC_{j-1}^{(2)}$ because we did not remove any conflicts of size~$2$ during the construction of~$\cC_j$.
		For all disjoint~$e,f\in\cH$ with~$\set{e,f}\notin\cC_j^{(2)}$ and all~$j\in[\ell]_2$, we use~\eqref{equation: conflict weight bound} to obtain
		\begin{align*}
			\ex{\abs{ (\cC_j)_e^{(j-1)}\cap (\cC_j)_f^{(j-1)} }}
			&\leq \abs{ (\cC_{j-1})_e^{(j-1)}\cap (\cC_{j-1})_f^{(j-1)} }+\ex{\abs{ (\cX_j)_e^{(j-1)}\cap (\cC_{j-1})_f^{(j-1)} }}\\
			&\hphantom{=}\mathrel{}\quad+\ex{\abs{ (\cC_{j-1})_e^{(j-1)}\cap (\cX_j)_f^{(j-1)} }}+\ex{\abs{ (\cX_j)_e^{(j-1)}\cap (\cX_j)_f^{(j-1)} }}\\
			&\leq d^{j-1-\eps/3}+4\max(d^{j-1},\Delta(\cC^{(j)}))\cdot \frac{d^{3\eps}}{n^{j-1}} + \paren[\bigg]{\frac{dn}{k}}^{j-1}\cdot \paren[\bigg]{\frac{d^{3\eps}}{n^{j-1}}}^{2}\\
			&\leq d^{j-1-\eps/3}+\frac{8\Gamma d^{j-1+3\eps}}{n} + \frac{d^{j-1+6\eps}}{n}
			\leq \frac{d^{j-1-\eps/4}}{2}.
		\end{align*}
		For all~$\cZ\in\ccZ$, we also use~\eqref{equation: conflict weight bound} to obtain
		\begin{align*}
			\ex{\abs{\cset{ Z\in\cZ }{ \text{$Z$ is not~$\cC_j$-free} }}}
			&\leq \abs{\cset{ Z\in\cZ }{ \text{$Z$ is not~$\cC_{j-1}$-free} }}+\sum_{Z\in\cZ}\sum_{Z'\in\unordsubs{Z}{j}} w_j(Z')\\
			&\leq \frac{4^{j-1}\abs\cZ}{d^{2\eps}}+\frac{2^\ell d^{3\eps}\abs\cZ}{n^{j-1}}
			\leq \frac{4^j\abs\cZ}{2d^{2\eps}}.
		\end{align*}
		If~$j=2$, then, again using the upper bound~\eqref{equation: conflict weight bound}, for all~$v\in V(\cH)$ and~$e\in\cH$ we obtain
		\begin{equation*}
			\ex{\abs{\cset{f\in N_{\cC_j}^{(2)}(e)}{v\in f}}}
			\leq \abs{\cset{f\in N_{\cC}^{(2)}(e)}{v\in f}}+d\cdot \frac{d^{3\eps}}{n}
			\leq d^{1-\eps/3} + \frac{d^{1+3\eps}}{n}
			\leq \frac{d^{1-\eps/4}}{2}
		\end{equation*}
		and for all disjoint~$e,f\in\cH$, we obtain
		\begin{align*}
			\ex{\abs{ N_{\cC_j}^{(2)}(e)\cap N_{\cC_j}^{(2)}(f)}}
			&\leq\abs{ (\cC_{j-1})_e^{(1)}\cap (\cC_{j-1})_f^{(1)} }
			+\ex{\abs{ (\cX_j)_e^{(1)}\cap (\cC_j)_f^{(1)} }}\\
			&\hphantom{=}\mathrel{}\quad+\ex{\abs{ (\cC_j)_e^{(1)}\cap (\cX_j)_f^{(1)} }}+\ex{\abs{ (\cX_j)_e^{(1)}\cap (\cX_j)_f^{(1)} }}\\
			&\leq d^{1-\eps/3} + 4\max(d,\Delta(\cC^{(2)}))\cdot \frac{d^{3\eps}}{n^{j-1}}+ \frac{dn}{k}\cdot \paren[\bigg]{\frac{d^{3\eps}}{n}}^2\\
			&\leq  d^{1-\eps/3} + \frac{8\Gamma d^{1+3\eps}}{n}+ \frac{d^{1+6\eps}}{n}
			\leq \frac{d^{1-\eps/4}}{2}.
		\end{align*}
		
		With these bounds on the expected values, using Chernoff's inequality (Lemma~\ref{lemma: chernoff}) and a suitable union bound we conclude that with high probability~\ref{item: next step almost regular}--\ref{item: next step double j=2} hold.
		To see that~\ref{item: next step test systems} also holds with high probability, we use McDiarmid's inequality (Lemma~\ref{lemma: mcdiarmid}).
		For~$\cZ\in\ccZ$ and~$C\in \unordsubs{\cH}{j}$, adding or removing~$C$ from~$\cC_j$ changes the number of tests~$Z\in\cZ$ that are not~$\cC_j$-free by at most~$d_{\cZ}(C)$ and we have
		\begin{equation*}
			\sum_{C\in \unordsubs{\cH}{j}}d_{\cZ}(C)^2
			\leq \Delta_j(\cZ)\cdot\sum_{C\in \unordsubs{\cH}{j}}d_{\cZ}(C)
			\leq \frac{\abs\cZ}{d^{j+\eps}}\cdot 2^\ell\abs\cZ
			\leq \frac{\abs\cZ^2}{d}.
		\end{equation*}
		Thus, since the expected number of tests~$Z\in\cZ$ that are not~$\cC_j$-free is at most~$4^j\abs\cZ/(2d^{2\eps})$, McDiarmid's inequality (Lemma~\ref{lemma: mcdiarmid}) entails
		\begin{equation*}
			\pr[\bigg]{\abs{\cset{ Z\in\cZ }{ \text{$Z$ ist not~$\cC_j$-free} }}\geq \frac{4^j\abs\cZ}{d^{2\eps}}}
			\leq \exp\paren[\bigg]{-\frac{4^j\abs\cZ^2 d}{2d^{4\eps}\abs\cZ^2}}
			\leq \exp(-d^{1/2}).
		\end{equation*}
		A suitable union bound completes the proof.
	\end{proof}
	
	\begin{proof}[Proof of Theorem~\ref{theorem: test systems}]
		We deduce Theorem~\ref{theorem: test systems} from Theorem~\ref{theorem: process}.
		In this proof, we actually allow more vertices in the sense that we only assume~$n\leq \exp(d^{\eps/(400\ell)})$ instead of~$n\leq \exp(d^{\eps^2/\ell})$, we only impose the weaker bound~$1/\mu^{\Gamma\ell}\leq d^{\eps^{5/3}}$ instead of~$1/\mu^{\Gamma\ell}\leq d^{\eps^2}$, we allow more test systems by only assuming~$\abs{\ccZ}\leq \exp(d^{\eps/(400\ell)})$ instead of~$\abs\ccZ\leq \exp(d^{\eps^2/\ell})$ and we obtain stronger bounds characterising the properties of the matching.
		This will be convenient when proving Theorems~\ref{theorem: test functions} and~\ref{theorem: less regularity functions}.
		
		Instead of~$\cC$, consider a conflict system~$\cC'$ as in Lemma~\ref{lemma: conflict regularization}, for~$\cZ\in\ccZ$, let~$\cZ':=\cset{Z\in\cZ}{\text{$Z$ is~$\cC'$-free}}$ and define~$\ccZ':=\cset{\cZ'}{\cZ\in\ccZ}$.
		Note that
		\begin{equation*}
			\frac{1}{\mu^{3\Gamma\ell}}\leq d^{3\eps^{5/3}}\leq d^{75(\eps/5)^{5/3}}\leq d^{(\eps/5)^{3/2}},
		\end{equation*}
		that for all~$j\in[\ell]_2$, we have
		\begin{equation*}
			d^{j-1-\eps/500}\leq (1-d^{-\eps/5})\Delta(\cC'^{(j)})\leq\delta(\cC'^{(j)}).
		\end{equation*}
		Furthermore, for all~$\cZ\in\ccZ$ of uniformity~$j$, due to~\ref{item: trackable size} we have
		\begin{equation*}
			\abs{\cZ'}\geq (1-d^{-\eps})\abs\cZ\geq \frac{d^{j+\eps}}{2}\geq d^{j+\eps/5}
		\end{equation*}
		and since for all~$e,f\in\cH$ with~$\set{e,f}\notin\cC'^{(2)}$ and all~$j\in[\ell-1]$ we have~$\abs{\cC_e'^{(j)}\cap \cC_f'^{(j)}}\leq d^{j-\eps/5}$ by choice of~$\cC'$, the test systems~$\cZ'\in\ccZ'$ are~$(d,\eps/5,\cC')$-trackable.
		Thus, Theorem~\ref{theorem: process} with~$\eps/5$,~$3\Gamma$,~$\cC'$,~$\ccZ'$ playing the roles of~$\eps$,~$\Gamma$,~$\cC$,~$\ccZ$ yields a~$\cC'$-free matching~$\cM\subseteq\cH$ with~$\abs{\cM}= (1-\mu)n/k$,
		\begin{align*}
			\abs{\cset{ Z\in\cZ }{ Z\subseteq\cM }}
			&\geq (1- d^{-\eps/375})\paren[\bigg]{\frac{\abs\cM k}{dn}}^j\abs{\cZ'}
			\geq (1- d^{-\eps/375})\paren[\bigg]{(1-d^{-\eps})\frac{\abs\cM}{\abs\cH}}^j\abs{\cZ'}\\
			&\geq (1- d^{-\eps/375})(1-\ell d^{-\eps})\paren[\bigg]{\frac{\abs\cM}{\abs\cH}}^j\abs{\cZ'}
			\geq (1- d^{-\eps/400})\paren[\bigg]{\frac{\abs\cM}{\abs\cH}}^j\abs\cZ
		\end{align*}
		and
		\begin{equation*}
			\abs{\cset{ Z\in\cZ }{ Z\subseteq\cM }}
			\leq (1+ d^{-\eps/375})\paren[\bigg]{\frac{\abs\cM k}{dn}}^j\abs{\cZ'}
			\leq (1+ d^{-\eps/375})\paren[\bigg]{\frac{\abs\cM}{\abs\cH}}^j\abs\cZ.
		\end{equation*}
		Furthermore, as the matching~$\cM$ is~$\cC'$-free, it is~$\cC$-free by the choice of~$\cC'$.
	\end{proof}

	\subsection{Proof of Theorem~\ref{theorem: test functions}}
	To prove Theorem~\ref{theorem: test functions}, we employ the following lemma which allows us to approximate a test function~$w$ (or rather its extension to arbitrary edge sets) using test systems.
	For a hypergraph~$\cH$, a finite sequence~$\ccZ=\cZ_1,\ldots,\cZ_z$ of~$j$-uniform test systems for~$\cH$ and~$E\subseteq\cH$, we define the \defnidx[wZ(E)@$w_\ccZ(E)$]{total~$\ccZ$-weight $w_\ccZ(E):=\sum_{i\in[z]}\abs{\cZ_i\cap \unordsubs{E}{j}}$}.
	
	\begin{lemma}\label{lemma: test function as test systems}
		For all~$k\geq 2$, there exists~$\eps_0>0$ such that for all~$\eps\in(0,\eps_0)$, there exists~$d_0$ such that the following holds for all~$d\geq d_0$.
		Suppose~$\cH$ is a~$k$-graph on~$n\leq \exp(d^{\eps/600\ell})$ vertices and suppose~$\cC$ is a conflict system for~$\cH$.
		Suppose~$w$ is a~$j$-uniform~$(d,\eps,\cC)$-trackable test function for~$\cH$ where~$j\leq \log d$.
		Then, there exists a sequence~$\ccZ=\cZ_1,\ldots,\cZ_z$ of~$j$-uniform~$(d,\eps/2,\cC)$-trackable test systems for~$\cH$ with~$z= \exp(d^{\eps/500\ell})$ and~$\abs{\cZ_i}=(1\pm d^{-1/2})w(\cH)$ for all~$i\in[z]$ as well as
		\begin{equation}\label{equation: systems approximate the function}
			w(E)=(1\pm d^{-1})\frac{w_\ccZ(E)}{z}
		\end{equation}
		for all~$E\subseteq\cH$ with~$w_\ccZ(E)\geq z$.
	\end{lemma}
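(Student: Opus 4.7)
The plan is to construct $\cZ_1,\ldots,\cZ_z$ independently by random sampling: for each $i\in[z]$, include every $E\in\unordsubs{\cH}{j}$ in $\cZ_i$ independently with probability $w(E)\in[0,1]$. Since $w$ vanishes on $j$-sets that are not matchings and, by~\ref{item: test weight function no conflicts}, also on those that are not $\cC$-free, each $\cZ_i$ consists of $\cC$-free matchings, so~\ref{item: trackable no conflicts} is automatic. Moreover,~\ref{item: trackable neighborhood} for $\cZ_i$ is deterministic: if $d_{\cZ_i}(ef)\geq 1$ then some $E\in\unordsubs{\cH}{j}$ containing both $e$ and $f$ satisfies $w(E)>0$, and~\ref{item: test weight function neighborhood} supplies the codegree bound. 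Crucially, $\ex{\abs{\cZ_i}}=w(\cH)$ and, more generally, $\ex{w_\ccZ(E)}=z\cdot w(E)$ for every $E\subseteq\cH$.

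For the size bound and the remaining trackability conditions~\ref{item: trackable size} and~\ref{item: trackable degrees}, I would apply Chernoff's inequality (Lemma~\ref{lemma: chernoff}) combined with a union bound. Concentration $\abs{\cZ_i}=(1\pm d^{-1/2})w(\cH)$ follows from Chernoff~(i) using $w(\cH)\geq d^{j+\eps}$ from~\ref{item: test weight function total weight}; this simultaneously yields~\ref{item: trackable size}. For~\ref{item: trackable degrees} with fixed $j'\in[j-1]$ and $E'\in\unordsubs{\cH}{j'}$, we have $\ex{d_{\cZ_i}(E')}\leq w(\cH)/d^{j'+\eps}$ by~\ref{item: test weight function spread}, while the target threshold $\abs{\cZ_i}/d^{j'+\eps/2}\approx w(\cH)/d^{j'+\eps/2}$ is a factor $d^{\eps/2}$ larger; Chernoff~(ii) applied with this threshold as $2t$ gives failure probability $\exp(-\Omega(w(\cH)/d^{j'+\eps/2}))\leq \exp(-\Omega(d^{1+\eps/2}))$, which easily survives a union bound over the at most $\abs{\cH}^{\ell}\cdot z$ choices of $(j',E',i)$.

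The main obstacle is the approximation~\eqref{equation: systems approximate the function}, which must hold simultaneously for all $E\subseteq\cH$---doubly exponentially many in $d$. For fixed $E$, $w_\ccZ(E)$ is a sum of independent Bernoullis with expectation $z\cdot w(E)$. If $w(E)\leq 1/2$, then Chernoff~(ii) with $t=z/2$ yields $\pr{w_\ccZ(E)\geq z}\leq \exp(-z/12)$, making~\eqref{equation: systems approximate the function} vacuously true on this event. If $w(E)>1/2$, Chernoff~(i) with $\delta=d^{-1}$ yields $\pr{w_\ccZ(E)\neq(1\pm d^{-1})zw(E)}\leq 2\exp(-z/(6d^2))$. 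Hence the per-$E$ failure probability is at most $2\exp(-z/(12d^2))$. The delicate parameter tuning is that $z=\exp(d^{\eps/500\ell})$ and $n\leq\exp(d^{\eps/600\ell})$ (whence $\abs{\cH}\leq d\exp(d^{\eps/600\ell})/k$) were chosen precisely so that $z/(12d^2)$ exceeds $\abs{\cH}\ln 2$: after taking logarithms this reduces to $d^{\eps/500\ell}-d^{\eps/600\ell}=d^{\eps/600\ell}(d^{\eps/3000\ell}-1)$ dominating $O(\log d)$, which holds comfortably for large $d$. This slack is exactly what allows the union bound over all $2^{\abs{\cH}}$ subsets of $\cH$ to go through while keeping $z$ within the size specified in the conclusion.
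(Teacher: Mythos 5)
Your proposal is correct and is essentially identical to the paper's proof: the same random construction (include each $E\in\unordsubs{\cH}{j}$ in $\cZ_i$ independently with probability $w(E)$), the same observation that \ref{item: trackable neighborhood} and \ref{item: trackable no conflicts} hold deterministically, the same Chernoff estimates for \ref{item: trackable size}, \ref{item: trackable degrees} and the case split at $w(E)=1/2$, and the same union bound over all $2^{\abs\cH}$ subsets, which is exactly what the choice of $z$ versus $n$ is for. The only nit: concentration with $\delta=d^{-1}$ gives $w_\ccZ(E)=(1\pm d^{-1})zw(E)$, and inverting this yields $w(E)\leq w_\ccZ(E)/\bigl(z(1-d^{-1})\bigr)$, which slightly exceeds $(1+d^{-1})w_\ccZ(E)/z$; run Chernoff with $\delta=d^{-1}/2$ (as the paper does) to absorb the inversion loss.
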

	\begin{proof}
		To obtain the elements of the sequence~$\ccZ=\cZ_1,\ldots,\cZ_z$, we construct~$\exp(d^{\eps/500\ell})$ test systems~$\cZ$ by including every set~$Z\in\unordsubs{\cH}{j}$ in~$\cZ$ independently at random with probability~$w(Z)$.
		Then, Chernoff's inequality (Lemma~\ref{lemma: chernoff}) shows that the desired properties hold with positive probability.
		
		Since~$w$ is~$(d,\eps,\cC)$-trackable, the following holds for all~$\cZ\in\set{\cZ_1,\ldots,\cZ_z}$.
		\begin{itemize}
			\item $\ex{\abs\cZ}=w(\cH)\geq d^{j+\eps}$;
			\item $\ex{d_\cZ(E')}= w(\cset{ E\in\unordsubs{\cH}{j} }{  E'\subseteq E})\leq \ex{\abs\cZ}/d^{j'+\eps}$ for all~$j'\in[j-1]$ and~$E'\in\unordsubs{\cH}{j'}$;
			\item $\pr{\abs{\cC_e^{(j')}\cap \cC_f^{(j')}}\leq d^{j'-\eps}\tforall{$e,f\in\cH$ with~$d_\cZ(ef)\geq 1$ and all~$j'\in[\ell-1]$}}=1$;
			\item $\pr{\text{$Z$ is a~$\cC$-free matching for all~$Z\in\cZ$}}= 1$.
		\end{itemize}
		Furthermore, for all~$E\subseteq \cH$, we have~$\ex{ w_\ccZ(E) }=w(E)\abs\ccZ$.
		Thus, with a suitable union bound, Chernoff's inequality (Lemma~\ref{lemma: chernoff}) shows that with positive probability, every (random) hypergraph~$\cZ\in\set{\cZ_1,\ldots,\cZ_z}$ is a~$(d,\eps/2,\cC)$-trackable test system for~$\cH$ satisfying~$\abs\cZ=(1\pm d^{-1/2})w(\cH)$,~$w_\ccZ(E)< z$ for all~$E\subseteq\cH$ with~$w(E)\leq 1/2$ and
		\begin{equation*}
			w_\ccZ(E)=\paren[\bigg]{1\pm \frac{d^{-1}}{2}}w(E)z
		\end{equation*}
		for all~$E\subseteq\cH$ with~$w(E)\geq 1/2$ and thus~\eqref{equation: systems approximate the function} for all~$E\in\cH$ with~$w_\ccZ(E)\geq z$.
	\end{proof}
	
	\begin{proof}[Proof of Theorem~\ref{theorem: test functions}]
		In this proof, we actually allow more vertices and more test functions in the sense that we only assume~$n\leq \exp(d^{\eps/(600\ell)})$ and~$\abs{\ccW}\leq \exp(d^{\eps/(600\ell)})$ instead of~$n\leq \exp(d^{\eps^2/\ell})$ and~$\abs\cW\leq \exp(d^{\eps^2/\ell})$.
		This will be convenient when proving Theorem~\ref{theorem: less regularity functions}.
		
		Lemma~\ref{lemma: test function as test systems} shows that for all~$j$-uniform~$w\in\ccW$, there exists a sequence~$\cZ^w_1,\ldots,\cZ^w_z$ of~$j$-uniform~$(d,\eps/2,\cC)$-trackable test systems for~$\cH$ with~$z= \exp(d^{\eps/(500\ell)})$,
		\begin{equation}\label{equation: approximating system size}
			\abs{\cZ}=(1\pm d^{-1/2})w(\cH)
		\end{equation}
		for all~$\cZ\in\set{\cZ^w_1,\ldots,\cZ^w_z}$ and
		\begin{equation}\label{equation: test systems approximate test function}
			w(E)=(1\pm d^{-1})\frac{\sum_{i\in[z]} \abs{\cZ^w_i\cap\unordsubs{E}{j}}}{z}
		\end{equation}
		for all~$E\subseteq\cH$ with~$\sum_{i\in[z]} \abs{\cZ^w_i\cap\unordsubs{E}{j}}\geq z$.
		Let~$\ccZ:=\bigcup_{w\in\ccW}\set{\cZ^w_1,\ldots,\cZ^w_z}$.
		Note that
		\begin{equation*}
			\abs\ccZ\leq \exp(d^{\eps/(600\ell)})\cdot \exp(d^{\eps/(500\ell)})\leq \exp(d^{\eps/(400\ell)})
		\end{equation*}
		and
		\begin{equation*}
			\frac{1}{\mu^{\Gamma\ell}}\leq d^{\eps^2}\leq d^{4(\eps/2)^{2}}\leq d^{(\eps/2)^{5/3}}.
		\end{equation*}
		Thus, an application of Theorem~\ref{theorem: test systems} with~$\eps/2$ playing the role of~$\eps$ making use of the fact that we only worked with weaker assumptions while obtaining a slightly stronger output in the proof of Theorem~\ref{theorem: test systems}, yields a~$\cC$-free matching~$\cM\subseteq\cH$ with~$\abs\cM= (1-\mu)n/k$ and
		\begin{equation}\label{equation: fraction of all test systems}
			\abs[\bigg]{\cZ\cap\unordsubs{\cM}{j}}=\abs{ \cset{Z\in\cZ}{Z\subseteq M} }=(1\pm d^{-\eps/800})\paren[\bigg]{\frac{\abs\cM}{\abs\cH}}^j\abs\cZ
		\end{equation}
		for all~$j$-uniform~$\cZ\in\ccZ$.
		Fix a~$j$-uniform~$w\in\ccW$.
		In particular,~\eqref{equation: fraction of all test systems} together with~\ref{item: trackable size} implies~$\abs{\cZ\cap\unordsubs{\cM}{j}}\geq d^{\eps/2}/2^{j+1}\geq d^{\eps/2}/4^{\ell}\geq 1$ for all~$\cZ\in\set{\cZ^w_1,\ldots,\cZ^w_z}$ and hence~$\sum_{i\in[z]}\abs{\cZ^w_i\cap\unordsubs{\cM}{j}}\geq z$.
		This allows us to apply~\eqref{equation: test systems approximate test function} such that combining it with~\eqref{equation: approximating system size} and~\eqref{equation: fraction of all test systems}, we conclude that
		\begin{align*}
			w(\cM)
			&=(1\pm d^{-1})\frac{\sum_{i\in[z]} \abs{\cZ^w_i\cap\unordsubs{\cM}{j}}}{z}
			=(1\pm d^{-\eps/850})\frac{\sum_{i\in[z]}\paren[\big]{\frac{\abs\cM}{\abs\cH}}^j\abs\cZ }{z}\\
			&=(1\pm d^{-\eps/900})\paren[\bigg]{\frac{\abs\cM}{\abs\cH}}^j w(\cH),
		\end{align*}
		which completes the proof.
	\end{proof}
	
	\subsection{Proof of Theorem~\ref{theorem: less regularity functions}} 
	To prove Theorem~\ref{theorem: less regularity functions}, we apply Theorem~\ref{theorem: test functions} with a suitable more regular~$k$-graph~$\cH'$ where the given~$k$-graph~$\cH$ is an induced subgraph of~$\cH'$.
	More specifically, we use the following lemma.
	\begin{lemma}\label{lemma: regularization of H}
		For all~$k\geq 2$, there exists~$\eps_0>0$ such that for all~$\eps\in(0,\eps_0)$, there exists~$d_0$ such that the following holds for all~$d\geq d_0$.
		Suppose~$\cH$ is a~$k$-graph on~$n\leq \exp(d^\eps)$ vertices with~$(1-\eps)d\leq\delta(\cH)\leq\Delta(\cH)\leq d$ and~$\Delta_2(\cH)\leq d^{1-\eps}$.
		Then,~$\cH$ is an induced subgraph of a~$k$-graph~$\cH'$ on~$3n$ vertices with~$(1-d^{-\eps})d\leq\delta(\cH')\leq\Delta(\cH')\leq d$,~$\Delta_2(\cH')\leq d^{1-\eps}$ and
		\begin{equation}\label{equation: few crossing edges}
			\abs{\cset{e\in\cH'}{1\leq\abs{e\cap V(\cH)}\leq k-1}}\leq 2\eps dn.
		\end{equation}
	\end{lemma}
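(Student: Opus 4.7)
The idea is to ``pad'' $\cH$ with $2n$ fresh dummy vertices and a suitable random collection of new edges that only meets $V(\cH)$ in zero or in exactly one vertex; the latter restriction makes the bound~\eqref{equation: few crossing edges} automatic and ensures that $\cH'[V(\cH)]=\cH$. Concretely, let $D$ be a set of $2n$ new vertices, set $V(\cH'):=V(\cH)\cup D$, keep all edges of $\cH$, and then independently add random new edges of two types: for each $v\in V(\cH)$, include every $k$-set $e$ with $e\cap V(\cH)=\{v\}$ with probability $\alpha_v:=(d-d_{\cH}(v))/\binom{2n}{k-1}$, and include every $k$-set $e\subseteq D$ independently with probability $\beta$ tuned so that each dummy has expected $\cH'$-degree exactly $d$. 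Since $\delta(\cH)\ge (1-\eps)d$ forces the trivial bound $(1-\eps)d\le\binom{n-1}{k-1}$ and hence $n=\Omega(d^{1/(k-1)})$, the inequality $\alpha_v\le 1$ holds (using $d-d_\cH(v)\le\eps d\ll\binom{2n}{k-1}$), so the random experiment is well-defined.

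The expected degree of every real vertex $v$ equals $d_\cH(v)+\alpha_v\binom{2n}{k-1}=d$ and by the choice of $\beta$ every dummy has expected degree $d$ as well. Chernoff's inequality (Lemma~\ref{lemma: chernoff}) applied to each vertex degree, which is a sum of independent Bernoullis with mean $d$, gives $\Pr\bigl[d_{\cH'}(u)\ne(1\pm d^{-\eps/2})d\bigr]\le 2\exp(-d^{1-\eps}/3)$. Since $n\le\exp(d^{\eps})$, a union bound over all $3n$ vertices fails with probability at most $\exp(d^{\eps})\cdot 2\exp(-d^{1-\eps}/3)=o(1)$. After this event holds, we delete at most one new edge at each vertex whose degree temporarily exceeds $d$; this reduces each degree by at most $1$, keeping it in $[(1-d^{-\eps})d,d]$ as required. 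The expected number of crossing edges is $\sum_{v}(d-d_{\cH}(v))\le\eps dn$, and Chernoff again gives at most $2\eps dn$ crossing edges with high probability, establishing~\eqref{equation: few crossing edges}.

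The main technical point is verifying $\Delta_2(\cH')\le d^{1-\eps}$. For two real vertices $u,v$ no new edges have been added that contain both, so $d_{\cH'}(uv)=d_{\cH}(uv)\le d^{1-\eps}$. For mixed or purely dummy pairs the expected codegree is at most $O((k-1)d/n)$ by direct computation (for a crossing edge containing both $v\in V(\cH)$ and $x\in D$ one gets $\alpha_v\binom{2n-1}{k-2}=(k-1)(d-d_\cH(v))/(2n)$, and similarly for the other cases). The crucial input is a lower bound on $n$: combining $\Delta_2(\cH)\le d^{1-\eps}$ (which bounds the average $2$-degree) with $|E(\cH)|\ge(1-\eps)dn/k$ yields
\begin{equation*}
(1-\eps)\frac{dn(k-1)}{2}\le |E(\cH)|\binom{k}{2}\le\binom{n}{2}d^{1-\eps},
\end{equation*}
and hence $n\ge (1-\eps)(k-1)d^{\eps}+1$. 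This gives $(k-1)d/n\le d^{1-\eps}/2$, so the expected codegrees of all pairs are at most $d^{1-\eps}/2$. Applying the Chernoff tail bound $\Pr[X\ge 2t]\le\exp(-t/3)$ with $t=d^{1-\eps}/2$ to each pair and taking a union bound over the $\binom{3n}{2}\le\exp(2d^{\eps})$ pairs yields failure probability at most $\exp(2d^{\eps}-d^{1-\eps}/6)=o(1)$.

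The hardest part is calibrating all the concentration bounds simultaneously so that a single random outcome satisfies the degree, codegree, and crossing-edge conditions at once, and in particular ensuring that the deterministic lower bound on $n$ derived from the codegree hypothesis of $\cH$ is exactly strong enough to drive the expected codegrees of new pairs below the $d^{1-\eps}$ threshold. Once this is in place, the remaining steps are routine: combining the random construction with a union bound produces a valid $\cH'$ with positive probability, and the minor deterministic trimming at the end brings maximum degrees to exactly $d$ without destroying any of the other properties.
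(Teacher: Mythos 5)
Your construction is essentially the paper's: pad with $2n$ dummy vertices, add random edges meeting $V(\cH)$ in at most one vertex with probabilities chosen to fill the degree deficits, and finish with Chernoff plus a union bound. However, there is a genuine gap in the degree calibration. You set the expected degree of every vertex to be exactly $d$ and then invoke concentration of the form $d_{\cH'}(u)=(1\pm d^{-\eps/2})d$. This means the degree can overshoot $d$ by as much as $d^{1-\eps/2}$ (indeed, for a dummy vertex the random part has mean $d$, so the typical overshoot is already of order $\sqrt{d}$ at about half the dummies). Your proposed repair --- ``delete at most one new edge at each vertex whose degree temporarily exceeds $d$; this reduces each degree by at most $1$'' --- therefore does not bring the degrees back below $d$: you would need to delete up to $d^{1-\eps/2}$ edges at such a vertex, and those deletions in turn lower the degrees of other vertices in an uncontrolled way. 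The paper avoids this entirely by aiming at a target degree $(1-d^{-2\eps})d$ strictly below $d$; then the upward fluctuation $(1+d^{-2\eps})(1-d^{-2\eps})d\le d$ stays under the cap while the downward fluctuation stays above $(1-d^{-\eps})d$, and no trimming is needed. Your argument can be fixed the same way, but as written the conclusion $\delta(\cH')\le\Delta(\cH')\le d$ does not follow.

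A secondary, more minor point concerns the codegree verification. The lower bound on $n$ you extract from $\Delta_2(\cH)\le d^{1-\eps}$ is $n\ge(1-\eps)(k-1)d^{\eps}$, which only yields $(k-1)d/n\le d^{1-\eps}/(1-\eps)$, not the claimed $d^{1-\eps}/2$; the subsequent application of $\pr{X\ge 2t}\le\exp(-t/3)$ then lands you at the threshold $d^{1-\eps}$ only up to constant factors. The clean route (and the one the paper takes) is the trivial bound $(1-\eps)d\le\delta(\cH)\le\binom{n-1}{k-1}$, hence $n\ge d^{1/k}$, which gives expected codegrees of new pairs of order $d^{1-1/k}\ll d^{1-\eps}$ once $\eps_0<1/(2k)$, leaving comfortable room for the Chernoff tail. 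You already derive $n=\Omega(d^{1/(k-1)})$ to check $\alpha_v\le 1$, so you should simply reuse it here instead of the weaker density argument.
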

	\begin{proof}
		After choosing an appropriate vertex set~$V'$ for~$\cH'$, we construct the edge set of~$\cH'$ by starting with~$\cH$ and adding sets~$e\in\unordsubs{V}{k}$ with~$\abs{e\cap V}\leq 1$ independently at random with suitable probabilities derived from Lemma~\ref{lemma: regularization weight}.
		
		In more detail, choose a set~$V'$ of size~$3n$ with~$V:=V(\cH)\subseteq V'$.
		Define the target degree~$d_\rmtar:=(1-d^{-2\eps})d$ that for all~$v\in V'$ serves as a target value for~$d_{\cH'}(v)$ that we aim for (but possibly only meet approximately).
		To this end consider the function~$d_\rmdef\colon V\rightarrow\bR$, that for all~$v\in V$, maps~$v$ to the degree deficit~$d_\rmdef(v):=\max(0,d_\rmtar-d_\cH(v))$ and for all~$e\in\unordsubs{V'}{k}$ with~$e\cap V=\set{v}$, consider the weight
		\begin{equation*}
			w(e)
			:=\frac{d_\rmdef(v)}{\binom{2n}{k-1}}
			\leq \frac{\eps d}{\binom{2n}{k-1}}
			\leq \frac{(1-\eps)d}{\binom{n}{k-1}}
			\leq \frac{\delta(\cH)}{\binom{n}{k-1}}
			\leq 1.
		\end{equation*}
		When adding edges~$e\in\unordsubs{V'}{k}$ with~$\abs{e\cap V}=1$ independently at random with probability~$w(e)$, the expected contribution to the degrees of the vertices~$u\in V^+:=V'\setminus V$ is
		\begin{equation*}
			d(u)
			:=\sum_{v\in V}\sum_{U\in\unordsubs{V^+\setminus\set{u}}{k-2}} w(\set{u,v}\cup U)
			\leq n\cdot\binom{2n-1}{k-2}\cdot \frac{\eps d}{\binom{2n}{k-1}}
			= \frac{\eps (k-1) d n}{2n}
			\leq \eps k d_\rmtar.
		\end{equation*}
		Hence, extend the domain of~$d_\rmdef$ such that for all~$u\in V^+$,~$d_\rmdef$ maps~$u$ to the degree deficit~$d_\rmdef(u):=d_\rmtar-d(u)$.
		For~$e\in\unordsubs{V^+}{k}$, motivated by Lemma~\ref{lemma: regularization weight}, consider the weight
		\begin{equation*}
			w(e):=\frac{(k-1)!\prod_{u\in e} d_\rmdef(u)}{d_\rmdef(V^+)^{k-1}} \leq \frac{(k-1)! d_\rmtar^k}{\paren{(1-\eps k) d_\rmtar\cdot 2n}^{k-1}}\leq \frac{(1-\eps)d}{\frac{n^{k-1}}{(k-1)!}}\leq \frac{\delta(\cH)}{\binom{n}{k-1}}\leq 1.
		\end{equation*}
		
		Let~$\cH'$ denote the (random)~$k$-graph with vertex set~$V'$ whose edge set is obtained from~$\cH$ by adding every set~$e\in\unordsubs{V'}{k}$ with~$\abs{e\cap V}\leq 1$ independently at random with probability~$w(e)$.
		
		Let us investigate the expected degrees in~$\cH'$.
		Lemma~\ref{lemma: regularization weight} was the motivation for defining the weights~$w(e)$ with~$e\in\unordsubs{V^+}{k}$, so first, we bound the error term~$4k^2 \max_{u\in V^+}d_\rmdef(u)/d_\rmdef(V^+)$.
		We have
		\begin{equation*}
			\frac{4k^2 \max_{u\in V^+}d_\rmdef(u)}{d_\rmdef(V^+)}
			\leq \frac{4k^2 d_\rmtar}{(1-\eps k)d_\rmtar\cdot 2n}
			\leq \frac{4k^2}{d^{1/k}}
			\leq d^{-2\eps}.
		\end{equation*}
		For all~$v\in V$ with~$d_\cH(v)\geq d_\rmtar$, we have~$d_{\cH'}(v)=d_\cH(v)$ and thus~$d_\rmtar\leq d_{\cH'}(v)\leq d$ with probability~$1$.
		For all~$v\in V$ with~$d_\cH(v)\leq d_\rmtar$, we have
		\begin{equation*}
			\ex{d_{\cH'}(v)}
			=d_{\cH}(v)+\sum_{U\in\unordsubs{V^+}{k-1}} w(\set{v}\cup U)
			=d_{\cH}(v)+d_\rmdef(v)
			=d_\rmtar.
		\end{equation*}
		For all~$u\in V^+$, Lemma~\ref{lemma: regularization weight} yields
		\begin{equation*}
			\ex{d_{\cH'}(u)}
			=d(u)+\sum_{e\in\unordsubs{V^+}{k}\colon u\in e} w(e)
			=(1\pm d^{-2\eps}) d_\rmtar.
		\end{equation*}
		
		Furthermore, for all~$v_1,v_2\in V$, we have~$d_{\cH'}(v_1v_2)=d_{\cH}(v_1v_2)\leq d^{1-\eps}$ with probability~$1$, for all~$v\in V$ and~$u\in V^+$, we have
		\begin{equation*}
			\ex{d_{\cH'}(uv) }
			=\sum_{U\in\unordsubs{V^+\setminus\set{u}}{k-2}} w(\set{u,v}\cup U)
			\leq \binom{2n-1}{k-2}\cdot \frac{\eps d}{\binom{2n}{k-1}}
			=\frac{\eps(k-1)d}{2n}
			\leq \frac{d}{d^{1/k}}
			\leq d^{1-2\eps}
		\end{equation*}
		and for all~$u_1,u_2\in V^+$, Lemma~\ref{lemma: regularization weight} yields
		\begin{align*}
			\ex{d_{\cH'}(u_1u_2)}
			&=\sum_{U\in \unordsubs{V^+}{k-2}} w(\set{u_1,u_2}\cup U)+\sum_{v\in V}\sum_{U\in \unordsubs{V^+}{k-3}} w(\set{u_1,u_2,v}\cup U)\\
			&\leq (1+ d^{-2\eps}) \frac{(k-1)! d_\rmdef(u_1) d_\rmdef(u_2)}{(k-2)! d_\rmdef(V^+)}+ n\binom{2n}{k-3}\frac{ d_\rmdef(v) }{\binom{2n}{k-1}}\\
			&\leq \frac{k d_\rmtar^2}{(1-\eps k)d_\rmtar\cdot 2n} + \frac{\eps k^2 d}{n}
			\leq \frac{2k d}{n}
			\leq \frac{2k d}{d^{1/k}}
			\leq d^{1-2\eps}.
		\end{align*}
		Finally, we obtain
		\begin{equation*}
			\ex{ \abs{ \cset{ e\in \cH' }{ 1\leq\abs{e\cap V(\cH)}\leq k-1 } } }
			=\sum_{v\in V}\sum_{U\in\unordsubs{V^+}{k-1}} w(\set{v}\cup U)
			= \sum_{v\in V} d_\rmdef(v)
			\leq \eps dn.
		\end{equation*}
		
		With these bounds on the expected degrees, using Chernoff's inequality (Lemma~\ref{lemma: chernoff}) and a suitable union bound, we conclude that with high probability, we have~$(1-d^{-\eps})d\leq d_{\cH'}(v)\leq d$ for all~$v\in V'$,~$d_{\cH'}(v_1v_2)\leq d^{1-\eps}$ for all~$v_1,v_2\in V'$ and~\eqref{equation: few crossing edges}.
	\end{proof}
	\begin{proof}[Proof of Theorem~\ref{theorem: less regularity functions}]
		Lemma~\ref{lemma: regularization of H} shows that~$\cH$ is an induced subgraph of a~$k$-graph~$\cH'$ on~$3n\leq \exp(d^{\eps/(600\ell)})$ vertices with~$(1-d^{-\eps})d\leq\delta(\cH')\leq\Delta(\cH')\leq d$,~$\Delta_2(\cH')\leq d^{1-\eps}$ and~$\abs{W}\leq 2\eps dn$ where
		\begin{equation*}
			W:=\cset{ e\in\cH' }{ 1\leq\abs{e\cap V(\cH)}\leq k-1 }.
		\end{equation*}
		Note that~$\cC$ is a~$(d,\ell,\Gamma,\eps)$-bounded conflict system for~$\cH'$ and that every test function~$w\in\ccW$ is a~$(d,\eps,\cC)$-trackable test function for~$\cH'$.
		Let~$W'\subseteq\cH'$ with~$W\subseteq W'$ and~$\eps dn\leq \abs{W'}\leq 2\eps dn$.
		To see that~$w':=\ind_{W'}$ is a~$(d,\eps,\cC)$-trackable test function for~$\cH'$, note that~$\abs{W'}\geq \eps dn\geq d^{1+\eps}$.
		
		An application of Theorem~\ref{theorem: test functions} with~$\eps$,~$\cH'$,~$\ccW\cup\set{w'}$ playing the roles of~$\mu$,~$\cH$,~$\ccW$ making use of the fact that we allowed more vertices and test functions in the proof of Theorem~\ref{theorem: test functions} yields a~$\cC$-free matching~$\cM'\subseteq\cH'$ with~$\abs{\cM'}= 3(1-\eps)n/k$ and
		\begin{equation}\label{equation: extended matching functions}
			w(\cM')=(1\pm d^{-\eps/900})\paren[\bigg]{\frac{\abs{\cM'}}{\abs{\cH'}}}^j w(\cH')
		\end{equation}
		for all~$j$-uniform~$w\in\ccW$ and
		\begin{equation}\label{equatioN: extended mathcing overlap}
			\abs{\cM'\cap W}\leq w'(\cM')\leq 2\frac{\abs{\cM'}}{\abs{\cH'}} w'(\cH')\leq 4\frac{\frac{3n}{k}}{\frac{3(1-d^{-\eps})dn}{k}}\eps dn\leq 8\eps n.
		\end{equation}
		Let~$V(\cM'):=\bigcup_{e\in \cM'}e$ and~$\cM:=\cM'\cap \cH$.
		Then,~\eqref{equatioN: extended mathcing overlap} entails
		\begin{align*}
			\abs{\cM}
			&\geq \abs{\cM\cup (\cM'\cap W)} - 8\eps n
			\geq \frac{\abs{V(\cM')\cap V(\cH)}}{k}-8\eps n
			\geq \frac{n-\abs{V(\cH')\setminus V(\cM')}}{k}-8\eps n\\
			&\geq (1-\eps^{6/7})\frac{n}{k}.
		\end{align*}
		This implies
		\begin{equation*}
			\frac{\abs{\cM}}{\abs\cH}
			=\frac{(1\pm\eps^{6/7})\frac{n}{k}}{\frac{(1\pm\eps)dn}{k}}
			=(1\pm 2\eps^{6/7})\frac{1}{d}
			=(1\pm 3\eps^{6/7})\frac{\abs{\cM'}}{\abs{\cH'}}
		\end{equation*}
		and thus, for all~$j\in[1/\eps^{1/3}]$ and all~$j$-uniform~$w\in\ccW$, from~\eqref{equation: extended matching functions} we obtain
		\begin{equation*}
			w(\cM)
			=w(\cM')
			=(1\pm d^{-\eps/900})\paren[\bigg]{\frac{\abs{\cM'}}{\abs{\cH'}}}^j w(\cH')
			=(1\pm \sqrt\eps)\paren[\bigg]{\frac{\abs{\cM}}{\abs{\cH}}}^j w(\cH),
		\end{equation*}
		which completes the proof.
	\end{proof}
	
	\subsection{Proof of Theorem~\ref{theorem: counting}}
	In this subsection, we prove Theorem~\ref{theorem: counting}. To this end, we first employ the same conflict regularization approach that we used in Subsection~\ref{subsection: proof test systems} and then we bound the number of possible choices in every step of Algorithm~\ref{algorithm: matching}.
	
	\begin{proof}[Proof of Theorem~\ref{theorem: counting}]
		Let~$\cC'$ denote a conflict system for~$\cH$ as in Lemma~\ref{lemma: conflict regularization}.
		Consider Algorithm~\ref{algorithm: matching} with~$\cH$ and~$\cC'$ playing the roles of the input parameters~$\cH$ and~$\cC$.
		For~$i\in[m-1]_0$, let
		\begin{gather*}
			\phat_V(i):=1-\frac{ik}{n},\quad
			\Gammahat_0(i):=\sum_{j\in[\ell]_2}\frac{\Delta(\cC^{(j)})}{d^{j-1}}\paren[\bigg]{\frac{ik}{n}}^{j-1},\quad\text{and}\quad
			\Gammahat(i):=\sum_{j\in[\ell]_2}\frac{\Delta(\cC'^{(j)})}{d^{j-1}}\paren[\bigg]{\frac{ik}{n}}^{j-1}.
		\end{gather*}
		Let~$\ccM$ denote the set of all~$\cC$-free matchings~$\cM\subseteq\cH$ with~$\abs\cM= (1-\mu)n/k$.
		For all~$\cM\in\ccM$, let~$\cE_\cM:=\set{\cM(m)=\cM}$ and for~$i\geq 0$, let~$\cT^*(i)$ denote the event that for all~$i'\in[i-1]_0$, we have
		\begin{equation*}
			\abs\cH(i')\geq (1-d^{-\eps^2})\cdot \frac{dn}{k}\cdot \phat_V(i')^k\cdot\exp\paren{-\Gammahat_0(i')}=:\hhat_0^+(i').
		\end{equation*}
		Then, since~$\cT^*(m)\subseteq\set{\cM(m)\in\ccM}$, we have~$\pr{\cT^*(m)}=\sum_{\cM\in\ccM} \pr{\cT^*(m)\cap\cE_\cM}$ and thus
		\begin{equation}\label{equation lower bound matchings}
			\abs\ccM\geq \frac{\pr{\cT^*(m)}}{\max_{\cM\in\ccM} \pr{\cT^*(m)\cap\cE_\cM}}.
		\end{equation}
		Hence, we aim to find a suitable lower bound for~$\pr{\cT^*(m)}$ and for all~$\cM\in\ccM$, a suitable upper bound for~$\pr{\cT^*(m)\cap\cE_\cM}$.
		First, we consider~$\pr{\cT^*(m)}$.
		
		Theorem~\ref{theorem: trajectories} together with Remark~\ref{remark: basic bounds} implies that with probability at least~$1-\exp(-d^{\eps^2})$, for all~$i\in[m-1]_0$, we have
		\begin{equation*}
			\abs\cH(i)\geq (1-d^{-2\eps^2})\cdot \frac{dn}{k}\cdot \phat_V(i)^k\cdot\exp\paren{-\Gammahat(i)}=:\hhat^+(i).
		\end{equation*}
		By choice of~$\cC'$, for all~$i\in[m-1]_0$, we have
		\begin{equation*}
			\Gammahat(i)
			\leq (1+d^{-3\eps^2})\sum_{j\in[\ell]_2}\frac{d^{j-1-\eps/600}+\Delta(\cC^{(j)})}{d^{j-1}}\paren[\bigg]{\frac{ik}{n}}^{j-1}
			\leq \Gammahat_0(i) + d^{-2\eps^2},
		\end{equation*}
		and hence
		\begin{equation*}
			\exp(-\Gammahat(i))
			\geq\exp(-d^{-2\eps^2}) \exp(-\Gammahat_0(i))
			\geq (1-d^{-2\eps^2}) \exp(-\Gammahat_0(i)).
		\end{equation*}
		This shows~$\hhat_0^+(i)\leq \hhat^+(i)$, so we obtain
		\begin{equation}\label{equation: lower bound availability tracking}
			\pr{\cT^*(m)}
			\geq 1-\exp(-d^{\eps^2})
			\geq 1-d^{-1}.
		\end{equation}
		
		Next, we fix any~$\cM\in\ccM$ and consider~$\pr{\cT^*(m)\cap\cE_\cM}$.
		Fix an ordering~$e_1,\ldots,e_m$ of~$\cM$.
		We have
		\begin{align*}
			\pr[\Big]{ \cT^*(m)\cap\bigcap_{i\in[m]} \set{e(i)=e_i} }
			&= \pr[\Big]{ \bigcap_{i\in[m]} (\set{e(i)=e_{i}}\cap\cT^*(i)) }\\
			&\leq \prod_{i\in[m]} \cpr[\Big]{e(i)=e_i}{ \bigcap_{i'\in[i-1]} (\set{e(i')=e_{i'}}\cap\cT^*(i')) }\\
			&\leq \prod_{i\in[m]} \frac{k\exp(\Gammahat_0(i-1))}{(1-d^{-\eps^2})\cdot dn\cdot\phat_V(i-1)^k}\\
			&= \frac{k^m\exp(\sum_{i\in[m-1]_0}\Gammahat_0(i))}{(1-d^{-\eps^2})^{m}\cdot d^m n^m\cdot\prod_{i\in[m-1]_0}\phat_V(i)^k}.
		\end{align*}
		Note that
		\begin{align*}
			\sum_{i\in[m-1]_0}\Gammahat_0(i)
			&=\sum_{j\in[\ell]_2} \frac{\Delta(\cC^{(j)})}{d^{j-1}}\sum_{i\in[m-1]_0} \paren[\bigg]{\frac{ik}{n}}^{j-1}
			\leq\sum_{j\in[\ell]_2} \frac{\Delta(\cC^{(j)})}{d^{j-1}}\sum_{i\in[m-1]_0} \paren[\bigg]{\frac{i}{m}}^{j-1}\\
			&\leq \sum_{j\in[\ell]_2} \frac{\Delta(\cC^{(j)})}{d^{j-1}}\int_0^m \paren[\bigg]{\frac{x}{m}}^{j-1}\diff x
			=m\sum_{j\in[\ell]_2} \frac{\Delta(\cC^{(j)})}{jd^{j-1}}
		\end{align*}
		and
		\begin{equation*}
			\prod_{i\in[m-1]_0}\phat_V(i)
			=\frac{k^m}{n^m}\prod_{i\in[m-1]_0} \paren[\bigg]{\frac{n}{k}-i}
			\geq \frac{k^m m!}{n^m}
		\end{equation*}
		as well as
		\begin{equation*}
			\frac{k^m m!}{n^m}\geq \paren[\bigg]{\frac{k m}{\eul n}}^m = (1-d^{-\eps^3})^m \exp(-m).
		\end{equation*}
		Thus, since there were at most~$m!$ choices for the ordering~$e_1,\ldots,e_m$, we obtain
		\begin{align*}
			\pr{\cT^*(m)\cap\cE_\cM}
			&\leq m!\cdot \frac{k^m\exp\paren[\big]{\sum_{j\in[\ell]_2} \frac{\Delta(\cC^{(j)})}{jd^{j-1}}}^m}{(1-d^{-\eps^2})^{m}\cdot d^m n^m\cdot\paren[\big]{\frac{k^m m!}{n^m}}^k}\\
			&=\paren[\Bigg]{\frac{\exp\paren[\big]{\sum_{j\in[\ell]_2} \frac{\Delta(\cC^{(j)})}{jd^{j-1}}}}{(1-d^{-\eps^2})\cdot d\cdot \paren[\big]{\frac{k^m m!}{n^m}}^{(k-1)/m}}}^m\\
			&\leq \paren[\Bigg]{\frac{\exp\paren[\big]{k-1+\sum_{j\in[\ell]_2} \frac{\Delta(\cC^{(j)})}{jd^{j-1}}}}{(1-d^{-2\eps^4})d}}^m.
		\end{align*}
		Using~\eqref{equation lower bound matchings} to combine this with~\eqref{equation: lower bound availability tracking} completes the proof.
	\end{proof}
	
	\section{Sparse Steiner systems}\label{section: steiner systems}
	In this section, we prove Theorem~\ref{thm:Steiner systems simple} and some variations.
	For a partial~$(m,s,t)$-Steiner system~$\cS$, we use~$\bigcup \cS:=\bigcup_{S\in \cS}S$ to denote the set of points that~$\cS$ spans.
	In~\cite{GKLO:20} it was shown that if~$\cS$ is an~$(m,s,t)$-Steiner system, that is, a partial~$(m,s,t)$-Steiner system where every~$t$-set~$T\subseteq [m]$ is a subset of exactly one~$s$-set~$S\in\cS$, then for all~$j\in[\abs{\cS}]_2$, there is a collection~$\cS'\subseteq\cS$ of size~$j$ that spans at most
	\begin{equation*}
		\pi(j):=(s-t)j+t+1
	\end{equation*}
	points.
	Motivated by this we proceed as in~\cite{GKLO:20} and introduce the notion that for an integer~$\ell$, a partial~$(m,s,t)$-Steiner system~$\cS$ is~\defn{$\ell$-sparse} if for all~$j\in[\ell]_2$, every~$\cS'\in\unordsubs{\cS}{j}$ spans at least~$\pi(j)$ points, or equivalently, if for all integers~$p$ with~$2\leq \kappa_{s,t}(p)+1\leq \ell$, where
	\begin{equation*}
		\kappa_{s,t}(p):=\floor[\bigg]{\frac{p-t-1}{s-t}},
	\end{equation*}
	every~$\cS'\subseteq\cS$ that spans at most~$p$ points has size at most~$\kappa_{s,t}(p)$.
	
	In Theorem~\ref{theorem: steiner systems}, we allow~$\ell$ to grow with~$m$, hence providing a lower bound for the maximum possible~$\ell$ as a function of~$m$.
	Lefmann, Phelps and Rödl~\cite{LPR:93} obtained the following upper bound.
	\begin{theorem}[\cite{LPR:93}]
		There exists~$c>0$ such that every Steiner triple system of order~$m$ contains a subset of size~$j$ where~$4\leq j\leq c\log m/\log \log m$ that spans at most~$j+2$ points.
	\end{theorem}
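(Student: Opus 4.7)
The plan is to exploit the algebraic structure of a Steiner triple system (STS): for $x\neq y$, let $x\ast y$ denote the unique point $z$ with $\{x,y,z\}\in\cS$, turning $[m]$ into a Steiner quasigroup (setting $x\ast x:=x$). The idea is to find two distinct algebraic expressions in a small number of variables that evaluate to the same point of $[m]$; the resulting ``syzygy'' will translate into a sparse configuration of triples.

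First, I would fix a seed $V_0=\{v_1,\ldots,v_s\}$ of points, with $s$ a small constant to be tuned, and, for a depth parameter $d$, consider the set $F_d$ of formal terms in the variables $v_1,\ldots,v_s$ built from $\ast$ with parse trees of height at most $d$. Standard Catalan-type bounds show that $|F_d|$ grows at least like $s^{k}$ with $k$ of order $2^{d}$, so that $|F_d|$ exceeds $m$ as soon as $2^{d}$ is slightly larger than $\log m/\log s$.

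Next, each term in $F_d$ evaluates to a point of $[m]$, so by pigeonhole there exist two distinct terms $t\neq t'$ in $F_d$ with the same evaluation. Choose such a colliding pair minimising a suitable potential, for instance the total number of internal nodes of $t$ and $t'$, broken lexicographically by the multiset of evaluations of proper subterms. Each internal node of a parse tree corresponds to exactly one triple of~$\cS$; a parse tree with $k$ leaves has $k-1$ internal nodes and, generically, contributes $k-1$ intermediate points. Setting $j:=(|t|+|t'|-2)$, the union of the two parse trees supplies $j$ triples in $\cS$ spanning at most $s+(|t|-1)+(|t'|-1)-1$ points, where the final $-1$ is forced by the common evaluation $t=t'$. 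Choosing $s$ small and using the minimality of the pair to rule out additional internal coincidences, this configuration consists of $j$ triples on at most $j+2$ points, which is precisely the desired failure of sparsity.

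Finally, one optimises the parameters. The pigeonhole threshold $|F_d|>m$ is achieved with $k:=2^{d}=O(\log m/\log\log m)$, so that $j=O(\log m/\log\log m)$, matching the claimed bound. The main obstacle is the minimality analysis in the pigeonhole step: the identities of the Steiner quasigroup (for instance $a\ast(a\ast b)=b$) produce many ``cheap'' collisions with small values of $j$ that nonetheless collapse to trivial configurations, so one has to show that the minimising pair $(t,t')$ actually uses $j$ \emph{distinct} triples on $j+2$ \emph{distinct} points. This is the delicate part: one selects a minimising pair, argues by a subterm exchange argument that any further internal coincidence inside $t$ or $t'$ would allow a shorter colliding pair, and invokes this to exclude identifications that would reduce either the triple count or the point count below the target.
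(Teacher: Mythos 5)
This statement is not proved in the paper at all --- it is quoted from Lefmann, Phelps and R\"odl~\cite{LPR:93} --- so your proposal has to stand on its own, and it contains a genuine quantitative gap. With a seed set of constant size $s$, the number of formal terms with $k$ leaves is at most $C_{k-1}\cdot s^{k}\le (4s)^{k}$ (a Catalan number times the leaf labellings), i.e.\ only singly exponential in $k$, while each such term uses up to $k-1$ triples. Hence the pigeonhole threshold $|F_d|>m$ forces $k=2^{d}\ge \log m/\log (4s)=\Omega(\log m)$, and the colliding pair may well consist of two terms with $\Theta(\log m)$ leaves each (in a locally sparse system every shorter collision is degenerate, by definition), so the configuration produced has $j=\Theta(\log m)$ triples. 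Your closing claim that the threshold is already met with $k=O(\log m/\log\log m)$ contradicts your own estimate $|F_d|\approx s^{k}$: for constant $s$ that count requires $k=\Omega(\log m)$, and letting $s$ grow with $m$ is not an option because your point count $s+(|t|-1)+(|t'|-1)-1\le j+2$ forces $s\le 3$. So the argument proves at best the weaker bound $j=O(\log m)$. To obtain $\log m/\log\log m$ one needs a family of candidate configurations whose cardinality is superexponential, of order $j^{\Omega(j)}$, in the number of triples each candidate uses. This is what the actual argument achieves by growing a configuration one triple at a time: at step $i$ the new triple may be attached through any of the $\Theta(i^{2})$ pairs of points already present (and none of these triples can have its third point inside the current point set without immediately producing a forbidden configuration), which yields roughly $(j!)^{2}$ growth sequences. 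A fixed constant alphabet of formal generators cannot produce this factorial growth, so the term-tree framework is structurally incapable of reaching the stated bound.

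Separately, the ``delicate part'' you defer is doing more work than you acknowledge. It is not enough to discard collisions that follow from the Steiner quasigroup identities: in the union of the two parse trees you must also ensure that the $j$ internal nodes give $j$ \emph{distinct} triples, and a coincidence between two triples is not the same event as a coincidence between two intermediate points (two internal nodes can determine the same triple $\{a,b,c\}$ while evaluating to the distinct points $c$ and $b$). Such a coincidence lowers the triple count without lowering the point count and destroys the density $p\le j+2$, so the minimality/exchange argument would have to exclude it explicitly; as written this is asserted rather than proved. Still, I would regard the missing $\log\log m$ factor as the fatal defect and this as a secondary one.
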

	It would be interesting to close or significantly narrow the gap between these two bounds by determining more precisely how large~$\ell$ may be chosen in terms of~$m$.

	\begin{theorem}\label{theorem: steiner systems}
		For all~$s>t\geq 2$, there exists~$m_0$ such that for all~$m\geq m_0$ and
		\begin{equation*}
			\ell:=\frac{\log\log m}{3s\log\log\log m},
		\end{equation*}
		there exists an~$\ell$-sparse partial~$(m,s,t)$-Steiner system~$\cS$ of size~$(1-\exp(-\sqrt{\log m}))\binom{m}{t}/\binom{s}{t}$.
	\end{theorem}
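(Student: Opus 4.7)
The plan is to deduce Theorem~\ref{theorem: steiner systems} from Theorem~\ref{theorem: no test systems} by setting up an appropriate hypergraph-matching instance. Take $\cH$ to be the $k$-graph with vertex set $\binom{[m]}{t}$ and edge set $\cset{\binom{S}{t}}{S\in\binom{[m]}{s}}$, so that $k=\binom{s}{t}$, every vertex lies in exactly $d:=\binom{m-t}{s-t}=\Theta(m^{s-t})$ edges, and $\Delta_2(\cH)=\binom{m-t-1}{s-t-1}=O(m^{s-t-1})\le d^{1-\eps_0}$ for some $\eps_0=\eps_0(s,t)>0$. Matchings in $\cH$ correspond bijectively to partial $(m,s,t)$-Steiner systems, and a collection of edges fails to be $\ell$-sparse exactly when some sub-collection of size $j\in\{2,\ldots,\ell\}$ has $s$-sets spanning at most $(s-t)j+t$ points in $[m]$. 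Define the conflict hypergraph $\cC$ so that $\cC^{(j)}$, for $j\in\{2,\ldots,\ell\}$, comprises all $j$-element matchings $\{e_1,\ldots,e_j\}\subseteq E(\cH)$ whose associated $s$-sets $S_1,\ldots,S_j$ satisfy $|S_1\cup\cdots\cup S_j|\le (s-t)j+t$. Then a $\cC$-free matching is precisely an $\ell$-sparse partial Steiner system. Observe that $\cC^{(2)}=\emptyset$: two edges of $\cH$ forming a matching have disjoint $t$-vertex sets, so $|S_1\cap S_2|\le t-1$ and hence $|S_1\cup S_2|\ge 2s-t+1>(s-t)\cdot 2+t$. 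In particular, the conditions from Theorem~\ref{thm:matchings simple} involving $\cC^{(2)}$ are vacuous.

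The key enumerative estimate is the following. Fix $j\in\{3,\ldots,\ell\}$, $j'\in\{1,\ldots,j-1\}$ and a $j'$-matching of $\cH$ whose $s$-sets have union of size $p_0$. The number of ways to extend it to a conflict in $\cC^{(j)}$ is at most $c_{s,t,j}\,m^{(s-t)j+t-p_0}$: any such extension is determined by a choice of at most $(s-t)j+t-p_0$ further points of $[m]$ and then a partition into $s$-sets, which can be done in $O_{s,t,j}(1)$ ways. For $j'=1$ we have $p_0=s$, yielding $\Delta_1(\cC^{(j)})=O(m^{(s-t)(j-1)})=O(d^{j-1})$. For $j'\ge 2$, iteratively each new $s$-set in a matching contributes at least $s-t+1$ fresh points, so $p_0\ge s+(j'-1)(s-t+1)$, and the estimate becomes $\Delta_{j'}(\cC^{(j)})\le O(m^{(s-t)(j-j')-(j'-1)})\le d^{j-j'-\eps_1}$ for some $\eps_1=\eps_1(s,t)>0$. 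The $j$-dependent constants $c_{s,t,j}$ and the $\ell$-dependent constants in $\Gamma$ are harmless because $\ell^\ell=\exp(O(\log\log m))$ is sub-polynomial in $d$. Therefore $\cC$ is $(d,\ell,\Gamma,\eps)$-bounded for some $\Gamma=O(\ell)$ and some fixed $\eps>0$ depending only on $s,t$.

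It remains to apply Theorem~\ref{theorem: no test systems} with $\mu:=\exp(-\sqrt{\log m})$ and to verify its hypotheses. The vertex-count hypothesis requires $\binom{m}{t}\le\exp(d^{\eps^2/\ell})$; since $d^{\eps^2/\ell}=\exp(\eps^2(s-t)\log m/\ell)$ and $\log m/\ell=3s\log m\log\log\log m/\log\log m\gg\log\log m$, this holds for $m$ large. The parameter condition $1/\mu^{\Gamma\ell}\le d^{\eps^2}$ reduces to $\Gamma\ell\sqrt{\log m}\le \eps^2(s-t)\log m$, i.e. $\ell^2\le O(\sqrt{\log m})$, which holds since $\ell^2=O((\log\log m/\log\log\log m)^2)=o(\sqrt{\log m})$. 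Theorem~\ref{theorem: no test systems} then provides a $\cC$-free matching $\cM\subseteq\cH$ of size $(1-\mu)\binom{m}{t}/\binom{s}{t}$, and this matching corresponds to an $\ell$-sparse partial $(m,s,t)$-Steiner system of the size claimed in Theorem~\ref{theorem: steiner systems}.

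The main obstacle is the bookkeeping required to verify that the codegree bounds $\Delta_{j'}(\cC^{(j)})\le d^{j-j'-\eps}$ hold uniformly for all $j'<j\le\ell$ as $\ell$ itself grows with $m$: the implicit multiplicative constants depend on $j$ and must be uniformly swallowed by the $d^{-\eps}$ slack. The growth rate of $\ell$ in the statement is essentially dictated by the requirement $\ell^\ell\le d^{o(1)}$ together with $1/\mu^{\Gamma\ell}\le d^{\eps^2}$; both force $\ell$ to lie well below $\sqrt{\log m}$, and the explicit choice $\ell=\log\log m/(3s\log\log\log m)$ is comfortably in this range.
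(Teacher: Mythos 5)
Your overall strategy is the paper's: same auxiliary $k$-graph $\cH$ with $k=\binom{s}{t}$ and $d=\binom{m-t}{s-t}$, conflicts encoding sparseness violations, and an application of Theorem~\ref{theorem: no test systems} with $\mu=\exp(-\sqrt{\log m})$. However, there is a genuine gap in your verification of the codegree condition, and it traces back to a false combinatorial claim. You take $\cC^{(j)}$ to be \emph{all} $j$-element matchings whose $s$-sets span at most $(s-t)j+t$ points, and you justify $\Delta_{j'}(\cC^{(j)})\le d^{j-j'-\eps}$ by asserting that every $j'$-matching of $\cH$ spans at least $s+(j'-1)(s-t+1)$ points because ``each new $s$-set contributes at least $s-t+1$ fresh points''. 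A new $s$-set intersects each \emph{individual} previous $s$-set in at most $t-1$ points, but it can intersect their \emph{union} in many more; for $s=3$, $t=2$ the fourth triple of a Pasch configuration contributes zero fresh points. Concretely, with your (non-minimal) conflict system and $s=3$, $t=2$, $\ell\ge 5$: let $E$ be a Pasch configuration ($j'=4$ triples on $6=(s-t)j'+t$ points, itself forbidden). The number of conflicts in $\cC^{(5)}$ containing $E$ is $\Theta(m)=\Theta(d)$ (add one triple on two old points and one new point), which violates the required bound $d^{5-4-\eps}=d^{1-\eps}$. In general, whenever a $j'$-subset $E$ of a conflict is itself forbidden, so that it spans only $p_0\le (s-t)j'+t$ points, the number of extensions to a $j$-conflict is of order $m^{(s-t)j+t-p_0}\ge d^{j-j'}$, and condition~\ref{item: conflict codegrees} fails.

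The repair is exactly the point the paper flags after Theorem~\ref{thm:matchings simple}: one must prune redundant conflicts. Define $\cC$ to consist only of the \emph{minimal} forbidden configurations, i.e.\ those containing no smaller forbidden sub-configuration. This loses nothing, since a matching avoiding all minimal forbidden configurations avoids all forbidden ones; and it restores the codegree bound, because now every sub-matching $E$ of a conflict with $2\le|E|=j'<j$ is non-forbidden and hence spans at least $(s-t)j'+t+1$ points, and that extra ``$+1$'' is precisely the $m^{-1}\le d^{-\eps}$ saving (with $\eps=1/s$) that makes $\Delta_{j'}(\cC^{(j)})\le d^{j-j'-\eps}$ go through. (For the degree bound one also uses that a minimal forbidden configuration of size $j$ spans at least $(s-t)(j-1)+t+1$ points.) A secondary inaccuracy: $\Gamma$ is not $O(\ell)$; the $\binom{p}{s}^{j}$ factors force $\Gamma$ of order $(\ell s)^{\ell s}\approx(\log m)^{1/3}$, which is still compatible with $1/\mu^{\Gamma\ell}\le d^{\eps^2}$ — indeed this is what dictates the $3s$ in the definition of $\ell$ — but your displayed reduction to ``$\ell^2\le O(\sqrt{\log m})$'' understates what must be checked.
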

	\begin{proof}
		Fix~$s>t\geq 2$, suppose that~$m$ is sufficiently large in terms of~$s$ and~$t$ and define~$\ell$ as in the statement.
		Let~$X:=[m]$ and~$k:=\binom{s}{t}$.
		Consider the~$k$-graph~$\cH$ with vertex set~$\unordsubs{X}{t}$ and edge set~$\set[\big]{ \binom{S}{t}\colon S\in\binom{X}{s} }$.
		With appropriately chosen parameters and conflicts enforcing~$\ell$-sparseness, we can apply Theorem~\ref{theorem: no test systems} to obtain a matching~$\cM\subseteq\cH$ that represents a partial~$(m,s,t)$-Steiner system as desired.
		
		First, let us introduce some further terminology and notation.
		As~$m$,~$s$ and~$t$ are fixed throughout the proof, we call partial~$(m,s,t)$-Steiner systems simply \defn{partial Steiner systems}.
		We say that a partial Steiner system~$\cS$ is \defn{forbidden} if it has size~$j\in[\ell]_2$ and spans less than~$\pi(j)$ points.
		Note that a partial Steiner system~$\cS$ is~$\ell$-sparse if and only if there is no forbidden partial Steiner system~$\cS'\subseteq\cS$.
		Every edge~$e\in\cH$ is the set of all~$t$-sets that are subsets of an~$s$-set~$S\subseteq X$, so given~$e$, we may recover~$S$ by considering the set~$\bigcup_{v\in e}v$ of points lying in one of the vertices in~$e$.
		We can reverse this and obtain~$e$ by considering the set~$\unordsubs{S}{t}$ of all~$t$-sets that are subsets of~$S$.
		We extend these constructions to edge sets and collections of~$s$-sets as follows.
		For an edge set~$E\subseteq\cH$ and for~$\cS\subseteq\unordsubs{X}{s}$, we define
		\begin{equation*}
			\cS(E):=\set[\Big]{ \bigcup_{v\in e} v\colon e\in E }\quad\text{and}\quad
			E(\cS):=\set[\bigg]{ \binom{S}{t}\colon S\in\cS }.
		\end{equation*}
		Furthermore, to provide access to the underlying subset of~$X$, for~$E\subseteq\cH$, we define~$X(E):=\bigcup \cS(E)$ to be the set of points lying in vertices in edges of~$E$ and for~$e\in\cH$, we set~$X(e):=X(\set{e})$.
		Note that mapping~$E\subseteq\cH$ to~$\cS(E)$ yields a size preserving bijection from the set of all matchings in~$\cH$ to the set of all partial Steiner systems and mapping~$\cS$ to~$ E(\cS)$ yields its inverse.

		We are now ready to define appropriate parameters and conflicts.
		Let~$\cC$ denote the conflict system for~$\cH$ where a set~$C\subseteq \cH$ is a conflict in~$\cC$ if and only if~$\cS(C)$ is a forbidden partial Steiner system of size~$j\in[\ell]_2$ such that there is no smaller forbidden partial Steiner system~$\cS'\subseteq\cS$.
		Let
		\begin{gather*}
			d:=\binom{m-t}{s-t},\quad
			\eps:=\frac{1}{s},\quad
			\Gamma:=(\ell s)^{\ell s+1},\quad\text{and}\quad
			\mu:=\exp(-\sqrt{\log m}).
		\end{gather*}
		The~$k$-graph~$\cH$ is~$d$-regular and for all distinct~$u,v\in V(\cH)$, we have~$\abs{u\cup v}\geq t+1$ and thus
		\begin{equation*}
			d_{\cH}(uv)
			\leq \binom{m-t-1}{s-t-1}
			=\frac{s-t}{m-t}d
			\leq (s-t)d^{1-\frac{1}{s-t}}
			\leq d^{1-\eps}.
		\end{equation*}
		Furthermore, we have~$\Gamma\leq (\log m)^{2/5}$ and hence~$1/\mu^{\Gamma\ell}\leq \exp((\log m)^{19/20})\leq d^{\eps^2}$.
		We show next that~$\cC$ is~$(d,\ell,\Gamma,\eps)$-bounded.
		Then, Theorem~\ref{theorem: no test systems} yields a~$\cC$-free matching~$\cM\subseteq\cH$ with size~$(1-\mu)\binom{m}{t}/\binom{s}{t}$
		and thus an~$\ell$-sparse partial Steiner system~$\cS(\cM)$ of the same size.
		
		Condition~\ref{item: conflict size} holds by construction of~$\cC$, and since there is no forbidden partial Steiner system of size~$2$, conditions~\ref{item: conflict j=2} and~\ref{item: conflict double j=2} are also trivially satisfied.
		It remains to check that~\ref{item: conflict degree} and~\ref{item: conflict codegrees} hold.
		
		To this end, first note that removing any element of a forbidden partial Steiner system~$\cS$ of size~$j\in[\ell]_3$ that spans less than~$\pi(j-1)$ points yields a smaller forbidden partial Steiner system.
		This implies that all forbidden partial Steiner systems~$\cS$ of size~$j\in[\ell]_2$ that do not contain a smaller forbidden partial Steiner system~$\cS'\subseteq\cS$ span at least~$\pi(j-1)$ points and hence we have~$\pi(j-1)\leq\abs{X(C)}\leq \pi(j)-1$ for all~$j\in[\ell]_2$ and~$C\in\usub{\cC}{j}$.
		
		First, we aim to verify~\ref{item: conflict degree}.
		For all~$j\in[\ell]_2$,~$e\in\cH$ and~$p\in[\pi(j)-1]_{\pi(j-1)}$, we obtain
		\begin{align*}
			\sum_{P\in\unordsubs{X}{p}\colon X(e)\subseteq P} \abs{\cset{C\in\cC^{(j)}}{X(C)=P}}
			&\leq \binom{m-s}{p-s}\binom{p}{s}^j
			\leq \frac{1}{(s!)^j} m^{p-s}p^{js}
			\leq \frac{(\ell s)^{\ell s}}{(s!)^j}m^{p-s}\\
			&\leq \frac{(\ell s)^{\ell s}}{(s!)^j} m^{(s-t)(j-1)}
			\leq (\ell s)^{\ell s} d^{j-1}
		\end{align*}
		and thus
		\begin{equation*}
			d_{\usub{\cC}{j}}(e)
			\leq \sum_{p\in[\pi(j)-1]_{\pi(j-1)}}\sum_{P\in\unordsubs{X}{p}\colon X(e)\subseteq P} \abs{\cset{C\in\cC^{(j)}}{X(C)=P}}
			\leq s(\ell s)^{\ell s} d^{j-1}.
		\end{equation*}
		This yields~$\sum_{j\in[\ell]_{2}}\frac{\Delta(\usub{\cC}{j})}{d^{j-1}}\leq\Gamma$.
		Clearly,~$\abs{ \cset{j\in[\ell]_2}{ \cC^{(j)}\neq\emptyset } }\leq\ell\leq\Gamma$, so~\ref{item: conflict degree} holds.
		
		For all~$j\in[\ell]_{2}$,~$j'\in[j-1]_2$ and~$E\subseteq\cH$ with~$\abs{E}=j'$  and~$d_{\usub{\cC}{j}}(E)\geq 1$, the partial Steiner system~$\cS(E)$ is not forbidden and thus spans at least~$\pi(j')$ points.
		For all~$p\in[\pi(j)-1]_{\pi(j-1)}$, this entails
		\begin{align*}
			\sum_{P\in\unordsubs{X}{p}\colon X(E)\subseteq P} \abs{\cset{C\in\cC^{(j)}}{X(C)=P}}
			&\leq \binom{m-(s-t)j'-t-1}{p-(s-t)j'-t-1}\binom{p}{s}^j
			\leq m^{p-(s-t)j'-t-1}p^{js}\\
			&\leq (\ell s)^{\ell s}m^{p-(s-t)j'-t-1}
			\leq (\ell s)^{\ell s}m^{(s-t)(j-j')-1}\\
			&= (\ell s)^{\ell s}\frac{m^{(s-t)(j-j'-1/s)}}{m^{t/s}}
			\leq \frac{d^{j-j'-\eps}}{s}
		\end{align*}
		and thus
		\begin{equation*}
			d_{\usub{\cC}{j}}(E)
			\leq \sum_{p\in[\pi(j)-1]_{\pi(j-1)}}\sum_{P\in\unordsubs{X}{p}\colon X(E)\subseteq P} \abs{\cset{C\in\cC^{(j)}}{X(C)=P}}
			\leq d^{j-j'-\eps}.
		\end{equation*}
		This shows that~\ref{item: conflict codegrees} holds and hence finishes the proof.
	\end{proof}
	
	Theorem~\ref{theorem: steiner systems} is a version of Theorem~\ref{thm:Steiner systems simple} where we allow~$\ell$ to grow with~$m$.
	Due to the growth of~$\ell$, we do not obtain the polynomially decreasing leftover fraction~$m^{-\eps}$ from Theorem~\ref{thm:Steiner systems simple}, however, for fixed~$\ell$, it is straightforward to adapt the proof such that it yields a leftover fraction as in Theorem~\ref{thm:Steiner systems simple}.
	
	Additionally, using test systems, the proof can easily be extended to also provide control over the~$(t-1)$-degrees of the leftover, that is, for all~$(t-1)$-sets~$Y\in\unordsubs{[m]}{t-1}$, control over the number of~$t$-sets~$T$ with~$Y\subseteq T$ that are not subsets of an~$s$-set~$S\in\cS$.
	Indeed, in the proof where we consider the~$\binom{s}{t}$-graph~$\cH$ with vertex set~$\binom{[m]}{t}$ and edge set~$\cset[\big]{\unordsubs{S}{t}}{S\in\unordsubs{[m]}{s}}$, instead of Theorem~\ref{theorem: no test systems}, one may simply apply Theorem~\ref{theorem: test systems} using the sets~$\cset{e\in\cH}{Y\subseteq\bigcup_{v\in e} v}$ with~$Y\in\unordsubs{[m]}{t-1}$ as test systems.
	
	Moreover, the same approach that we use in the proof can also be applied to prove Theorem~\ref{thm:packings} by considering the~$\binom{s}{t}$-graph~$\cH$ whose vertices are the edges of~$G$ and whose edges are the edge sets of the cliques induced by the elements of~$\cK$.
	In particular, the arising conflict system is a subgraph of the conflict system~$\cC$ we analyzed in the proof of Theorem~\ref{theorem: steiner systems}, so all the bounds still hold.
	
	\section{Concluding remarks}
	
	In this paper we show that approximately regular uniform hypergraphs~$\cH$ with small codegrees not only admit almost-perfect matchings but almost-perfect matchings~$\cM$ such that no subset of~$\cM$ is an element of a given collection of conflicts~$C\subseteq E(\cH)$.
	This extends classical results of Frankl and R\"odl~\cite{FR:85}, and Pippenger (see~\cite{PS:89}) as well as recent results on approximate high-girth Steiner triple systems~\cite{BW:19,GKLO:20}.
	We give a few applications of our main theorem in Subsection~\ref{subsection: applications}, but we believe that there are many more.
	
	We close with three open questions.
	
	\begin{itemize}
		\item Theorem~\ref{theorem: counting} yields a lower bound on the number of conflict-free almost-perfect matchings.
		We wonder if the bound in Theorem~\ref{theorem: counting} is essentially tight.
		If there are no conflicts an upper bound can be derived with the so-called entropy method.
		Potentially this method can be adapted to also yield the corresponding upper bound in a setting with conflicts.
		Even in the case of Steiner triple systems with girth at least 7 this seems challenging (see the discussions in~\cite{BW:19,GKLO:20,KSSS:22}).
		\item In our main theorem we require an upper bound on the number of vertices of $\cH$ that is exponential in the degree $d$.
		This is a very mild condition and suffices for essentially all applications known to us.
		Nevertheless, this condition is not needed in the setting without conflicts and we believe it is also not needed here;
		to be more precise, it is not needed for the analogous statement of Theorem~\ref{theorem: less regularity functions} without conflicts and ignoring test functions.
		Note that previous proofs for such a result employed the R\"odl nibble instead of an analysis of a random greedy algorithm (see for example~\cite{AKS:97,FR:85,PS:89}).
		\item Theorem~\ref{theorem: steiner systems} shows that there are approximate Steiner systems on $m$ points of girth  $\Omega( \frac{\log\log m}{\log \log \log m})$
		whereas Lefmann, Phelps and R\"odl~\cite{LPR:93} proved that in general the girth cannot be larger than $O(\frac{\log m}{\log \log m})$.
		It remains an interesting question to determine the largest possible girth of (approximate) Steiner systems.
	\end{itemize}
	
	\subsection*{Note added} While finalising our paper, we learned that Delcourt and Postle~\cite{DP:22} independently obtained similar results.
	In particular, they were also motivated by and proved the existence of approximate high-girth Steiner systems, and observed that this is just a special case of a general hypergraph matching theorem.
	Their proof method is complementary to ours: while we analyse a random process, they pursue a nibble argument.
	The precise statements of the obtained matching theorems differ.
	For instance, the result of Delcourt and Postle does not require an upper bound on the number of vertices.
	Moreover they observed that the classical theorem of Ajtai, Koml\'os, Pintz, Spencer and Szemer\'edi~\cite{AKPSS:82} for finding an independent set in girth five hypergraphs can also be deduced from the general matching theorem (we were unaware of this application and in our case we need polylogarithmic degree).
	Our result has the advantage that we can track test functions (which we believe is crucial for potential applications based on the absorption method) and handle conflicts of size 2 without requiring a girth condition for~$\cC$ (see Theorem 1.16 in~\cite{DP:22}).

\providecommand{\bysame}{\leavevmode\hbox to3em{\hrulefill}\thinspace}
\providecommand{\MR}{\relax\ifhmode\unskip\space\fi MR }
\providecommand{\MRhref}[2]{%
	\href{http://www.ams.org/mathscinet-getitem?mr=#1}{#2}
}
\providecommand{\href}[2]{#2}

	\AtEndDocument{\printindex}
	

\begin{thebibliography}{10}
	
	\bibitem{AKPSS:82}
	M.~Ajtai, J.~Koml{\'o}s, J.~Pintz, J.~Spencer, and E.~Szemer{\'e}di,
	\emph{Extremal uncrowded hypergraphs}, J. Combin. Theory Ser. A~\textbf{32}
	(1982), 321--335.
	
	\bibitem{AKS:97}
	N.~Alon, J.-H.~Kim, and J.~Spencer, \emph{Nearly perfect matchings in regular
		simple hypergraphs}, Israel J. Math.~\textbf{100} (1997), 171--187.
	
	\bibitem{AS:95}
	N.~Alon and B.~Sudakov, \emph{Disjoint systems}, Random Structures
	Algorithms~\textbf{6} (1995), 13--20.
	
	\bibitem{BGKLMO:20}
	B.~Barber, S.~Glock, D.~K\"uhn, A.~Lo, R.~Montgomery, and D.~Osthus,
	\emph{Minimalist designs}, Random Structures Algorithms~\textbf{57} (2020),
	47--63.
	
	\bibitem{BKLMO:17}
	B.~Barber, D.~K\"uhn, A.~Lo, R.~Montgomery, and D.~Osthus, \emph{Fractional
		clique decompositions of dense graphs and hypergraphs}, J. Combin. Theory
	Ser.~B~\textbf{127} (2017), 148--186.
	
	\bibitem{BW:19}
	T.~Bohman and L.~Warnke, \emph{Large girth approximate {S}teiner triple
		systems}, J. Lond. Math. Soc.~\textbf{100} (2019), 895--913.
	
	\bibitem{BES:73b}
	W.~G.~Brown, P.~Erd\H{o}s, and V.~T.~S\'os, \emph{Some extremal problems on
		{$r$}-graphs}, New directions in the theory of graphs, Academic Press, 1973,
	pp.~53--63.
	
	\bibitem{DP:22}
	M.~Delcourt and L.~Postle, \emph{Finding an almost perfect matching in a
		hypergraph avoiding forbidden submatchings}, arXiv:2204.08981 (2022).
	
	\bibitem{EGJ:20a}
	S.~Ehard, S.~Glock, and F.~Joos, \emph{Pseudorandom hypergraph matchings},
	Combin. Probab. Comput.~\textbf{29} (2020), 868--885.
	
	\bibitem{EL:14}
	D.~Ellis and N.~Linial, \emph{On regular hypergraphs of high girth}, Electron.
	J. Combin.~\textbf{21} (2014), Art.~1.54, 17~pages.
	
	\bibitem{erdos:73}
	P.~Erd\H{o}s, \emph{Problems and results in combinatorial analysis}, Colloquio
	Internazionale sulle Teorie Combinatorie (Rome, 1973), Accad. Naz. Lincei,
	1976, pp.~3--17.
	
	\bibitem{erdos:76}
	\bysame, \emph{Problems and results in combinatorial analysis}, Creation in
	Math.~\textbf{9} (1976), 25~pages.
	
	\bibitem{EH:63}
	P.~Erd\H{o}s and H.~Hanani, \emph{On a limit theorem in combinatorial
		analysis}, Publ. Math. Debrecen~\textbf{10} (1963), 10--13.
	
	\bibitem{FGKMT:18}
	K.~Ford, B.~Green, S.~Konyagin, J.~Maynard, and T.~Tao, \emph{Long gaps between
		primes}, J. Amer. Math. Soc.~\textbf{31} (2018), 65--105.
	
	\bibitem{FF:87}
	P.~Frankl and Z.~F\"{u}redi, \emph{Colored packing of sets}, Combinatorial
	design theory, North-Holland Math. Stud. 149, North-Holland, Amsterdam, 1987,
	pp.~165--177.
	
	\bibitem{FR:85}
	P.~Frankl and V.~R\"{o}dl, \emph{Near perfect coverings in graphs and
		hypergraphs}, European J. Combin.~\textbf{6} (1985), 317--326.
	
	\bibitem{freedman:75}
	D.~A.~Freedman, \emph{On tail probabilities for martingales}, Ann.
	Probab.~\textbf{3} (1975), 100--118.
	
	\bibitem{furedi:12}
	Z.~F\"{u}redi, \emph{2-cancellative hypergraphs and codes}, Combin. Probab.
	Comput.~\textbf{21} (2012), 159--177.
	
	\bibitem{FR:13}
	Z.~F\"uredi and M.~Ruszink\'o, \emph{Uniform hypergraphs containing no grids},
	Adv. Math.~\textbf{240} (2013), 302--324.
	
	\bibitem{GS:21}
	L.~Gishboliner and A.~Shapira, \emph{Constructing dense grid-free linear
		$3$-graphs}, Proc. Amer. Math. Soc. (to appear).
	
	\bibitem{GKLO:20}
	S.~Glock, D.~K\"uhn, A.~Lo, and D.~Osthus, \emph{On a conjecture of {E}rd{\H
			o}s on locally sparse {S}teiner triple systems}, Combinatorica~\textbf{40}
	(2020), 363--403.
	
	\bibitem{GKLO:ta}
	\bysame, \emph{The existence of designs via iterative absorption: hypergraph
		{$F$}-designs for arbitrary~{$F$}}, Mem. Amer. Math. Soc. (to appear).
	
	\bibitem{GKO:21}
	S.~Glock, D.~K\"{u}hn, and D.~Osthus, \emph{Extremal aspects of graph and
		hypergraph decomposition problems}, Surveys in combinatorics 2021, London
	Math. Soc. Lecture Note Ser. 470, Cambridge Univ. Press, Cambridge, 2021,
	pp.~235--265.
	
	\bibitem{KKKMO:21}
	D.~Y.~Kang, T.~Kelly, D.~K\"uhn, A.~Methuku, and D.~Osthus, \emph{A proof of
		the {E}rd{\H o}s-{F}aber-{L}ov\'asz conjecture}, arXiv:2101.04698 (2021).
	
	\bibitem{keevash:14}
	P.~Keevash, \emph{The existence of designs}, arXiv:1401.3665 (2014).
	
	\bibitem{KL:20}
	P.~Keevash and J.~Long, \emph{The {B}rown--{E}rd{\H o}s--{S}\'os conjecture for
		hypergraphs of large uniformity}, Proc. Amer. Math. Soc. (to appear).
	
	\bibitem{KS:20b}
	P.~Keevash and K.~Staden, \emph{Ringel's tree packing conjecture in quasirandom
		graphs}, arXiv:2004.09947 (2020).
	
	\bibitem{kirkman:47}
	T.~P.~Kirkman, \emph{On a problem in combinatorics}, Cambridge Dublin Math.
	J.~\textbf{2} (1847), 191--204.
	
	\bibitem{KKLS:18}
	M.~Krivelevich, M.~Kwan, P.-S.~Loh, and B.~Sudakov, \emph{The random
		{$k$}-matching-free process}, Random Structures Algorithms~\textbf{53}
	(2018), 692--716.
	
	\bibitem{KSSS:22}
	M.~Kwan, A.~Sah, M.~Sawhney, and M.~Simkin, \emph{High-girth {S}teiner triple
		systems}, arXiv:2201.04554 (2022).
	
	\bibitem{KSSS:22b}
	\bysame, \emph{Substructures in {L}atin squares}, arXiv:2202.05088 (2022).
	
	\bibitem{LPR:93}
	H.~Lefmann, K.~T.~Phelps, and V.~R\"odl, \emph{Extremal problems for triple
		systems}, J. Combin. Des.~\textbf{1} (1993), 379--394.
	
	\bibitem{L:17}
	Z.~Luria, \emph{New bounds on the number of n-queens configurations},
	arXiv:1705.05225 (2017).
	
	\bibitem{nash-williams:70}
	C.~St. J.~A.~Nash-Williams, \emph{An unsolved problem concerning decomposition
		of graphs into triangles}, In:~Combinatorial {T}heory and its {A}pplications
	{III} (P.~Erd\H{o}s, A.~R\'enyi, and V.T.~S\'os, eds.), North Holland, 1970,
	pp.~1179--1183.
	
	\bibitem{PS:89}
	N.~Pippenger and J.~Spencer, \emph{Asymptotic behaviour of the chromatic index
		for hypergraphs}, J. Combin. Theory Ser.~A~\textbf{51} (1989), 24--42.
	
	\bibitem{rodl:85}
	V.~R\"odl, \emph{On a packing and covering problem}, European J.
	Combin.~\textbf{6} (1985), 69--78.
	
	\bibitem{ST:20}
	C.~Shangguan and I.~Tamo, \emph{Degenerate {T}ur\'{a}n densities of sparse
		hypergraphs}, J. Combin. Theory Ser. A~\textbf{173} (2020), 105228, 25.
	
	\bibitem{sidorenko:20}
	A.~Sidorenko, \emph{Approximate {S}teiner {$(r-1,r,n)$}-systems without three
		blocks on {$r+2$} points}, J. Combin. Des.~\textbf{28} (2020), 144--148.
	
	\bibitem{W:99}
	N.~C.~Wormald, \emph{The differential equation method for random graph
		processes and greedy algorithms}, In M. Karo\'{n}ski and H. Pr\"{o}mel, Eds.,
	Lectures on Approximation and Randomized Algorithms (1999), 73--155.
	
\end{thebibliography}
\end{document}